\g@addto@macro{\UrlBreaks}{\UrlOrds}
\providecommand{\noopsort}[1]{} 
\newtheorem{Th}{Theorem}[section]
\newtheorem{Prop}[Th]{Proposition}
\newtheorem{Lemma}[Th]{Lemma}
\theoremstyle{definition}
\newtheorem{Remark}[Th]{Remark}
\newtheorem{Def}{Definition}[section]
\newtheorem{Cor}[Th]{Corollary}
\newtheorem{Example}{Example}[section]
\newtheorem{Question}{Question}
\newcommand{\beq}{\begin{equation}}
\newcommand{\eeq}{\end{equation}}
\def\scalar(#1,#2){(#1\mid#2)}
\newcommand{\un}{\underline}
\newcommand{\raz}{\mathbbm{1}}
\newcommand{\1}{\mathbbm{1}}
\newcommand{\cs}{{\cal S}}
\newcommand{\cb}{{\cal B}}
\newcommand{\cd}{{\cal D}}
\newcommand{\cf}{{\cal F}}
\newcommand{\cm}{{\cal M}}
\newcommand{\co}{{\cal O}}
\newcommand{\xbm}{(X,{\cal B},\mu)}
\newcommand{\ot}{\otimes}
\newcommand{\ov}{\overline}
\newcommand{\Q}{\mathbb{Q}}
\newcommand{\R}{{\mathbb{R}}}
\newcommand{\T}{{\mathbb{T}}}
\newcommand{\C}{{\mathbb{C}}}
\newcommand{\Z}{{\mathbb{Z}}}
\newcommand{\N}{{\mathbb{N}}}
\newcommand{\vep}{\varepsilon}
\newcommand{\va}{\varphi}
\newcommand{\mob}{\boldsymbol{\mu}}
\newcommand{\lio}{\boldsymbol{\lambda}}
\newcommand{\tend}[3][]{\xrightarrow[#2\to#3]{#1}}
\newcommand{\tfs}{T_{\va,\cs}}
\newcommand{\bfu}{\boldsymbol{u}}
\newcommand{\bfv}{\boldsymbol{v}}
\newcommand{\bfe}{\boldsymbol{e}}
\newcommand{\bfd}{\boldsymbol{d}}
\newcommand{\bfsigma}{\boldsymbol{\sigma}}
\newcommand{\bfw}{\boldsymbol{w}}
\newcommand{\bdelta}{\boldsymbol{\delta}}
\newcommand{\sB}{\mathscr{B}}
\newcommand{\PP}{\mathbb{P}}
\newcommand{\xbmt}{(X,{\cal B},\mu,T)}
\newcommand{\EE}{{\mathbb{E}}}
\newcommand{\D}{{\mathbb{D}}}
\newcommand{\ab}{|}
\title{Sarnak's Conjecture -- what's new}
\author{S. Ferenczi\and J. Ku\l aga-Przymus\and M. Lema\'nczyk}
\date{}
\begin{document}
\bibliographystyle{amsplaininitials_nomrnumber}

\maketitle

\begin{abstract}An overview of last seven years results concerning Sarnak's conjecture on M\"obius disjointness is presented, focusing on ergodic theory aspects of the conjecture.
\end{abstract}

\tableofcontents

\section*{Introduction}

\paragraph{M\"obius disjointness}
Assume that $T$ is a continuous map\footnote{Most often, however not always, $T$ will be a homeomorphism.} of a compact metric space $X$. Following Peter Sarnak \cite{Sa,Sa:Af}, we will say  that $T$, or, more precisely, the topological dynamical system $(X,T)$  {\em is M\"obius disjoint} (or {\em M\"obius orthogonal})\footnote{$\mob$ stands for the  arithmetic M\"obius function, see next sections for explanations of notions that appear in Introduction.} if:
\begin{equation}\label{mdis1}
\lim_{N\to\infty}\frac1N\sum_{n\leq N} f(T^nx)\mob(n)=0\text{ for each $f\in C(X)$ and $x\in X$}.
\end{equation}
In 2010, Sarnak \cite{Sa,Sa:Af} formulated the following conjecture:\footnote{To be compared with
M\"obius Randomness Law by Iwaniec and Kowalski \cite{Iw-Ko}, page 338,
 that any ``reasonable'' sequence of complex numbers is orthogonal to $\mob$.}
\begin{equation}\label{sc1}
\parbox{0.85\textwidth}{Each zero entropy continuous map $T$ of a compact metric space $X$ is M\"obius disjoint.}
\end{equation}
Note that if $f$ is constant then convergence \eqref{mdis1} takes place in an arbitrary topological system $(X,T)$; indeed, $\frac1N\sum_{n\leq N}\mob(n)\to0$ is equivalent to the Prime Number Theorem (PNT), e.g.\ \cite{Hi,Te}. We can also interpret this statement as the equivalence of the PNT and the M\"obius disjointness of the one-point dynamical system. The Prime Number Theorem in arithmetic progressions (Dirichlet's theorem) can also be viewed similarly: it is equivalent to the M\"obius disjointness of the system $(X,T)$, where $Tx=x+1$ on $X=\Z/k\Z$ for each $k\geq1$. Note also that the classical Davenport's \cite{Da} estimate:
for each $A>0$, we have
\beq\label{vin}
\max_{t \in \mathbb{T}}\left|\displaystyle\sum_{n \leq N}e^{2\pi int}{\boldsymbol{\mu}}(n)\right|\leq C_A\frac{N}{\log^{A}N}\text{ for some }C_A>0\text{ and all }N\geq 2,\eeq
yields the M\"obius disjointness of irrational rotations.\footnote{\label{f:lindense}In order to establish M\"obius disjointness, we need to show convergence~\eqref{mdis1} (for all $x\in X$) only for a set of functions linearly dense in $C(X)$, so, for the rotations on the (additive) circle $\T=[0,1)$, we only need to consider characters. Note also that if the topological system $(X,T)$ is uniquely ergodic then we need to check~\eqref{mdis1} (for all $x\in X$) only for a subset of $C(X)$ which is linearly dense in $L^1$.

In what follows, for inequalities (as~\eqref{vin}), we will also use notation $\ll$ or ${\rm O}(\cdot)$, or $\ll_A$ or ${\rm O}_A(\cdot)$ if we need to emphasize a role of $A>0$.}

The present article is concentrated on an overview of research done during the last seven years\footnote{For a presentation of a part of it, see~\cite{dR:Ga}.} on Sarnak's conjecture~\eqref{sc1} from the ergodic theory point of view. It is also rather aimed at the readers with a good orientation in dynamics, especially in ergodic theory. It means that we assume that the reader is familiar with at least basics of ergodic theory, but often more than that is required, monographs \cite{Co-Fo-Si,Ei-Wa,Fu1,Gl,Wal} are among best sources to be consulted. In contrast to that, we included in the article a selection of some basics of analytic number theory. Those which appear here, in principle, are not contained in~\cite{Ri-this} and, as we hope, allow one for a better understanding of dynamical aspects of some number-theoretic results. We should however warn the reader that some number-theoretic results will be presented in their simplified (typically, non-quantitative) forms, sufficient for some ergodic interpretations but not putting across the whole complexity and depth of the results. In particular, this remark applies to recent break-through results of Matom\"aki and  Radziwi\l{}\l{}~\cite{Ma-Ra} and some related concerning a behavior of multiplicative functions on short intervals.\footnote{For a detailed account of these results, we refer the reader to \cite{Sau}.}

\paragraph{Ergodic theory viewpoint on Sarnak's conjecture} Sarnak's conjecture~\eqref{sc1} is formulated as a problem in topological dynamics. However, for each topological system $(X,T)$ the set $M(X,T)$ of (Borel, probability) $T$-invariant measures is non-empty and we can study dynamical properties of $(X,T)$ by looking at all measure-theoretic dynamical systems $(X,{\cal B},\mu,T)$ for $\mu\in M(X,T)$. Via the Variational Principle, Sarnak's conjecture can  be now formulated as M\"obius disjointness of the topological systems $(X,T)$ whose measure-theoretic systems $(X,{\cal B},\mu,T)$ for all $\mu\in M(X,T)$ have zero Kolmogorov-Sinai entropy. But one of main motivations for~\eqref{sc1} in \cite{Sa} was that this condition is weaker than a certain (open since 1965) pure number-theoretic result, known as the Chowla conjecture (see Section~\ref{s:sec31}).
Since the Chowla conjecture has its pure ergodic theory interpretation (Section~\ref{s:sec31}), the approach through invariant measures allows one to see the implication
$$
\mbox{ Chowla conjecture $\Rightarrow$ Sarnak's conjecture}\footnote{As proved by Tao \cite{Ta4}, the logarithmic averages version of the Chowla conjecture is equivalent to the logarithmic version of Sarnak's conjecture. We will see later in Section~\ref{czesc3} that once the logarithmic Chowla conjecture holds for the Liouville function $\lio$, we have that all configurations of $\pm1$s appear in $\lio$ (infinitely often).}
$$
as a consequence of some disjointness (in the sense of Furstenberg) results in ergodic theory. While the Chowla conjecture remains open, some recent break-through results in number theory find their natural interpretation as particular instances of the validity of Sarnak's conjecture. Samples of such results are (see Sections~\ref{s:interpr} and~\ref{s:frho}):
\begin{enumerate}
\item
The result of Matom\"aki, Radziwi\l\l \ and Tao \cite{Ma-Ra-Ta}: $$\sum_{h\leq H}\left|\sum_{m\leq M}\mob(m)\mob(m+h)\right|={\rm o}(HM)$$ (when $H,M\to \infty$, $H\ll M$) implies that each system $(X,T)$ for which all invariant measures yield measure-theoretic systems with discrete spectrum is M\"obius disjoint.\footnote{The same argument applied to the Liouville function $\lio$ implies that the subshift $X_{\lio}$ generated by $\lio$ is uncountable, see Section~\ref{czesc3}.}
\item
The result of Tao \cite{Ta3}:
$$\sum_{n\leq N}\frac{\mob(n)\mob(n+h)}n={\rm o}(\log N)$$ (when $N\to\infty$) for each $h\neq0$  implies that each system $(X,T)$ for which all invariant measures yield measure-theoretic systems with singular spectrum are logarithmically M\"obius disjoint.
\end{enumerate}
This is done by:
\begin{itemize}
\item
interpreting the number theoretic results as ergodic properties of the dynamical systems given by the invariant measures of the subshift $X_{\mob}$ for which $\mob$ is quasi-generic,
\item
using classical disjointness results in ergodic theory.
\end{itemize}
It is surprising and important that the ergodic theoretical methods of the last decades that led to new non-conventional ergodic theorems and showed a particular role of nil-systems, also appear in the context of Sarnak’s conjecture, and again the role of nil-systems seems to be decisive. Together with some new disjointness results in ergodic theory, it pushes forward significantly our understanding of M\"obius disjointness, at least on the level of logarithmic version of Sarnak's conjecture. The most spectacular achievement here is the recent result of Frantzikinakis and Host \cite{Fr-Ho1} (see Section~\ref{s:frho}) who proved that each zero entropy topological system $(X,T)$ with only countably many ergodic measures is logarithmically M\"obius disjoint.

The proofs reflect the ``local'' nature of all the aforementioned results. In other words, regardless the total entropy of the system, to obtain \eqref{mdis1} for a FIXED $x\in X$  (and all $f\in C(X)$), we only need to look at ergodic properties of the dynamical systems given by measures ``produced'' by $x$ itself (the limit points of the empiric measures given by $x$).
So, if all such measures yield zero entropy systems, the Chowla conjecture implies \eqref{mdis1} (for the fixed  $x$ and all $f\in C(X)$). When  all such measures yield systems with discrete spectrum / singular  spectrum / countably many ergodic components then
the relevant M\"obius disjointness holds (at $x$). Points with one of the listed properties may appear in $(X,T)$ having positive entropy. In fact, a positive entropy system can be M\"obius disjoint \cite{Do-Se}. To distinguish between zero and positive entropy systems it is natural to expect that in the zero entropy case the behavior of sums in \eqref{mdis1} is homogenous in $x$ (for a fixed $f\in C(X)$). Indeed, the uniform convergence (in $x\in X$, under the Chowla conjecture) of sums \eqref{mdis1} has been proved in \cite{Ab-Ku-Le-Ru1} (see Section~\ref{s:AOP}); in fact \eqref{sc1} is equivalent to Sarnak's conjecture in its uniform form and also in a uniform short interval form. Moreover, for the Liouville function, no positive entropy system satisfies \eqref{mdis1} in its uniform short interval form. The problem of uniform convergence turns out to be closely related to the general problem whether M\"obius disjointness is stable under our ergodic theory approach.  More precisely, suppose that the topological dynamical systems $(X,T)$ and $(X',T')$ are such that the dynamical systems obtained from invariant measures are the same for each of them (up to measure-theoretic isomorphism). Does the M\"obius disjointness of $(X,T)$ imply the M\"obius disjointness of $(X',T')$? Although the answer in general seems unknown, in case of uniquely ergodic models of the same measure-theoretic system a satisfactory (positive) answer can be given \cite{Ab-Ku-Le-Ru1}.


\paragraph{Content of the article}
We include the following topics:
\begin{itemize}
\item Sarnak's conjecture a.e., Sarnak's conjecture versus Prime Number Theorem in dynamics -- see Introduction and Section~\ref{czesc1}.
\item Brief introduction to multiplicative functions, Prime Number Theorem, K\'atai-Bourgain-Sarnak-Ziegler criterion -- see Section~\ref{czesc2}.
\item Results of Matom\"aki, Radziwi\l\l \ and Matom\"aki, Radziwi\l\l, Tao on multiplicative functions and some of their ergodic interpretations -- see Section~\ref{czesc3}.
\item Chowla conjecture, logarithmic Chowla and logarithmic Sarnak conjectures (Tao's results and Frantzikinakis and Host's results) -- see Section~\ref{czesc3}.
\item Frantzikinakis' theorem on some consequences of ergodicity of measures for which $\mob$ is quasi-generic -- see Section~\ref{czesc3}.
\item Ergodic criterion for Sarnak's conjecture -- the AOP and MOMO properties (uniform convergence in~\eqref{mdis1}), Sarnak's conjecture in topological models -- see Section~\ref{s:AOP}.
\item Glimpses of results on Sarnak's conjecture: systems of algebraic origin (horocycle flows, nilflows); systems of measure-theoretic origin (finite rank systems, distal systems), interval exchange transformations, systems of number-theoretic origin (automatic sequences and related) -- see Section~\ref{czesc5}.
\item Related research: $\mathscr{B}$-free systems, applications to ergodic Ramsey theory -- see Section~\ref{czesc6}.
\end{itemize}

\paragraph{Sarnak's conjecture a.e.}
Before we really get into the subject of Sarnak's conjecture, let us emphasize that this is the requirement ``for each $f\in C(X)$ and $x\in X$'' in~\eqref{mdis1} that makes Sarnak's conjecture deep and difficult to establish. As it has been already noticed in \cite{Sa}, the a.e.\ version of~\eqref{sc1} is always true regardless of the entropy assumption:

\begin{Prop}[\cite{Sa}]\label{p:aklt1}
Let $T$ be an automorphism of a standard Borel probability space $(X,{\mathcal{B}},\mu )$ and let $f \in L^1(X,\mathcal{B},\mu)$. Then, for a.e.\ $x \in X$, we have
\[
\frac1{N}\sum_{n\leq N}f(T^nx) {\boldsymbol{\mu}} (n) \tend{N}{\infty}0.
\]
\end{Prop}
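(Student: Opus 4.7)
The plan is to reduce to bounded $f$, apply the spectral theorem together with Davenport's estimate~\eqref{vin} to obtain very fast $L^2$-decay of the relevant averages, extract a.e.\ convergence along a subsequence via Borel--Cantelli, and fill in the remaining integers by a standard comparison based on Birkhoff's pointwise ergodic theorem.

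Writing $A_Nf(x):=\frac{1}{N}\sum_{n\leq N}f(T^nx)\mob(n)$, I first assume $f\in L^\infty(\mu)$. The spectral theorem applied to the unitary operator $U_T\colon g\mapsto g\circ T$ on $L^2(\mu)$ produces a finite positive spectral measure $\sigma_f$ on $\T$ with $\sigma_f(\T)=\|f\|_{L^2}^2$ and
$$
\|A_Nf\|_{L^2(\mu)}^2=\int_\T\left|\frac{1}{N}\sum_{n\leq N}e^{2\pi int}\mob(n)\right|^2 d\sigma_f(t).
$$
Davenport's estimate~\eqref{vin} bounds the integrand by $C_A^2/\log^{2A}N$ uniformly in $t\in\T$, for every $A>0$, whence
$$
\|A_Nf\|_{L^2(\mu)}\ll_A \frac{\|f\|_\infty}{\log^A N}.
$$

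I then pick a subsequence $N_k:=\lfloor e^{\sqrt k}\rfloor$: with $A=2$ this gives $\sum_k\|A_{N_k}f\|_{L^2(\mu)}^2<\infty$, so Chebyshev combined with Borel--Cantelli yields $A_{N_k}f(x)\to 0$ for $\mu$-a.e.\ $x$. To fill in $N_k<N\leq N_{k+1}$, I decompose
$$
A_Nf-A_{N_k}f=\left(\frac{1}{N}-\frac{1}{N_k}\right)\sum_{n\leq N_k}f(T^nx)\mob(n)+\frac{1}{N}\sum_{N_k<n\leq N}f(T^nx)\mob(n);
$$
the first summand is bounded in absolute value by $|A_{N_k}f(x)|$, which tends to $0$ a.e., and the second, using $|\mob|\leq 1$, by $\frac{1}{N_k}\sum_{N_k<n\leq N_{k+1}}|f(T^nx)|$, which tends to $0$ $\mu$-a.e.\ because $N_{k+1}/N_k\to 1$ and the Ces\`aro averages of $|f|\circ T^n$ converge a.e.\ by Birkhoff.

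The only mildly delicate point is the passage from $L^\infty$ to $L^1$. Given $\varepsilon>0$, I split $f=g+h$ with $g\in L^\infty(\mu)$ and $\|h\|_{L^1(\mu)}<\varepsilon^2$. The bounded case gives $A_Ng(x)\to 0$ a.e., while $|A_Nh(x)|\leq\frac{1}{N}\sum_{n\leq N}|h(T^nx)|$, which converges $\mu$-a.e.\ to $\EE(|h|\mid\mathcal{I})(x)$, $\mathcal{I}$ denoting the $\sigma$-algebra of $T$-invariant sets; Chebyshev bounds the measure of $\{\EE(|h|\mid\mathcal{I})\geq\varepsilon\}$ by $\|h\|_{L^1}/\varepsilon<\varepsilon$, so $\mu\{\limsup_N|A_Nf|\geq 2\varepsilon\}<\varepsilon$, and letting $\varepsilon\to 0$ along a countable sequence finishes the proof. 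The substantive input is the decay of $\|A_Nf\|_{L^2}$, an immediate consequence of~\eqref{vin} and the spectral theorem; everything else is bookkeeping.
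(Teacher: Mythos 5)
Your proof is correct and follows essentially the same route as the paper's: the spectral theorem transfers the averages to exponential sums controlled by Davenport's estimate~\eqref{vin}, a.e.\ convergence is first extracted along a sparse subsequence from the resulting $L^2$-decay, the intermediate $N$ are handled via the slow growth of that subsequence, and the $L^1$ case is reduced to $L^\infty$ through the pointwise ergodic theorem. The only cosmetic difference is your choice $N_k=\lfloor e^{\sqrt k}\rfloor$, whose consecutive ratios already tend to $1$, where the paper takes $N_k=\rho^k$ and lets $\rho\to1$ at the end; both devices accomplish the same interpolation.
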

For a complete proof, see \cite{Ab-Ku-Le-Ru}. The main ingredient is the Spectral Theorem which replaces
$
\Big\|\frac1{N}\sum_{n\leq N} f(T^nx){\boldsymbol{\mu}} (n)\Big\|_2$ by $\Big\|\frac1{N}\sum_{n\leq N} z^n {\boldsymbol{\mu}} (n)\Big\|_{L^2(\sigma_f)}\footnote{$\sigma_f$ stands for the spectral measure of $f$.}$,
together
with Davenport's estimate~\eqref{vin} (for $A=2$) which yields
$$
 \Big\|\frac1{N}\sum_{n\leq N} f(T^nx){\boldsymbol{\mu}} (n)\Big\|_2 \ll \frac{1}{{\log^{2}N}},\:N\geq2.
$$
The latter shows that, for $\rho>1$, the function $\sum_{k\geq1}\left|\frac1{\rho^k}\sum_{n\leq \rho^k} f(T^n\cdot)\mob(n)\right|$ is in $L^2(X,\mu)$ which, letting $\rho\to1$ allows one to conclude for $f\in L^\infty(X,\mu)$. The general case $f\in L^1(X,\mu)$ follows from the pointwise ergodic theorem.

As shown in \cite{EI1}, a use of Davenport's type estimate proved in \cite{Gr-Ta} for the nil-case, yields a polynomial version of Proposition~\ref{p:aklt1}. See also \cite{Cu-We} for the pointwise ergodic theorem for other arithmetic weights.

\section{From a PNT in dynamics to Sarnak's conjecture}\label{czesc1}
The content of this section can be viewed as a kind of motivation for Sarnak's conjecture (and is written on the base of Tao's post~\cite{Ta2} and Sarnak's lecture given at CIRM~\cite{Sa-CIRM}).

We denote by $\N:=\{1,2,\ldots\}$ the set of positive integers.
Given $N\in\N$, we let
$\pi(N):=\{p\leq N: p\in\PP\}$.
The classical Prime Number Theorem states that
\beq\label{pnt1}
\lim_{N\to\infty}\frac{\pi(N)}{N/\log N}=1.\eeq
We will always refer to this theorem as \underline{the} (classical) PNT.

Assume that $T$ is a continuous map of a compact metric space $X$.  Assume moreover that $(X,T)$ is uniquely ergodic, that is, the set $M(X,T)$ of $T$-invariant probability Borel measures is reduced to one measure, say $\mu$. By unique ergodicity, the ergodic averages go to zero (even uniformly) for zero mean continuous functions:
$$
\frac1N\sum_{n\leq N}f(T^nx)\tend{N}{\infty} 0$$
for each $f\in C(X)$, $\int_Xf\,d\mu=0$, and $x\in X$.
Hence, the statement that {\em a PNT holds in} $(X,T)$ ``should'' mean
\beq\label{meaningPNT}
\lim_{N\to\infty}\frac1{\pi(N)}\sum_{\PP\ni p\leq N}f(T^px)=0\eeq
for all zero mean $f\in C(X)$ and $x\in X$ (in what follows, instead of $\sum_{\PP\ni p}$, we write simply $\sum_{p}$ if no confusion arises).\footnote{We recall that Bourgain in \cite{Bo10,Bo12,Bo13},
proved that for each $\alpha\geq(1+\sqrt3)/2$, each automorphism $T$ of a probability standard Borel space $\xbm$ and each $f\in L^\alpha\xbm$ the sums in~\eqref{meaningPNT} converge for a.e.~$x\in X$. The result has been extended by Wierdl in \cite{Wi1} for all $\alpha>1$.} Let us see how to arrive at~\eqref{meaningPNT} differently.

Recall that the von Mangoldt function $\boldsymbol{\Lambda}$ is defined by $\boldsymbol{\Lambda}(n)=\log p$ if $n=p^k$ for  a prime number $p$ (and $k\geq1$) and $\boldsymbol{\Lambda}(n)=0$ otherwise. Contrary to most of arithmetic functions considered in this article, $\boldsymbol{\Lambda}$ is not multiplicative. It is not bounded either and its support has zero density. The (classical) PNT is equivalent to
$$
\frac1N\sum_{n\leq N}\boldsymbol{\Lambda}(n)\tend{N}{\infty} 1.
$$
A given sequence $(a_n)\subset\C$ can be said {\em to satisfy \underline{a}\ PNT}  whenever we can give an asymptotic estimate on
$$
\sum_{n\leq N}a_n\boldsymbol{\Lambda}(n)
$$
when $N$ tends to infinity; thus the classical PNT is a PNT for the sequence $a_n=1$. In particular, a sequence $(a_n)$  also satisfies a PNT  if
\begin{equation}\label{APNT}
\sum_{n\leq N}a_n\boldsymbol{\Lambda}(n)=\sum_{n\leq N}a_n+{\rm o}(N),
\end{equation}
and, if additionally $(a_n)$ has zero mean, i.e.\ if $\frac1N\sum_{n\leq N}a_n\tend{N}{\infty} 0$, then $(a_n)$ satisfies a PNT if
\begin{equation}\label{apnt}
\frac1N\sum_{n\leq N}a_n\boldsymbol{\Lambda}(n)\tend{N}{\infty} 0.
\end{equation}
An interesting special case is $a_n=(-1)^n$, which has zero mean. Here, we do have estimates of the sums of $\boldsymbol{\Lambda}(n)$ over the odd numbers smaller than $N$, but they  are of the order of $N$, thus~\eqref{apnt} is not satisfied. Beyond this point, we will not study such particular cases and we shall always write that {\em the sequence $(a_n)$ satisfies \underline{a}\ PNT} whenever~\eqref{APNT} holds.

Zero mean sequences are easily ``produced'' in uniquely ergodic systems. We will say that a {\em uniquely ergodic topological dynamical system $(X,T)$ satisfies \underline{a} PNT} if
\beq\label{pnt1a}
\frac1N\sum_{n\leq N}f(T^nx)\boldsymbol{\Lambda}(n)\tend{N}{\infty} 0\eeq
for all zero mean $f\in C(X)$ and $x\in X$.
We have
$$
\frac1N\sum_{n\leq N}f(T^nx)\boldsymbol{\Lambda}(n)=\frac1N\sum_{p\leq N}f(T^px)\log p+ \frac1N\sum_{p^k\leq N, k\geq 2}f(T^{p^k}x)\log p.$$
Now, in the second sum if $p^k\leq N$ then $p\in[1,\sqrt N]$; the largest value of $\log p$ is bounded by $\frac12\log N$, therefore, the second sum is of order
${\rm O}(\sqrt N\cdot \log N/N)$, hence of order $N^{-\frac12+\vep}$ for each $\vep>0$. Thus, a PNT in $(X,T)$ means that
\beq\label{pnt2}
\frac1N\sum_{p\leq N}f(T^px)\log p\tend{N}{\infty} 0\eeq
for all zero mean $f\in C(X)$ and $x\in X$.
Note that by the classical PNT to prove~\eqref{pnt2}, we need to show it for a linearly dense set of functions.\footnote{Indeed, we have
\begin{multline*}
\left| \frac1N\sum_{p\leq N}f(T^px)\log p - \frac1N\sum_{p\leq N}g(T^px)\log p\right|\\
\leq \frac1N\sum_{p\leq N}|f(T^px)-g(T^px)|\log p\leq\|f-g\| \frac1N\sum_{p\leq N}\log p={\rm O}(\|f-g\|),
\end{multline*}
as condition $\frac1N\sum_{n\leq N}\boldsymbol{\Lambda}(n)\tend{N}{\infty} 1$   is equivalent to $\frac1N\sum_{p\leq N}\log p\tend{N}{\infty} 1$.}

Let us now write
$$
\frac1N\sum_{p\leq N}f(T^px)\log p=\frac1N\sum_{p\leq N/\log N}f(T^px)\log p +\frac1N\sum_{N/\log N\leq p\leq N}f(T^px)\log p.$$
We have $\frac1N\sum_{p\leq N/\log N}f(T^px)\log p={\rm O}(1/\log N)$ (by $\frac1M\sum_{p\leq M}\log p\to 1$ when $M\to\infty$). Moreover, write $f=f_+-f_-$ and then we have
\begin{multline*}
\frac{\log N-\log\log N}N\sum_{N/\log N\leq p\leq N}f_+(T^px)\\
 \leq \frac1N\sum_{N/\log N\leq p\leq N}f_+(T^px)\log p
\leq\frac{\log N}N\sum_{N/\log N\leq p\leq N}f_+(T^px)
\end{multline*}
as  $\log N-\log\log N\leq \log p\leq \log N$ for the $p$ in the considered interval. Now, $\pi(N)/(N/(\log N-\log\log N))\tend{N}{\infty}1$ and $\pi(N)/(N/\log N)\tend{N}{\infty}1$, whence
$$
\left|\frac1N\sum_{p\leq N}f_+(T^px)\log p-\frac1{\pi(N)}\sum_{p\leq N}f_+(T^px)\right|\tend{N}{\infty} 0.$$
Repeating the same reasoning with $f_+$ replaced by $f_-$
and by~\eqref{pnt2}, we obtain that the statement {\em a PNT holds in $(X,T)$} is equivalent to~\eqref{meaningPNT}
for all zero mean $f\in C(X)$ and $x\in X$.

\begin{Remark}
By replacing $\boldsymbol{\Lambda}$ in~\eqref{pnt1a} by $\mob$, we come back to Sarnak's conjecture.  The identity $\boldsymbol{\Lambda}=\mob\ast\log$ (see~\eqref{dconv} below), i.e.\
$\boldsymbol{\Lambda}(n)=\sum_{d\divides n}\mob(d)\log(n/d)=
-\sum_{d\divides n}\mob(d)\log d$ suggests some
other connections between the simultaneous validity of a PNT and M\"obius disjointness  in $(X,T)$  but no rigorous theorem toward a formal equivalence of the two conditions has been proved. Actually, such an equivalence taken literally does not hold. Indeed, the fact that the support of $\boldsymbol{\Lambda}$ is of zero upper Banach density makes a PNT vulnerable under zero density replacements of the observable $(f(T^nx))$. On the other hand, M\"obius orthogonality is stable under such replacements. We illustrate this using the following simple example.

Consider the classical case $a_n=1$ for all $n\in\N$. This is the same as to consider a PNT in a uniquely ergodic model\footnote{We recall that if $(Z,\cd,\kappa,R)$ is a measure-preserving system then by its {\em uniquely ergodic model} we mean  a uniquely ergodic system $(X,T)$ with the unique (Borel) $T$-invariant measure $\mu$
such that $(Z,\cd,\kappa,R)$ is measure-theoretically isomorphic to $(X,\cb(X),\mu,T)$.}  of the one-point system. One can now ask if we have a PNT in all uniquely ergodic models
of the one-point system (it is an exercise to prove that all such models are M\"obius disjoint).
Take any sequence  $(c_{p^k})_{p^k}\in\{-1,1\}^{\N}$ and define $b_n$ as $a_n$ when $n\neq p^k$ and   $b_{p^k}=c_{p^k}$.  We can see that
     $$\frac1N\sum_{n\leq N}b_n\boldsymbol{\Lambda}(n)=\frac1N\sum_{p^k\leq N}c_{p^k}\log p.$$
Now, the subshift $X_b\subset\{-1,1\}^{\N}$ generated by $b$ (cf.~\eqref{definicjaXx}) has only one invariant measure $\delta_{11\ldots}$, so it is a uniquely ergodic model of the one-point system and if we take
$f(z)=1-z(1)$ ($z\in X_b$) as our continuous function, we can see that $f$ has zero mean but neither \eqref{pnt1a} nor \eqref{meaningPNT} are satisfied if the sequence $c$ is badly behaving.
It follows that we can expect a PNT to hold only in some classes of ``natural'' dynamical systems, samples of which we will see in Section~\ref{czesc5}.

Returning to our discussion on a PNT, in any such situation, given a bounded sequence $(f(n))\subset\C$, we can write
$$
\sum_{n\leq N}f(n)\boldsymbol{\Lambda}(n)=-\sum_{n\leq N}f(n)\sum_{d\divides n}\mob(d)\log(d)
=-\sum_{d\leq N}\mob(d)\log d\sum_{e\leq N/d}f(ed).
$$
Then a further decomposition of the second sum into a structured part and a remainder leads to two sums and allows one for an application of M\"obius Randomness Law to the second sum in order to predict the correct main term value of $\sum_{n\leq N}f(n)\boldsymbol{\Lambda}(n)$, see~\cite{Sa-CIRM}.
\end{Remark}

\section{Multiplicative functions}\label{czesc2}
\subsection{Definition and examples}
 An arithmetic function $\bfu\colon\N\to\C$ is called {\em multiplicative} if $\bfu(1)=1$ and $\bfu(mn)=\bfu(m)\bfu(n)$ whenever $(m,n)=1$. If $\bfu(mn)=\bfu(m)\bfu(n)$ without the coprimeness restriction on $m,n$, then $\bfu$ is called {\em completely multiplicative}. Clearly, each multiplicative function is entirely determined by its values at $p^{\alpha}$, where $p\in\PP$ is a prime number and $\alpha\in\N$ (for completely multiplicative functions $\alpha=1$). A prominent example of a multiplicative function is the M\"obius function $\mob$ determined by $\mob(p)=-1$ and $\mob(p^\alpha)=0$ for $\alpha\geq2$. Note that $\mob^2$ (which is obviously also multiplicative) is the characteristic function of the set of square-free numbers. The Liouville function $\lio\colon\N\to\C$ is completely multiplicative and is given by $\lio(p)=-1$. Clearly, $\mob=\lio\cdot \mob^2$ and we will see soon some more relations between $\mob$ and $\lio$. Many other classical arithmetic functions are multiplicative, for example:  the Euler function $\boldsymbol{\phi}$; the function $n\mapsto (-1)^{n+1}$ is a periodic multiplicative function which is not completely multiplicative; $\bfd(n):=$number of divisors of~$n$, $n\mapsto 2^{\boldsymbol{\omega}(n)}$, where $\boldsymbol{\omega}(n)$ stands for the number of different prime divisors of $n$; $\bfsigma(n)=\sum_{d\divides n}d$. Recall that given $q\geq1$, a function $\chi\colon\N\to\C$ is called a {\em Dirichlet character of modulus}~$q$ if:
\begin{enumerate}[(i)]
\item
$\chi$ is $q$-periodic and completely multiplicative,
\item
$\chi(n)\neq0$ if and only if $(n,q)=1$.
\end{enumerate}
It is not hard to see that Dirichlet characters are determined by the ordinary characters of the multiplicative group (of order $\boldsymbol{\phi}(q)$) $\left(\Z/q\Z\right)^\ast$ of invertible (under multiplication) elements in $\Z/q\Z$. The Dirichlet character $\chi_1(n):=1$  iff $(n,q)=1$ is called the {\em principal character} of modulus $q$. Moreover, each periodic, completely multiplicative function is a Dirichlet character (of a certain modulus). Another class of important (completely) multiplicative functions is given by Archimedean characters $n\mapsto n^{it}=e^{it\log n}$ which are indexed by $t\in\R$.

\subsection{Dirichlet convolution, Euler's product} Recall that given two arithmetic functions $\bfu,\bfv\colon\N\to\C$, by their {\em Dirichlet convolution} $\bfu\ast\bfv$ we mean the arithmetic function
\beq\label{dconv}
\bfu\ast\bfv(n):=\sum_{d\divides n}\bfu(d)\bfv(n/d),\;n\in\N.\eeq
If by $A$ we denote the set of arithmetic functions then $(A,+,\ast)$ is a ring which is an integral domain and the unit $\bfe\in A$ is given by $\raz_{\{1\}}$.\footnote{The M\"obius Inversion Formula is given by $\mob\ast\raz_\N=\bfe$.} There is a natural ring isomorphism between $A$ and the ring $D$ of (formal)\footnote{We will not discuss here the problem of convergence of Dirichlet series, see~\cite{Ri-this}.} Dirichlet series
$$A\ni \bfu\mapsto U(s):=\sum_{n=1}^\infty \frac{\bfu(n)}{n^s}\in D,\ s\in\C,
$$
under which
$$
U(s)V(s)=\sum_{n=1}^\infty \frac{\bfu\ast\bfv(n)}{n^s}.$$

When $\bfu=\raz_{\N}$ then the Dirichlet series defines the Riemann $\zeta$ function:\footnote{An
analytic continuation of $\zeta$ yields a meromorphic function on $\C$ (with one pole  at $s=1$) satisfying the functional equation
\beq\label{funeq1}
\zeta(s)=2^s\pi^{s-1}\sin\left(\frac{\pi s}2\right)\Gamma(1-s)\zeta(1-s).\eeq
Because of the sine,  $\zeta(-2k)=0$ for all integers $k\geq 1$ -- these are so called trivial zeros  of $\zeta$  ($\zeta(2k)\neq0$ since $\Gamma$ has simple poles at $0,-1,-2,\ldots$).
In ${\rm Re}\, s>1$ there are no zeros of $\zeta$ ($\zeta$ is represented by a convergent infinite product), so except of $-2k$, $k\geq1$, there are no zeros for $s\in\C$, ${\rm Re}\,s<0$ (as ${\rm Re}(1-s)>1$). The
Riemann Hypothesis asserts that
all nontrivial zeros of $\zeta$ are on the line $x=\frac12$. See~\cite{Ri-this}.}
$$\zeta(s)=\sum_{n=1}^\infty\frac1{n^s}\text{ for }{\rm Re}\,s>1.$$

It is classical that if $\bfu$ and $\bfv$ are multiplicative then so is their Dirichlet convolution. The importance of multiplicativity can be seen in the representation of the Dirichlet series of a multiplicative function $\bfu$ as an Euler's product. Indeed,
a general term of
 $\prod_{p\in\PP}(1+\bfu(p)p^{-s}+\bfu(p^2)p^{-2s}+\ldots)$ has the form $\frac{\bfu(p_{i_1}^{\alpha_1})\cdot\ldots\cdot \bfu(p_{i_r}^{\alpha_r})}
 {(p_{i_1}^{\alpha_1}\cdot\ldots\cdot p_{i_r}^{\alpha_r})^s}=\frac{\bfu(p_{i_1}^{\alpha_1}\cdot\ldots\cdot p_{i_r}^{\alpha_r})}{(p_{i_1}^{\alpha_1}\cdot\ldots\cdot p_{i_r}^{\alpha_r})^s}$, i.e.\ equals $\frac{\bfu(n)}{n^s}$ for some $n$.
It easily follows that
$$
 \sum_{n\geq1}\frac{\bfu(n)}{n^s}
=\prod_{p\in \PP}(1+\bfu(p)p^{-s}+\bfu(p^2)p^{-2s}+\ldots).$$
If additionally $\bfu$ is completely multiplicative (and $|\bfu|\leq1$), then $\bfu(p^k)=\bfu(p)^k$ and
$$\sum_{n=1}^\infty\frac{\bfu(n)}{n^s}=\prod_{p\in \PP}(1-\bfu(p)p^{-s})^{-1}.$$
Note that
if $\bfu=\mob$, we obtain
$$
\sum_{n\geq1}\frac{\mob(n)}{n^s}=\prod_{p\in \PP}(1-p^{-s})
$$
since $\mob(p)=-1$ and $\mob(p^r)=0$ whenever $r\geq2$.
Since for the Riemann $\zeta$ function, we have
$\zeta(s)=\prod_{p\in\PP}(1-p^{-s})^{-1}$ for ${\rm Re}\,s>1$, we obtain the following.

\begin{Cor}\label{odwrot}
We have
$\frac1{\zeta(s)}=\sum_{n\geq1}\frac{\mob(n)}{n^s}$ whenever ${\rm Re}\,s>1$.\end{Cor}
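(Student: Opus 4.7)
The plan is to read the corollary directly off the two Euler product computations displayed immediately before its statement, with essentially no extra work. On the one hand, the general Euler product formula specialised to $\bfu = \mob$ was just shown to give
\[
\sum_{n \geq 1}\frac{\mob(n)}{n^{s}} \;=\; \prod_{p \in \PP}\bigl(1 - p^{-s}\bigr),
\]
because the local factor $1 + \mob(p)p^{-s} + \mob(p^{2})p^{-2s} + \cdots$ reduces to $1 - p^{-s}$: the term of index $p$ contributes $-p^{-s}$ (since $\mob(p) = -1$) and all higher-order terms vanish (since $\mob(p^{r}) = 0$ for $r \geq 2$). On the other hand, the Riemann zeta function admits the classical Euler product $\zeta(s) = \prod_{p \in \PP}(1-p^{-s})^{-1}$ in $\operatorname{Re} s > 1$. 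Multiplying the two displays factor by factor, each prime contributes $(1 - p^{-s})(1-p^{-s})^{-1} = 1$, whence the product of the two Dirichlet series is identically $1$ in this half-plane, which is the corollary.

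An equally short alternative, worth recording since the excerpt explicitly sets up the machinery, proceeds via the ring isomorphism $A \to D$ between arithmetic functions under Dirichlet convolution and (formal) Dirichlet series. The M\"obius Inversion Formula $\mob \ast \raz_{\N} = \bfe$, stated in the footnote, transports under this isomorphism to
\[
\left(\sum_{n \geq 1}\frac{\mob(n)}{n^{s}}\right)\zeta(s) \;=\; 1,
\]
using that the Dirichlet series of $\raz_{\N}$ is $\zeta$ and that of the unit $\bfe = \raz_{\{1\}}$ is the constant $1$. Dividing by $\zeta(s)$ gives the desired formula.

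The only point requiring a word of care is convergence, and this is not a real obstacle. For $\operatorname{Re} s > 1$ the series $\sum n^{-s}$ converges absolutely, and since $|\mob(n)| \leq 1$ so does $\sum \mob(n)n^{-s}$; absolute convergence legitimises both the Euler product expansions and the termwise multiplication of the two series, and guarantees that $\zeta(s) \neq 0$ in this half-plane so that the division in the second argument is allowed. I therefore do not anticipate any substantive difficulty: the entire content of the corollary is already packed into the two Euler products displayed just above its statement.
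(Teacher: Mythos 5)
Your proposal is correct and matches the paper exactly: the paper first deduces the identity by comparing the Euler product $\sum_{n\geq1}\mob(n)n^{-s}=\prod_{p}(1-p^{-s})$ with $\zeta(s)=\prod_p(1-p^{-s})^{-1}$, and then records the alternative derivation via $\mob\ast\raz_{\N}=\bfe$ and the ring isomorphism with Dirichlet series — precisely your two arguments. Nothing is missing.
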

We could have derived the above assertion in a different way. Indeed, $\mob\ast \raz_{\N}=\bfe$.
If $G(s):=\sum_{n=1}^\infty\frac{\mob(n)}{n^s}$ stands for  the Dirichlet series of the  M\"obius function, then $$G(s)\cdot\zeta(s)=\sum_{n=1}^\infty
\frac{(\mob\ast\raz_{\N})(n)}{n^s}=
    \sum_{n=1}^\infty\frac{\bfe(n)}{n^s}=1.
$$

\subsection{Distance between multiplicative functions}
Denote by
\beq\label{przM}
\cm:=\{\bfu\colon\N\to\C : \bfu\text{ is multiplicative and }|\bfu|\leq1\}.
\eeq
Let $\bfu,\bfv\in\cm$. Define the ``distance'' function $D$ on $\cm$ by setting
\beq\label{distance1}
D(\bfu,\bfv):=\left(\sum_{p\in\PP}\frac1p\left(1-{\rm Re}\left(\bfu(p)\ov{\bfv(p)}\right)\right)\right)^{1/2}.
\eeq
For each $\bfu,\bfv,\bfw\in\cm$, we have:
\begin{itemize}
\item $D(\bfu,\bfu)\geq0$; $D(\bfu,\bfu)=0$ iff $\sum_{p\in\PP}\frac1p(1-|\bfu(p)|^2)=0$ iff $|\bfu(p)|=1$ for all $p\in\PP$, so  $D(n^{it},n^{it})=0$ for each $t\in\R$, $D(\lio,\lio)=D(\mob,\mob)=0$. Of course, if $\bfu(p)=0$ for each $p\in\PP$ then $D(\bfu,\bfu)=+\infty$. Moreover,  $\boldsymbol{\phi}(n)/n\in\cm$ and $D(\boldsymbol{\phi}(n)/n,\boldsymbol{\phi}(n)/n)=\sum_{p\in\PP}
    \frac1p(1-\frac{(1-p)^2}{p^2})$
    is positive and finite.
\item $D(\bfu,\bfv)=D(\bfv,\bfu)$.
\item $D(\bfu,\bfv)\leq D(\bfu,\bfw)+D(\bfw,\bfv)$, see \cite{Gr-Sa}.
\end{itemize}

When $D(\bfu,\bfv)<+\infty$ then one says that $\bfu$ {\em pretends to be} $\bfv$. For example, $\mob^2$ and $\boldsymbol{\phi}(n)/n$ pretend to be~$\raz$ (as $\sum_{p\in\PP}\frac1p(1-\frac{p-1}p)=
\sum_{p\in\PP}\frac1{p^2}<+\infty$).

\begin{Lemma}[\cite{Gr-Sa}]\label{nier}
For each $\bfu,\bfv,\bfw,\bfw'\in\cm$, we have
\begin{enumerate}[(i)]
\item
$D(\bfu\bfw,\bfv\bfw')\leq D(\bfu,\bfv)+D(\bfw,\bfw')$.
\end{enumerate}
Moreover, by (i) and a simple induction,
\begin{enumerate}[(i)]\label{pow}
\setcounter{enumi}{1}
\item
$mD(\bfu,\bfv)\geq D(\bfu^m,\bfv^m)$ for all $m\in\N$.
\end{enumerate}
\end{Lemma}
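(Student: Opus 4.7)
The plan is to derive (i) from a pointwise (per-prime) inequality and then pass to the weighted sum via Minkowski's inequality in $\ell^2$; part (ii) is then an immediate induction using (i).

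First I would prove the key elementary inequality: for any $\alpha,\beta\in\C$ with $|\alpha|,|\beta|\leq1$,
$$\sqrt{1-{\rm Re}(\alpha\beta)}\leq \sqrt{1-{\rm Re}(\alpha)}+\sqrt{1-{\rm Re}(\beta)}.$$
This is applied prime by prime with $\alpha=\bfu(p)\overline{\bfv(p)}$ and $\beta=\bfw(p)\overline{\bfw'(p)}$, so that $\alpha\beta=\bfu\bfw(p)\overline{\bfv\bfw'(p)}$. The inequality itself is proved by squaring both sides and combining the identity $1-{\rm Re}(z)=\tfrac12|1-z|^2+\tfrac12(1-|z|^2)$ with three elementary estimates: the bound $|1-\alpha\beta|\leq|1-\alpha|+|1-\beta|$ (from the decomposition $1-\alpha\beta=(1-\alpha)+\alpha(1-\beta)$ and $|\alpha|\leq1$), the bound $1-|\alpha\beta|^2\leq(1-|\alpha|^2)+(1-|\beta|^2)$ (immediate from $1-st\leq(1-s)+(1-t)$ for $s,t\in[0,1]$), and the bound $|1-z|\leq\sqrt{2(1-{\rm Re}(z))}$ for $|z|\leq1$. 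These combine algebraically to give exactly the square of the right-hand side.

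Once the pointwise bound is in hand, I set
$$a_p=\sqrt{\tfrac{1}{p}\bigl(1-{\rm Re}(\bfu(p)\overline{\bfv(p)})\bigr)},\ b_p=\sqrt{\tfrac{1}{p}\bigl(1-{\rm Re}(\bfw(p)\overline{\bfw'(p)})\bigr)},\ c_p=\sqrt{\tfrac{1}{p}\bigl(1-{\rm Re}(\bfu\bfw(p)\overline{\bfv\bfw'(p)})\bigr)},$$
and observe that the pointwise inequality gives $c_p\leq a_p+b_p$ for each prime $p$. Minkowski's inequality on $\ell^2(\PP)$ then yields
$$D(\bfu\bfw,\bfv\bfw')=\Bigl(\sum_p c_p^2\Bigr)^{1/2}\leq \Bigl(\sum_p(a_p+b_p)^2\Bigr)^{1/2}\leq \Bigl(\sum_p a_p^2\Bigr)^{1/2}+\Bigl(\sum_p b_p^2\Bigr)^{1/2}=D(\bfu,\bfv)+D(\bfw,\bfw'),$$
which is (i). For (ii) I note that $\bfu^{m-1},\bfv^{m-1}\in\cm$ and apply (i) with $\bfw=\bfu^{m-1}$, $\bfw'=\bfv^{m-1}$ to obtain $D(\bfu^m,\bfv^m)\leq D(\bfu,\bfv)+D(\bfu^{m-1},\bfv^{m-1})$, then iterate; the base case $m=1$ is trivial.

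The main obstacle is the pointwise inequality; everything downstream is routine book-keeping plus Minkowski. The subtle point is that both $|1-z|\leq\sqrt{2(1-{\rm Re}(z))}$ and $1-|\alpha\beta|^2\leq(1-|\alpha|^2)+(1-|\beta|^2)$ genuinely rely on $|\alpha|,|\beta|\leq1$, which is exactly why $D$ is defined on $\cm$ rather than on arbitrary multiplicative functions. I also note that allowing $+\infty$ values (as in the remark preceding the lemma for $\bfu(p)\equiv0$) causes no problem: both sides behave consistently in the extended sense, and Minkowski's inequality remains valid.
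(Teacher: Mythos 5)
Your proof is correct. The paper itself gives no argument for (i), simply citing \cite{Gr-Sa}, and your derivation --- the pointwise inequality $\sqrt{1-{\rm Re}(\alpha\beta)}\leq\sqrt{1-{\rm Re}(\alpha)}+\sqrt{1-{\rm Re}(\beta)}$ for $|\alpha|,|\beta|\leq1$ obtained from the splitting $1-{\rm Re}(z)=\tfrac12|1-z|^2+\tfrac12(1-|z|^2)$, followed by Minkowski in $\ell^2(\PP)$ and the obvious induction for (ii) --- is precisely the standard Granville--Soundararajan argument that the citation points to.
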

If we fix $t\neq0$ and $k\geq k_0$ then the number of $p\in\PP$ satisfying
$$
\exp\left(\frac{2\pi}t(k+\frac13)\right)\leq p\leq \exp\left(\frac{2\pi}t(k+\frac23)\right)$$
is (by the PNT) at least $C\frac{\exp(2\pi k/t)}{k/t}$ (for a constant $C>0$), whence
$$
\left|\left\{p\in\PP: k+\frac13\leq \frac{t\log p}{2\pi}\leq k+\frac23\right\}\right|\geq C\frac{\exp(2\pi k/t)}{k/t}.
$$
It follows that
\begin{equation}\label{arch1a}
\sum_{\exp(\frac{2\pi}t(k+\frac13))\leq p\leq \exp(\frac{2\pi}t(k+\frac23))}\frac1p\left(1-\cos(t\log p)\right)\geq C'\frac1k
\end{equation}
for a constant $C'>0$.
Now, using~\eqref{distance1},~\eqref{arch1a} and summing over $k$, we obtain the following:\footnote{This proof of~\eqref{m3} has been shown to us by G.\ Tenenbaum.}
\beq\label{m3}
D(\raz,n^{it})=\infty\text{ for each }t\neq0.\eeq
It is not difficult to see that for $t\neq0$, $D(\chi,n^{it})=+\infty$ for each Dirichlet character $\chi$, while for $t=0$, we have $D(\chi,1)<+\infty$ if and only if $\chi$ is principal.

\subsection{Mean of a multiplicative function. The Prime Number Theorem (PNT)}
The distance $D$ is useful when we want to compute means of  multiplicative functions. Given an arithmetic function $\bfu\colon\N\to\C$ its {\em mean} $M(\bfu)$ is defined as $M(\bfu):=\lim_{N\to\infty}\frac1N\sum_{n\leq N}\bfu(n)$
(if the limit exists).

\begin{Th}[Hal\'asz; e.g.\ Thm.\ 6.3 \cite{Elliot}]
Let $\bfu\in\cm$.
Then $M(\bfu)$ exists and is non-zero if and only if
\begin{enumerate}[(i)]
\item
there is at least one positive integer $k$ so that $\bfu(2^k)\neq-1$, and
\item
the series $\sum_{p\in\PP}\frac1p(1-\bfu(p))$ converges.
\end{enumerate}
When these conditions are satisfied, we have
$$M(\bfu)=\prod_{p\in\PP}\left(1-\frac1p\right)\left(1+\sum_{m=1}^\infty p^{-m}\bfu(p^m)\right).$$
The mean value $M(\bfu)$ exists and is zero if and only if either
\begin{enumerate}[(i)]
\setcounter{enumi}{2}
\item
there is a real number $\tau$, so that for each positive integer $k$, $\bfu(2^k)=-2^{ki\tau}$, moreover $D(\bfu,n^{i\tau})<+\infty$; or
\item
$D(\bfu,n^{it})=\infty$ for each $t\in\R$.
\end{enumerate}
\end{Th}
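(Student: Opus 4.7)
The plan is to combine the Halász inequality, which gives an upper bound
$$\frac1N\left|\sum_{n\leq N}\bfu(n)\right|\ll (1+M)e^{-M}+\frac1T+\frac1{\sqrt{\log N}},$$
with $M=\min_{|t|\leq T}\sum_{p\leq N}\frac1p\bigl(1-{\rm Re}\,\bfu(p)p^{-it}\bigr)$, together with Delange's theorem (a refinement of Halász identifying the limit when $\bfu$ pretends to be $\raz$) and the rotation trick $\bfu(n)\mapsto\tilde\bfu(n):=\bfu(n)n^{-i\tau}$ that reduces an arbitrary pretender $n^{i\tau}$ to $\raz$.

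First I would dispose of case (iv). If $D(\bfu,n^{it})=+\infty$ for every $t\in\R$, then since $t\mapsto\sum_{p\leq N}\frac1p(1-{\rm Re}\,\bfu(p)p^{-it})$ is continuous and its pointwise limit (as $N\to\infty$) is identically $+\infty$ on $[-T,T]$, a compactness argument gives $\min_{|t|\leq T}D(\bfu,n^{it};N)^2\to\infty$ as $N\to\infty$ for each fixed $T$. Halász's inequality then yields $M(\bfu)=0$ on sending first $N\to\infty$ and then $T\to\infty$.

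Next, for both existence cases I would apply the rotation $\tilde\bfu=\bfu\cdot n^{-i\tau}\in\cm$; it satisfies $D(\tilde\bfu,\raz)=D(\bfu,n^{i\tau})$ and $\tilde\bfu(2^k)=\bfu(2^k)2^{-ki\tau}$. Under $D(\tilde\bfu,\raz)<\infty$, Delange's theorem (derived from Halász by a stationary-phase analysis near $t=0$) gives
$$M(\tilde\bfu)=\prod_{p\in\PP}\left(1-\tfrac1p\right)\left(1+\sum_{m\geq1}\tilde\bfu(p^m)p^{-m}\right),$$
the product being absolutely convergent. For $p\geq 3$, since $|\tilde\bfu|\leq1$, the factor obeys $|1+\sum_{m\geq1}\tilde\bfu(p^m)p^{-m}|\geq 1-1/(p-1)>0$, so the only local factor that can vanish is the one at $p=2$, which vanishes precisely when $\tilde\bfu(2^k)=-1$ for all $k$, i.e.\ when $\bfu(2^k)=-2^{ki\tau}$. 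From here: assuming (iii), the $p=2$ factor vanishes, so $M(\tilde\bfu)=0$; a partial summation transferring the bound $\sum_{n\leq N}\tilde\bfu(n)={\rm o}(N)$ to $\sum_{n\leq N}\bfu(n)=\sum_{n\leq N}\tilde\bfu(n)n^{i\tau}$ (immediate if $\tau=0$, and otherwise using the quantitative Halász decay of the partial sums to absorb the bounded oscillation of $n^{i\tau}$) gives $M(\bfu)=0$. Assuming (i) and (ii), the convergence of $\sum_p\frac1p(1-\bfu(p))$ means exactly that Delange applies with $\tau=0$, producing the stated Euler product; (i) ensures the $p=2$ factor is non-zero, so $M(\bfu)\neq0$.

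For the converse, assume $M(\bfu)$ exists. If no $\tau$ makes $D(\bfu,n^{i\tau})$ finite, we are in (iv). Otherwise, uniqueness of the pretender follows from the triangle inequality for $D$ combined with equation~\eqref{m3}: two finite values would give $D(n^{i\tau},n^{i\tau'})<\infty$, contradicting $D(\raz,n^{i(\tau-\tau')})=\infty$ for $\tau\neq\tau'$. Delange then yields the asymptotic $\frac1N\sum_{n\leq N}\bfu(n)=C\cdot N^{i\tau}/(1+i\tau)+{\rm o}(1)$ with $C$ the Euler product above; existence of the plain Cesàro limit forces either $\tau=0$ with $C\neq0$ (giving (i)+(ii)) or $C=0$ (forcing the 2-adic equality of (iii)). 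The main obstacle is the refined Halász/Delange asymptotic with the correct main term, whose standard derivation runs through Perron's formula and contour manipulations for the Dirichlet series of $\bfu$ in the half-plane ${\rm Re}\,s>1$, together with the pretentious distance estimates of Granville--Soundararajan; this contour work, rather than any ergodic or combinatorial step, is the technical heart of the argument.
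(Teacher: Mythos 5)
The paper does not prove this theorem at all: it is quoted as a classical result with a pointer to Elliott (Thm.~6.3), so there is no in-paper argument to compare yours against. Taken on its own terms, your sketch follows the standard ``pretentious'' route --- Hal\'asz's inequality plus a Dini-type compactness argument for case (iv), the twist $\tilde{\bfu}(n)=\bfu(n)n^{-i\tau}$ to normalize the pretender, the observation that the only local Euler factor that can vanish is the one at $p=2$, and uniqueness of $\tau$ via the triangle inequality for $D$ together with~\eqref{m3} --- and the outline is sound. The partial-summation transfer from $\sum_{n\leq N}\tilde{\bfu}(n)={\rm o}(N)$ to $\sum_{n\leq N}\bfu(n)={\rm o}(N)$ also works as claimed (indeed ${\rm o}(n)$ bounds on the partial sums already suffice, since $|(n+1)^{i\tau}-n^{i\tau}|\ll|\tau|/n$).

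One point needs flagging. You invoke ``Delange's theorem'' under the hypothesis $D(\tilde{\bfu},\raz)<+\infty$, but the distance only controls $\sum_{p}\frac1p\left(1-{\rm Re}\,\tilde{\bfu}(p)\right)$, whereas Delange's theorem requires convergence of the complex series $\sum_{p}\frac1p\left(1-\tilde{\bfu}(p)\right)$ --- which is exactly condition (ii) and is strictly stronger. When only the real part converges, the partial Euler products $P(N)=\prod_{p\leq N}\left(1-\frac1p\right)\sum_{m\geq0}\tilde{\bfu}(p^m)p^{-m}$ have convergent modulus but possibly drifting argument, and the correct black box is the refined Hal\'asz asymptotic $\frac1N\sum_{n\leq N}\bfu(n)=\frac{N^{i\tau}}{1+i\tau}P(N)+{\rm o}(1)$, not an identity for $M(\tilde{\bfu})$. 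Your conclusion in case (iii) survives because the vanishing $p=2$ factor kills $P(N)$ identically; but in the converse direction the step ``existence of the Ces\`aro limit forces $\tau=0$ with $C\neq0$'' must in addition rule out the argument drift of $P(N)$, and doing so is precisely how one recovers the full condition (ii) rather than mere pretentiousness. You do acknowledge at the end that the refined asymptotic is the technical heart and defer it; that is legitimate for a sketch, but as written the reduction to ``Delange'' applies a theorem outside its hypotheses, and the distinction it elides is the very content separating the four cases of the statement.
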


\begin{Cor}[Wirsing's theorem]
If $\bfu\in\cm$ is real-valued then $M(\bfu)$ exists.
\end{Cor}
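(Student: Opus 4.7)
The plan is to invoke Hal\'asz's theorem and verify that, for real-valued $\bfu\in\cm$, at least one of its conditions (i)--(iv) always holds, so that $M(\bfu)$ exists (either as a nonzero or as a zero value). The key point to extract from the hypothesis $\bfu\in\R$ is that $\bfu$ can pretend only to the trivial Archimedean character $n^{i\cdot 0}=\raz$.

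First I would set up a dichotomy. If $D(\bfu,n^{it})=+\infty$ for every $t\in\R$, condition (iv) of Hal\'asz's theorem applies immediately and yields $M(\bfu)=0$. Otherwise there exists $t_0\in\R$ with $D(\bfu,n^{it_0})<+\infty$, and the task is to show $t_0=0$. Because $\bfu(p)\in\R$, for every $p\in\PP$ we have
\[
{\rm Re}\bigl(\bfu(p)\,p^{-it_0}\bigr)=\bfu(p)\cos(t_0\log p)={\rm Re}\bigl(\bfu(p)\,p^{it_0}\bigr),
\]
so $D(\bfu,n^{-it_0})=D(\bfu,n^{it_0})<+\infty$. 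Combining the triangle inequality with Lemma~\ref{nier}(i) applied to $\bfu=\bfw=n^{it_0}$, $\bfv=\bfw'=n^{-it_0}$ (and using $D(n^{it_0},n^{it_0})=0$), I would derive
\[
D(\raz,n^{2it_0})\le D(n^{it_0},n^{-it_0})\le D(n^{it_0},\bfu)+D(\bfu,n^{-it_0})<+\infty.
\]
By~\eqref{m3} this forces $2t_0=0$, hence $t_0=0$ and $D(\bfu,\raz)<+\infty$. Since $\bfu(p)\in[-1,1]$, the quantities $\frac1p(1-\bfu(p))$ are non-negative and their sum equals $D(\bfu,\raz)^2<+\infty$; thus condition (ii) of Hal\'asz's theorem is satisfied.

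It remains to analyse condition (i). If there exists some $k\geq 1$ with $\bfu(2^k)\neq-1$, then (i) and (ii) both hold and Hal\'asz's theorem yields that $M(\bfu)$ exists (and is nonzero). If instead $\bfu(2^k)=-1$ for every $k\geq 1$, then with $\tau=0$ we have $\bfu(2^k)=-1=-2^{ki\tau}$ and $D(\bfu,n^{i\tau})=D(\bfu,\raz)<+\infty$, so condition (iii) holds and $M(\bfu)=0$ exists. In every case the mean exists, which is the claim.

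The only nontrivial step is passing from $D(\bfu,n^{it_0})<+\infty$ to $t_0=0$; everything else is bookkeeping inside Hal\'asz's classification. The obstacle is mild and is essentially handled by the identity~\eqref{m3} together with the triangle inequality and the tensorial bound of Lemma~\ref{nier}.
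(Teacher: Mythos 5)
Your argument is correct and is essentially the paper's own proof: the same key step $D(\raz,n^{2it})=D(n^{it},n^{-it})\le 2D(\bfu,n^{it})$ (exploiting realness of $\bfu(p)$ and the triangle inequality), combined with~\eqref{m3} to rule out pretending to $n^{it}$ for $t\neq 0$, followed by the same case analysis inside Hal\'asz's classification. The only cosmetic difference is that you phrase the exclusion contrapositively (assuming some $t_0$ with finite distance and forcing $t_0=0$) and spell out condition (iii) with $\tau=0$, which the paper leaves implicit.
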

\begin{proof}
Since ${\rm Re}(p^{it})={\rm Re}(p^{-it})$, and $\bfu(p)\in\R$, we have
$$
D(\raz,n^{2it})=D(n^{-it},n^{it})\leq 2D(\bfu,n^{it})$$
by the triangle inequality.
By~\eqref{m3}, it follows that $D(\bfu,n^{it})=+\infty$ for each $0\neq t\in\R$. Hence, if $D(\bfu,\raz)=+\infty$, then $D(\bfu,n^{it})=+\infty$ for each $t\in\R$ and then $M(\bfu)=0$ by Hal\'asz's theorem (iv).

If not then $D(\bfu,1)<+\infty$. Then the series $\sum_{p\in\PP}\frac1p(1-\bfu(p))$ converges (so (ii) is satisfied) and we check whether or not $\bfu(2^k)=-1$ for all $k\in\N$, that is, either (i) holds or (iii) holds.
\end{proof}

\begin{Remark} It follows from~\eqref{m3} that in Hal\'asz's theorem (iii) and (iv) are two disjoint conditions.\end{Remark}


\begin{Remark} Not all functions from $\cm$ have mean. Indeed, an Archimedean character $n^{it}$ has mean iff $t=0$. This can be shown by a direct computation: apply Euler's summation formula to $f(x)=x^{it}$ with $t\neq0$, to obtain $\frac1N\sum_{n\leq N}n^{it}=\frac{N^{it}}{it+1}+{\rm O}\left(\frac{\log N}{N}\right)$.\end{Remark}

\begin{Th}[e.g.\ \cite{Hi,Gr-Sa,Te}]\label{pntmob}\label{t:pnt1}
The PNT is equivalent to $M(\mob)=0$.\end{Th}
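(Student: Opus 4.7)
My approach is to exploit the Dirichlet convolution identity $\boldsymbol{\Lambda}=\mob\ast\log$ (equivalently, $\boldsymbol{\Lambda}(n)=-\sum_{d\divides n}\mob(d)\log d$, which follows from $\sum_{d\divides n}\mob(d)=\bfe(n)$) to pass between the partial sums of $\mob$ and those of $\boldsymbol{\Lambda}$, and then to invoke a Tauberian input to close the equivalence. Write $M(x):=\sum_{n\leq x}\mob(n)$ and $\psi(x):=\sum_{n\leq x}\boldsymbol{\Lambda}(n)$.

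First, I would reduce the PNT to the Chebyshev form $\psi(x)\sim x$. Since the higher-prime-power contribution $\sum_{p^k\leq x,\,k\geq 2}\log p$ is ${\rm O}(\sqrt{x}\log x)$, we have $\psi(x)=\sum_{p\leq x}\log p+{\rm O}(\sqrt{x}\log x)$, and partial summation then gives $\psi(x)\sim x\iff \pi(x)\sim x/\log x$ (essentially the computation already carried out in Section~\ref{czesc1}). It therefore suffices to establish $\psi(x)\sim x\iff M(x)={\rm o}(x)$.

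For this, the cleanest route passes through the Dirichlet series. By Corollary~\ref{odwrot}, $1/\zeta(s)=\sum_{n\geq 1}\mob(n)/n^s$ for ${\rm Re}\,s>1$, while logarithmic differentiation of the Euler product gives $-\zeta'(s)/\zeta(s)=\sum_{n\geq 1}\boldsymbol{\Lambda}(n)/n^s$. Both directions of the equivalence now reduce to the non-vanishing of $\zeta$ on the line ${\rm Re}\,s=1$: at $s=1$, the function $-\zeta'/\zeta$ has a simple pole of residue~$1$ and is otherwise analytic in a neighborhood of that line, so Wiener--Ikehara yields $\psi(x)\sim x$; symmetrically, $1/\zeta$ is analytic on that line with value $0$ at $s=1$, and a suitable Tauberian theorem delivers $M(x)={\rm o}(x)$. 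An elementary alternative, going back to Landau and Axer, applies Dirichlet's hyperbola method to $\psi(x)=\sum_{de\leq x}\mob(d)\log e$, splitting at $\sqrt x$ and using Stirling's asymptotic for $\sum_{e\leq y}\log e$ to rewrite $\psi(x)$ in terms of weighted partial sums of $\mob$ (such as $\sum_{d\leq y}\mob(d)/d$ and $\sum_{d\leq y}\mob(d)\log d/d$), which are Abel-equivalent to $M(x)={\rm o}(x)$.

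The main obstacle is the non-formal step in either route. In the analytic route it is the Wiener--Ikehara Tauberian theorem, combined with the classical non-vanishing $\zeta(1+it)\neq 0$ for $t\in\R$, which is the true depth of PNT. In the elementary route it is the delicate cancellation needed in sums such as $\sum_{e\leq \sqrt x}\log e\cdot M(x/e)$: the naive bound coming from $M(y)={\rm o}(y)$ only yields ${\rm o}(x\log^2 x)$, so one must invoke Axer's equiconvergence theorem (or an analogous Tauberian device) to extract the ${\rm o}(x)$ saving. No purely convolution-theoretic manipulation circumvents this step.
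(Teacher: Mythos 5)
The paper offers no proof of this theorem: it is stated as a classical fact with citations to Hildebrand, Granville--Soundararajan and Tenenbaum, and the remark immediately following it points to Diamond's survey for the ``elementary equivalence.'' So there is no in-paper argument to compare yours against; judged on its own, your sketch is a correct account of exactly the standard proofs those references contain. Your elementary route is Landau's equivalence: the reduction of the PNT to $\psi(x)\sim x$, the identity $\boldsymbol{\Lambda}=\mob\ast\log$, the hyperbola/Stirling computation, and the correct diagnosis that the naive bound loses logarithms so that Axer's theorem (applied after subtracting the main term, e.g.\ via $\boldsymbol{\Lambda}-\raz=\mob\ast(\log-\bfd+2\gamma)$ with the Dirichlet divisor estimate $\mathrm{O}(\sqrt{y})$ for the partner's summatory function) is the genuinely non-formal input in the direction $M(\mob)=0\Rightarrow$ PNT; the converse direction is in fact the easier one, since working with $\sum_{n\leq N}\mob(n)\log n=-\sum_{d\leq N}\mob(d)\psi(N/d)$ the spurious $\log$ is absorbed by the factor $\log N$ multiplying $M(N)$, so only trivial bounds are needed there. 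Two small caveats. First, your analytic route (Wiener--Ikehara applied to $-\zeta'/\zeta$ and to $1/\zeta$) proves both statements unconditionally from $\zeta(1+it)\neq0$ rather than deducing either from the other; that establishes the equivalence only in the degenerate sense that both sides are true, and misses the point the paper's remark is making, namely that each side can be derived from the other by elementary convolution arguments. Second, your sketch leaves Axer's theorem itself as a black box, which is acceptable here since the paper does the same for the entire theorem, but it is where all the work is.
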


\begin{Remark} The statement above is an elementary equivalence, see the discussion in Section~4 \cite{Di}. For a PNT for a more general $f$ (i.e.\ not for $f=1$) the relation between such a disjointness and sums over the primes requires more quantitative estimates than simply ${\rm o}(N)$.\end{Remark}

\begin{Remark}  By Hal\'asz's theorem,  condition $M(\mob)=0$  is equivalent to $D(\mob,n^{it})=\infty$ for each $t\in\R$ ($\mob$ does not pretend to be $n^{it}$), and this can be established similarly to the proof of~\eqref{m3}.
\end{Remark}

The PNT tells us about cancelations of $+1$ and $-1$ for $\mob$. When one requires a behavior similar to random sequences, say ``square-root type cancelation'',  the result is much stronger:

\begin{Th}[Littlewood, see \cite{Ch}]
The Riemann Hypothesis holds if and only if
for every $\vep>0$, we have $\sum_{n\leq N}\mob(n)={\rm O}_\vep(N^{\frac12+\vep})$.
\end{Th}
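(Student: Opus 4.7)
The plan is to exploit the Dirichlet series identity $\frac{1}{\zeta(s)} = \sum_{n\geq 1}\frac{\mob(n)}{n^s}$ (Corollary~\ref{odwrot}) together with Perron's formula relating partial sums to contour integrals. Let $M(N):=\sum_{n\leq N}\mob(n)$. The two implications are handled by contour-shifting in opposite directions.

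For the easier direction ($\Leftarrow$), assume $M(N)={\rm O}_\vep(N^{1/2+\vep})$ for every $\vep>0$. By partial (Abel) summation, for ${\rm Re}\,s>1$ we have
$$
\frac{1}{\zeta(s)}=\sum_{n\geq 1}\frac{\mob(n)}{n^s}=s\int_1^\infty \frac{M(x)}{x^{s+1}}\,dx.
$$
Under the hypothesis, the integral converges absolutely and locally uniformly in the half-plane ${\rm Re}\,s>1/2+\vep$ for every $\vep>0$, producing an analytic continuation of $1/\zeta$ to ${\rm Re}\,s>1/2$. Hence $\zeta$ has no zeros in that half-plane; combined with the functional equation~\eqref{funeq1}, which reflects zeros across the critical line, all nontrivial zeros must lie on ${\rm Re}\,s=1/2$, i.e.\ RH.

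For the harder direction ($\Rightarrow$), assume RH. The starting point is Perron's formula: for non-integer $N$ and $c>1$,
$$
M(N)=\frac{1}{2\pi i}\int_{c-iT}^{c+iT}\frac{1}{\zeta(s)}\cdot\frac{N^s}{s}\,ds+(\text{error}),
$$
with an explicit error depending on $T$ and the tails of the series. The plan is to push the contour leftward to the line ${\rm Re}\,s=1/2+\vep$. Under RH, $1/\zeta(s)$ is holomorphic in ${\rm Re}\,s>1/2$ (the pole of $\zeta$ at $s=1$ gives a removable zero of $1/\zeta$, contributing no residue to $N^s/(s\zeta(s))$ since the integrand is regular there), so no residues are collected in the shift except the one at $s=0$, which yields a bounded contribution. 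The shifted integral together with the horizontal pieces is then estimated using standard bounds for $1/\zeta(s)$ on vertical lines inside the critical strip under RH.

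The main obstacle is precisely this last step: one needs the classical estimate $1/\zeta(\sigma+it)\ll_\vep |t|^\vep$ valid for $\sigma\geq 1/2+\vep$ under RH (obtained via the Hadamard product and the Borel--Carath\'eodory estimate applied to $\log\zeta$, combined with the trivial bound on ${\rm Re}\,s=1+\delta$). Granting such polynomial bounds on vertical lines, one optimizes $T$ as a function of $N$ (a choice like $T=N$ suffices) so that the vertical integral on ${\rm Re}\,s=1/2+\vep$ and the horizontal connecting pieces are both ${\rm O}_\vep(N^{1/2+2\vep})$, while the Perron error term is ${\rm O}(N^{1+\vep}/T)={\rm O}(N^\vep)$. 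Since $\vep>0$ was arbitrary, this gives $M(N)={\rm O}_\vep(N^{1/2+\vep})$ as required.
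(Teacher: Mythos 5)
Your ``$\Leftarrow$'' direction is exactly the paper's argument: Abel summation to get $\frac1{\zeta(s)}=s\int_1^\infty M(x)x^{-s-1}\,dx$, absolute convergence of the integral for ${\rm Re}\,s>\frac12+\vep$ giving an analytic (hence zero-free) continuation of $1/\zeta$, and the functional equation to dispose of the left half of the strip. Note, though, that this is the \emph{only} direction the paper proves -- it explicitly says ``we show the sufficiency'' and cites the literature for the rest -- so your attempt at the converse goes beyond the text.

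On that converse: your outline (Perron's formula, shift to ${\rm Re}\,s=\frac12+\vep$, bound the shifted and horizontal integrals) is the standard route, but as written it has two issues. First, the substantive one: the entire content of this direction is the RH-conditional estimate $1/\zeta(\sigma+it)\ll_\vep |t|^{\vep}$ for $\sigma\geq\frac12+\vep$, which you explicitly ``grant.'' Naming the tools (Hadamard product, Borel--Carath\'eodory applied to $\log\zeta$) is the right pointer, but until that lemma is actually proved or precisely cited, the proof of $(\Rightarrow)$ is a plan rather than an argument -- you have correctly identified where the difficulty lives, but not discharged it. Second, a small slip: when you move the contour only as far left as ${\rm Re}\,s=\frac12+\vep$, the point $s=0$ is not inside the region swept out, so no residue at $s=0$ is collected at all; the ``bounded contribution'' from $s=0$ you mention is spurious (harmless, since you only claim it is ${\rm O}(1)$, but it signals a confusion about which poles the rectangle actually encloses -- the answer is none, since $1/\zeta$ is regular at $s=1$). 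With the conditional bound on $1/\zeta$ supplied and that bookkeeping corrected, your choice $T=N$ does close the argument in the standard way.
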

This result  is not hard to establish and we show the sufficiency: By Corollary~\ref{odwrot}, we have
$$
\frac1{\zeta(s)}=\sum_{n=1}^\infty\frac{\mob(n)}{n^s}=
-\sum_{n=1}^\infty\mob(n)\int_n^\infty dx^{-s}=s\sum_{n=1}^\infty
\mob(n)\int_n^\infty\frac{dx}{x^{s+1}}.$$
Setting $\boldsymbol{M}(x)=\sum_{n\leq x}\mob(n)$, we obtain
\beq\label{RH1}
\frac1{\zeta(s)}=s\int_1^\infty\frac{\boldsymbol{M}(x)}{x^{s+1}}\,dx,\;\;
{\rm Re}\,s>1\eeq
and, by the assumption on $\boldsymbol{M}(\cdot)$,
$$ \int_1^\infty\left|\frac{\boldsymbol{M}(x)}{x^{s+1}}\right|\,dx=
\int_1^\infty\frac{\big|\boldsymbol{M}(x)\big|}{x^{{\rm Re}\,s+1}}\,dx\ll
\int_1^\infty x^{\frac12+\vep-({\rm Re}\,s+1)}\,dx=\int_1^\infty x^{-{\rm Re}\,s-\frac12+\vep}\,dx.
$$
It follows that the integral on the RHS of~\eqref{RH1} is absolutely convergent for ${\rm Re}\,s>\frac12+\vep$. Hence,~\eqref{RH1} yields an analytic extension of $\frac1{\zeta(\cdot)}$ to $\{s\in \C : {\rm Re}\, s>\frac12+\vep\}$. In this domain there are no zeros of $\zeta$ and by the functional equation (see~\eqref{funeq1}) on $\zeta$, we obtain the Riemann Hypothesis.

\subsection{Aperiodic multiplicative functions}
Denote by
$$
\cm_{\rm conv}:= \{\bfu\in\cm :  \lim_{N\to\infty}\frac1N\sum_{n\leq N}\bfu(an+r)\text{ exists for all }a,r\in\N\}.$$
The following is classical.
\begin{Lemma}\label{niejasny} Let $\bfu\in\cm$. Then $\bfu\in \cm_{\rm conv}$ if and only if the mean value $M(\chi\cdot \bfu)$ exists for each Dirichlet character $\chi$. \end{Lemma}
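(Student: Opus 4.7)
The plan is to treat the two implications separately: the forward direction is elementary via the periodicity of $\chi$, while the backward one is direct when $(r,a)=1$ and requires a prime-by-prime decomposition of $\bfu(an+r)$ when $(r,a)>1$.

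For $(\Rightarrow)$, assuming $\bfu\in\cm_{\rm conv}$, I would fix a Dirichlet character $\chi$ of modulus $q$ and use its $q$-periodicity to write
\[
\frac{1}{N}\sum_{n\leq N}\chi(n)\bfu(n)
\;=\;\sum_{r=1}^{q}\chi(r)\cdot\frac{1}{N}\!\!\sum_{\substack{n\leq N\\ n\equiv r\,({\rm mod}\,q)}}\!\!\bfu(n).
\]
Reindexing the inner sum via $n=qk+r$ identifies it, up to boundary errors of size $O(1/N)$, with $\tfrac{1}{q}$ times the Ces\`aro average $\tfrac{1}{K}\sum_{k\leq K}\bfu(qk+r)$, which converges by hypothesis; hence $M(\chi\bfu)$ exists.

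For $(\Leftarrow)$, I would first note that $\tfrac{1}{N}\sum_{n\leq N}\bfu(an+r)$ and $\tfrac{a}{M}\sum_{m\leq M,\,m\equiv r\,({\rm mod}\,a)}\bfu(m)$ with $M=aN+r$ coincide up to an error of order $O(1/N)$, so it suffices to prove convergence of the latter. When $(r,a)=1$, character orthogonality modulo $a$,
\[
\1_{n\equiv r\,({\rm mod}\,a)}
\;=\;\frac{1}{\boldsymbol{\phi}(a)}\sum_{\chi\,({\rm mod}\,a)}\overline{\chi(r)}\,\chi(n)
\]
(both sides being supported on $(n,a)=1$), reduces the average to a finite linear combination of the sums $\tfrac{1}{M}\sum_{m\leq M}\chi(m)\bfu(m)\to M(\chi\bfu)$, yielding convergence with explicit value $\tfrac{1}{\boldsymbol{\phi}(a)}\sum_{\chi}\overline{\chi(r)}M(\chi\bfu)$.

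The main obstacle is the case $d:=(r,a)>1$, since the characters modulo $a$ vanish on the residue class of $r$. My reduction would write $r=dr_0$, $a=da_0$ with $(r_0,a_0)=1$, so that $m\equiv r\,({\rm mod}\,a)$ is equivalent to $m=dm'$ with $m'\equiv r_0\,({\rm mod}\,a_0)$ and automatically $(m',a_0)=1$. The primes dividing $d$ then split into those dividing $a_0$, for which $(p,m')=1$ and the $p$-factor of $\bfu(dm')$ separates out as a constant, and the ``bad'' primes $p\mid d$ with $p\nmid a_0$, where $v_p(m')$ is unconstrained so $\bfu(dm')$ does not factor cleanly. For each bad prime I would decompose the sum over $m'$ according to $k:=v_p(m')$, writing $m'=p^k m''$ with $p\nmid m''$: this peels off a factor $\bfu(p^{v_p(d)+k})$ and leaves an inner sum of $\bfu(m'')$ over a thinner arithmetic progression with an added coprimality condition to $p$. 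Iterating over the finitely many bad primes, the residual sums are over residue classes modulo $a$ coprime to $a$, hence of the coprime type already handled. The bounds $|\bfu|\leq 1$ and $\sum_{k\geq 0}p^{-k}<\infty$ provide absolute summability of the outer series indexed by tuples $(k_p)$ of valuations, legitimising the exchange of the limit in $M$ with the infinite sum and completing the argument.
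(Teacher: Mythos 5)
The paper provides no proof of this lemma at all --- it is simply stated as ``classical'' --- so there is no internal argument to measure yours against; judged on its own, your proof is correct and is essentially the standard argument for this classical fact. The forward direction (splitting $\frac1N\sum_{n\le N}\chi(n)\bfu(n)$ over residues mod $q$) and the coprime case of the converse (orthogonality of characters mod $a$) are exactly right, and the reduction of the case $d=(r,a)>1$ via the factorization $r=dr_0$, $a=da_0$ and the valuations at the ``bad'' primes $p\mid d$, $p\nmid a_0$ is the correct use of multiplicativity. Two points deserve to be made explicit when you write this up. First, the residual averages run over $m''$ in a fixed class mod $a_0$ (coprime to $a_0$) together with coprimality to the bad primes; since every prime dividing $a$ divides either $a_0$ or a bad prime, these conditions define a finite union of residue classes coprime to their modulus (equivalently, classes mod $a$ coprime to $a$), which is the precise reason the coprime case applies --- your phrasing asserts this but the equality of radicals is the justification. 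Second, in exchanging $\lim_{M\to\infty}$ with the sum over valuation tuples $(k_p)$, the term attached to a tuple with $P=\prod_p p^{k_p}$ is bounded by $\frac{1}{dPa_0}$ plus a boundary error ${\rm O}(1/M)$, and the latter arises only for the ${\rm O}\bigl((\log M)^{C}\bigr)$ tuples with $dP\le M$; truncating in $P$ and using $\sum_{(k_p)}P^{-1}<\infty$ then legitimizes the exchange. Your appeal to $|\bfu|\le1$ and $\sum_{k\ge0}p^{-k}<\infty$ is the right mechanism, so this is a matter of bookkeeping rather than a gap.
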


An arithmetic function $\bfu\colon\N\to\C$ is called {\em aperiodic} if, for all $a,r\in\N$, we have
$\lim_{N\to\infty}\frac1N\sum_{n\leq N}\bfu(an+r)=0$. Similarly to Lemma~\ref{niejasny}, we obtain that $\bfu\in\cm$ is aperiodic if and only if $M(\chi\cdot\bfu)=0$ for each Dirichlet character $\chi$. Delange theorem (see, e.g., \cite{Gr-Sa}) gives necessary and sufficient conditions for $\bfu$ to be aperiodic. In particular, each $\bfu\in\cm$ satisfying $D(\bfu,\chi\cdot n^{it})=0$ for all Dirichlet characters $\chi$ and all $t\in\R$, is aperiodic. Classical multiplicative functions as $\mob$ or $\lio$ are aperiodic.

Frantzikinakis and Host in \cite{Fr0} prove a deep structure theorem for multiplicative functions from $\cm$. One of the consequences of it is the following characterization of aperiodic functions: $\bfu\in\cm$ is aperiodic if and only if it is uniform, that is, all Gowers uniformity seminorms\footnote{For $N\in\N$ we write $[N]$ for the set $\{1,2,\ldots,N\}$.
Given $h,N\in\N$ and  $f\colon \N\rightarrow \C$, we let $S_hf(n)=f(n+h)$ and $f_N=\raz_{[N]}\cdot f$. For $s\in\N$, the {\em Gowers uniformity seminorm}  \cite{Gowers01} $\|.\|_{U^s_{[N]}}$ is defined in the following way:
\[
\|f\|_{U^1_{[N]}}:= \left|\frac{1}{N}\sum_{n=1}^N f_N(n)\right|
\]
and for $s\geq 1$
\[
\|f\|_{U^{s+1}_{[N]}}^{2^{s+1}}:= \frac{1}{N}\sum_{h=1}^N \left\|f_N S_h \overline{f_N}\right\|_{U^s_{[N]}}^{2^s}.
\]
A bounded function $f\colon\N\rightarrow \C$ is called
{\em uniform} if $\| f \|_{U^s_{[N]}}$ converges to zero as $N\to\infty$ for each $s\geq1$.} vanish \cite{Fr0}.
In \cite{Be-Ku-Le-Ri1a} (see Theorem~1.3 therein), this result is extended to show that $\bfu\in\cm_{conv}$ is either uniform or rational.\footnote{\label{f:rational}An arithmetic function $\bfu$ is {\em rational} if for each $\vep>0$ there is a periodic function $\bfv$ such that $\limsup_{N\to\infty}\frac1N\sum_{n\leq N}|\bfu(n)-\bfv(n)|<\vep$. Note that since $\mob$ is aperiodic, whence orthogonal to all periodic sequences it will also be orthogonal to each rational $\bfu$ \cite{Be-Ku-Le-Ri1a}. An example of rational sequence is given by $\mob^2$. For more examples, see the sets of $\mathscr{B}$-free numbers in the Erd\"os case in Section~\ref{czesc6}.}  Also, a variation of this result has been proved in \cite{Be-Ku-Le-Ri1a} (see Theorem~A therein):
\begin{equation}\label{relunif}
\parbox{0.85\textwidth}{for each positive density level set $E=\{n\in\N: \bfu(n)=c\}$ of $\bfu\in\cm$ there is a (unique if density is smaller than~1) rational (i.e.\ coming from a rational function from $\cm$) level set $R$ of $\bfv\in\cm$ such that $d(R)\raz_{E}-d(E)\raz_R$ is Gowers uniform.}
\end{equation}
For example, for $E=\{n\in\N :  \mob(n)=1\}$ the unique set $R$ is just the set of square-free numbers.

\subsection{Davenport type estimates on short intervals} Given $\bfu\in\cm$, for our purposes we will need additionally the following:\footnote{To be compared with the estimates~\eqref{vin}, where we drop the $\sup$ requirement.}   for each $(b_n)\subset\N$ with $b_{n+1}-b_n\to\infty$ and any $c\in \C$, $|c|=1$, we have
\beq\label{sh1}
\frac1{b_{K+1}}\sum_{k\leq K}\left|\sum_{b_k\leq n<b_{k+1}}c^n\bfu(n)\right|\tend{K}{\infty}0.\eeq
It is not hard to see that if $\bfu\in\cm$ satisfies~\eqref{sh1} for each $(b_n)$ and $c$ as above, then it must be aperiodic.

In fact, it follows from a break-through result in \cite{Ma-Ra} and \cite{Ma-Ra-Ta} that the class of $\bfu\in\cm$ for which~\eqref{sh1} holds contains all $\bfu$ for which
\beq\label{e:mrt}
\inf_{|t|\leq M,\chi~\text{mod}~q,q\leq Q}D(\bfu,n\mapsto \chi(n)n^{it};M)^2\to\infty,
\eeq
when  $10\leq H\leq M$, $H\to\infty$ and $Q=\min(\log^{1/125}M,\log^5 H)$; here $\chi$ runs over all Dirichlet characters of modulus $q\leq Q$ and
$$
D(\bfu,\bfv;M):=\left(\sum_{p\leq M, p\in\PP}\frac{1-{\rm Re}(\bfu(p)\overline{\bfv(p)})}{p}\right)^{1/2}
$$
for each $\bfu,\bfv\in\cm$.
Moreover, classical multiplicative functions like $\mob$ and $\lio$ satisfy~\eqref{e:mrt}, see \cite{Ma-Ra-Ta}.

Finally, note that~\eqref{sh1} true for all $(b_n)$ as above is equivalent to the following statement:
\beq\label{sh2}
\frac1M\sum_{M\leq m<2M}\left|\sum_{m\leq h<m+H} c^h\bfu(h)\right|\tend{M,H}{\infty,H={\rm o}(M)} 0\eeq
(we can also replace the first sum by $\sum_{1\leq m<M}$), see \cite{Ab-Le-Ru2} for details. This statement is much closer to the original formulations of (simplified versions of) theorems from \cite{Ma-Ra,Ma-Ra-Ta}.

One more consequence of the main result in \cite{Ma-Ra} is the following:

\begin{Th}[Thm.\ 1.1 in \cite{Ma-Ra-Ta} and a corollary for $k=2$ therein] \label{srednie}
For $H\to\infty$ arbitrarily slowly with $M\to\infty$ ($H\leq M$), we have
$$
\sum_{h\leq H}\left|\sum_{m\leq M}\mob(m)\mob(m+h)\right|={\rm o}(HM).$$
\end{Th}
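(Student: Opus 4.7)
The strategy is to reduce the averaged two-point correlation to the short-interval cancellation \eqref{sh2} for the M\"obius function (which applies since $\mob$ verifies \eqref{e:mrt}) via a dualization plus a Fourier decomposition in the shift variable. Set $C(h):=\sum_{m\leq M}\mob(m)\mob(m+h)$ and $S:=\sum_{h\leq H}|C(h)|$, and pick unimodular coefficients $c_h$ with $c_h\,C(h)=|C(h)|$. Swapping the order of summation gives
\[
S=\sum_{m\leq M}\mob(m)\,T(m),\qquad T(m):=\sum_{h\leq H}c_h\,\mob(m+h),
\]
and Cauchy-Schwarz in $m$ together with $|\mob|\leq 1$ reduces the task to $\sum_{m\leq M}|T(m)|^2=o(H^2M)$.

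Next, I would \emph{expand the weights $c_h$ by discrete Fourier analysis on $\{1,\ldots,H\}$}: write $c_h=\frac1H\sum_{j=0}^{H-1}\hat c(j)\,e(jh/H)$. Parseval gives $\sum_j|\hat c(j)|^2=H\sum_h|c_h|^2\leq H^2$, and Cauchy-Schwarz in the frequency variable produces
\[
|T(m)|^2\;\leq\;\sum_{j=0}^{H-1}|U_m(j/H)|^2,\qquad U_m(\alpha):=\sum_{h\leq H}e(\alpha h)\,\mob(m+h).
\]
For each fixed frequency $\alpha$, the quantity $U_m(\alpha)$ is (up to an irrelevant unimodular factor) exactly the twisted short-interval sum appearing in \eqref{sh2} with $c=e(\alpha)$. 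Since $\mob$ satisfies \eqref{e:mrt},
\[
\sum_{m\leq M}|U_m(\alpha)|=o(M),
\]
and the trivial pointwise bound $|U_m(\alpha)|\leq H$ upgrades this to $\sum_m|U_m(\alpha)|^2\leq H\cdot o(M)=o(HM)$.

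\emph{Assembling.} Summing over the $H$ Fourier frequencies $\alpha\in\{j/H:0\leq j<H\}$ and swapping orders,
\[
\sum_{m\leq M}|T(m)|^2\;\leq\;\sum_{j=0}^{H-1}\sum_{m\leq M}|U_m(j/H)|^2\;=\;H\cdot o(HM)\;=\;o(H^2M),
\]
whence $S^2\leq M\cdot o(H^2M)$, i.e.\ $S=o(HM)$, as required.

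\emph{Main obstacle.} The dualization, Cauchy-Schwarz and Fourier manipulations are soft; the substance sits entirely in the short-interval estimate applied to the twists $U_m(\alpha)$, and in the fact that the final summation over the $H$ discrete frequencies demands the $o(M)$-rate in \eqref{sh2} to hold \emph{uniformly} in $c=e(\alpha)$. This uniformity is not visible in the pointwise statement \eqref{sh2} itself but is delivered by the quantitative form of the Matom\"aki-Radziwi\l\l{} theorem, since the pretentious-distance hypothesis \eqref{e:mrt} for $\mob$ is insensitive to Archimedean twists of bounded height. This uniform short-interval cancellation is the genuinely substantive input; once it is granted, the $k=2$ correlation bound stated in the theorem falls out of the soft Plancherel/duality wrapper above.
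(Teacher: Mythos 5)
You should first note that the survey does not prove Theorem~\ref{srednie} at all: it is quoted verbatim from \cite{Ma-Ra-Ta}, so what you are really attempting is a reconstruction of their argument from the survey's stated ingredients, and your dualization $S=\sum_m\mob(m)T(m)$ plus Fourier expansion of the weights does begin along the same lines as their circle-method proof. The quantitative core, however, breaks down. First, you misread \eqref{sh2}: the true (Matom\"aki--Radziwi\l{}\l{}--Tao) normalization is $\frac{1}{HM}\sum_{m\le M}\bigl|\sum_{m\le h<m+H}e^{2\pi i\alpha h}\mob(h)\bigr|\to 0$, i.e.\ $\sum_{m\le M}|U_m(\alpha)|=o(HM)$, not $o(M)$ (the inner sum has length $H$, so $o(M)$ would say the short sums are $o(1)$ on average, which is absurd; the survey's display \eqref{sh2} simply omits the inner $1/H$ present in \eqref{sh1} and in the original papers). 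With the correct normalization your chain yields $\sum_m|T(m)|^2=o(H^3M)$ and hence only $S=o(H^{3/2}M)$, which is weaker than the trivial bound $S\le HM$. Worse, the per-frequency estimate your assembly actually requires, $\sum_m|U_m(\alpha)|^2=o(HM)$, fails even for a random $\pm1$ sequence (where it is $\asymp HM$), so no cancellation theorem can deliver it.

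There is also a structural reason the route cannot work as written: your Cauchy--Schwarz over all $H$ frequencies with the full Parseval budget is exactly equivalent to trivial Cauchy--Schwarz in $h$. Indeed, by discrete Parseval $\sum_{j=0}^{H-1}|U_m(j/H)|^2=H\sum_{h\le H}\mob(m+h)^2\asymp H^2$ for typical $m$ (squarefree density $6/\pi^2$), so $\sum_{m\le M}\sum_{j<H}|U_m(j/H)|^2\asymp H^2M$ identically and can never be $o(H^2M)$; your step 5 bound $|T(m)|^2\le\sum_j|U_m(j/H)|^2$ is just the trivial estimate in disguise. The saving must come from the concentration of $\hat c$: since $\sum_j|\hat c(j)|^2\le H^2$, one has $|\hat c(j)|\ge\varepsilon H$ for at most $\varepsilon^{-2}$ frequencies, and the genuine argument in \cite{Ma-Ra-Ta} (after localizing $m$ into blocks of length $\asymp H$, so that the correlation becomes an honest short-interval object) handles those few large frequencies with a twisted short-interval estimate and the remaining frequencies trivially via Parseval. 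Finally, the uniformity in $\alpha$ is not a formal consequence of \eqref{e:mrt}: $\mob(n)e^{2\pi i\alpha n}$ is not multiplicative, and treating all additive twists (major arcs via Dirichlet characters of small modulus and bounded $n^{it}$ twists, minor arcs via a bilinear/Ramar\'e-type argument) is the main technical theorem of \cite{Ma-Ra-Ta}, not something the fixed-$c$ statement \eqref{sh2} hands you.
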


\subsection{The KBSZ criterion}
Sarnak's conjecture is aimed at showing that deterministic sequences (i.e.\ those given as observable sequences  in the zero entropy systems) are orthogonal
to $\mob$. In particular, as $\mob$ is a multiplicative function, the result\footnote{The main ideas for this result appeared in
\cite{Da-De} and \cite{Mo-Va}. It was first established in a slightly different form in \cite{Ka} and
then in \cite{Bo-Sa-Zi}, see also \cite{Ha} for a proof. The criterion has its origin in the bilinear method of Vinogradov \cite{Vi} which is a technique to study sums of $a$ over primes in terms of sums over progressions $\sum_{n\leq N} a_{dn}$ and sums $\sum_{n\leq N} a_{d_1n}a_{d_2n}$. If $a_n=f(T^nx)$ then these sums are Birkhoff sums for powers of $T$ and their joinings.

In what follows we will refer to Theorem~\ref{t:kbsz} as to the KBSZ criterion.} below establishes disjointness with $\mob$.

\begin{Th}[\cite{Ka,Bo-Sa-Zi}]\label{t:kbsz}
Assume that $(a_n)$ is a bounded sequence of complex numbers.
Assume that for all prime numbers $p\neq q$
\beq\label{kbsz1}
\frac1N\sum_{n\leq N}a_{pn}\ov{a}_{qn}\tend{N}{\infty}0.\eeq
Then, for each multiplicative function $\bfu\in\cm$, we have
\beq\label{kbsz2}
\frac1N\sum_{n\leq N}a_n\bfu(n)\tend{N}{\infty}0.\eeq\end{Th}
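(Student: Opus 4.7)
I would follow the Daboussi--Delange / K\'atai strategy: extract a sum that is \emph{additive} over small primes from $\sum_{n\leq N}a_n\bfu(n)$, rearrange it into a bilinear form in two primes, and then win by Cauchy--Schwarz plus the hypothesis \eqref{kbsz1}. Set
$$S(N):=\sum_{n\leq N}a_n\bfu(n),\qquad \mathcal{L}(y):=\sum_{p\leq y}\tfrac{1}{p}\sim\log\log y,\qquad \omega_y(n):=\#\{p\leq y:p\mid n\}.$$
Write $C:=\|a\|_\infty<\infty$; note $|\bfu(n)|\leq 1$.

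The first step is to use the Tur\'an--Kubilius inequality $\sum_{n\leq N}(\omega_y(n)-\mathcal{L}(y))^2\ll N\,\mathcal{L}(y)$ together with Cauchy--Schwarz to get
$$\mathcal{L}(y)\,S(N)=\sum_{n\leq N}a_n\bfu(n)\omega_y(n)+O\!\left(N\sqrt{\mathcal{L}(y)}\right).$$
Interchanging the sums on the right and splitting each $n=pm$ according to whether $p\mid m$ or not, multiplicativity of $\bfu$ yields $\bfu(pm)=\bfu(p)\bfu(m)$ on the coprime piece; the remaining $p\mid m$ contribution is bounded by $\sum_{p\leq y}N/p^2=O(N)$. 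Dropping also the coprimality restriction (another $O(N)$ error) gives
$$\mathcal{L}(y)\,S(N)=\sum_{p\leq y}\bfu(p)\sum_{m\leq N/p}a_{pm}\bfu(m)+O\!\left(N\sqrt{\mathcal{L}(y)}\right).$$

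The second step is to bound the double sum $U(y,N):=\sum_{p\leq y}\bfu(p)\sum_{m\leq N/p}a_{pm}\bfu(m)$. Switching order to sum over $m$ first and applying Cauchy--Schwarz in $m$, I obtain
$$|U(y,N)|^2\leq N\sum_{p_1,p_2\leq y}\bfu(p_1)\overline{\bfu(p_2)}\sum_{m\leq N/\max(p_1,p_2)}a_{p_1m}\overline{a_{p_2m}}.$$
The diagonal contribution $p_1=p_2$ is at most $C^2 N\sum_{p\leq y}(N/p)=O(N^2\mathcal{L}(y))$. For the off-diagonal part, the hypothesis \eqref{kbsz1} gives, for every fixed pair of distinct primes $p_1,p_2$, that $\frac{1}{N}\sum_{m\leq N/\max(p_1,p_2)}a_{p_1m}\overline{a_{p_2m}}\to 0$ as $N\to\infty$; since only finitely many pairs with $p_i\leq y$ occur, one obtains, for $y$ fixed,
$$|U(y,N)|^2\leq O\!\left(N^2\mathcal{L}(y)\right)+o_{y\text{ fixed}}(N^2)\quad\text{as }N\to\infty.$$

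Combining the two steps gives
$$|S(N)|^2\ll \frac{N^2}{\mathcal{L}(y)}+\frac{o_{y\text{ fixed}}(N^2)}{\mathcal{L}(y)^2}.$$
A diagonal-style choice $y\to\infty$ slowly with $N$ (first pick $y$ so that $1/\mathcal{L}(y)$ is small, then let $N$ be large depending on $y$) yields $S(N)=o(N)$, which is \eqref{kbsz2}.

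\textbf{Expected difficulty.} The bookkeeping in step one is routine but needs care: because $\bfu$ is only assumed multiplicative (not completely), the decomposition $\bfu(pm)=\bfu(p)\bfu(m)$ holds only on $\gcd(m,p)=1$, and one must verify that both the $p\mid m$ contribution and the loss from dropping the coprimality condition are $O(N)$ (which they are, via $\sum_p 1/p^2<\infty$). The real leverage is in step two: one needs that for each \emph{fixed} bounded set of pairs $(p_1,p_2)$ the correlations are $o(N)$ uniformly over that finite family, which is automatic, and that the diagonal error $O(N^2\mathcal{L}(y))$ is strictly smaller than the target $\mathcal{L}(y)^2\,|S(N)|^2$ -- this is exactly what lets the divergence $\mathcal{L}(y)\to\infty$ (i.e.\ $\sum_p 1/p=\infty$) do the work. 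No quantitative rate on \eqref{kbsz1} is needed, but one must be careful to choose $y=y(N)\to\infty$ slowly enough that the uniformity over the finitely many prime pairs with $p_i\leq y(N)$ still gives $o(N)$ correlations; this is the one place where the argument is not purely algebraic and is the main conceptual step.
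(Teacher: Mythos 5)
Your argument is correct and is precisely the standard K\'atai/Daboussi--Delange argument (Tur\'an--Kubilius to replace $S(N)$ by a sum over $n$ divisible by a small prime, then Cauchy--Schwarz to produce the bilinear correlations $\sum_m a_{p_1m}\overline{a_{p_2m}}$, with the diagonal killed by the divergence of $\sum_p 1/p$); the paper does not prove Theorem~\ref{t:kbsz} itself but only cites \cite{Ka,Bo-Sa-Zi,Ha}, and those sources run exactly this argument. The only bookkeeping points worth noting --- that $\bfu(pm)=\bfu(p)\bfu(m)$ requires $p\nmid m$ and that the resulting errors are $O(N\sum_p p^{-2})=O(N)$, and that the off-diagonal terms involve only the finitely many prime pairs below the fixed $y$ --- are all handled correctly in your write-up.
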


For example, see \cite{Ka}, the criterion applies to the sequences of the form $(e^{iP(n)})$, where $P\in \R[x]$ has at least one irrational coefficient (different from the constant term).

In the context of dynamical systems, we use this criterion for $a_n=f(T^nx)$, $n\geq 1$. Clearly, this leads us to study the behavior of different (prime) powers of a fixed map $T$. We should warn the reader that when applying Theorem~\ref{t:kbsz}, we do not expect to have \eqref{kbsz1} satisfied for all continuous functions, in fact, even in uniquely ergodic systems, in general, it cannot hold for all zero mean functions\footnote{We can easily see that when $Tx=x+\alpha$ is an irrational rotation on $\T=[0,1)$, then, by the Weyl criterion on uniform distribution,~\eqref{kbsz1} is satisfied for all characters (for all $x\in\T$), but there are continuous zero mean functions for which~\eqref{kbsz1} fails \cite{Ku-Le}.} but we need a subset of $C(X)$ which is linearly dense, cf.\ footnote~\ref{f:lindense}.

We will also need the following variation of Theorem~\ref{t:kbsz},  see \cite{Ab-Le-Ru2}:

\begin{Prop} \label{pr:kbsz} Assume that $(a_n)$ is a bounded sequence of complex numbers. Assume, moreover, that
\beq\label{kbsz}
\limsup_{\stackrel{p,q\to \infty}{\text{ different primes}}}\left(\limsup_{N\to\infty}
\left|\frac1N\sum_{n\leq N}a_{pn}\ov{a}_{qn}\right|\right)=0.\eeq
Then, for each multiplicative function $\bfu\colon\N\to\C$, $\bfu\in\cm$, we have
\beq\label{eq:sa2}
\lim_{N\to\infty}\frac1N\sum_{n\leq N}a_n\cdot \bfu(n)=0.\eeq
\end{Prop}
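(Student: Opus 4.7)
The approach is to adapt the proof of the KBSZ criterion (Theorem~\ref{t:kbsz}): the hypothesis \eqref{kbsz} is strictly weaker than \eqref{kbsz1}, but only in that it permits arbitrarily small (rather than zero) correlations for pairs of large distinct primes. I would therefore run the KBSZ argument with primes localized to a range $[P_0,y]$, where $P_0=P_0(\varepsilon)$ is chosen using \eqref{kbsz}. Fix $\varepsilon>0$ and pick $P_0\in\PP$ such that $\limsup_N|\tfrac1N\sum_{n\leq N}a_{pn}\overline{a_{qn}}|<\varepsilon$ for all distinct primes $p,q\geq P_0$; write $S(N):=\sum_{n\leq N}a_n\bfu(n)$ and $A:=\sup_n|a_n|$. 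The target is $\limsup_N|S(N)/N|=O(\sqrt{\varepsilon})$, from which \eqref{eq:sa2} follows by sending $\varepsilon\to 0$.

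After the standard reduction to completely multiplicative $\bfu$ (the contribution from integers divisible by a prime square $p^2$ with $p\leq P_0$ is negligible), I would introduce a free parameter $y$ to be sent to infinity after $N$, together with
$$
\omega_y(n):=\#\{p\in\PP:P_0\leq p\leq y,\ p\mid n\},\qquad L:=\sum_{P_0\leq p\leq y}\tfrac1p.
$$
Mertens' theorem gives $L\to\infty$ as $y\to\infty$; the Tur\'an--Kubilius inequality gives $\sum_{n\leq N}(\omega_y(n)-L)^2\leq CNL$. Switching the order of summation in $\sum_n a_n\bfu(n)\omega_y(n)$ and absorbing the Tur\'an--Kubilius error via Cauchy--Schwarz then yields
$$
L\cdot S(N)=\sum_{P_0\leq p\leq y}\bfu(p)\,c_p+O(AN\sqrt{L}),\qquad c_p:=\sum_{m\leq N/p}a_{pm}\bfu(m).
$$

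The heart of the argument is the Bourgain--Sarnak--Ziegler Cauchy--Schwarz manipulation applied to $\sum_p\bfu(p)c_p$: after dualizing in the inner variable $m$ and expanding the square, one obtains a diagonal contribution (trivially $O(A^2NL)$) and an off-diagonal contribution expressible in terms of the correlations $\sum_{m\leq\min(N/p,N/q)}a_{pm}\overline{a_{qm}}$ for distinct primes $p,q\in[P_0,y]$. By the choice of $P_0$, each such correlation is bounded in absolute value by $\varepsilon\,N/\max(p,q)+o_N(N)$. Combining everything and dividing by $L^2$ should yield
$$
\limsup_{N\to\infty}\left|\tfrac1NS(N)\right|^2\leq C\varepsilon+o_{y\to\infty}(1),
$$
so that the claim follows by letting first $y\to\infty$ and then $\varepsilon\to 0$. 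The principal technical obstacle lies in the last combinatorial step: the off-diagonal sum must be set up so that the weights attached to pairs $p\neq q$ in $[P_0,y]$ are absorbed by the $L^2$ factor in the denominator -- this is exactly the delicate point of the original BSZ proof. The key observation making our variant work is that restricting primes to $[P_0,y]$ instead of $[2,y]$ is harmless, since $L\to\infty$ for any fixed $P_0$; the only change is that the off-diagonal contributions are now $\varepsilon$-small rather than $o(1)$-small, which is precisely what the proof needs.
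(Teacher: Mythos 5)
Your overall strategy -- a KBSZ-type bilinear argument with primes confined to a window $[P_0,y]$, where $P_0=P_0(\varepsilon)$ is supplied by \eqref{kbsz} -- is the right family of argument (the paper gives no proof of Proposition~\ref{pr:kbsz} but refers to \cite{Ab-Le-Ru2}, where it is deduced from the quantitative, finite-range form of the Bourgain--Sarnak--Ziegler lemma). However, the step you yourself flag as delicate is exactly where your bookkeeping fails, and with it the claimed order of limits. After your Cauchy--Schwarz in $m$ (over $m\le N/P_0$) and expansion of the square, the off-diagonal contribution is bounded by
\[
\frac{N}{P_0}\sum_{\substack{p\ne q\in[P_0,y]}}\Big|\sum_{m\le N/\max(p,q)}a_{pm}\ov{a}_{qm}\Big|\;\le\;\frac{2\varepsilon N^2}{P_0}\sum_{p\ne q\le y}\frac1{\max(p,q)},
\]
and $\sum_{p\ne q\le y}1/\max(p,q)\asymp y/(\log y)^2$, which is enormously larger than $L^2P_0$ (recall $L\asymp\log\log y$). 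So after dividing by $L^2N^2$ this term is not $C\varepsilon+o_{y\to\infty}(1)$; it tends to infinity as $y\to\infty$ for fixed $\varepsilon,P_0$. In the original criterion (Theorem~\ref{t:kbsz}) the off-diagonal terms are $o_N(1)$ for a fixed finite set of primes, so their badly distributed weights never matter because $N\to\infty$ is taken with $y$ fixed; once they are merely $\varepsilon$-small, the total weight over pairs does matter, and a single Cauchy--Schwarz over the whole range $[P_0,y]$ loses a factor of order $\sqrt{y/P_0}$, not $\sqrt{L}$. Hence your final display, and the plan ``first $y\to\infty$, then $\varepsilon\to0$'', do not follow from the chain you describe; the difference between ``$\varepsilon$-small'' and ``$o_N(1)$-small'' is precisely not harmless here.

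The repair is to do the bilinear step scale by scale: split $[P_0,y]$ into dyadic blocks $\mathcal{P}_j\subset(2^j,2^{j+1}]$, so that $\max(p,q)$ is comparable to the length $N/2^j$ of the $m$-range; then each block contributes $O(\sqrt{\varepsilon}\,N/j)$ off-diagonally, and summing over blocks gives a total loss of order $\sqrt{\varepsilon}\,L$, not $\sqrt{\varepsilon}$ (this is the $\sqrt{\varepsilon\log(1/\varepsilon)}$-type loss in the quantitative KBSZ statement). Consequently one cannot send $y\to\infty$ for fixed $\varepsilon$: given a target $\delta$, one first fixes $L\asymp\log(1/\delta)$, then takes $\varepsilon$ so small that $\sqrt{\varepsilon}L<\delta$, then $P_0(\varepsilon)$ from \eqref{kbsz} (enlarged at will), then a \emph{finite} $y$ with $\sum_{P_0\le p\le y}1/p\approx L$; the integers with no prime factor in $[P_0,y]$ contribute $O(Ae^{-L}N)$ by Mertens, and \eqref{kbsz} conveniently puts no upper bound on $p,q$, so such a window is available. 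Two smaller points: your ``reduction to completely multiplicative $\bfu$'' is unjustified as stated (integers divisible by $4$ have density $1/4$, so they cannot be discarded), but it is also unnecessary -- since $\bfu(pm)=\bfu(p)\bfu(m)$ unless $p\mid m$, the exceptional terms contribute $O\big(AN\sum_{p\ge P_0}p^{-2}\big)$, which is harmless.
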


\begin{Remark}\label{r:etKBSZ}
In contrast to the KBSZ criterion given by Theorem~\ref{t:kbsz}, condition~\eqref{kbsz} has its ergodic theoretical counterpart -- the property called AOP (see Section~\ref{s:AOP}) which is a measure-theoretic invariant.
\end{Remark}

\section{Chowla conjecture}\label{czesc3} In this section we get into the subject of the Chowla conjecture which is the main motivation for Sarnak's conjecture.
\subsection{Formulation and ergodic interpretation}\label{s:sec31}
The Chowla conjecture deals with  higher order correlations of the M\"obius function,\footnote{As a matter of fact, in \cite{Ch}, it is formulated for the Liouville function. We follow \cite{Sa}.  For a discussion on an equivalence of the Chowla conjecture with $\mob$ and $\lio$, we invite the reader to~\cite{Ra-this}. As shown in \cite{Ma-Ra-Ta}, there are non-pretentious (completely) multiplicative functions for which Chowla conjecture fails. For more information, see the discussion on Elliot's conjecture in \cite{Ma-Ra-Ta}.} that is, the conjecture asserts that
\beq\label{chsfor}
\frac1N\sum_{n\leq N}\mob^{j_0}(n)\mob^{j_1}(n+k_1)\ldots\mob^{j_r}(n+k_r)
\tend{N}{\infty} 0
\eeq
whenever $1\leq k_1<\ldots<k_r$, $j_s\in\{1,2\}$ not all equal to 2, $r\geq0$.\footnote{The Chowla conjecture is rather ``close'' in spirit to the Twin Number Conjecture in the sense that the latter is expressed by $(\ast)\;\sum_{n\leq x}\boldsymbol{\Lambda}(n)\boldsymbol{\Lambda}(n+2)=(2\Pi_2)\cdot x+{\rm o}(x)$, where $\Pi_2=\prod_{p}(1-\frac1{(p-1)^2})=0,66016\ldots$ which can be compared with $\sum_{n\leq x}\mob(n)\mob(n+2)={\rm o}(x)$ which is ``close'' to the Chowla conjecture, see e.g. \cite{Ta1}. A recent development shows that it is realistic to claim that the Chowla conjecture with an error term  of the form ${\rm o}((\log N)^{-A})$ for some $A$ large enough ($A$ depending on the number of shifts of $\mob$ that are considered) implies~$(\ast)$.  (Of course, everywhere $\boldsymbol{\Lambda}$ is a good approximation of $\raz_{\PP}$).

See also \cite{Mu-Va1} for a (conditional) equivalence of $(\ast)$ with $\sum_{n\leq N}\boldsymbol{\Lambda}(n)\mob(n+2)={\rm o}(N)$.
}

We will now explain an ergodic meaning of the Chowla conjecture. Recall that given a dynamical system $(X,T)$ and $\mu\in M(X,T)$, a point $x\in X$
is called {\em generic for} $\mu$ if
$$
\frac1N\sum_{n\leq N}f(T^nx)\tend{N}{\infty}\int_X f\,d\mu$$
for each $f\in C(X)$. Equivalently,
$\frac1N\sum_{n\leq N} \delta_{T^nx}\tend{N}{\infty} \mu$ (we recall that $M(X,T)$ is endowed with the weak$^\ast$ topology which makes it a compact metrizable space). By compactness, each point is {\em quasi-generic} for a certain measure $\nu\in M(X,T)$, i.e.\
$$
\frac1{N_k}\sum_{n\leq N_k}\delta_{T^nx}\tend{k}{\infty}\nu
$$
for a certain subsequence $N_k\to\infty$. Let
\beq\label{kugen}
\text{Q-gen}(x):=\{\nu\in M(X,T) : x\text{ is quasi-generic for }\nu\}.\footnote{We recall that either $x$ is generic or $\text{Q-gen}(x)$ is a connected uncountable set, see Proposition~3.8 in \cite{De-Gr-Si}.}
\eeq

Assume now that we have a finite alphabet $A$. We consider $(A^{\Z},S)$, so called {\em full shift}, or more precisely,  {\em two-sided full shift}, where $A^\Z$ is endowed with the product topology and $S((x_n))=(y_n)$ with $y_n=x_{n+1}$ for each $n\in\Z$. Each $X\subset A^\Z$ that is closed and $S$-invariant yields a {\em subshift}, i.e.\ the dynamical system $(X,S)$. One way to obtain a subshift is to choose $x\in A^\Z$ and consider the closure $X_x$ of the orbit of $x$ via $S$. If $x$ is given as a one-sided sequence, $x\in A^{\N}$, we still might consider
\beq\label{definicjaXx}
X_x:=\{y\in A^\Z : \mbox{each block appearing in $y$ appears in $x$}\}
\eeq
to obtain a two-sided subshift. In case when each block appearing in $x$ reappears infinitely often, $X_x=\ov{\{S^n\ov{x} : n\in\Z\}}$, for some $\ov{x}$ for which $\ov{x}(j)=x(j)$ for each $j\geq1$ but, in general, there is no such a good $\ov{x}$.
Moreover, we will let ourselves speak about a one-sided sequence $x$ to be generic or quasi-generic for a measure $\nu\in M(X_x,S)$.

Now take $A=\{-1,0,1\}$. For each subshift $X\subset\{-1,0,1\}^\Z$ let $\theta\in C(X)$ be defined as
\beq\label{theta}
\theta(y)=y(0),\;y\in X.\eeq
Note that directly from the Stone-Weierstrass theorem we obtain the following.

\begin{Lemma}\label{linden}  The linear subspace generated by the constants and the family $$\{\theta^{j_0}\circ S^{k_0}\cdot\theta^{j_1}\circ S^{k_1}\cdot\ldots
\cdot\theta^{j_r}\circ S^{k_r} : k_i\in\Z, j_i\in\{1,2\},i=0,1,\ldots,r, r\geq0 \}$$ of continuous functions
is an algebra of functions separating points, hence it is dense in $C(X)$.\end{Lemma}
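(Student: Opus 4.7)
The plan is a direct application of the Stone--Weierstrass theorem. Write $\ca$ for the linear span appearing in the statement and $\cf$ for the family of products $\theta^{j_0}\circ S^{k_0}\cdot\theta^{j_1}\circ S^{k_1}\cdots\theta^{j_r}\circ S^{k_r}$ indexed there. Since $X$ is compact (being closed in the compact product space $\{-1,0,1\}^{\Z}$) and all elements of $\cf$ are real-valued continuous functions, and since $\raz_X\in\ca$, it will suffice to verify that $\ca$ is a subalgebra of $C(X,\R)$ and that it separates points of $X$.

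The one step that requires a small remark is closure under multiplication. A product of two elements of $\cf$ is, after grouping the factors that share a common shift, of the form $\prod_{i}\theta^{a_i}\circ S^{k_i}$ with the $k_i$ distinct and $a_i\geq 1$. Because every coordinate of every $y\in X$ belongs to $\{-1,0,1\}$, one has on $X$ the pointwise identities $\theta^{a}=\theta$ when $a$ is odd and $\theta^{a}=\theta^{2}$ when $a$ is even; each factor therefore collapses to one of $\theta\circ S^{k_i}$ or $\theta^{2}\circ S^{k_i}$, and the whole product lies back in $\cf$. Together with the fact that scalar multiples and sums stay in $\ca$, and that multiplying by the constant~$1$ changes nothing, this shows $\ca$ is a unital subalgebra of $C(X,\R)$.

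Separation of points is immediate from the very first layer of $\cf$: if $y\neq y'$ in $X$, choose $k\in\Z$ with $y(k)\neq y'(k)$; then $\theta\circ S^{k}\in\cf\subset\ca$ takes the distinct values $y(k)$ and $y'(k)$ in $\{-1,0,1\}$. The Stone--Weierstrass theorem then yields that $\ca$ is uniformly dense in $C(X)$. There is no genuine obstacle; the argument is a bookkeeping exercise resting on the elementary identity $t^{3}=t$ valid on the alphabet $\{-1,0,1\}$, which is precisely what makes the restriction $j_i\in\{1,2\}$ (rather than $j_i\in\N$) sufficient.
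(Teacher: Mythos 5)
Your proof is correct and follows exactly the route the paper intends: the paper simply asserts the lemma ``directly from the Stone--Weierstrass theorem,'' and your argument supplies the only nontrivial bookkeeping, namely that the identity $t^{3}=t$ on the alphabet $\{-1,0,1\}$ makes the family closed under multiplication with exponents restricted to $\{1,2\}$, while $\theta\circ S^{k}$ already separates points. Nothing further is needed.
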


The subshift $(X_{\mob},S)$
is called the {\em M\"obius system} and  $X_{\mob^2}\subset\{0,1\}^\Z\subset\{-1,0,1\}^\Z$  is the {\em square-free system}.\footnote{The point $\mob^2$ is recurrent, so there is a ``completion'' of $\mob^2$ to a two-sided sequence generating the same subshift.} Note that $s\colon (z(n))\mapsto (z(n)^2)$ will settle a factor map between the M\"obius system and the square-free system.
The point $\mob^2$ is a generic point for so called {\em Mirsky measure} $\nu_{\mob^2}$ \cite{Ce-Si,Mi} (see Section~\ref{invme}). In other words, there are frequencies of blocks on $\mob^2$:
for each block $B\in\{0,1\}^\ell$, the following limit exists:
$$
\lim_{N\to\infty}\frac1N\left|\{1\leq n\leq N-\ell :  \mob^2(n,n+\ell-1)=B\}\right|=:\nu_{\mob^2}(B).$$
We can now consider the relatively independent extension\footnote{Consider  Bernoulli measure $B(1/2,1/2)$ on $\{-1,1\}^{\Z}$ and Mirsky measure $\nu_{\mob^2}$ on $\{0,1\}^{\Z}$. Measure $\widehat{\nu}_{\mob^2}$ is the image of the product measure $B(1/2,1/2)\ot\nu_{\mob^2}$ via the map
$$(x,y)\mapsto ((x(n)\cdot y(n)))_{n\in\Z}\in\{-1,0,1\}^{\Z}.$$} $\widehat{\nu}_{\mob^2}$ of $\nu_{\mob^2}$ which is the measure on $s^{-1}(X_{\mob^2})\subset\{-1,0,1\}^{\Z}$ given by the following condition: for each block $C\in\{-1,0,1\}^\ell$, we have
$$
\widehat{\nu}_{\mob^2}(C):=\frac1{2^k}\nu_{\mob^2}(C^2),$$
where $C^2$ is obtained from $B$ by squaring on each coordinate and $k$ is the number of~1 in $C^2$.
A straightforward computation shows that
\beq\label{calki}
\int_{\{-1,0,1\}^\Z}\theta^{j_0}\circ S^{k_0}\cdot\theta^{j_1}\circ S^{k_1}\cdot\ldots
\cdot\theta^{j_r}\circ S^{k_r}\,d\widehat{\nu}_{\mob^2}=0\eeq
whenever $\{j_0,\ldots,j_r\}\neq\{2\}$.
On the other hand, in view of Lemma~\ref{linden}, the values of integrals
$$
\int_{\{-1,0,1\}^\Z}\theta^{2}\circ S^{k_0}\cdot\theta^{2}\circ S^{k_1}\cdot\ldots
\cdot\theta^{2}\circ S^{k_r}\,d\widehat{\nu}_{\mob^2}$$
for all  $k_i\in\Z$ and $r\geq0$ entirely determine the Mirsky measure $\nu_{\mob^2}$.

\begin{Cor}\label{Chg}
The Chowla conjecture holds if and only if $\mob$ is a generic point for $\widehat{\nu}_{\mob^2}$.\end{Cor}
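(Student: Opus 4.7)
The plan is to exploit Lemma~\ref{linden}: genericity of $\mob$ for $\widehat{\nu}_{\mob^2}$ is equivalent to convergence
\[
\frac1N\sum_{n\leq N}F(S^n\mob)\tend{N}{\infty}\int F\,d\widehat{\nu}_{\mob^2}
\]
for $F$ running over the constants and the family $F=\theta^{j_0}\circ S^{k_0}\cdot\theta^{j_1}\circ S^{k_1}\cdots \theta^{j_r}\circ S^{k_r}$ with $k_i\in\Z$ and $j_i\in\{1,2\}$, since these functions are linearly dense in $C(X_{\mob})$ and the Birkhoff averages are uniformly bounded by~$1$. Unwinding the definitions $\theta(y)=y(0)$ and $(\theta\circ S^k)(S^n\mob)=\mob(n+k)$ gives
\[
F(S^n\mob)=\mob^{j_0}(n+k_0)\mob^{j_1}(n+k_1)\cdots\mob^{j_r}(n+k_r),
\]
so the Birkhoff averages along $\mob$ of these $F$'s are, up to a boundary error of order $\max_i|k_i|/N$ coming from the shift $n\mapsto n+k_0$, precisely the higher-order correlation sums appearing in~\eqref{chsfor}.

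I would then split into two cases according to whether all $j_s$ equal~$2$. If not all $j_s=2$, then~\eqref{calki} gives $\int F\,d\widehat{\nu}_{\mob^2}=0$, and the Chowla conjecture~\eqref{chsfor} states exactly that the corresponding Birkhoff average tends to~$0$. If, on the other hand, every $j_s=2$, then $F$ factors through the squaring map $s$, i.e.\ $F=G\circ s$, where $G$ is the analogous product of $\theta$'s and $S$'s on the square-free subshift; hence $F(S^n\mob)=G(S^n\mob^2)$, while the construction of $\widehat{\nu}_{\mob^2}$ as the relatively independent extension of $\nu_{\mob^2}$ gives $\int F\,d\widehat{\nu}_{\mob^2}=\int G\,d\nu_{\mob^2}$. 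Convergence of the Birkhoff average is therefore the genericity of $\mob^2$ for the Mirsky measure (cited in the excerpt). Constants give the trivial case. Combining the three cases proves that the Chowla conjecture implies genericity of $\mob$ for $\widehat{\nu}_{\mob^2}$.

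For the converse, I would assume $\mob$ is generic for $\widehat{\nu}_{\mob^2}$ and apply this to $F$ with not all $j_s=2$, taking $k_0=0$ and $0<k_1<\ldots<k_r$ as in~\eqref{chsfor}. Genericity combined with~\eqref{calki} yields precisely~\eqref{chsfor}, so the Chowla conjecture holds.

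The main (and essentially only) obstacle is bookkeeping: correctly identifying $\theta^{j_i}\circ S^{k_i}$ evaluated on $S^n\mob$ with the $n$th term of a Chowla-type correlation sum, reconciling the convention $k_0=0$ in~\eqref{chsfor} with arbitrary $k_0\in\Z$ in Lemma~\ref{linden} via a harmless reindexing, and verifying that $\widehat{\nu}_{\mob^2}$ pushes forward to $\nu_{\mob^2}$ under squaring. No analytic input beyond generic\-ity of $\mob^2$ for $\nu_{\mob^2}$ and formula~\eqref{calki} is required.
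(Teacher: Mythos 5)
Your proof is correct and follows essentially the same route as the paper's: reduce genericity to convergence of Birkhoff averages on the linearly dense family from Lemma~\ref{linden}, match the mixed-exponent integrals to the Chowla correlations via~\eqref{calki}, and dispose of the all-squares case via genericity of $\mob^2$ for the Mirsky measure. You are in fact slightly more explicit than the paper on two points it leaves implicit --- the all-$j_s=2$ case and the converse implication --- and your direct reindexing $n\mapsto n+k_0$ is just a concrete substitute for the paper's appeal to $S$-invariance of the limit measure.
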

\begin{proof} We consider any extension of $\mob$ to a two-sided sequence (for example we set $\mob(n)=0$ for each $n\leq0$).
Suppose that
\beq\label{pom1}
	\frac1{N_k}\sum_{n\leq N_k}\delta_{S^n\mob}\tend{k}{\infty} \kappa.
\eeq
In order to get $\kappa=\widehat{\nu}_{\mob^2}$, in view of Lemma~\ref{linden}, we need to show that
$$\int_{\{-1,0,1\}^\Z}\theta^{j_0}\circ S^{k_0}\cdot\theta^{j_1}\circ S^{k_1}\cdot\ldots
\cdot\theta^{j_r}\circ S^{k_r}\,d\kappa=0$$
for any choice of integers $k_0<k_1<\ldots<k_r$,  $\{j_0,j_1,\ldots,j_r\}\neq\{2\}$ and $r\geq0$. Since the measure $\nu$ is $S$-invariant, it is the same as to show that
$$\int_{\{-1,0,1\}^\Z}\theta^{j_0}\circ \cdot\theta^{j_1}\circ S^{k_1-k_0}\cdot\ldots
\cdot\theta^{j_r}\circ S^{k_r-k_0}\,d\kappa=0.$$
Now, we have $1\leq k_1-k_0<\ldots<k_r-k_0$ and the result follows from~\eqref{chsfor} and~\eqref{pom1}.
\end{proof}

The Chowla conjecture for $r=0$ is just the PNT, however, it remains open even for $r=1$. As in \cite{Sa}, we could consider a weaker version of the Chowla conjecture. Namely, we say that $\mob$ satisfies the {\em topological Chowla conjecture} if $X_{\mob}=s^{-1}(X_{\mob^2})$.

\begin{Remark}\label{r:ratCh} Note that \eqref{chsfor} holds if
$$|\{0\leq t\leq r: j_t=1\}|=1.$$
Indeed, it is not hard to see that if $t_0$ is the only index for which $j_{t_0}=1$ then the sequence $a(n):=\prod_{t\neq t_0}\mob^2(n+k_t)$ is rational. Hence, $\mob$ is orthogonal to $a(\cdot)$, cf.\ footnote~\ref{f:rational}.\end{Remark}

\subsection{The Chowla conjecture implies Sarnak's conjecture}
Assume that $(X,T)$ is a topological system.  Following \cite{Kam,We9} a point $x\in X$ is called {\em completely deterministic} if for each measure $\nu\in \text{Q-gen}(x)$ (see~\eqref{kugen}),  the measure theoretic dynamical system $(X,\cb(X),\nu,T)$ has zero Kolmogorov-Sinai entropy: $h_{\nu}(T)=0$. Of course, if the topological entropy of $T$ is zero, then by the Variational Principle, each $x\in X$ is completely deterministic. On the other hand, $(X_{\mob^2},S)$ has positive topological entropy \cite{Ab-Le-Ru1,MR3430278,Sa} and $\mob^2\in X_{\mob^2}$ is completely deterministic, see \cite{Ce-Si,Ab-Ku-Le-Ru}.

Let $f\in C(X)$ and $x\in X$ be completely deterministic. We have
$$
\frac1N\sum_{n\leq N}f(T^nx)\mob(n)
=\int_{X\times X_{\mob}}
(f\otimes \theta)d\left(\frac1N\sum_{n\leq N}\delta_{(T\times S)^n(x,\mob)}\right).
$$
We can assume that
$$
\frac1{N_k}\sum_{n\leq N_k}\delta_{(T\times S)^n(x,\mob)}\tend{k}{\infty} \rho\text{ in the space }M(X\times X_{\mob},T\times S).
$$
Under the Chowla conjecture, the projection of $\rho$ on $X_{\mob}$ is equal to $\widehat{\nu}_{\mob^2}$ (since, by Corollary~\ref{Chg}, $\mob$ is a generic point for  $\widehat{\nu}_{\mob^2}$), while the projection of $\rho$ on $X$ is some $T$-invariant measure  $\kappa$ and $h_\kappa(T)=0$ (since $x$ is completely deterministic). Note that
$\rho$ is a joining\footnote{Recall that if $R_i$ is an automorphism of a probability standard Borel space $(Z_i,\cd_i, \nu_i)$, $i=1,2$, then each $R_1\times R_2$-invariant measure $\lambda$ on $(Z_1\times Z_2,\cd_1\ot\cd_2)$ having the projections $\nu_1$ and $\nu_2$, respectively is called a {\em joining} of $R_1$ and $R_2$: we write $\lambda\in J(R_1,R_2)$. If $R_1,R_2$ are ergodic then the set $J^e(R_1,R_2)$ of ergodic joinings between $R_1$ and $R_2$ is non-empty. A fundamental notion here is the {\em disjointness} (in sense of Furstenberg)  \cite{Fu}: $R_1$ and $R_2$ are disjoint if $J(R_1,R_2)=\{\nu_1\ot\nu_2\}$: we write $R_1\perp R_2$. For example, zero entropy automorphisms are disjoint with automorphisms having completely positive entropy (Kolmogorov automorphisms) and also a relativized version of this assertion holds.} of the (measure-theoretic) dynamical systems  $(X,\kappa,T)$ and $(X_{\mob},\widehat{\nu}_{\mob^2},S)$. Moreover, the latter automorphism has the so called relative Kolmogorov property with respect to the factor $(X_{\mob^2},\nu_{\mob^2},S)$. We then consider the restriction of the joining $\rho|_{X\times X_{\mob^2}}$ and $\rho|_{X_{\mob}}$ to obtain two systems that have a common factor (namely $X_{\mob^2}$) relatively to which the first one has zero entropy and the second being relatively Kolmogorov. Since the function $\theta$ is orthogonal to $L^2(X_{\mob^2},\nu_{\mob^2})$, the relative disjointness theorem on zero entropy and Kolmogorov property yields the following (see also Remark~\ref{r:chs2}):
\begin{Th}[\cite{Ab-Ku-Le-Ru}]\label{ChtoS} The Chowla conjecture implies
$$
\frac1N\sum_{n\leq N}f(T^nx)\mob(n)\to0$$
for each dynamical system $(X,T)$, $f\in C(X)$ and $x\in X$ completely deterministic.
In particular, the Chowla conjecture implies Sarnak's conjecture.\footnote{We will see later that some special cases of validity of convergence in~\eqref{chsfor} also have their ergodic interpretations and they imply M\"obius disjointness for restricted classes of dynamical systems of zero entropy; in particular, see Corollary~\ref{c:mn3} and Corollary~\ref{c:logsarnaksing}.}
\end{Th}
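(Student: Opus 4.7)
The plan is to exploit the weak$^*$ topology together with the Chowla input (via Corollary~\ref{Chg}) and a relativized disjointness theorem. First I would rewrite the sum as
\[
\frac{1}{N}\sum_{n\leq N}f(T^nx)\mob(n)=\int_{X\times X_{\mob}} f\otimes \theta\,d\nu_N,\qquad \nu_N:=\frac{1}{N}\sum_{n\leq N}\delta_{(T\times S)^n(x,\mob)},
\]
and observe that by weak$^*$ compactness of $M(X\times X_{\mob},T\times S)$ it suffices to show that $\int f\otimes\theta\,d\rho=0$ for every limit point $\rho$ of $(\nu_N)$: if every subsequential limit of the integrals is zero, the sequence itself converges to zero.

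Next I would identify the two marginals of $\rho$. The projection onto $X$ is some $\kappa\in\text{Q-gen}(x)$, hence $h_{\kappa}(T)=0$ because $x$ is completely deterministic. The projection onto $X_{\mob}$ belongs to $\text{Q-gen}(\mob)$, and under the Chowla conjecture Corollary~\ref{Chg} forces this marginal to equal $\widehat{\nu}_{\mob^2}$. So $\rho$ is a joining of the zero-entropy system $(X,\cb(X),\kappa,T)$ with $(X_{\mob},\widehat{\nu}_{\mob^2},S)$.

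The decisive step is to exploit the structure of $\widehat{\nu}_{\mob^2}$. By construction it is the relatively independent extension of the Mirsky system $(X_{\mob^2},\nu_{\mob^2},S)$ by the i.i.d.\ fair sign on the support, so the extension $s\colon X_{\mob}\to X_{\mob^2}$ is relatively Bernoulli, hence relatively Kolmogorov, over the Mirsky factor; moreover $\EE_{\widehat{\nu}_{\mob^2}}[\theta\mid X_{\mob^2}]=0$, because conditionally on the square-free pattern the nonzero coordinates are independent uniform $\pm 1$. The relativized Furstenberg disjointness theorem (zero entropy versus the Kolmogorov class, relativized over a common factor) tells us that any joining of a zero-entropy system with a relatively Kolmogorov extension is relatively independent over the common base. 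Applied to $\rho$, identifying the Mirsky factor of $X_{\mob}$ as the common base (pulled back through $\mathrm{id}\times s$), we conclude
\[
\int f\otimes\theta\,d\rho=\int \EE_{\rho}[f\mid X_{\mob^2}]\cdot \EE_{\widehat{\nu}_{\mob^2}}[\theta\mid X_{\mob^2}]\,d\rho=0,
\]
which finishes the proof; the Sarnak implication follows because the zero-entropy hypothesis on $(X,T)$ makes every $x\in X$ completely deterministic by the Variational Principle.

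The main obstacle I expect is not the final calculation, which is short, but the careful invocation of the \emph{relative} disjointness theorem: one must verify that the extension $X_{\mob}\to X_{\mob^2}$ is genuinely relatively Kolmogorov with respect to $\widehat{\nu}_{\mob^2}$ (this reduces to the i.i.d.\ structure of the fiber signs), and that the Mirsky $\sigma$-algebra is honestly a common factor of the two sides of the joining $\rho$, so that the relative-independence conclusion applies. Everything else is bookkeeping given Corollary~\ref{Chg} and complete determinism.
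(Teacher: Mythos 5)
Your proposal is correct and follows essentially the same route as the paper: rewrite the averages as integrals against empirical measures on $X\times X_{\mob}$, use Corollary~\ref{Chg} to identify the $X_{\mob}$-marginal of any limit joining as $\widehat{\nu}_{\mob^2}$, note the $X$-marginal has zero entropy by complete determinism, and conclude via the relative disjointness theorem (zero entropy versus relative Kolmogorov over the common Mirsky factor) together with $\EE_{\widehat{\nu}_{\mob^2}}[\theta\mid X_{\mob^2}]=0$. The points you flag as requiring care (the relatively Bernoulli, hence relatively Kolmogorov, structure of the fiber signs and the identification of the common factor) are exactly the ingredients the paper invokes.
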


\begin{Remark}\label{r:chs1} It is also proved in \cite{Ab-Ku-Le-Ru} that this seemingly stronger statement of the validity of Sarnak's conjecture at completely deterministic points is in fact equivalent to the M\"obius disjointness of all zero entropy systems.\end{Remark}

\begin{Remark}\label{r:chs2}A word for word repetition of the above proof\footnote{The above proof was already suggested by Sarnak in \cite{Sa}.} yields the same result when we replace $\mob$ by another generic point of $\widehat\nu_{\mob^2}$ in which we control the relative Kolmogorov property over the maximal factor with zero entropy, so called Pinsker factor. In particular, we can replace $\mob$ by $\lio$ (for which the Pinsker factor will be just the one-point dynamical system).

As a matter of fact, it is expected that each aperiodic real-valued multiplicative function satisfies the Chowla type result (and hence satisfies the Sarnak type result), see the conjectures by Frantzikinakis and Host formulated after Theorem~\ref{t:frho2}.\end{Remark}

\begin{Remark}\label{r:chs3} The original proof of Sarnak of the implication ``Chowla conjecture $\Rightarrow$ Sarnak's conjecture'' used some combinatorial arguments and probabilistic methods, see \cite{Ta1}.\end{Remark}

Sarnak's conjecture~\eqref{sc1} is formulated for the M\"obius function. But of course one can consider other multiplicative functions.\footnote{If M\"obius disjointness in a dynamical system is shown through the KBSZ criterion then we obtain orthogonality with respect to all multiplicative functions.} Below, we show that if we use the Liouville function then nothing changes.

\begin{Cor}\label{c:equiv}
Sarnak's conjecture with respect to $\mob$ is equivalent to Sarnak's conjecture with respect to $\lio$.\end{Cor}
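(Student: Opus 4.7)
The plan is to rely on the pair of elementary identities
\begin{equation*}
\lio(n)=\sum_{d^2\mid n}\mob(n/d^2), \qquad \mob(n)=\sum_{d^2\mid n}\mob(d)\,\lio(n/d^2),
\end{equation*}
valid for every $n\in\N$. The first is a routine consequence of $\mob=\lio\cdot\mob^2$ together with the unique factorization $n=d^2 m$ with $m$ square-free; in terms of Dirichlet series, it amounts to the assertion that the Dirichlet series of $\lio$ is $\zeta(2s)/\zeta(s)$, cf.\ Corollary~\ref{odwrot}. The second identity is its Dirichlet inverse, obtained by inverting the series of the ``squares'' indicator $\zeta(2s)$.

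To deduce Sarnak's conjecture for $\lio$ from Sarnak's conjecture for $\mob$, I would fix a zero entropy topological system $(X,T)$, a function $f\in C(X)$ and a point $x\in X$, and rewrite
\begin{equation*}
\frac1N\sum_{n\le N}f(T^nx)\lio(n)=\frac1N\sum_{d\le\sqrt N}\sum_{m\le N/d^2}f\bigl((T^{d^2})^m x\bigr)\mob(m).
\end{equation*}
Since $(X,T^{d^2})$ has zero topological entropy for every $d\ge1$, Sarnak's conjecture applied to $\mob$ in each system $(X,T^{d^2})$ gives that the inner sum is ${\rm o}(N/d^2)$ as $N\to\infty$, for each fixed $d$. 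Given $\vep>0$, I would choose $D$ with $\|f\|\sum_{d>D}d^{-2}<\vep$; the tail $d>D$ is bounded trivially by $\|f\|N/d^2$ and so contributes at most $\vep$, while the head (the finitely many $d\le D$) contributes ${\rm o}(1)$ by the hypothesis. Letting $\vep\to0$ gives the desired convergence to~$0$.

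The reverse implication is entirely parallel: invoking the second identity and Sarnak's conjecture for $\lio$ applied to each $(X,T^{d^2})$, one obtains
\begin{equation*}
\frac1N\sum_{n\le N}f(T^nx)\mob(n)=\frac1N\sum_{d\le\sqrt N}\mob(d)\sum_{m\le N/d^2}f\bigl((T^{d^2})^m x\bigr)\lio(m),
\end{equation*}
and the same head/tail split (using now $|\mob(d)|\le1$) concludes. I do not anticipate any conceptual obstacle; the only point to verify is the uniform-in-$d$ tail bound, which is immediate from absolute convergence of $\sum_d d^{-2}$.
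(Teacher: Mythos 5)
Your proof is correct. The first direction (M\"obius $\Rightarrow$ Liouville) is essentially identical to the paper's argument: the identity $\lio(n)=\sum_{d^2\divides n}\mob(n/d^2)$, the observation that zero entropy is preserved under taking the powers $T^{d^2}$, and the head/tail split using $\sum_d d^{-2}<\infty$ (with the quantifiers handled as you do: fix $\vep$, truncate at $D$, then let $N\to\infty$ for the finitely many remaining $d$). The reverse direction, however, is where you genuinely depart from the paper. You use the Dirichlet-inverse identity $\mob(n)=\sum_{d^2\divides n}\mob(d)\lio(n/d^2)$ (which is correct: the extra factor $\mob(d)$ is bounded, so the same tail estimate applies) and run the argument symmetrically, again only needing Liouville disjointness of the zero entropy systems $(X,T^{d^2})$. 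The paper instead writes $\mob=\lio\cdot\mob^2$ pointwise and works in the product system $X\times X_{\mob^2}$: it invokes the nontrivial facts that $\mob^2$ is a completely deterministic point (the Mirsky measure/zero entropy result of Cellarosi--Sinai) and that Sarnak's conjecture for $\lio$ self-improves to orthogonality of $\lio$ to $f(T^nx)$ at \emph{all} completely deterministic points $x$ (Theorem~\ref{ChtoS} with Remarks~\ref{r:chs1} and~\ref{r:chs2}, from \cite{Ab-Ku-Le-Ru}), applied to the completely deterministic point $(x,\mob^2)$ and the function $f\ot\theta$. Your route is more elementary and self-contained -- it needs no ergodic input beyond closure of zero entropy under powers -- while the paper's route showcases the completely-deterministic-points machinery developed in the survey and yields the slightly more general principle that multiplying an observable by a completely deterministic $\{0,1\}$-sequence (here $\mob^2$) preserves orthogonality, at the cost of relying on the deeper transfer result of \cite{Ab-Ku-Le-Ru}.
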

\begin{proof}
Let us recall the basic relation between these two functions: $\lio(n)=\sum_{d^2\divides n}\mob(n/d^2)$.

Assume that $(X,T)$ is a dynamical system with $h(T)=0$. As the zero entropy class is closed under taking powers, we assume M\"obius disjointness for all powers of $T$.
Then
\begin{align*}
\frac1N\sum_{n\leq N}f(T^nx)\lio(n)&=\frac1N\sum_{n\leq N} f(T^nx)\left( \sum_{d^2 \divides n}\mob(n/d^2)\right)\\
&=\frac1N\sum_{n\leq N}\sum_{d^2 \divides n}\mob(n/d^2)f((T^{d^2})^{n/d^2}x)\\
&=\sum_{d\leq\sqrt{N}}\frac1{d^2}\cdot\frac1{N/d^2}\sum_{n\leq N/d^2}\mob(n)f((T^{d^2})^nx).
\end{align*}
Take $\vep>0$ and select $M\geq1$ so that $\sum_{d\geq M}\frac1{d^2}<\vep$.
Consider $T,T^2,T^3,\ldots, T^M$. We have
$$
\left|\frac1N\sum_{n\leq N}f(T^{kn}x)\mob(n)\right|<\vep$$
for all $k=1,\ldots, M$ whenever $N\geq N_0$. It follows that
$$
\left|\frac1{N/d^2}\sum_{n\leq N/d^2}\mob(n)f(T^{d^2n}x)\right|<\vep$$
for all $d=1,\ldots,M$ if $N>MN_0$. Otherwise we estimate such a sum by $\|f\|_\infty$.

To obtain the other direction, we first recall that $\mob^2$ is a completely deterministic point. Then use Theorem~\ref{ChtoS} for $\lio$ (see Remark~\ref{r:chs2}),
write $\lio(n)\mob^2(n)=\mob(n)$ for each $n\geq1$ and we obtain
$$
\frac1N\sum_{n\leq N} f(T^nx)\mob^2(n)\lio(n)=\frac1N\sum_{n\leq N}(f\otimes\theta)((T\times S)^n(x,\mob^2))\lio(n)\to0$$
as the point $(x,\mob^2)$ is completely deterministic.\end{proof}

\subsection{The logarithmic versions of Chowla and Sarnak's conjectures} \label{s:TAO} An intriguing problem arises whether the Chowla and Sarnak's conjecture are equivalent. An intuition from ergodic theory
would say that this is rather not the case as the class of systems that are disjoint (in the Furstenberg sense) from all zero entropy measure-theoretic systems is the class of Kolmogorov automorphisms and not only Bernoulli automorphisms (and a relative version of this result persists).\footnote{If we consider general sequences $z\in\{-1,0,1\}^{\N}$ then we can speak about the Sarnak and Chowla properties on a more abstract level: for example the Chowla property of $z$ means~\eqref{chsfor} with $\mob$ replaced by $z$. See Example~5.1 and Remark~5.3 in \cite{Ab-Ku-Le-Ru} for sequences  orthogonal to all deterministic sequences but not satisfying the Chowla property. However,  arithmetic functions in these examples are not multiplicative.

However, an analogy between disjointness results in ergodic theory and disjointness of sequences is sometimes accurate. For example, a measure-theoretic dynamical system has zero entropy if and only if it is disjoint with all Bernoulli automorphisms. As pointed out in \cite{Ab-Ku-Le-Ru} (Prop. 5.21), a sequence $t\in\{-1,1\}^\N$ is completely deterministic if and only if it is disjoint with any sequence $z\in\{-1,0,1\}^{\N}$ satisfying the Chowla property.}

From that point of view a recent remarkable result of Terence Tao \cite{Ta4} about the equivalence of logarithmic versions of the Chowla and Sarnak's conjectures is quite surprising. We will formulate some versions\footnote{See Remark 1.9. Also, in \cite{Ta4} the Liouville function $\lio$ is considered, see page~2 in \cite{Ta4} how to replace $\lio$ by $\mob$.} of three (out of five)
conjectures from \cite{Ta4}.

\vspace{2ex}

\noindent
{\bf Conjecture A:}  We have
$$\frac1{\log N}\sum_{n\leq N}\frac{\mob^{j_0}(n)\mob^{j_1}(n+k_1)\ldots\mob^{j_r}(n+k_r)}n
\tend{N}{\infty} 0
$$
whenever $1\leq k_1<\ldots<k_r$, $j_s\in\{1,2\}$ not all equal to 2, $r\geq0$.

\vspace{2ex}
\begin{Remark}\label{r:fromP}
It should be noted that passing to such logarithmic averages moves one away from questions about primes, twin primes and subtleties such as the parity problem. For example, the statement $\sum_{n\leq N}\frac{\mob(n)}n={\rm o}(\log N)$ is easy to establish (in fact, $\left|\sum_{n\leq N}\frac{\mob(n)}n\right|\leq1$), while the PNT is equivalent to much stronger statement $\sum_{n=1}^\infty\frac{\mob(n)}n=0$ (as conditionally convergent series).

On the other hand, the logarithmically averaged Chowla conjecture implies that all ``admissible'' configurations do appear on $\mob$, see Corollary~\ref{c:topCh} below (the topological Chowla conjecture for $\lio$ implies that all blocks of $\pm1$ appear in $\lio$).
\end{Remark}

\vspace{2ex}

\noindent
{\bf Conjecture B:} We have
$$\frac1{\log N}\sum_{n\leq N}\frac{f(T^nx)\mob(n)}n\tend{N}{\infty} 0$$
whenever $(X,T)$ is a topological system of zero topological entropy, $f\in C(X)$ and $x\in X$.

\vspace{2ex}

To formulate the third conjecture, we need to recall the definition of a nilrotation.  Let $G$ be a connected, simply connected Lie group and $\Gamma\subset G$ a lattice (a discrete, cocompact subgroup).
For any $g_0\in G$ we define $T_{g_0}(gH):=g_0gH$. Then the topological system $(G/\Gamma, T_{g_0})$ is called a {\em nilrotation}.

\vspace{2ex}

\noindent
{\bf Conjecture C:}
Let $f\in C(G/\Gamma)$ be Lipschitz continuous and $x_0\in G$. Then (for $H\leq N$)
$$
\sum_{n\leq N}\frac{\sup_{g\in G}\left|\sum_{h\leq H}f(T_g^{h+n}(x_0\Gamma))\mob(n+h)\right|}n={\rm o}(H\log N).
$$

\begin{Th}[\cite{Ta4}]\label{logChS}
Conjectures A, B and C are equivalent.\end{Th}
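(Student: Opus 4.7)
My plan is to prove the two routine directions $A\Rightarrow B$ and $B\Rightarrow C$, and then attack the genuinely new implication $C\Rightarrow A$, where I expect the real obstacle.

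For $A\Rightarrow B$, I would run the proof of Theorem~\ref{ChtoS} essentially verbatim, with Cesàro averages $\tfrac{1}{N}\sum_{n\le N}$ replaced throughout by logarithmic averages $\tfrac{1}{\log N}\sum_{n\le N}\tfrac{1}{n}$. The first step is the logarithmic analogue of Corollary~\ref{Chg}: Conjecture $A$ says exactly that $\mob$ is \emph{logarithmically generic} for $\widehat{\nu}_{\mob^2}$, by the same computation as in Corollary~\ref{Chg} applied to $\tfrac{1}{\log N_k}\sum_{n\le N_k}\tfrac{1}{n}\delta_{S^n\mob}$ and the algebra of test functions from Lemma~\ref{linden}. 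Then, given a zero topological entropy system $(X,T)$, $f\in C(X)$ and $x\in X$, I pass to a subsequence along which the logarithmic empirical measures of $(x,\mob)$ on $X\times X_{\mob}$ converge to some $\rho$. Its $X_\mob$-marginal is $\widehat{\nu}_{\mob^2}$; its $X$-marginal $\kappa$ has zero entropy by the Variational Principle (which survives for logarithmic averages since logarithmic quasi-generic measures are ordinary invariant measures). Relative disjointness of zero entropy from the relatively Kolmogorov extension $(X_\mob,\widehat{\nu}_{\mob^2},S)\to(X_{\mob^2},\nu_{\mob^2},S)$, combined with $\theta\perp L^2(X_{\mob^2},\nu_{\mob^2})$, then forces $\int f\otimes\theta\,d\rho=0$, which is $B$.

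For $B\Rightarrow C$, each nilrotation $(G/\Gamma,T_{g_0})$ is uniquely ergodic with zero topological entropy, so Conjecture $B$ gives pointwise logarithmic Möbius disjointness. The remaining task is to absorb the short-interval parameter $h\in[1,H]$ and the supremum over $g\in G$. Using Leibman--Green--Tao equidistribution for polynomial orbits, the short inner sum $\sum_{h\le H}f(T_g^{h+n}(x_0\Gamma))\mob(n+h)$ can be rewritten as a correlation of $\mob$ with a Lipschitz observable on an enlarged nilmanifold whose complexity is controlled in $H$ and $\|f\|_{\mathrm{Lip}}$. The supremum over $g$ is handled by an $\varepsilon$-net compactness argument on $G/\Gamma$, and unique ergodicity transfers pointwise convergence to uniform in the base point.

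The hard direction is $C\Rightarrow A$, and this is where I expect essentially all the difficulty. The strategy has three layers: (i) the Green--Tao--Ziegler inverse theorem for the Gowers $U^{s+1}[N]$-norms, which reduces any non-negligible correlation of $\mob$ (in particular the multilinear averages in $A$) to a correlation of $\mob$ against a Lipschitz nilsequence on a nilmanifold of bounded complexity; (ii) the Matomäki--Radziwi{\l}{\l} short interval theorem, which tells us that the behavior of $\mob$ on $[M,M+H]$ is controlled for almost every short scale $H\le M$; and (iii) an \emph{entropy decrement argument} which shows that the logarithmic Chowla correlation is approximately invariant under the dilations $n\mapsto pn$ for most small primes $p$, allowing one to replace a long logarithmic average by an average over random short intervals at random scales. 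Once these pieces are in place, the nilsequence correlation produced by (i) is evaluated on short intervals via (iii), and is killed uniformly by Conjecture $C$. The main obstacle, and the place I expect to spend the bulk of the effort, is the entropy decrement: one must sum the small losses in the dilation invariance step over a logarithmic range of primes $p$ without destroying the gain, and carefully track how the nilmanifold complexity interacts with the short-interval length $H$. Every other step is a black-box application of results already in the literature; the entropy decrement is what genuinely unlocks the equivalence.
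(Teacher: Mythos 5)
First, a point of comparison: the survey does not prove Theorem~\ref{logChS} at all — it is quoted from Tao's paper \cite{Ta4}, with only the remark that the implications A$\Rightarrow$B$\Rightarrow$C survive Ces\`aro averaging while C$\Rightarrow$A is the one that genuinely needs logarithmic averages. So your proposal has to be measured against Tao's argument rather than anything in this text. Against that benchmark, your A$\Rightarrow$B is fine in outline: logarithmic quasi-generic limits are still invariant measures, so the proof of Theorem~\ref{ChtoS} adapts, and Conjecture~A is exactly logarithmic genericity of $\mob$ for $\widehat{\nu}_{\mob^2}$. Your C$\Rightarrow$A skeleton (inverse theorem for the Gowers norms, short-interval input, entropy decrement over a logarithmic range of primes) is also a fair description of Tao's route, though as you concede it is a plan, not a proof.

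The genuine gap is B$\Rightarrow$C. In Conjecture~C the supremum over $g\in G$ sits \emph{inside} the sum over $n$, and the inner sum runs over a short interval of length $H$ which may be far shorter than $N$. Conjecture~B applied to a fixed nilrotation (which, incidentally, need not be uniquely ergodic for every $g$) only controls the long logarithmic average for that fixed $g$; it says nothing about the size of $\sum_{h\le H}f(T_g^{h+n}x_0\Gamma)\mob(n+h)$ for typical $n$, let alone when $g=g(n)$ is chosen adversarially for each $n$. An $\varepsilon$-net over $g$ combined with unique ergodicity cannot repair this, because the net point changes with $n$ and short sums are not majorized by the long average. Tao's deduction — and the analogous mechanism in this survey, cf.\ the strong MOMO property (Definition~\ref{def:SM}) and Remark~\ref{r:mondra} — instead builds a single auxiliary \emph{zero entropy} topological system whose points encode slowly varying (constant on blocks of length $H$) sequences of $\varepsilon$-net elements $g(n)$ together with the short-interval segmentation, so that the whole adversarial expression becomes one observable sequence $f(T^ny)$ in that larger system; the crux of the step is checking that the number of admissible patterns grows subexponentially, so that Conjecture~B applies to the big system. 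Without that encoding, B$\Rightarrow$C does not follow from what you wrote, and this is precisely where the zero-entropy hypothesis is used in a nontrivial way.
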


\begin{Remark} Tao also shows that  if instead of logarithmic averages we come back to  Ces\`aro averages, then
$$
\mbox{Conjecture A $\Rightarrow$ Conjecture B $\Rightarrow$ Conjecture C}$$
and it is the implication Conjecture C $\Rightarrow$ Conjecture A  that requires logarithmic averages.
\end{Remark}

\begin{Remark} Let us consider the Ces\`aro version of Conjecture~C with $H={\rm o}(N)$ and we drop the assumption on the sup (which is inside), i.e.:   for each $g\in G$, we have
$$
\frac1N\sum_{n\leq N}\left|\sum_{h\leq H}f(T_g^{h+n}(x_0\Gamma))\mob(n+h)\right|\tend{H,N}{\infty,H={\rm o}(N)} 0.
$$
This is a particular case of
what we will see in Section~\ref{s:AOP}, where we introduce the strong MOMO notion (hence, the validity of Sarnak's conjecture on (typical) short interval).\end{Remark}

\begin{Cor}[a letter of W.\ Veech in June 2016] \label{c:topCh}  Sarnak's conjecture implies topological Chowla conjecture. Equivalently, Sarnak's conjecture implies that each block $B\in\{-1,0,1\}^\ell$ for which $B^2$ appears in $\mob^2$ appears in $\mob$ (and the entropy of $(X_{\mob},S)$ equals $\frac6{\pi^2}\log3$).
\end{Cor}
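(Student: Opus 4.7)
The plan is to establish the chain
\begin{equation*}
\text{Sarnak's conjecture}\ \Longrightarrow\ \text{Conjecture~B}\ \Longleftrightarrow\ \text{Conjecture~A}\ \Longrightarrow\ \text{topological Chowla},
\end{equation*}
where the middle equivalence is Tao's Theorem~\ref{logChS}. For the first step I would use standard Abel summation: for any bounded sequence $(b_n)$ with partial sums $S_N$, the identity
\begin{equation*}
\sum_{n\leq N}\frac{b_n}{n}=\frac{S_N}{N}+\sum_{n=1}^{N-1}\frac{S_n}{n(n+1)}
\end{equation*}
shows that $S_N/N\to 0$ implies $(\log N)^{-1}\sum_{n\leq N}b_n/n\to 0$. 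Applied to $b_n:=f(T^nx)\mob(n)$ for a zero topological entropy system $(X,T)$, this is exactly Conjecture~B.

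The substance is the deduction of topological Chowla from Conjecture~A, which I would carry out as follows. Fix a block $B=(b_0,\ldots,b_{\ell-1})\in\{-1,0,1\}^\ell$ with $B^2$ appearing in $\mob^2$, and consider the indicator
\begin{equation*}
I_B(n):=\prod_{i=0}^{\ell-1}\raz_{\mob(n+i)=b_i}.
\end{equation*}
Using the elementary identities
\begin{equation*}
\raz_{\mob=0}=1-\mob^2,\qquad\raz_{\mob=\pm 1}=\tfrac{1}{2}(\mob^2\pm\mob),
\end{equation*}
I would expand $I_B$ as a linear combination of monomials of the form $\prod_{i\in T}\mob^{j_i}(n+i)$, with $j_i\in\{0,2\}$ at positions where $b_i=0$ and $j_i\in\{1,2\}$ at positions where $b_i\neq 0$. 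By Conjecture~A (together with the PNT for the $r=0$ case) every monomial still carrying a factor $\mob^{1}$ has vanishing logarithmic average; the surviving monomials (with $j_i=2$ at every nonzero position of $B$ and any choice at the zero positions) reassemble to
\begin{equation*}
\frac{1}{2^s}\prod_{b_i=0}\bigl(1-\mob^2(n+i)\bigr)\prod_{b_i\neq 0}\mob^2(n+i)\ =\ \frac{1}{2^s}\raz_{[B^2]}(S^n\mob^2),
\end{equation*}
where $s:=|\{i:b_i\neq 0\}|$ and $[B^2]$ is the cylinder in $\{0,1\}^{\Z}$ determined by $B^2$. Since $\mob^2$ is generic for the Mirsky measure $\nu_{\mob^2}$ and $\nu_{\mob^2}([B^2])>0$ whenever $B^2$ is admissible (by a direct inclusion--exclusion/CRT argument: choose $a_p\pmod{p^2}$ realizing $B^2$ and use independence across primes), the logarithmic average of $I_B$ equals $\nu_{\mob^2}([B^2])/2^s>0$. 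In particular $I_B(n)=1$ for some $n$, so $B$ appears in $\mob$.

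The entropy assertion $\tfrac{6}{\pi^2}\log 3$ then follows from the resulting equality $X_{\mob}=s^{-1}(X_{\mob^2})$ by combining the established topological entropy of $X_{\mob^2}$ (see~\cite{Ab-Le-Ru1,MR3430278,Sa}) with the extra $\frac{6}{\pi^2}\log 2$ coming from the independent $\pm 1$ choice at each square-free position (the maximal density of nonzero entries being $6/\pi^2$). The hard part of this plan is invoking Tao's equivalence Theorem~\ref{logChS} as a black box; once it is granted, the rest is an elementary polynomial expansion combined with the genericity of $\mob^2$ for $\nu_{\mob^2}$.
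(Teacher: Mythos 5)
Your main argument is correct and is essentially the paper's proof: the chain Sarnak $\Rightarrow$ logarithmic Sarnak (partial summation) $\Rightarrow$ logarithmic Chowla (Theorem~\ref{logChS}) is identical, and your expansion of the block indicator via $\raz_{\{\mob=0\}}=1-\mob^2$ and $\raz_{\{\mob=\pm1\}}=\frac12(\mob^2\pm\mob)$ is just an unwinding of the identity $\widehat{\nu}_{\mob^2}([B])=\frac1{2^s}\nu_{\mob^2}([B^2])$ which the paper invokes when it asserts $\widehat{\nu}_{\mob^2}(B)>0$ and concludes from logarithmic genericity of $\mob$ for $\widehat{\nu}_{\mob^2}$ that $B$ occurs; your remark that a positive logarithmic average of a nonnegative sequence already forces a positive term even streamlines the paper's convex-combination-of-Ces\`aro-averages step.

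The one point to correct is the parenthetical entropy claim: your additive heuristic (entropy of $X_{\mob^2}$, i.e.\ $\frac6{\pi^2}\log2$, plus an ``extra'' $\frac6{\pi^2}\log2$ from independent signs at square-free positions) would give $\frac6{\pi^2}\log4$, not $\frac6{\pi^2}\log3$. The correct count for $s^{-1}(X_{\mob^2})$ is that a set of density $\frac6{\pi^2}$ may be filled freely with the three symbols $-1,0,1$ (equivalently, in the variational computation the measure of maximal entropy puts density $\frac4{\pi^2}$, not $\frac6{\pi^2}$, of nonzero symbols), which yields $\frac6{\pi^2}\log3$; note the paper treats this equality as known and does not prove it inside the displayed proof either.
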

\begin{proof}
Indeed, Sarnak's conjecture implies its logarithmic version which, by Theorem~\ref{logChS}, implies logarithmic Chowla conjecture, that is, $\frac1{\log N}\sum_{n\leq N}\frac{\delta_{S^n\mob}}n\to \widehat{\nu}_{\mob^2}$.  However, the logarithmic averages of the Dirac measures are convex combinations of the consecutive Ces\`aro averages\footnote{\label{topCh}
Assume that $(a_n)$ is a bounded sequence and set $A_n=a_1+\ldots+ a_n$.  Then, we have by summation by parts
\begin{multline}\label{sparts}
\frac1{\log N}\sum_{n\leq N}\frac{a_n}n=\frac1{\log N}\sum_{n\leq N}(A_{n+1}-A_n)\frac1n\\
=\frac1{\log N}\sum_{n\leq N}A_n
\left(\frac1n-\frac1{n+1}\right)+{\rm o}(1)=
\frac1{\log N}\sum_{n\leq N}\frac{A_n}n\frac1{n+1}+{\rm o}(1).
\end{multline}
It follows that:
\begin{itemize}
\item
If the Ces\`aro averages of $(a_n)$ converge, so do the logarithmic averages of $(a_n)$.
\item The converse does not hold (see e.g.\ \cite{MR1512943} in $\mathscr{B}$-free case, Section~\ref{sofm}).
\item If the Ces\`aro averages converge along a subsubsequence $(N_k)$ then not necessarily the logarithmic averages do the same.
Indeed, by~\eqref{sparts},$\frac1{\log N_k}\sum_{n\leq N_k}\frac{a_n}n$  is (up to a small error) a convex combination of the Ces\`aro averages for all $n\leq N_k$.
\end{itemize}}
$\frac1n\sum_{j\leq n}\delta_{S^j\mob}$, so if we take a block $B\in s^{-1}(X_{\mob^2})$, we have $\widehat{\nu}_{\mob^2}(B)>0$ and therefore there exists $n$ such that $\frac1n\sum_{j\leq n}\delta_{S^j\mob}(B)>0$, which means that $B$ appears in $\mob$.
\end{proof}

\begin{Remark}\label{r:gkl1} (added in October 2017) As a matter of fact, as shown in \cite{Go-Kw-Le}, Sarnak's conjecture implies the existence of a subsequence $(N_k)$ along which $\frac1{N_k}\sum_{n\leq N_k}\delta_{S^n\mob}\to \widehat{\nu}_{\mob^2}$. This follows from a general observation that, given a topological system $(X,T)$, whenever an ergodic
  measure $\nu$ is a limit of a subsequence $(M_k)$ of logarithmic averages of Dirac measures: $\nu=\lim_{k\to \infty}\frac1{\log M_k}\sum_{m\leq M_k}\frac{\delta_{T^{m}x}}m$, then there exists a subsequence $(N_k)$ for which $\nu=\lim_{k\to\infty}\frac1{N_k}\sum_{n\leq N_k}\delta_{T^nx}$. We apply this to the measure $\widehat{\nu}_{\mob^2}$ which is ergodic.\end{Remark}

In \cite{Ta3}, Tao proves the logarithmic version of Chowla conjecture  for the correlations of order~2 (which we formulate for the Liouville function):

\begin{Th}[\cite{Ta3}]\label{t:logCh2}
For each $0\neq h\in\Z$, we have
$$
\frac1{\log N}\sum_{n\leq N}\frac{\lio(n)\lio(n+h)}n\tend{N}{\infty}0.$$
\end{Th}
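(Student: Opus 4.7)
The plan is to leverage the complete multiplicativity of $\lio$ to relate the logarithmic correlation at shift $h$ to correlations at shifts $ph$ for primes $p$, and then to feed the resulting prime-averaged identity into the Matom\"aki--Radziwi\l\l--Tao averaged estimate (Theorem~\ref{srednie}) to kill the correlations on average. The non-trivial bridge between these two steps is a Shannon-entropy stabilisation argument.

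First I would use the identity $\lio(pm)=-\lio(m)$ valid for every prime $p$, which gives $\lio(pn)\lio(p(n+h))=\lio(n)\lio(n+h)$. Summing $1/n$ against this identity over $n\le N/p$ and reindexing $m=pn$ yields
\[
\sum_{n\le N/p}\frac{\lio(n)\lio(n+h)}{n}\;=\;p\sum_{\substack{m\le N\\ p\mid m}}\frac{\lio(m)\lio(m+ph)}{m}.
\]
Writing $S(N,h):=\sum_{n\le N}\tfrac{\lio(n)\lio(n+h)}{n}$, and noting that $S(N/p,h)=S(N,h)+\mathrm{O}(\log p)$ trivially, I would average the above identity against the weights $1/p$ over primes $p\in[P,2P]$, where $P=P(N)$ grows to infinity but with $\log P=\mathrm{o}(\log N)$. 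Mertens' theorem $\sum_{p\in[P,2P]}\tfrac{1}{p}\sim\log 2$ then gives, after swapping sums on the right,
\[
(\log 2)\,S(N,h)\;=\;\sum_{m\le N}\frac{1}{m}\sum_{\substack{p\in[P,2P]\\ p\mid m}}\lio(m)\lio(m+ph)\;+\;\mathrm{o}(\log N).
\]

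The main obstacle is to justify replacing the constraint $p\mid m$ inside the inner sum by its expected density $1/p$, which would turn the right-hand side into $\sum_{p\in[P,2P]}\tfrac{S(N,ph)}{p}$. This step is the quasi-independence between the residue $m\bmod p$ and the correlation value $\lio(m)\lio(m+ph)$, averaged over a slowly-growing prime window, and there is no soft tool that gives it directly. The approach I would pursue is an entropy decrement argument: for $m$ varying over a random dyadic window $[M,2M]$, consider the joint distribution of the coarse vector $(m\bmod p)_{p\in[P,2P]}$ together with a finite-alphabet truncation of the correlation $\lio(m)\lio(m+ph)$; the corresponding Shannon entropy is monotone in $P$ and uniformly bounded, hence must stabilise on a positive proportion of dyadic windows, and stabilisation translates quantitatively into the required approximate independence.

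Granted this replacement, we are reduced to $(\log 2)\,L(N,h)\approx\sum_{p\in[P,2P]}\tfrac{L(N,ph)}{p}$, where $L(N,h)=S(N,h)/\log N$. Here Theorem~\ref{srednie} enters: its statement, applied on dyadic scales of $M$ via summation by parts, transforms into a bound of the form $\sum_{h'\in[H_1,H_2]}|L(N,h')|=\mathrm{o}(H_2-H_1)$ whenever $H_1\to\infty$ and $H_2\le N$; taking $[H_1,H_2]=[Ph,2Ph]$ and summing over the $\Theta(P/\log P)$ primes in $[P,2P]$ with weight $1/p\asymp 1/P$ shows that the right-hand side of the averaged identity tends to zero, and hence $L(N,h)\to 0$, as required.
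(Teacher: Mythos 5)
The survey gives no proof of this theorem: it is quoted from Tao's paper \cite{Ta3}, so your sketch has to be measured against Tao's argument, whose skeleton (complete multiplicativity to pass from the shift $h$ to shifts $ph$ along multiples of $p$, an entropy decrement argument to remove the divisibility constraint, then an averaged estimate over the primes $p\in[P,2P]$) you have correctly identified. Two points, however, do not survive scrutiny. First, a normalization slip: by Mertens, $\sum_{p\in[P,2P]}1/p\sim \log 2/\log P\to 0$, not $\log 2$; as written, your averaged identity would bound $(\log 2)\,S(N,h)$ by ${\rm O}\big((\log N)/\log P\big)+{\rm o}(\log N)$ and thus ``prove'' the theorem with no analytic input at all, which is the sign that the bookkeeping is off. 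After renormalizing, what the multiplicativity-plus-entropy step actually reduces the problem to is that the essentially uniform average over the $\asymp P/\log P$ primes $p\in[P,2P]$ of $|L(N,ph)|$ tends to $0$, where $L(N,h'):=S(N,h')/\log N$.

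Second, and this is the genuine gap, Theorem~\ref{srednie} cannot deliver that reduced statement. It controls the average of the correlations over \emph{all} shifts $h'\leq H$, whereas the shifts you need, $\{ph: p\in[P,2P]\}$, form a subset of $[Ph,2Ph]$ of relative density $\asymp 1/(h\log P)\to 0$; since trivially $|L(N,h')|\leq 1+{\rm o}(1)$, the ${\rm o}(HM)$ bound of Theorem~\ref{srednie} is perfectly consistent with $|L(N,ph)|\asymp 1$ for every prime $p\in[P,2P]$, so the sparse sub-average need not be small. This is exactly where Tao's proof does something different: instead of quoting the averaged Chowla estimate, he exploits the prime structure of the shift set via the circle method --- expanding the prime-averaged correlation into short-interval exponential sums, using Vinogradov-type bounds for $\sum_{p\sim P}e^{2\pi i\alpha p}$ together with an $L^2$/Parseval bound on the minor arcs, and the Matom\"aki--Radziwi\l{}\l{} short-interval theorem (for $\lio$ twisted by major-arc phases) on the major arcs. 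Your entropy-decrement paragraph names the right tool but is only a gesture; it can be filled in along the lines of \cite{Ta3} (one only obtains suitable scales $P$ along a sparse sequence, which suffices), whereas the final averaging step must be replaced, not merely elaborated.
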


See also \cite{Ma-Ra}, where it is proved that for each integer $h\geq1$ there exists $\delta(h)>0$ such that $\limsup_{N\to\infty}\frac1N\left|\sum_{n\leq N}\lio(n)\lio(n+h)\right|\leq1-\delta(h)$ and \cite{Ma-Ra-Ta1}, where it is proved that for the Liouville function the eight patterns of length~3 of signs occur with positive lower density, and the density result with lower density replaced by upper density persists for $k+5$ patterns (out of total $2^k$) for each $k\in\N$.

For a proof of a function field Chowla's conjecture, see~\cite{Ca-Ru}.

\begin{Remark}
See also \cite{Ta-Te}, where, given $k_0,\ldots k_{\ell}\in\Z$ and $\bfu_0,\ldots,\bfu_{\ell}\in\cm$, one studies sequences of the form
$$
n\mapsto \bfu_0(n+ak_0)\cdot\ldots\cdot\bfu_{\ell}(n+ak_{\ell}),\ a\in \Z.
$$
By considering their logarithmic averages, one obtains a sequence $(f(a))$. The main result of \cite{Ta-Te} is a structure theorem (depending on whether or not the product $\bfu_0\cdot\ldots\bfu_{\ell}$ weakly pretends to be a Dirichlet character) for the sequences $(f(a))$. As a corollary,  the logarithmically averaged Chowla conjecture is proved for any odd number of shifts.
\end{Remark}

\subsection{Frantzikinakis' theorem}
Tao's approach from \cite{Ta4} is continued in \cite{Fr}. Before we formulate Frantzikinakis' results, let us
interpret some arithmetic properties, especially the role of a ``good behavior'' on (typical) short interval of a multiplicative function in the ergodic theory language.

\subsubsection{Ergodicity of measures for which $\mob$ is quasi-generic}\label{s:interpr}
In this subsection we summarize ergodic consequences of some recent, previously mentioned number-theoretic results, cf. \cite{Fr2}. By that we mean that we consider all measures $\kappa\in \text{Q-gen}(\mob)$ and we study ergodic properties of the dynamical systems $(X_{\mob},\kappa, S)$.

Let $\kappa\in \text{Q-gen}(\mob)$, i.e.\  $\frac1{M_k}\sum_{m\leq M_k}\delta_{S^m\mob}\tend{k}{\infty}\kappa\in M(X_{\mob},S)$ for some increasing sequence $(M_k)$. As usual, $\theta(x)=x(0)$ ($\theta\in C(X_{\mob})$). We have
\beq\label{wl1}
\int_{X_{\mob}}\theta\,d\kappa=0,
\eeq
as the integral equals $\lim_{k\to\infty}\frac1{M_k}\sum_{n\leq M_k}\theta(S^n\mob)=0$ (by the PNT). Denoting by $Inv$ the $\sigma$-algebra of $S$-invariant (modulo the measure $\kappa$) subsets of $X_{\mob}$, we recall that
$$
\frac1H\sum_{h\leq H}\theta\circ S^h\tend{H}{\infty} \EE(\theta|Inv) \text{ in }L^2(X_{\mob},\kappa)
$$
(by the von Neumann ergodic theorem). We
 want to show that
$$
 \theta\perp L^2(X_{\mob}, Inv, \kappa)
$$
(i.e.\ $\kappa$ must be ``slightly'' ergodic). In other words, we want to show that
$$
\int_{X_{\mob}}\left|\frac1H\sum_{h\leq H}\theta\circ S^h\right|^2\,d\kappa\tend{H}{\infty}0.
$$
But such integrals can be computed:
$$\frac1{M_k}\sum_{m\leq M_k}\left|\frac1H\sum_{h\leq H}\theta\circ S^h(S^m\mob)\right|^2\tend{k}{\infty}
    \int_{X_{\mob}}\left|\frac1H\sum_{h\leq H}\theta\circ S^h\right|^2\,d\kappa.$$
Putting things together, given $\vep>0$, for $H\geq1$ large enough, we want to see
$$\limsup_{k\to\infty}\frac1{M_k}\sum_{m\leq M_k}\left|\frac1H\sum_{h\leq H}\mob(m+h)\right|^2\leq\vep.$$
The latter is true because of \cite{Ma-Ra}:
for  a ,,typical'' $m$ the sum $\left|\frac1H\sum_{m\leq h<m+H}\mob(h)\right|$ is small.

\begin{Remark}As the calculation above shows, the fact that $$\frac1{M}\sum_{m\leq M}\left|\frac1H\sum_{h\leq H}\mob(m+h)\right|^2\to0$$ when $H\to\infty$ and $H={\rm o}(M)$ is equivalent to $\theta\perp L^2(X_{\mob},Inv,\kappa)$ for each $\kappa\in \text{Q-gen}(\mob)$. In particular, the Chowla conjecture implies the above short interval behavior.
\end{Remark}

However, remembering that $\kappa|_{X_{\mob^2}}=\nu_{\mob^2}$, one can ask now whether $\theta$ is measurable with respect to the factor given  by the Mirsky measure. As this factor has rational discrete spectrum \cite{Ce-Si}, to show that this is not the case, we need to prove that
$\theta\perp L^2(\Sigma_{rat})$, where $\Sigma_{rat}$ stands for the factor given by the whole rational spectrum of $(X_{\mob},\kappa,S)$.  To do it, we need to show that for each $r\geq1$, we have
$$
\frac1N\sum_{n\leq N}\theta\circ S^{rn}\tend{N}{\infty} 0\text{ in }L^2(X_{\mob},\kappa).$$
This convergence can be shown  by using the strong MOMO property (which we will consider in Section~\ref{s:AOP}) for the rotation $j\mapsto j+1$ on $\Z/r\Z$. We skip this argument here and show still a stronger consequence.

Assume that $\kappa\in \text{Q-gen}(\mob)$ and that we want to show that the spectral measure of $\theta\in L^2(X_{\mob},\kappa)$ is continuous. Hence, we need to show that
$$
\frac1H\sum_{h\leq H}|\widehat{\sigma}_{\theta}(h)|\tend{H}{\infty}0$$
when $H\to\infty$. Equivalently, we need to show that
$$
\frac1H\sum_{h\leq H}\left|\int_{X_{\mob}} \theta\circ S^h\cdot \theta\,d\kappa\right|\tend{H}{\infty}0.$$
If we fix $H\geq1$ then
\begin{multline*}
\int_{X_{\mob}}\theta\circ S^h\cdot \theta\,d\kappa=
\lim_{k\to\infty}
\frac1{M_k}\sum_{m\leq M_k}\theta\circ S^h(S^m\mob)\cdot\theta(S^m\mob)\\
=\frac1{M_k}\sum_{m\leq M_k}\mob(m+h)\mob(m).
\end{multline*}
It follows that we need to show that
$$
\frac1H\sum_{h\leq H}\left|\frac1{M_k}\sum_{m\leq M_k}\mob(m+h)\mob(m)\right|\to 0$$
when $H,M_k\to\infty$; to be precise, given $\vep>0$ we want to show that for $H>H_\vep$, we have $\limsup_{k\to\infty}\frac1H\sum_{h\leq H}\left|\frac1{M_k}\sum_{m\leq M_k}\mob(m+h)\mob(m)\right|<\vep$. Hence, directly from Theorem~\ref{srednie}, we obtain the following.

\begin{Cor}\label{c:mn2} The spectral measure of $\theta$ is continuous for each $\kappa\in \text{Q-gen}(\mob)$.\end{Cor}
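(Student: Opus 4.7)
The plan is to use the Wiener-type continuity criterion: to show that $\sigma_\theta$ has no atoms, it suffices to show that $\frac{1}{H}\sum_{h\leq H}|\widehat{\sigma}_\theta(h)| \to 0$ as $H\to\infty$. Indeed, since $|\widehat{\sigma}_\theta(h)|\leq \|\sigma_\theta\|$ is uniformly bounded, this $L^1$ Cesàro decay implies the standard $L^2$ decay $\frac{1}{H}\sum_{h\leq H}|\widehat{\sigma}_\theta(h)|^2 \to 0$, which by Wiener's lemma is equivalent to the continuity of $\sigma_\theta$.

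The next step is to translate the Fourier coefficients into correlations of $\mob$. Since $\theta$ is real-valued, the spectral theorem gives $\widehat{\sigma}_\theta(h) = \int_{X_{\mob}}\theta\cdot\theta\circ S^h\,d\kappa$. Using that $\theta\cdot\theta\circ S^h\in C(X_{\mob})$ and that $\kappa=\lim_k \frac{1}{M_k}\sum_{m\leq M_k}\delta_{S^m\mob}$, this integral can be written, for each fixed $h$, as the limit
\[
\widehat{\sigma}_\theta(h)=\lim_{k\to\infty}\frac{1}{M_k}\sum_{m\leq M_k}\mob(m)\mob(m+h).
\]
Hence it suffices to show that for every $\vep>0$ there is $H_\vep$ such that, for all $H\geq H_\vep$,
\[
\limsup_{k\to\infty}\frac{1}{H}\sum_{h\leq H}\left|\frac{1}{M_k}\sum_{m\leq M_k}\mob(m)\mob(m+h)\right|<\vep.
\]

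The final step is to deduce this from Theorem~\ref{srednie}. That theorem gives $\sum_{h\leq H}|\sum_{m\leq M}\mob(m)\mob(m+h)|=\mathrm{o}(HM)$ provided $H=H(M)\to\infty$ arbitrarily slowly with $M\leq H$. The slight subtlety—this is the only real obstacle—is that the MRT statement couples $H$ to $M$, whereas here I want to fix $H$ first and let $k\to\infty$. This is handled by a standard diagonal extraction: if the desired statement failed, there would exist $\vep>0$ and sequences $H_j\to\infty$ and $M_{k_j}\to\infty$ with
\[
\frac{1}{H_j M_{k_j}}\sum_{h\leq H_j}\left|\sum_{m\leq M_{k_j}}\mob(m)\mob(m+h)\right|\geq\vep,
\]
from which one builds, by diagonalization, a function $H(M)\to\infty$ slowly with $H(M)\leq M$ violating Theorem~\ref{srednie}. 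Thus the desired bound holds, $\frac{1}{H}\sum_{h\leq H}|\widehat{\sigma}_\theta(h)|\to 0$, and $\sigma_\theta$ is continuous.
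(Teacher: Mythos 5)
Your proof is correct and follows essentially the same route as the paper: the Wiener continuity criterion, the identification of $\widehat{\sigma}_\theta(h)$ with the limit of the correlations $\frac1{M_k}\sum_{m\leq M_k}\mob(m)\mob(m+h)$ along the quasi-genericity sequence, and then Theorem~\ref{srednie}; your extra diagonal-extraction remark just makes explicit the decoupling of $H$ from $M$ that the paper absorbs into the phrase ``directly from Theorem~\ref{srednie}'' (stated there precisely as a $\limsup_k$ bound for all $H>H_\vep$). Note only the slip where you wrote ``$M\leq H$'' for the hypothesis of Theorem~\ref{srednie}; it should read $H\leq M$, as you in fact use later.
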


While it is obvious that the subshift $X_{\mob}$ is uncountable (indeed, it is the subshift $X_{\mob^2}$ which is already uncountable, see Section~\ref{czesc6}), it is not clear whether $X_{\lio}$ is uncountable. However, if a subshift $(Y,S)$ is countable, all its ergodic measures are given by periodic orbits, hence there are only countably many of them and it easily follows that each $\kappa\in M(Y,S)$ yield a system with discrete spectrum.
Hence, immediately from Corollary~\ref{c:mn2}, we obtain that:

\begin{Cor}\label{c:xlio}
The subshift $X_{\lio}$ is uncountable.\footnote{The result has been observed in \cite{Fr-Ho1}, cf.\ also \cite{Hu-Wa-Ye}.}
\end{Cor}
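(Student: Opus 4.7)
The plan is to mimic the argument that was just used for $X_{\mob}$ in Corollary~\ref{c:mn2}, exploiting the fact that $\lio$ takes only the values $\pm 1$, so that $\theta\in C(X_{\lio})$ defined by $\theta(y)=y(0)$ has constant modulus $1$. I proceed by contradiction: suppose $X_{\lio}$ is countable. Then every ergodic $S$-invariant measure on $X_{\lio}$ is supported on a (necessarily finite) periodic orbit, and the collection of such orbits is countable. Consequently, for every $\kappa\in M(X_{\lio},S)$, the ergodic decomposition presents $(X_{\lio},\kappa,S)$ as a countable convex combination of finite cyclic rotations, and in particular every such measure-theoretic system has purely discrete (in fact rational) spectrum. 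Equivalently, the spectral measure of every $f\in L^2(X_{\lio},\kappa)$ is purely atomic.

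Second, I would establish the analogue of Corollary~\ref{c:mn2} for $X_{\lio}$: for each $\kappa\in\text{Q-gen}(\lio)$, the spectral measure $\sigma_\theta$ of $\theta$ in $L^2(X_{\lio},\kappa)$ is continuous (atomless). The computation is identical to the one preceding Corollary~\ref{c:mn2}: choosing $(M_k)$ with $\frac{1}{M_k}\sum_{m\leq M_k}\delta_{S^m\lio}\to\kappa$, one has
\[
\widehat{\sigma}_\theta(h)=\int_{X_{\lio}}\theta\circ S^h\cdot\theta\,d\kappa=\lim_{k\to\infty}\frac{1}{M_k}\sum_{m\leq M_k}\lio(m+h)\lio(m),
\]
so continuity of $\sigma_\theta$ amounts to
\[
\frac{1}{H}\sum_{h\leq H}\left|\frac{1}{M_k}\sum_{m\leq M_k}\lio(m+h)\lio(m)\right|\tend{H,M_k}{\infty}0.
\]
This is exactly the statement of Theorem~\ref{srednie} with $\mob$ replaced by $\lio$, which is indeed the form in which Matom\"aki, Radziwi{\l}{\l} and Tao prove their correlation bound in \cite{Ma-Ra-Ta} (their method treats $\lio$ on equal footing with $\mob$, since $\lio$ satisfies the pretentiousness hypothesis \eqref{e:mrt}).

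Finally, combining the two: on $X_{\lio}\subset\{-1,1\}^\Z$ one has $|\theta(y)|\equiv 1$, hence $\|\theta\|_{L^2(\kappa)}^2=1$ for every $\kappa\in M(X_{\lio},S)$, so $\sigma_\theta$ is a probability measure. But by the second step, $\sigma_\theta$ is atomless for each $\kappa\in\text{Q-gen}(\lio)$, whereas the assumption that $X_{\lio}$ is countable forces $\sigma_\theta$ to be purely atomic. This contradiction proves that $X_{\lio}$ is uncountable. The only step that requires more than a routine transcription is the short-interval correlation bound for $\lio$; once one accepts that \cite{Ma-Ra-Ta} delivers the $\lio$-analogue of Theorem~\ref{srednie}, everything else is automatic, and this is where I would expect a reader unfamiliar with \cite{Ma-Ra-Ta} to have to pause.
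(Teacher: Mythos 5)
Your proof is correct and follows essentially the same route as the paper: countability of $X_{\lio}$ would force every invariant measure to yield a discrete-spectrum system (ergodic measures sitting on periodic orbits), which contradicts the continuity of the spectral measure of $\theta$, i.e.\ the $\lio$-analogue of Corollary~\ref{c:mn2} obtained from the Matom\"aki--Radziwi\l\l--Tao correlation bound, which indeed covers $\lio$ as well as $\mob$. Your explicit remark that $|\theta|\equiv 1$ on $X_{\lio}$ makes $\sigma_\theta$ nonzero is just the (implicit) last step of the paper's argument spelled out.
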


From Corollary~\ref{c:mn2} we derive immediately the M\"obius disjointness of all dynamical systems with ``trivial'' invariant measures (see also \cite{Hu-Wa-Zh}). This kind of problems will be the main subject of our discussion in Section~\ref{s:AOP}.

\begin{Cor}\label{c:mn3} Let $(X,T)$ be any topological dynamical system such that, for each measure $\nu\in M(X,T)$,  $(X,\nu,T)$ has discrete spectrum (not necessarily ergodic, of course). Then $(X,T)$ is M\"obius disjoint. In particular, the result holds if $M^e(X,T)$ is countable with each  member of $M^e(X,T)$ yielding a discrete spectrum dynamical system.\end{Cor}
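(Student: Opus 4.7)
\medskip

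The plan is to pass from pointwise Möbius orthogonality to an integral statement on the product system $(X\times X_{\mob},T\times S)$, then use the spectral disjointness between discrete and continuous parts of the Koopman operator. Fix $f\in C(X)$ and $x\in X$. To show $\frac1N\sum_{n\leq N}f(T^nx)\mob(n)\to 0$, it suffices, by compactness of the space of probability measures, to check that every weak-$^*$ accumulation point $\rho$ of the sequence $\frac1{N}\sum_{n\leq N}\delta_{(T^nx,S^n\mob)}$ satisfies $\int_{X\times X_{\mob}}f\otimes\theta\,d\rho=0$. Such a $\rho$ is $(T\times S)$-invariant, and its marginals are some $\nu\in\text{Q-gen}(x)\subseteq M(X,T)$ and $\kappa\in\text{Q-gen}(\mob)$; by hypothesis $(X,\mathcal{B}(X),\nu,T)$ has discrete spectrum, and by Corollary~\ref{c:mn2} the spectral measure of $\theta$ in $L^2(X_{\mob},\kappa)$ is continuous.

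Now I would analyze the Koopman operator $U:=U_T\otimes U_S$ on $L^2(X\times X_{\mob},\rho)$ and its spectral decomposition $L^2(\rho)=H_d\oplus H_c$ (discrete-spectrum part = closed span of eigenfunctions; continuous-spectrum part = its orthogonal complement). First, since $L^2(X,\nu)$ is spanned by eigenfunctions $(e_j)$ of $U_T$ with $Te_j=\lambda_j e_j$, the lifts $e_j\otimes 1$ lie in $L^2(\rho)$ (with $\|e_j\otimes 1\|_{L^2(\rho)}=\|e_j\|_{L^2(\nu)}$) and satisfy $U(e_j\otimes 1)=\lambda_j(e_j\otimes 1)$. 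Approximating $f$ in $L^2(\nu)$ by finite linear combinations of the $e_j$ and lifting, one gets that $f\otimes 1\in H_d$. Second, a direct computation of correlations factors through the $X_{\mob}$-marginal:
\[
\bigl\langle U^n(1\otimes\theta),1\otimes\theta\bigr\rangle_{L^2(\rho)}=\int_{X_{\mob}}\theta(S^nz)\,\overline{\theta(z)}\,d\kappa(z)=\widehat{\sigma}^{\kappa}_{\theta}(n),
\]
so the spectral measure of $1\otimes\theta$ in $L^2(\rho)$ coincides with the spectral measure of $\theta$ in $L^2(\kappa)$, which is continuous. Hence $1\otimes\theta\in H_c$.

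Since $H_d\perp H_c$, we conclude
\[
\int_{X\times X_{\mob}}f(y)\theta(z)\,d\rho(y,z)=\bigl\langle f\otimes 1,\,1\otimes\theta\bigr\rangle_{L^2(\rho)}=0,
\]
using that $\theta$ is real-valued so that $\overline{1\otimes\theta}=1\otimes\theta$. This proves the required Möbius orthogonality. The last assertion is immediate: if $M^e(X,T)$ is countable with each ergodic invariant measure giving a system with discrete spectrum, then by the ergodic decomposition every $\nu\in M(X,T)$ also yields a discrete-spectrum system, reducing to the previous case.

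The place where some care is needed is the transfer step: showing that $f\otimes 1$ genuinely sits in the discrete-spectrum part of the joining rather than only in the discrete part ``over'' $\nu$. This is not really an obstacle—it is handled by the fact that the lift $g\mapsto g\otimes 1$ is an isometry from $L^2(\nu)$ into $L^2(\rho)$ intertwining $U_T$ with $U$—but it is the point where one must resist the temptation to invoke disjointness of the whole systems (which we have no right to, because $(X_{\mob},\kappa,S)$ need not be weakly mixing; only the single vector $\theta$ is known to have continuous spectrum).
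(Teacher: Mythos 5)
Your proof is correct and follows essentially the same route as the paper: pass to a quasi-generic joining $\rho$ on $X\times X_{\mob}$, observe that the spectral measure of $f\otimes 1$ is that of $f$ in $L^2(\nu)$ (atomic, by the discrete-spectrum hypothesis) while the spectral measure of $1\otimes\theta$ is that of $\theta$ in $L^2(\kappa)$ (continuous, by Corollary~\ref{c:mn2}), and conclude orthogonality. The only difference is presentational: the paper invokes directly that mutually singular spectral measures force orthogonality, whereas you spell this out via the $H_d\oplus H_c$ decomposition and the lifting of eigenfunctions, which is the same argument made explicit.
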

\begin{proof} Fix $x\in X$ and consider
$$\frac1{M_k}\sum_{m\leq M_k}\delta_{(T^mx,S^m\mob)}\tend{k}{\infty}\rho.$$
We have $\rho|_{X_{\mob}}=:\kappa\in\text{Q-gen}(\mob)$ and $\rho|_X=:\nu$. Now, we fix $f\in C(X)$ and we need to show that
$\int f\ot \theta\,d\rho=0$. But
\begin{equation}\label{rnie}
\int_{X\times X_{\mob}}f\ot \theta\,d\rho=\int_{X\times X_{\mob}}(f\ot 1)\cdot (1\ot \theta)\,d\rho=0.
\end{equation}
Indeed, the spectral measure of $f\ot 1$ with respect to $\rho$ is the same as the spectral measure of $f$ with respect to $\nu$ and the spectral measure of $1\ot\theta$ with respect to $\rho$ is the same as the spectral measure of $\theta$ with respect to $\kappa$. Therefore, these spectral measures are mutually singular by assumption and Corollary~\ref{c:mn2}. Hence, the functions $f\ot 1$ and $1\ot\theta$ are orthogonal, i.e.~\eqref{rnie} holds.\footnote{We use here the standard result in the theory of unitary operators that mutual singularity of spectral measures implies orthogonality. Recall also the classical result in ergodic theory that spectral disjointness implies disjointness.}
\end{proof}

If we have all ergodic measures giving discrete spectrum but we have too many ergodic measures then the argument above does not go through. Consider
\begin{equation}\tag{$\ast$}\label{Z}
(x,y)\mapsto (x,x+y)\text{ on }\T^2.~\footnote{Consider $X_1=X_2=\T^2$ with $\mu_1=\mu_2=Leb_{\T^2}$, the diagonal joining $\Delta$ on $X_1\times X_2$ and $f(x,y)=\overline{\theta(x,y)}$ with $\theta(x,y)=e^{2\pi iy}$. The spectral measure of $\theta$ is Lebesgue, and all ergodic components of the measure $\mu_1$ have discrete spectra.}
\end{equation}
\begin{Question}[Frantzikinakis (2016)]  \label{q:Fr} Can we obtain $\kappa\in\text{Q-gen}(\lio)$, so that $(X_{\lio},\kappa,S)$ is isomorphic to~\eqref{Z}?\end{Question}
Of course, the answer to Question~\ref{q:Fr} is expected to be negative.

\subsubsection{Frantzikinakis' results}
We now follow \cite{Fr} and formulate results for the Liouville function, although, up to some obvious modifications, they also hold for $\mob$.

\begin{Th}[\cite{Fr}]\label{t:fr1} Assume that $N_k\to\infty$ and let $\frac1{\log N_k}\sum_{n\leq N_k}\frac{\delta_{S^n\lio}}n\tend{k}{\infty} \kappa$.
If $\kappa$ is ergodic then the Chowla conjecture (and Sarnak's conjecture) holds along $(N_k)$ for the logarithmic averages.\end{Th}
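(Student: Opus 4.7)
The plan is to prove that the ergodic measure $\kappa$ coincides with the Bernoulli measure $B(1/2,1/2)$ on $\{-1,1\}^{\Z}$; since $\lio$ never vanishes we have $\theta^2\equiv1$ on $X_{\lio}$, so by Lemma~\ref{linden} this is equivalent to showing that
$$\int_{X_{\lio}}\theta\circ S^{k_0}\cdot\theta\circ S^{k_1}\cdots\theta\circ S^{k_r}\,d\kappa=0$$
for every choice of distinct integers $k_0<k_1<\ldots<k_r$ with $r\geq0$. Unfolding the definition of $\kappa$ as a weak-$\ast$ limit of logarithmic empirical measures along $(N_k)$, this is exactly the logarithmic Chowla conjecture for $\lio$ along $(N_k)$; logarithmic Sarnak then follows via Theorem~\ref{logChS}.

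First I would dispose of the low-order cases. The case $r=0$ is the logarithmic form of the PNT. For $r=1$, Tao's theorem on logarithmically averaged pair correlations (Theorem~\ref{t:logCh2}) gives $\int\theta\cdot\theta\circ S^h\,d\kappa=0$ for every $h\neq0$; together with $\int\theta\,d\kappa=0$ this forces every Fourier coefficient of the spectral measure $\sigma_\theta$ to vanish except at $0$, so $\sigma_\theta$ equals the normalized Lebesgue measure on $\mathbb{T}$. Under the ergodicity assumption on $\kappa$, the function $\theta$ therefore already sits in the weakly mixing part of $L^2(X_{\lio},\kappa)$.

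For $r\geq2$ I would dovetail three ingredients. First, ergodicity of $\kappa$ gives, via Birkhoff's pointwise ergodic theorem, that for $\kappa$-almost every $x$ and every fixed tuple $\un{k}=(k_1,\ldots,k_r)$, the orbit averages of $F_{\un{k}}(x):=\theta(x)\theta(S^{k_1}x)\cdots\theta(S^{k_r}x)$ converge to $\int F_{\un{k}}\,d\kappa$. Second, the Matom\"aki--Radziwi\l\l--Tao short-interval estimates (Theorem~\ref{srednie} and~\eqref{sh2}) allow one to approximate long logarithmic averages of $F_{\un{k}}(S^n\lio)$ by averages of $F_{\un{k}}$ over short windows $[m,m+H]$ with $H={\rm o}(M)$, $H\to\infty$. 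Third, complete multiplicativity $\lio(pn)=-\lio(n)$ produces, after the change of variable $n\mapsto pn$ in the logarithmic averages and an averaging over primes $p\leq P$ with Dirichlet weights $\sum_{p\leq P}\frac1p\sim\log\log P$, a self-similarity between correlations at shifts $\un{k}$ and at shifts $\un{k}/p$ (when $p$ divides every $k_i$). Combining these, one argues by induction on $r$ that any potential value $c=\int F_{\un{k}}\,d\kappa$ must satisfy a linear constraint forcing $c=0$.

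The hard part is the \emph{transfer step} between the abstract ergodic system $(X_{\lio},\kappa,S)$ and the concrete arithmetic sequence $\lio$ itself, in a way compatible with the dilations $n\mapsto pn$. A priori the dilated logarithmic empirical measures along $(pN_k)$ need not converge to $\kappa$, and the joint distribution of the dilated and undilated samples has to be controlled. This is exactly the setting of Tao's entropy decrement argument from \cite{Ta3,Ta4}: the averaged Shannon entropy of the process $(\lio(pn))_n$ relative to $(\lio(n))_n$ cannot drop at every prime $p$, and this rigidity, combined with the MRT short-interval bounds, forces the dilated logarithmic empirical measures of $\lio$ to stay close to $\kappa$. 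Coupled with ergodicity of $\kappa$, this is what propagates the vanishing of binary correlations into the vanishing of all higher ones; without ergodicity the short-interval estimates alone do not pin down the joint distribution of $(\lio(n),\lio(n+k_1),\ldots,\lio(n+k_r))$, and the whole transfer breaks down.
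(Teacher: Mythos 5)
Your reduction of the statement to showing $\kappa=B(1/2,1/2)$, and the treatment of the cases $r=0,1$ via the logarithmic PNT and Theorem~\ref{t:logCh2} (giving that $\sigma_\theta$ is Lebesgue, as recorded in~\eqref{logtheta}), are fine. The genuine gap is the step $r\geq2$, which is where the entire difficulty of the theorem lives and where your argument is an assertion rather than a proof. The ingredients you dovetail --- Birkhoff's theorem for $\kappa$, the Matom\"aki--Radziwi\l\l{}--Tao short-interval bounds, complete multiplicativity, and Tao's entropy decrement argument --- are exactly the tools known to close the argument for two-point correlations (Theorem~\ref{t:logCh2}) and, with substantial extra work, for an odd number of shifts; for an even number of shifts $\geq 4$ they are notoriously insufficient, and no ``linear constraint forcing $c=0$'' is known to come out of an induction on $r$ based on them --- otherwise the even-order logarithmic Chowla conjecture would be unconditional. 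Moreover, the self-similarity you invoke is oriented the wrong way: multiplicativity plus the entropy decrement yields Tao's identity (Theorem~\ref{t:Taoid}), which relates the correlation at shifts $k_1,\ldots,k_r$ to a prime-averaged correlation at the \emph{dilated} shifts $pk_1,\ldots,pk_r$; restricting to primes dividing every $k_i$ involves only finitely many $p$, so it is negligible after the $\sum_{p\leq P}1/p$ averaging and cannot drive an induction. Finally, using ergodicity only through the pointwise ergodic theorem is too weak: $\kappa$-a.e.\ convergence says nothing about the specific point $\lio$, which need not be $\kappa$-typical, and this is not how the hypothesis enters the real proof.

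Frantzikinakis' actual argument, as sketched in the paper after Corollary~\ref{c:fr2}, runs through entirely different machinery: by the logarithmic, subsequential version of Tao's theorem (cf.\ Theorem~\ref{t:tao4}), logarithmic Chowla along $(N_k)$ follows once all local Gowers uniformity seminorms of $\lio$ vanish along $(N_k)$; the inverse theorem for these seminorms reduces this to showing that $\lio$ does not correlate, on typical short intervals and uniformly over a special class of nilsequences, with basic nilsequences; and that statement is proved by a deep induction on the step of the nilmanifold, using equidistribution on nilmanifolds together with \cite{Ma-Ra,Ma-Ra-Ta}. Ergodicity of $\kappa$ is exploited structurally there (combined with~\eqref{logtheta} it rules out rational/structured nil-behavior of the Furstenberg system), not merely through Birkhoff averages. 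So your route is genuinely different from the paper's, and as written it does not constitute a proof of the key case $r\geq2$.
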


Taking into account footnote~\ref{topCh}, we cannot deduce a similar statement for ordinary averages along $(N_k)$ but in view of \cite{Go-Kw-Le}, see Remark~\ref{r:gkl1}, the Chowla conjecture holds along another subsequence. The situation becomes clear when $(N_k)$ is the sequence of all natural numbers and we assume genericity.

\begin{Cor}[\cite{Fr}]\label{c:fr2} If $\lio$ is generic for an ergodic measure then the Chowla conjecture holds.
\end{Cor}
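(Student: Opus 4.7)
The plan is to invoke Theorem~\ref{t:fr1} along the full sequence and then upgrade the resulting logarithmic Chowla conjecture to the ordinary Ces\`aro Chowla conjecture, using the fact that $\lio$ is by hypothesis Ces\`aro-generic (not merely logarithmically so).

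First I would record the Liouville analogue of Corollary~\ref{Chg}. Since $\lio^2\equiv 1$, we have $X_{\lio}\subset\{-1,1\}^{\Z}$, and the Chowla conjecture for $\lio$ is equivalent to $\lio$ being (Ces\`aro) generic for the product Bernoulli measure $\kappa_0 := B(1/2,1/2)$ on $\{-1,1\}^{\Z}$. Indeed, by Lemma~\ref{linden} together with the relation $\theta^2\equiv 1$ on $X_{\lio}$, the measure $\kappa_0$ is entirely determined by the mixed moments $\int_{X_{\lio}}\theta\circ S^{k_0}\cdots\theta\circ S^{k_r}\,d\kappa_0$ with $0\leq k_0<\ldots<k_r$; all of these vanish (by independence and zero mean of the coordinates under $\kappa_0$), and their vanishing is precisely the Liouville Chowla correlation condition.

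Second, suppose $\lio$ is generic for an ergodic measure $\kappa\in M(X_{\lio},S)$. Applying footnote~\ref{topCh} to $a_n := \varphi(S^n\lio)$ for each fixed $\varphi\in C(X_{\lio})$, Ces\`aro convergence $\frac1N\sum_{n\leq N}\varphi(S^n\lio)\to\int\varphi\,d\kappa$ forces the corresponding logarithmic averages to converge to the same limit, so $\lio$ is also logarithmically generic for $\kappa$. Since $\kappa$ is ergodic, Theorem~\ref{t:fr1} applied along the full sequence yields the logarithmic Chowla conjecture for $\lio$: for every $r\geq 0$ and $0\leq k_0<k_1<\ldots<k_r$,
\[
\frac1{\log N}\sum_{n\leq N}\frac{\lio(n+k_0)\cdots\lio(n+k_r)}{n}\tend{N}{\infty}0.
\]
Interpreting the left-hand side as $\int_{X_{\lio}}\theta\circ S^{k_0}\cdots\theta\circ S^{k_r}\,d\left(\frac1{\log N}\sum_{n\leq N}\frac{\delta_{S^n\lio}}{n}\right)$ and passing to the limit using the logarithmic genericity of $\lio$ for $\kappa$, I obtain $\int_{X_{\lio}}\theta\circ S^{k_0}\cdots\theta\circ S^{k_r}\,d\kappa=0$ for all such tuples. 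Together with $\theta^2\equiv 1$, the first step then identifies $\kappa$ with $\kappa_0$.

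Finally, Ces\`aro genericity of $\lio$ for $\kappa=\kappa_0$ is, by the first step, exactly the Chowla conjecture for $\lio$, which is equivalent to the Chowla conjecture (for $\mob$). The main subtlety is conceptual rather than technical: Theorem~\ref{t:fr1} pins down $\kappa$ from logarithmic information alone, and the pre-existing Ces\`aro genericity hypothesis then automatically promotes this identification back to the Ces\`aro Chowla statement. A purely logarithmic hypothesis on $\lio$ would not suffice to conclude Ces\`aro Chowla (since logarithmic genericity does not imply Ces\`aro genericity), so the Ces\`aro genericity in the statement of the corollary is essential to the upgrade.
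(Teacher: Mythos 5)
Your argument is correct, and it is essentially the deduction the survey itself sets up in the sentence preceding Corollary~\ref{c:fr2}: Ces\`aro genericity passes to logarithmic genericity for the same (ergodic) measure by the partial-summation footnote~\ref{topCh}, Theorem~\ref{t:fr1} applied along the full sequence of integers yields the logarithmic Chowla conjecture, and the already-existing Ces\`aro limits of the correlation averages must then coincide with the (zero) logarithmic limits; your intermediate identification of $\kappa$ with the Bernoulli measure $B(1/2,1/2)$ (via Lemma~\ref{linden} and $\theta^2\equiv1$, exactly as in Corollary~\ref{Chg}) is a harmless repackaging of that last step, and your closing remark on the necessity of Ces\`aro genericity matches the paper's own caveat about subsequences. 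The only divergence worth noting is that the survey's ``few words on the proof'' following the corollary do not describe this soft deduction but rather Frantzikinakis' internal argument from \cite{Fr}: under genericity, Tao's Theorem~\ref{t:tao4} reduces the Chowla conjecture to the vanishing of all local uniformity seminorms of $\lio$, which is then proved from ergodicity via the inverse theorem for these seminorms and a deep induction over nilsequences; your proof legitimately bypasses all of this by quoting Theorem~\ref{t:fr1} as a black box, which is precisely the level of generality at which the survey states the corollary.
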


Let us say a few words on the proof. Recall that given a bounded sequence $(a(n))\subset \C$ admitting correlations,\footnote{I.e., we assume the existence of the limits of sequences $\left(\frac1N\sum_{n\leq N} a'(n)a'(n+k_1)\ldots a'(n+k_r)\right)_{N\geq1}$ for every $r\in\N$ and $k_1,\ldots, k_r\in\N$ (not necessarily distinct) with $a'=a$ or $\ov{a}$. It is not hard to see that $a$ admits correlations if and only if it is generic, cf.\ Section~\ref{s:sec31}.} one defines its  local uniformity seminorms (see Host and Kra \cite{Ho-Kr})
in the following manner:
\begin{align}
\|a\|^2_{U^1(\N)}&=\EE_{h\in\N}
\EE_{n\in\N}a(n+h)\ov{a(n)},\label{correl1}\\
\|a\|^{2^{s+1}}_{U^{s+1}(\N)}&=\EE_{h\in N}\|S_ha\cdot\ov{a}\|^{2^s}_{U^s(\N)},\;s\geq2,\label{correl2}
\end{align}
where, for each bounded sequence $(b(n))$,  $(S_hb)(n):=b(h+n)$ and $\EE_{n\in \N}b(n)=\lim_{N\to\infty}\frac1N\sum_{n\leq N}b(n)$.
(Similar definitions are considered along a subsequence $(N_k)$.)

The following result has been proved by Tao:

\begin{Th}[\cite{Ta4}] \label{t:tao4}Assume that $\lio$ is generic. The Chowla conjecture holds if and only if $\|\lio\|_{U^s(\N)}=0$ for each $s\geq 1$.\footnote{We have $\|\lio\|_{U^1(\N)}=0$ by \cite{Ma-Ra}, moreover $\|\lio\|_{U^2(\N)}=0$ is equivalent to $\lim_{N\to\infty}\EE_{m\in\N}\sup_{\alpha\in[0,1)}
\left|\EE_{n\in[m,m+N]}\lio(n)e^{2\pi in\alpha}\right|=0$ (cf.\ Conjecture~C) and remains open. For a subsequence version of Theorem~\ref{t:tao4} for logarithmic averages, see \cite{Ta4}.}
\end{Th}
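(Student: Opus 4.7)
My plan is to handle the two implications separately, using the inductive structure of the Gowers seminorms, the identity $\lio^2\equiv 1$, and a generalized von Neumann--type control of Chowla correlations by Gowers seminorms.

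For the easy direction (Chowla $\Rightarrow$ $\|\lio\|_{U^s(\N)}=0$ for every $s\geq 1$), I would first unroll the recursion~\eqref{correl1}--\eqref{correl2} into the closed form
$$\|\lio\|_{U^s(\N)}^{2^s} = \EE_{h_1,\ldots,h_s\in\N}\,\EE_{n\in\N}\,\prod_{\omega\in\{0,1\}^s}\lio(n+\omega\cdot h),$$
with $\omega\cdot h:=\omega_1h_1+\cdots+\omega_sh_s$. Both limits exist because $\lio$ is assumed generic, so every correlation of $\lio$ converges. For a ``generic'' tuple $(h_1,\ldots,h_s)$ the $2^s$ values $\omega\cdot h$ are pairwise distinct, and the inner average over $n$ is then a genuine Chowla correlation of order $2^s$, vanishing by hypothesis. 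For degenerate tuples, where some of the shifts coincide, the identity $\lio^2\equiv 1$ collapses the product to a lower-order Chowla correlation, which still vanishes (the totally degenerate tuple $h=0$ contributes a set of density zero to the outer $h$-average). Averaging over $h$ thus forces $\|\lio\|_{U^s(\N)}=0$.

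For the reverse implication, the central tool is a generalized von Neumann inequality: for any integers $0\leq k_0<k_1<\cdots<k_r$,
$$\Bigl|\EE_{n\in\N}\lio(n+k_0)\lio(n+k_1)\cdots\lio(n+k_r)\Bigr|\;\ll_r\;\|\lio\|_{U^{r}(\N)}.$$
Granting this bound, the hypothesis $\|\lio\|_{U^s(\N)}=0$ for all $s$ immediately annihilates every Chowla correlation of $\lio$. Translating back to the $\mob$-formulation~\eqref{chsfor} is cosmetic: since $\lio^2\equiv 1$, mixed exponents $j_s\in\{1,2\}$ reduce to a pure product of $\lio$-shifts at the positions where $j_s=1$, which by hypothesis are nonempty (and then one notes that the Chowla conjectures for $\lio$ and for $\mob$ are equivalent as indicated after~\eqref{chsfor}).

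The main obstacle is the generalized von Neumann inequality in the form above, for arbitrary shift patterns and for the infinite-density seminorm $\|\cdot\|_{U^s(\N)}$ rather than for its finite-scale analogue $\|\cdot\|_{U^s_{[N]}}$ recalled in the footnote to Section~2.6. On finite intervals this is the standard Cauchy--Schwarz/van der Corput chain, each step of which introduces a difference operator $S_hf\cdot\ov{f}$ matching the recursion that defines $U^s$. Converting the finite-scale chain into an inequality between the limiting correlations requires that all intermediate iterated Cesàro limits actually exist, which is precisely what genericity of $\lio$ supplies; one then has to organise the Cauchy--Schwarz iterations so as to extract a usable difference operator at each step, despite the shift pattern $k_0,\ldots,k_r$ being arbitrary rather than an arithmetic progression. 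This is combinatorial but standard, and it is the only place where the generic/quantitative hypothesis on $\lio$ enters nontrivially.
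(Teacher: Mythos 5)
First, a framing remark: the survey does not prove Theorem~\ref{t:tao4} at all -- it is quoted from \cite{Ta4} -- so your argument has to stand on its own. Your easy direction (Chowla $\Rightarrow$ all seminorms vanish) is essentially fine: genericity gives existence of all the iterated limits in \eqref{correl1}--\eqref{correl2}, and after unrolling, for every tuple $(h_1,\dots,h_s)\in\N^s$ the shift $0$ is attained only by $\omega=0$, so collapsing repeated shifts via $\lio^2\equiv1$ always leaves a \emph{nonempty} correlation with distinct shifts, which Chowla annihilates (this last point is worth saying explicitly, since an empty product would contribute $1$, not $0$).

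The reverse direction, however, rests on a false lemma. There is no ``generalized von Neumann inequality'' of the form $\bigl|\EE_{n}\lio(n+k_0)\cdots\lio(n+k_r)\bigr|\ll_r\|\lio\|_{U^{r}(\N)}$ for a \emph{fixed} shift pattern: Cauchy--Schwarz/van der Corput only controls multilinear averages in which the shift parameters are themselves averaged (e.g.\ $\EE_{n,d}\prod_j f(n+jd)$), never a single correlation along one fixed pattern. Concretely, take $f(n)=\epsilon(\lfloor n/2\rfloor)$ with $(\epsilon(m))$ i.i.d.\ signs: almost surely $f$ is generic, $\EE_n f(n)f(n+1)=\frac12$, and yet $\|f\|_{U^s(\N)}=0$ for every $s\geq1$ (for all but a density-zero set of tuples $(h_1,\dots,h_s)$ the factor at shift $0$ remains unpaired, so the inner $n$-average vanishes). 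Thus the statement you are proving is simply false for general generic $\pm1$ sequences, and any correct proof of ``seminorms vanish $\Rightarrow$ Chowla'' must use the multiplicativity of $\lio$, which your argument never invokes. An internal red flag: since $\|\lio\|_{U^1(\N)}=0$ is known from \cite{Ma-Ra} (see the footnote to the theorem), your inequality with $r=1$ would immediately yield the two-point Chowla conjecture with ordinary Ces\`aro averages, whereas only its logarithmic version (Theorem~\ref{t:logCh2}, \cite{Ta3}) is known. Tao's actual route is exactly where the difficulty sits: multiplicativity gives $\lio(n)\lio(n+k)=\lio(pn)\lio(pn+pk)$ for primes $p$, and the entropy decrement argument allows averaging over $p$, converting the fixed shift $k$ into the family of shifts $pk$ averaged over primes (cf.\ the identity of Theorem~\ref{t:Taoid}); only after this conversion can the vanishing of the uniformity seminorms, via the inverse theorem, be brought to bear on the correlations.
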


\begin{Remark} We have assumed in the statement of Theorem~\ref{t:tao4} that $\lio$ is generic but we would like also to note that, without this latter (strong) assumption, Tao obtained the equivalence in Theorem~\ref{t:tao4} for the logarithmic averages, see Conjecture~1.6 and Theorem~1.9 in \cite{Ta4} (however, one has to modify the definition of seminorms \cite{Ta4}).\end{Remark}

Hence, under the assumption of Corollary~\ref{c:fr2}, we need to prove that all local uniform seminorms of $\lio$ vanish. The inverse theorem for seminorms reduces this problem to the statement: for every basic nilsequence $(a(n))$\footnote{By that we mean $a(n)=f(g^n\Gamma)$ for some continuous $f\in C(G/\Gamma)$ and $g\in G$.} on an $s-1$-step nilmanifold $G/\Gamma$ and  every $s-2$-step manifold $H/\Lambda$, we have
$$
\lim_{N\to\infty}\EE_{m\in\N}\sup_{b\in \Psi_{H/\Lambda}}\left|\EE_{n\in[m,m+N]}\lio(n)a(n)b(n)\right|=0,$$
where $\Psi_{H/\Lambda}$ is a special class of basic nil-sequences (coming from Lipschtz functions). The latter is then proved using a deep induction argument.

\subsection{Dynamical properties of Furstenberg systems associated to the Liouville and M\"obius functions} \label{s:frho}
We now continue considerations about logarithmic version of Sarnak's conjecture, cf. Conjecture~B, Theorem~\ref{t:fr1}. Consider all measures $\kappa$ for which $\lio$ is logarithmically quasi-generic, i.e. $\frac1{\log N_k}\sum_{n\leq N_k}\frac{\delta_{S^n\lio}}{n}\to \kappa$ for some $N_k\to\infty$. We denote the set of all such measures by $\text{$\log$-Q-gen}(\lio)$. Following \cite{Fr-Ho1}, for each $\kappa\in\text{$\log$-Q-gen}(\lio)$ the corresponding measure-theoretic dynamical system $(X_{\lio},\kappa,S)$ will be called a {\em Furstenberg system of} $\lio$.  Before we get closer to the results of \cite{Fr-Ho1}, let us see first some consequence of Theorem~\ref{t:logCh2} for the logarithmic Sarnak's conjecture:
\beq\label{logtheta}
\parbox{0.8\textwidth}{For each Furstenberg system $(X_{\lio},\kappa,S)$,  the spectral measure $\sigma_{\theta}$ of $\theta$ is Lebesgue.}
\end{equation}
Indeed, assuming
$\frac1{\log N_k}\sum_{n\leq N_k}
\frac{\delta_{S^n\lio}}n\tend{k}{\infty}\kappa$,
Theorem~\ref{t:logCh2} tells us  that for each $h\in\Z\setminus\{0\}$, we have
$$
\widehat{\sigma}_\theta(h)=\int_{X_{\lio}}\theta\circ S^h\cdot \theta\,d\kappa=\lim_{k\to\infty}\frac1{\log N_k}\sum_{n\leq N_k}\frac{\lio(n+h)\lio(n)}n=0.$$

Using~\eqref{logtheta} and repeating the proof of Corollary~\ref{c:mn3}, we obtain the following.

\begin{Cor}\label{c:logsarnaksing} Let $(X,T)$ be a topological system such that each of its Furstenberg's systems has singular spectrum. Then $(X,T)$ is logarithmically Liouville disjoint.
\end{Cor}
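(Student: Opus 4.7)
The plan is to repeat the proof of Corollary~\ref{c:mn3}, replacing Ces\`aro averages by logarithmic ones and using~\eqref{logtheta} in place of Corollary~\ref{c:mn2}. The point of~\eqref{logtheta} over Corollary~\ref{c:mn2} is that it provides not merely continuity but \emph{absolute continuity} of $\sigma_\theta$ for every Furstenberg system of $\lio$, which is exactly what allows one to separate it, via mutual singularity, from any purely singular spectrum appearing on the $X$-side.

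Fix $x\in X$ and $f\in C(X)$. By weak$^\ast$ compactness of $M(X\times X_{\lio},T\times S)$, pass to a subsequence $(N_k)$ along which
$$
\frac{1}{\log N_k}\sum_{n\leq N_k}\frac{\delta_{(T^nx,S^n\lio)}}{n}\tend{k}{\infty}\rho.
$$
Then $\rho$ is $(T\times S)$-invariant. Its projection $\kappa:=\rho|_{X_{\lio}}$ lies in $\text{$\log$-Q-gen}(\lio)$, so by~\eqref{logtheta} the spectral measure of $\theta$ in $(X_{\lio},\kappa,S)$ is Lebesgue. Its projection $\nu:=\rho|_{X}$ is a logarithmic quasi-generic measure of $x$, so $(X,\nu,T)$ is a Furstenberg system of $(X,T)$, and by hypothesis the spectral measure of $f$ in $(X,\nu,T)$ is singular.

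The argument now concludes exactly as in the last paragraph of the proof of Corollary~\ref{c:mn3}. The spectral measures of $f\otimes 1$ and $1\otimes\theta$ in $L^2(\rho)$ agree with those of $f$ in $(X,\nu,T)$ and of $\theta$ in $(X_{\lio},\kappa,S)$ respectively, hence are mutually singular. Therefore $f\otimes 1\perp 1\otimes\theta$ in $L^2(\rho)$, and so
$$
0=\int_{X\times X_{\lio}}f\otimes\theta\,d\rho=\lim_{k\to\infty}\frac{1}{\log N_k}\sum_{n\leq N_k}\frac{f(T^nx)\lio(n)}{n}.
$$
Since every subsequential logarithmic limit of these averages is zero, the full logarithmic average tends to zero; as $x\in X$ and $f\in C(X)$ were arbitrary, $(X,T)$ is logarithmically Liouville disjoint. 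The only substantive ingredient -- really the ``hard part'', although here it is inherited rather than proved -- is the input~\eqref{logtheta}, which crucially relies on Tao's Theorem~\ref{t:logCh2}; without the logarithmic Chowla conjecture for pair correlations one would know only that $\sigma_\theta$ has no atoms, which would not suffice to separate it from an arbitrary singular spectrum on the $X$-side.
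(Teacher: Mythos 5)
Your proof is correct and is exactly the argument the paper intends: it invokes \eqref{logtheta} (Lebesgue spectrum of $\theta$ for every Furstenberg system of $\lio$) and repeats the spectral-disjointness argument of Corollary~\ref{c:mn3} with logarithmic averages in place of Ces\`aro averages. Nothing further is needed.
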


The starting point of the paper \cite{Fr-Ho1} is a surprising Tao's identity (implicit in \cite{Ta3}) for general sequences which in its ergodic theory language (cf.\ Subsection~\ref{s:interpr}) takes
the following form:

\begin{Th}[Tao's identity, \cite{Fr-Ho1}] \label{t:Taoid}
Let $\kappa\in \text{$\log$-Q-gen}(\lio)$. Then
$$
\int_{X_{\lio}}\left(\prod_{j=1}^{\ell} \theta\circ S^{k_j}\right)\,d\kappa=
(-1)^{\ell}\lim_{N\to\infty}\frac{\log N}N\sum_{\PP\ni p\leq N}\int_{X_{\lio}}\left(\prod_{j=1}^{\ell} \theta\circ S^{pk_j}\right)\,d\kappa$$
for all $\ell\in\N$ and $k_1,\ldots,k_{\ell}\in\Z$.
\end{Th}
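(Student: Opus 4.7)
The plan is to leverage the complete multiplicativity of $\lio$, namely $\lio(pn)=-\lio(n)$ for primes $p$, together with the PNT. Writing $F(n):=\prod_{j=1}^{\ell}\lio(n+k_j)$ and $F_p(n):=\prod_{j=1}^{\ell}\lio(n+pk_j)$, the assumption $\kappa\in\text{$\log$-Q-gen}(\lio)$ gives
\[
A:=\int\prod_j\theta\circ S^{k_j}\,d\kappa=\lim_{k\to\infty}\frac{1}{\log N_k}\sum_{n\le N_k}\frac{F(n)}{n},\quad A_p:=\int\prod_j\theta\circ S^{pk_j}\,d\kappa=\lim_{k\to\infty}\frac{1}{\log N_k}\sum_{n\le N_k}\frac{F_p(n)}{n}.
\]
The crucial observation $F_p(pm)=(-1)^{\ell}F(m)$ is immediate from multiplicativity. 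For each prime $p$ I would split the $n$-sum in $A_p$ according to whether $p\mid n$, substituting $n=pm$ in the first piece:
\[
\frac{1}{\log N_k}\sum_{\substack{n\le N_k\\p\mid n}}\frac{F_p(n)}{n}=\frac{(-1)^{\ell}}{p\log N_k}\sum_{m\le N_k/p}\frac{F(m)}{m}\tend{k}{\infty}\frac{(-1)^{\ell}A}{p},
\]
since truncating at $N_k/p$ versus $N_k$ costs only $O(\log p/\log N_k)$. Denoting the complementary limit by $C_p$, this yields the one-prime decomposition $A_p=(-1)^{\ell}A/p+C_p$ for every prime $p$.

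Multiplying the decomposition by $\log N/N$, summing over $p\le N$, and using Mertens' estimate $\sum_{p\le N}1/p=\log\log N+O(1)$ (so that the first contribution tends to $0$), the claimed identity reduces to
\[
\lim_{N\to\infty}\frac{\log N}{N}\sum_{p\le N}C_p=(-1)^{\ell}A. \qquad (\star)
\]

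The main obstacle is $(\star)$: it asserts that the PNT-weighted average over primes $p\le N$ of the ``$p\nmid n$'' portions of the $A_p$'s reproduces $(-1)^{\ell}A$. The pointwise analogue at $n=0$ is immediate (there $\prod_j\lio(pk_j)=(-1)^{\ell}\prod_j\lio(k_j)$, and the PNT alone gives the identity), but it fails pointwise for $n\neq 0$. My plan is to interchange the finite sum $\sum_{p\le N}$ with the limit $\lim_k$ inside each $C_p$, thereby rewriting $(\star)$ as a double limit $\lim_N\lim_k$ of a single logarithmic average in $n$ of $G_N(n):=\frac{\log N}{N}\sum_{p\le N,\,p\nmid n}F_p(n)$, and then to exchange the two outer limits. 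Justifying this exchange is the technical heart of the argument; I would borrow Tao's entropy-decrement / dyadic-scale-matching device from \cite{Ta3}, which shows that for a density-one set of dyadic scales the prime sums $G_P(n)$ behave consistently as $P$ varies across nearby scales, thereby allowing one to align $N$ with an appropriate $N_k$ and conclude. Modulo this pigeonhole step, the rest of the argument is essentially routine.
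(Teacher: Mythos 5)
First, note that the survey itself contains no proof of Theorem~\ref{t:Taoid}: it is quoted from \cite{Fr-Ho1} and described as ``implicit in \cite{Ta3}'', where it rests on Tao's entropy decrement argument. Your elementary skeleton is correct and is indeed how the argument starts: by complete multiplicativity $F_p(pm)=(-1)^{\ell}F(m)$, and the change of variables $n=pm$ in the logarithmic average gives, for every fixed prime $p$, the \emph{exact} identity
$$\lim_{k\to\infty}\frac1{\log N_k}\sum_{n\leq N_k}\frac{F_p(n)\,p\,\mathbf{1}_{p\mid n}}{n}=(-1)^{\ell}A ,$$
which is precisely your computation of the ``$p\mid n$'' piece multiplied by $p$. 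But the way you then package this does no work: since the subtracted term $(-1)^\ell A/p$ washes out by Mertens, your statement $(\star)$ is \emph{equivalent} to the theorem you are trying to prove (indeed $C_p=A_p+{\rm O}(1/p)$), so the ``one-prime decomposition'' is a circular reduction. The entire content of Tao's identity is the statement that, on average over $p\leq N$, the weight $p\,\mathbf{1}_{p\mid n}$ in the display above may be replaced by $1$, i.e.
$$\lim_{N\to\infty}\frac{\log N}{N}\sum_{\PP\ni p\leq N}\ \lim_{k\to\infty}\frac1{\log N_k}\sum_{n\leq N_k}\frac{F_p(n)\bigl(p\,\mathbf{1}_{p\mid n}-1\bigr)}{n}=0 ,$$
and this is exactly what your sketch never addresses.

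This replacement cannot be obtained by limit interchanges or by classical second-moment (Tur\'an--Kubilius) arguments: because $F_p$ itself depends on $p$, one cannot factor it out, and the naive second moment of $\frac{1}{\pi(P)}\sum_{p\leq P}(p\mathbf{1}_{p\mid n}-1)$ diverges like $\log P$. The only known route is Tao's entropy decrement argument, which shows that the sign pattern $\bigl(\lio(n+pk_j)\bigr)_j$ is asymptotically independent of the residue $n\bmod p$, on average over primes $p$ at most scales -- not, as you describe it, that ``the prime sums $G_P(n)$ behave consistently as $P$ varies across nearby scales''. Your proposal invokes this input only as a vaguely described black box, does not formulate the decorrelation statement it is supposed to deliver, and does not explain how the ``most scales'' conclusion upgrades to the existence of the full limit over $N$ asserted in the theorem (the map $N\mapsto\frac{\log N}{N}\sum_{p\leq N}A_p$ is not slowly varying, so this requires the stronger, density-one-in-scales form of the entropy decrement estimates). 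So the elementary part is fine but contentless, and the heart of the proof -- the averaged independence of $F_p(n)$ from $\mathbf{1}_{p\mid n}$ -- is missing; calling it a ``pigeonhole step'' after which ``the rest is routine'' substantially misjudges where the difficulty lies.
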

Now, the condition in Theorem~\ref{t:Taoid} is purely abstract (indeed, the function~$\theta$ generates the Borel $\sigma$-algebra), and the strategy to cope with logarithmic Sarnak's conjecture is to describe the class of measure-theoretic dynamical systems satisfying the assertion of Theorem~\ref{t:Taoid} and then to obtain Liouville disjointness for all systems which are disjoint (in the Furstenberg sense) from all members of the class. In fact, Frantzikinakis and Host deal with extensions of Furstenberg systems of $\lio$, so called {\em systems of arithmetic progressions with prime steps}.\footnote{Given a measure-theoretic dynamical system $(Z,\mathcal{D},\rho,R)$, its system of arithmetic progressions with prime steps is of the form $(Z^\Z,\cb(Z^{\Z}),\widetilde{\rho},S)$, where $S$ is the shift and the (shift invariant) measure $\widetilde{\rho}$ is determined by
$$
\int_{Z^{\Z}}\prod_{j=-m}^mf_j(z_j)\,d\widetilde{\rho}(z)=
\lim_{N\to\infty}\frac{\log N}N\sum_{p\leq N}\int_Z\prod_{j=-m}^m f_j\circ R^{pj}\,d\rho$$
for all $m\geq0$, $f_{-m},\ldots,f_m\in L^\infty(Z,\rho)$ (here $z=(z_j)$). It is proved that such shift systems have no irrational spectrum. One of key observations is that each Furstenberg system of the Liouville function is a factor of the associated system of arithmetic progressions with prime steps.}  They prove the following result.

\begin{Th}[\cite{Fr-Ho1}] \label{t:FH1} For each system of arithmetic progressions with prime steps, its ``typical'' ergodic component is isomorphic to the direct product of an infinite-step nilsystem and a Bernoulli automorphism.\footnote{The product decomposition depends on the component.} In particular, each Furstenberg system $(X_{\lio},\kappa,S)$ of $\lio$ is a factor of a system which:\\
(i) has no irrational spectrum and\\
(ii) has ergodic components isomorphic to the direct product of an infinite-step nilsystem and a Bernoulli automorphism. \end{Th}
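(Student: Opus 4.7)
The plan is to prove the structural assertion directly for the abstract shift system $(Z^\Z, \cb(Z^\Z), \widetilde{\rho}, S)$ built from a measure-preserving system $(Z,\cd,\rho,R)$ via the prime-step averaging formula. The statement about Furstenberg systems of $\lio$ then follows immediately, since such Furstenberg systems are factors of prime-step systems by the cited construction; no further argument on factors is needed, because the theorem only claims that each Furstenberg system is a factor of a system satisfying (i)--(ii), not that the Furstenberg systems themselves inherit these properties.

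For the absence of irrational spectrum (conclusion (i)), suppose $F \in L^2(\widetilde{\rho})$ is a unit eigenfunction of $S$ with eigenvalue $e^{2\pi i \alpha}$. I would test the defining identity of $\widetilde{\rho}$ on products of finitely many shifts of $F$ and $\overline{F}$, thereby rewriting the eigenvalue equation as an identity involving the prime-step average. The Vinogradov--Davenport estimate~\eqref{vin} implies that $\frac{\log N}{N}\sum_{p\leq N} e^{2\pi i p\beta}\to 0$ for every irrational $\beta$. Comparing the two sides of the resulting identity then forces $\alpha \in \Q$, so $F$ lies in the rational spectrum component.

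For the product structure on a typical ergodic component (conclusion (ii)), which is the heart of the theorem, the plan proceeds in three steps. First, ergodically decompose $\widetilde{\rho} = \int \widetilde{\rho}_\omega \, d\Omega(\omega)$ and work on a fixed (typical) ergodic component. Second, invoke the Host--Kra structure theorem to construct the inverse tower of $k$-step pronilfactors and its inverse limit $\mathcal{Z}_\infty$, an ergodic infinite-step nilsystem; by conclusion (i) applied to the Kronecker part, $\mathcal{Z}_\infty$ has only rational discrete spectrum. Third, show that the extension of $\mathcal{Z}_\infty$ by the orthogonal complement is a relatively Bernoulli extension which in fact splits as a direct product. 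The decisive input in this step is Tao's identity (Theorem~\ref{t:Taoid}), which transfers correlations of $\theta$ to correlations along prime-dilated shifts; combined with the Green--Tao--Ziegler inverse theorem for Gowers uniformity seminorms and the orthogonality of nilsequences to the von Mangoldt/$\mob$ weighting, one shows that any $f \perp \mathcal{Z}_\infty$ has vanishing higher-order correlations under the prime-step average. A Sinai/Ornstein-type argument upgrades this correlation control to the very weak Bernoulli property, producing a Bernoulli factor, and the rationality of the spectrum of $\mathcal{Z}_\infty$ trivialises the extension cocycle cohomologically, giving a genuine direct product rather than a skew product.

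The hardest step is the Bernoulli extraction in step three: translating Gowers-uniformity estimates along primes into an Ornstein-type isomorphism with a Bernoulli shift requires the full quantitative strength of the Matom\"aki--Radziwi\l{}\l{}--Tao short-interval results together with the Host--Kra structure theory and the Green--Tao nilsequence machinery, and it is here that the technical effort of \cite{Fr-Ho1} is concentrated. The earlier spectral step (i) is comparatively soft and serves mainly to guarantee that the splitting in step three is a direct, rather than a skew, product.
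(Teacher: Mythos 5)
There is an important caveat: the survey does not prove Theorem~\ref{t:FH1} at all --- it is quoted from \cite{Fr-Ho1}, with only the footnoted definition of systems of arithmetic progressions with prime steps --- so your proposal can only be measured against the Frantzikinakis--Host argument itself, and there it has a genuine gap at exactly the place you call the heart of the matter. Your step three takes as input that any $f$ orthogonal to the infinite-step nilfactor $Z_\infty$ has vanishing Gowers--Host--Kra seminorms and hence vanishing \emph{averaged} multiple correlations; but this holds in \emph{every} measure-preserving system (it is the definition of the Host--Kra factors), so by itself it cannot force the complement of $Z_\infty$ to be an independent Bernoulli direct factor. What makes prime-step systems special, and what drives the actual proof, is a dilation-invariance of the measure $\widetilde{\rho}$ (a strong-stationarity property: the joint distributions along step-$a$ progressions do not depend on $a$, up to sign), which converts ``multiple correlations vanish \emph{on average} over the dilation parameter'' into ``multiple correlations vanish identically'' for $f\perp Z_\infty$. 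That yields conditional independence of the coordinate process over $Z_\infty$, so the complementary factor is an i.i.d.\ process --- Bernoulli by construction, with the direct-product splitting coming from this independence. Your proposal never identifies or uses this property; instead it appeals to ``a Sinai/Ornstein-type argument'' and to ``cohomological trivialisation of the extension cocycle by rationality of the spectrum of $Z_\infty$'', neither of which is an argument (and the latter is conceptually off: rationality of the Kronecker part has no bearing on splitting off the Bernoulli part, and nilsystems are themselves nontrivial towers of isometric extensions). As stated, the route would not close.

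Two smaller points. For conclusion (i) you cite Davenport's estimate~\eqref{vin}, which concerns $\sum_{n\le N}\mob(n)e^{2\pi int}$; what your sketch actually needs is Vinogradov's estimate for exponential sums over primes (together with the prime number theorem in arithmetic progressions on the major arcs), and the reduction of an abstract $L^2$-eigenfunction to cylinder functions, uniformly in the shift, is a nontrivial step you elide. Also, the claim that ``no further argument on factors is needed'' hides that the factor statement is precisely Tao's identity (Theorem~\ref{t:Taoid}) reinterpreted as a factor map; and the assertion that the decisive input is the full quantitative Matom\"aki--Radziwi\l{}\l{}--Tao short-interval theorem misplaces where that result enters the Frantzikinakis--Host program (via Tao's two-point logarithmic Chowla theorem, Theorem~\ref{t:logCh2}, and related statements), not in the structure theorem for prime-step systems, whose inputs are ergodic-structural (Host--Kra) together with equidistribution of primes.
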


\begin{Remark} All the above results are also true when we replace $\lio$ by $\mob$.\end{Remark}

Then, some new disjointness results in ergodic theory are proved (for example, all totally ergodic automorphisms are disjoint from an automorphism satisfying (i) and (ii) in Theorem~\ref{t:FH1}) and the following remarkable result is obtained:

\begin{Th}[\cite{Fr-Ho1}]\label{t:frho1} Let $(X,T)$ be a topological dynamical system of zero entropy with countably many ergodic invariant measures.
Then Conjecture~B holds for $(X,T)$.\end{Th}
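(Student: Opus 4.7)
The plan is to combine the structural description of Furstenberg systems given by Theorem~\ref{t:FH1} with a disjointness argument in the style of Corollary~\ref{c:mn3} and Corollary~\ref{c:logsarnaksing}, using the countable-ergodic-measure hypothesis to control rational discrete spectrum.

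Fix $x\in X$ and $f\in C(X)$. It suffices to show that along every sequence $(N_k)$ for which
\[
\frac{1}{\log N_k}\sum_{n\leq N_k}\frac{\delta_{(T^nx,\, S^n\mob)}}{n}\tend{k}{\infty}\rho\in M(X\times X_{\mob},T\times S),
\]
one has $\int_{X\times X_{\mob}} f\otimes \theta\, d\rho =0$. The marginals of $\rho$ are a $T$-invariant measure $\nu$ of zero entropy and a Furstenberg system $\kappa$ of $\mob$. Apply Theorem~\ref{t:FH1}: there is an extension $(Z,\tilde\kappa,R)$ of $(X_{\mob},\kappa,S)$ with (i) no irrational spectrum and (ii) typical ergodic components isomorphic to (infinite-step nilsystem) $\times$ (Bernoulli). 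Lift $\rho$ to a joining $\tilde\rho\in J(\nu,\tilde\kappa)$ via the relatively independent extension over the factor map $Z\to X_{\mob}$. It is enough to show $\tilde\rho=\nu\otimes\tilde\kappa$, since then the pushforward gives $\rho=\nu\otimes\kappa$ and $\int f\otimes\theta\, d\rho=\int f\,d\nu\cdot\int\theta\, d\kappa=0$, the second factor vanishing by the logarithmic form of the PNT.

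The countability of $M^e(X,T)$ forces the ergodic decomposition $\nu=\sum_i\alpha_i\nu_i$ to be a countable convex combination of ergodic zero-entropy measures $\nu_i$. Decompose $\tilde\rho$ ergodically: almost every ergodic component projects to some $\nu_i$ on $X$ and to some component $N_\omega\times B_\omega$ on $Z$, where $N_\omega$ is an infinite-step nilsystem and $B_\omega$ is Bernoulli. The Bernoulli coordinate is disjoint from every zero-entropy system by Furstenberg's classical theorem, reducing the task to showing that every ergodic joining of $(X,\nu_i,T)$ with the infinite-step nilsystem $N_\omega$ is the product measure. Condition (i) passes through the ergodic decomposition in such a way that only \emph{rational} eigenvalues of $N_\omega$ can match eigenvalues of $\nu_i$.

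The main obstacle is precisely this possible matching of rational discrete spectra: nothing in the hypotheses prevents an ergodic component $\nu_i$ from sharing a rational eigenvalue with some $N_\omega$, which would obstruct disjointness of their Kronecker factors. This is exactly where countability of $\{\nu_i\}$ is used, together with the prime-averaging encoded in Tao's identity (Theorem~\ref{t:Taoid}): the collection of rational eigenvalues of the countably many $\nu_i$ is countable, and by a diagonal argument one can, for each fixed denominator $q$, restrict to a density-one set of primes coprime to $q$ whose action under $R^p$ equidistributes the finite rotational factors and kills the corresponding Fourier contributions. Making this precise requires the new disjointness theorems of \cite{Fr-Ho1} -- which strengthen the classical disjointness between zero-entropy and Bernoulli systems to a disjointness between ergodic zero-entropy systems and systems satisfying (i)--(ii) of Theorem~\ref{t:FH1}, once the countably many rational spectral matchings are neutralised. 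The countable-ergodic-measure hypothesis is what allows this neutralisation and thus reduces the problem to a totally ergodic situation where the disjointness applies; this reduction is the technical heart of the argument.
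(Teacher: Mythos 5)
First, a caveat about the target of comparison: the survey does not prove Theorem~\ref{t:frho1} at all --- it quotes it from \cite{Fr-Ho1} and only sketches the ingredients (Tao's identity, the structure Theorem~\ref{t:FH1}, and ``new disjointness results''). Your proposal is essentially a re-expansion of that sketch, and at the decisive moment you write that making the key step precise ``requires the new disjointness theorems of \cite{Fr-Ho1}''. That is circular: those theorems \emph{are} the proof, so the proposal does not establish the statement, it defers its technical heart to the paper being cited.

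Beyond the circularity there are concrete missteps. (i) Your reduction to showing $\tilde\rho=\nu\otimes\tilde\kappa$ is too strong and in general hopeless: a product joining means disjointness of $(X,\nu,T)$ from the (extension of the) Furstenberg system, and nothing in the hypotheses prevents the two from sharing a nontrivial common factor (e.g.\ rational Kronecker/Mirsky-type factors), which obstructs disjointness outright. What is actually needed --- and what \cite{Fr-Ho1} proves --- is only the orthogonality $f\otimes 1\perp 1\otimes\theta$ in $L^2(\rho)$, in the spirit of Corollaries~\ref{c:mn3} and~\ref{c:logsarnaksing}. (ii) Because of this too-strong target you misplace the difficulty: the ``rational eigenvalue matching'' you call the technical heart is already harmless for the function $\theta$, since by Theorem~\ref{t:logCh2} its spectral measure is Lebesgue in every Furstenberg system (see~\eqref{logtheta}), so $\theta$ is orthogonal to the whole discrete (in particular rational) part; conversely, spectral arguments alone cannot finish, because a zero-entropy $f$ may well have Lebesgue spectral type (time-one maps of horocycle flows). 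The real work in \cite{Fr-Ho1} is controlling the correlations coming from the infinite-step nil components and the Bernoulli components for a point $x$ that need not be generic, via Tao's identity and genuinely new (relative) disjointness theorems --- note also that splitting an ergodic component into its nil and Bernoulli coordinates and treating them separately, as you do, requires the relative zero-entropy-versus-Kolmogorov disjointness machinery, not Furstenberg's theorem applied coordinatewise. (iii) Finally, the countability of $M^e(X,T)$ is not used to ``neutralise countably many rational spectral matchings'' by a diagonal argument over primes (your description of that step has no precise content); in \cite{Fr-Ho1} it enters in the reduction to ergodic components, i.e.\ in decomposing the logarithmic averages along the orbit of $x$ according to the at most countably many ergodic measures, a step your proposal does not address.
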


In particular, logarithmic Sarnak's conjecture holds for all zero entropy uniquely ergodic systems.
As a matter of fact,  some new\footnote{They are new even for irrational rotations. Cf.\ the notions of (S)-strong and (S$_0$)-strong and their equivalence to the Chowla type condition in \cite{Ab-Ku-Le-Ru}.} consequences are derived:

\begin{Th}[\cite{Fr-Ho1}] \label{t:frho2} Let $(X,T)$ be a topological dynamical system with zero entropy. Assume that $x\in X$ is generic for a measure $\nu$
with only countably many ergodic components all of which yield totally ergodic systems. Then, for every $f\in C(X)$, $\int_Xf\,d\nu=0$, we have
$$
\lim_{N\to\infty}\frac1{\log N}\sum_{n\leq N}\frac{f(T^nx)\prod_{j=1}^{\ell}\mob(n+k_j)}{n}=0$$
for all $\ell\in\N$ and $k_1,\ldots,k_{\ell}\in\Z$.\end{Th}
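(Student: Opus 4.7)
The plan is to realise the target statement as a disjointness problem between an appropriate joining of $(X,T,\nu)$ with a Furstenberg system of $\mob$, and then to exploit the structure theorem for Furstenberg systems (Theorem~\ref{t:FH1}) together with the new disjointness result used in \cite{Fr-Ho1}, namely that every totally ergodic automorphism is disjoint from any system satisfying properties~(i) and~(ii) of Theorem~\ref{t:FH1}.

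\emph{Step 1 (constructing the joining).} Pass to a subsequence $(N_k)$ along which
\[
\frac{1}{\log N_k}\sum_{n\leq N_k}\frac{\delta_{(T^n x,\,S^n\mob)}}{n}\tend{k}{\infty}\rho\in M(X\times X_{\mob},T\times S).
\]
Genericity of $x$ forces the $X$-marginal of $\rho$ to be $\nu$; the $X_{\mob}$-marginal is then some $\kappa\in\text{$\log$-Q-gen}(\mob)$. With $G:=\prod_{j=1}^{\ell}\theta\circ S^{k_j}\in C(X_{\mob})$, the target expression along $(N_k)$ equals $\int(f\otimes G)\,d\rho$, so it suffices to prove $\int(f\otimes G)\,d\rho=0$ for every such joining~$\rho$.

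\emph{Step 2 (lifting and ergodic decomposition).} Apply Theorem~\ref{t:FH1} to realise $(X_{\mob},\kappa,S)$ as a measure-theoretic factor of a system $(\widetilde X,\widetilde\kappa,\widetilde S)$ having no irrational spectrum and ergodic components of the form (infinite-step nilsystem)~$\times$~(Bernoulli). Lift $\rho$ to a joining $\widetilde\rho$ of $\nu$ with $\widetilde\kappa$, and write the ergodic decomposition $\nu=\sum_{i\geq 1}c_i\nu_i$. By the cited disjointness, each totally ergodic $\nu_i$ is disjoint from every ergodic component of $\widetilde\kappa$; therefore each ergodic component of $\widetilde\rho$ is a direct product $\nu_i\otimes\widetilde\eta_{\omega'}$ for some $i$ and some ergodic component $\widetilde\eta_{\omega'}$ of $\widetilde\kappa$. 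Hence
\[
\widetilde\rho=\int \nu_i\otimes\widetilde\eta_{\omega'}\,d\bar\rho(i,\omega'),
\]
where $\bar\rho$ is a coupling of the ergodic-component measure $\sum_i c_i\delta_i$ of $\nu$ and the ergodic-component measure $\lambda$ of $\widetilde\kappa$. Pushing back to $X\times X_{\mob}$,
\[
\int(f\otimes G)\,d\rho=\int F(i)\,H(\omega')\,d\bar\rho(i,\omega'),
\]
with $F(i):=\int f\,d\nu_i$ and $H(\omega'):=\int G\,d\eta_{\omega'}$, where $\eta_{\omega'}$ is the image of $\widetilde\eta_{\omega'}$ in $X_{\mob}$.

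\emph{Step 3 (conclusion and main obstacle).} The hypothesis $\int f\,d\nu=0$ is precisely $\int F\,d(\sum_j c_j\delta_j)=0$. It therefore suffices to show that $\bar\rho$ equals the product of its two marginals, for then $\int FH\,d\bar\rho=(\int F)(\int H)=0$. Establishing this product structure is the main obstacle: both marginals of $\bar\rho$ live on spaces carrying only the identity action (the invariant $\sigma$-algebras of $\nu$ and of $\widetilde\kappa$), so the abstract disjointness invoked in Step~2 does not apply at this level directly. There are two natural routes I would pursue to close this gap. The first is to upgrade the disjointness of each $\nu_i$ with $\widetilde\kappa$ to a \emph{relative} disjointness over the invariant factor of $\widetilde\kappa$; combined with the countability of the ergodic decomposition of $\nu$ this would compel $\bar\rho$ to factor as a product, exactly as required. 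The second is an induction on $\ell$ that avoids any such upgrade: apply Tao's identity (Theorem~\ref{t:Taoid}) inside each ergodic component $\widetilde\eta_{\omega'}$ to rewrite $\int G\,d\eta_{\omega'}$ as an average over primes of $(\ell{-}1)$-fold correlations with prime-dilated shifts, and combine this with a KBSZ-type reduction to propagate the conclusion from level $\ell-1$ to level $\ell$; the base case $\ell=0$ is just $\int f\,d\nu=0$. In either route, the essential inputs are the structure Theorem~\ref{t:FH1} (furnishing the Bernoulli-times-nil ergodic components and the absence of irrational spectrum) and the disjointness property of totally ergodic systems against such extensions.
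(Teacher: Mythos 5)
The survey does not actually contain a proof of this theorem -- it is quoted from \cite{Fr-Ho1}, with only the ingredients indicated -- so your proposal has to stand on its own, and as written it does not. Step 1 is fine (Ces\`aro genericity of $x$ passes to logarithmic genericity by partial summation, so the $X$-marginal of $\rho$ is indeed $\nu$). The first genuine problem is in Step 2: the disjointness result proved in \cite{Fr-Ho1} (and recalled after Theorem~\ref{t:FH1}) is that a totally ergodic automorphism is disjoint from an automorphism satisfying \emph{both} (i) and (ii), and (i), ``no irrational spectrum,'' is a property of the whole non-ergodic system, not of its ergodic components. You instead invoke disjointness of each totally ergodic $\nu_i$ from each \emph{individual} ergodic component $\widetilde\eta_{\omega'}$. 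Those components are products of infinite-step nilsystems and Bernoulli systems, which in general do have irrational spectrum, and a totally ergodic system is certainly not disjoint from a nilsystem admitting it as a factor (or from itself, should such a component occur). So the componentwise product structure of $\widetilde\rho$ claimed in Step 2 neither follows from the cited result nor is true at that level of generality; the point of property (i) is precisely that it constrains the eigenvalues globally, across components, and the disjointness must be exploited at that global level.

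The second problem you acknowledge yourself: even granting the componentwise product structure, the coupling $\bar\rho$ of the two ergodic decompositions is unconstrained, and $\int F(i)H(\omega')\,d\bar\rho$ has no reason to vanish merely because $\int F$ does. This is not a technical loose end but the heart of the theorem: it is exactly where the hypotheses of countably many ergodic components and zero entropy must enter, yet they appear nowhere in your outline. The example $(x,y)\mapsto(x,x+y)$ on $\T^2$ (Question~\ref{q:Fr}) illustrates the danger: almost every ergodic component is a totally ergodic rotation, and the corresponding orthogonality statements are open, so no argument that only sees the components one at a time can succeed without countability. Your two proposed repairs are not proofs: the first (``relative disjointness over the invariant factor'') is essentially a restatement of the missing claim, and the second (induction on $\ell$ via Tao's identity and a KBSZ-type reduction) points in the direction of the actual Frantzikinakis--Host argument but is left entirely unexecuted. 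As it stands, the proposal reduces the theorem to an unproved disjointness-type statement rather than establishing it.
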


New conjectures are proposed in \cite{Fr-Ho1}:
\begin{enumerate}
\item
Every real-valued $\bfu\in\cm$ has a unique Furstenberg system (i.e.\ $\bfu$ is generic) which is ergodic and isomorphic to the direct product of a Bernoulli automorphism and an odometer.
\item
If, additionally, $\bfu\in\cm$ takes values $\pm1$ then its Furstenberg system is either Bernoulli or it is an odometer.
\end{enumerate}

Finally, it is noticed in \cite{Fr-Ho1} that the complexity of the Liouville function has to be superlinear, that is
\beq\label{mobcompl}
\lim_{N\to\infty}\frac1N\left|\{B\in\{-1,1\}^N:\:B\text{ appears in }\lio\}\right|=\infty.
\eeq
The reason is that, as shown in \cite{Fr-Ho1}, for transitive systems having linear block growth we have only finitely many ergodic measures (and clearly systems with linear block growth have zero topological entropy). Hence, by Theorem~\ref{t:frho1}, such systems are Liouville disjoint. As $X_{\lio}$ is not Liouville disjoint, $\lio$ cannot have linear block growth, i.e.~\eqref{mobcompl} holds.

\section{The MOMO and AOP properties}\label{s:AOP}
\subsection{The MOMO property and its consequences}
We will now consider Sarnak's conjecture from the ergodic theory point of view. We ask whether (already) measure-theoretic properties of a measurable system $(Z,\cd,\kappa,R)$ imply the validity of~\eqref{mdis1} for any $(X,T)$, $f\in C(X)$ provided that $x\in X$ is a generic point for a measure $\mu$ such that the measure-theoretic system $(X,\cb(X),\mu,T)$ is measure-theoretically isomorphic to $(Z,\cd,\kappa,R)$. More specifically,
we can ask whether some measure-theoretic properties of  $(Z,\cd,\kappa,R)$ can imply M\"obius disjointness of all its uniquely ergodic models.\footnote{Note that the answer is positive in all uniquely ergodic models of the one-point system: each such a model has a unique fixed point that attracts each orbit on a subset of density~1, cf.\ the map $e^{2\pi ix}\mapsto e^{2\pi ix^2}$, $x\in[0,1)$. This argument is however insufficient already for uniquely ergodic models of the exchange of two points: in this case we have a density~1 attracting 2-periodic orbit $\{a,b\}$, but we do not control to which point $a$ or $b$ the orbit returns first. Quite surprisingly, it seems that already in this case we need \cite{Ma-Ra} to obtain M\"obius disjointness of all uniquely ergodic models.}  We recall that the Jewett-Krieger theorem implies the existence of a uniquely ergodic model of each ergodic system.\footnote{If all uniquely ergodic systems were M\"obius disjoint, then as noticed by T.\ Downarowicz, we would get that the Chowla conjecture fails in view of the result of B.\ Weiss \cite{We1} Thm.\ 4.4' on approximation of generic points of ergodic measures  by uniquely ergodic sequences.}
As a matter of fact, there are plenty of such models and they can have various additional topological properties including topological mixing\footnote{Topological mixing for example excludes the possibility of having eigenfunctions continuous.}  \cite{Leh}.
Here is another variation of the approach to view M\"obius disjointness as a measure-theoretic invariant:
\begin{Question}\label{pyt2}
Does the M\"obius disjointness in a certain uniquely ergodic model of an ergodic system yield the M\"obius disjointness in all its uniquely ergodic models?
\end{Question}
To cope with these questions we need a definition. Let $\bfu\colon\N\to\C$ be an arithmetic function.\footnote{Our objective is of course the M\"obius function $\mob$, however the whole approach can be developed for an arithmetic function satisfying some additional properties.}

\begin{Def}[strong MOMO\footnote{The acronym comes from M\"obius Orthogonality of Moving Orbits.} property \cite{Ab-Ku-Le-Ru1}]\label{def:SM}
\label{def:strongMOMO}
We say that $(X,T)$ satisfies the \emph{strong MOMO property} (relatively to $\bfu$) if, for any increasing sequence of integers
$0=b_0<b_1<b_2<\cdots$ with $b_{k+1}-b_k\to\infty$, for any sequence $(x_k)$ of points in $X$, and any $f\in C(X)$, we have
  \begin{equation}
    \label{eq:defMOMOSI}
    \frac{1}{b_{K}} \sum_{k< K} \left|\sum_{b_k\le n<b_{k+1}} f(T^{n-b_k}x_k) \bfu(n)\right| \tend{K}{\infty} 0.
  \end{equation}
\end{Def}

\begin{Remark}\label{r:mondra} The property~\eqref{eq:defMOMOSI} looks stronger than the condition on M\"obius disjointness. The idea behind it is to look at the pieces of orbits (of different points) in one system as a single orbit of a point in a different, larger but ``controllable'' (from measure-theoretic point of view) system.\end{Remark}

\begin{Remark}\label{momointerpr}
One can easily show (as in Section~\ref{s:interpr}) that the strong MOMO property (relative to $\mob$) implies $f\ot\theta\perp L^2(Inv,\rho)$ for each $\rho\in  \text{Q-gen}((x,\mob),T\times S)$.\footnote{$Inv$ stands here for the $\sigma$-algebra of $T\times S$-invariant sets modulo~$\rho$.}\end{Remark}

By taking $f=1$ in Definition~\ref{def:SM}, we obtain that whenever strong MOMO holds, $\bfu$ has to satisfy:
\beq\label{eq:Mobius-like}
 \frac{1}{b_{K}} \sum_{k< K} \left|\sum_{b_k\le n<b_{k+1}} \bfu(n)\right| \tend{K}{\infty} 0\eeq
 for every sequence
$0=b_0<b_1<b_2<\cdots$ with $b_{k+1}-b_k\to\infty$.
In particular, $\frac1N\sum_{n<N}\bfu(n)\tend{N}{\infty}0$. This is to be compared with~\eqref{sh1}, \eqref{sh2} and~\eqref{e:mrt} to realize that we require a special behavior of $\bfu$ on a typical short interval.

\begin{Th}[\cite{Ab-Ku-Le-Ru1}]\label{thmA} Let $(Z,\cd,\kappa,R)$ be an ergodic dynamical system. Let $\bfu\colon\N\to\C$ be an arithmetic function.
The following conditions are equivalent:
\begin{enumerate}[(a)]
\item
There exist a topological system $(Y,S)$ enjoying the strong MOMO property (relative to $\bfu$) and $\nu\in M^e(Y,S)$ such that the measurable systems $(Y,\cb(Y),\nu,S)$ and  $(Z,\cd,\kappa,R)$ are isomorphic.\label{p11}
\item
  For any topological dynamical system $(X,T)$ and any $x\in X$, if there exists a finite number of $T$-invariant measures $\mu_j$, $1\le j\le t$, such that
  \begin{itemize}
    \item $(X,\cb(X),\mu_j,T)$ is measure-theoretically isomorphic to $(Z,\cd,\kappa,R)$ for each $j$,
    \item  any measure for which $x$ is quasi-generic is a convex combination of the measures $\mu_j$, i.e.\ $\text{Q-gen}(x)\subset{\rm conv}(\mu_1,\ldots,\mu_t)$,
  \end{itemize}
then $\frac1N\sum_{n\leq N}f(T^nx)\bfu(n)\tend{N}{\infty}0$ for each $f\in C(X)$.\label{p2}
\item
All uniquely ergodic models of $(Z,\cd,\kappa,R)$ enjoy the strong MOMO property (relative to $\bfu$).\label{p3}
\end{enumerate}
\end{Th}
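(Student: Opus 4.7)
My plan is the cyclic chain (c)$\Rightarrow$(a)$\Rightarrow$(b)$\Rightarrow$(c). The implication (c)$\Rightarrow$(a) is immediate from the Jewett--Krieger theorem, which produces at least one uniquely ergodic model of $(Z,\cd,\kappa,R)$; by hypothesis it enjoys strong MOMO.

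For (a)$\Rightarrow$(b), fix a topological system $(Y,S)$ with strong MOMO together with $\nu\in M^e(Y,S)$ as in (a), and measure-theoretic isomorphisms $\Phi_j\colon(Y,\nu,S)\to(X,\mu_j,T)$, $j=1,\ldots,t$. Given $f\in C(X)$ and $\delta>0$, first approximate each pullback $f\circ\Phi_j$ by a continuous $g_j\in C(Y)$ in $L^2(\nu)$ to within $\delta$. Using Rokhlin's lemma and the pointwise ergodic theorem in each $(X,\mu_j,T)$, I would partition $\N$ into consecutive blocks $[b_k,b_{k+1})$ with $b_{k+1}-b_k\to\infty$ such that, for a density-one set of $k$, the orbit piece $(T^nx)_{n\in[b_k,b_{k+1})}$ is shadowed by $(S^{n-b_k}y_k)_{n\in[b_k,b_{k+1})}$ for some $y_k\in Y$ in the Ces\`aro sense
$$
  \frac{1}{b_{k+1}-b_k}\sum_{n\in[b_k,b_{k+1})}\left|f(T^nx)-g_{j(k)}(S^{n-b_k}y_k)\right|<\delta,
$$
where $j(k)$ is the dominant measure on block $k$. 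Strong MOMO for $(Y,S)$ applied to this data drives $\frac{1}{b_K}\sum_{k<K}\bigl|\sum_{b_k\le n<b_{k+1}}g_{j(k)}(S^{n-b_k}y_k)\bfu(n)\bigr|$ to zero, and (by the triangle inequality) so do the corresponding unabsolved Ces\`aro sums. The shadowing error costs only $O(\delta)\,\|\bfu\|_\infty$, so letting $\delta\to0$ yields (b). The main technicality is the shadowing construction, which is a finitary version of the standard fact that measure-theoretic isomorphisms induce approximations of orbit pieces on typical long windows.

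For (b)$\Rightarrow$(c), I argue by contrapositive: suppose $(Y,S,\nu)$ is a uniquely ergodic model of $(Z,\cd,\kappa,R)$ but strong MOMO fails with witnesses $(b_k)$, $(y_k)\subset Y$, $f\in C(Y)$, $\epsilon>0$, $K_j\to\infty$. Embed $(Y,S)$ in the shift $(Y^\Z,\sigma)$ via $\iota(y)=(S^ny)_{n\in\Z}$ and set $\tilde z\in Y^\Z$ by $\tilde z_n := S^{n-b_{k(n)}}y_{k(n)}$, where $k(n)$ is the block index of $n$. Since $b_{k+1}-b_k\to\infty$, the set of ``jump'' times has zero density, so $\sigma^n\tilde z$ coincides with $\iota(S^{n-b_{k(n)}}y_{k(n)})$ on arbitrarily large product-topology windows for density-one many $n$. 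Any weak-$\ast$ limit of $\frac{1}{N}\sum_{n<N}\delta_{\sigma^n\tilde z}$ is therefore supported on $\iota(Y)$ and, by unique ergodicity of $(Y,S)$, must equal $\iota_*\nu$. Thus $\text{Q-gen}(\tilde z)=\{\iota_*\nu\}$ as a subset of $M(X^*,\sigma)$, where $X^*\subseteq Y^\Z$ is the orbit closure of $\tilde z$. With $f^*(u):=f(u_0)\in C(X^*)$, condition (b) forces $\frac{1}{N}\sum_{n\le N}f^*(\sigma^n\tilde z)\bfu(n)\to 0$, i.e.\ the \emph{signed} Ces\`aro sum vanishes.

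The residual (and hardest) step is to convert this signed vanishing into vanishing of the absolute-value sum featuring in strong MOMO. My plan is to extend $(X^*,\sigma)$ by a compact ``sign factor'' $(\Theta,\tau)$ recording unimodular scalars $\theta_k$ normalising each block-sum ($\theta_kS_k=|S_k|$), so that for a suitable continuous function $F(u,\xi):=\xi_0f(u_0)$ the signed Ces\`aro averages on $X^*\times\Theta$ equal $b_K^{-1}\sum_k|S_k|$. After passing to a further subsequence along which the $\theta_k$-sequence is generic for a measure on $\Theta$ with discrete (or rational) spectrum, the joint quasi-generic set on $X^*\times\Theta$ reduces to a finite collection of ergodic joinings, each a compact group extension of $\nu$ and hence, after absorbing the finite extension on the target side by increasing $t$, isomorphic to $(Z,\cd,\kappa,R)$; condition (b) then applies on the product and produces the desired contradiction. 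The main obstacle of the whole theorem lies in this final construction: simultaneously encoding per-block sign corrections while keeping the quasi-generic set a finite convex combination of copies of $\kappa$. Unique ergodicity of $(Y,S)$ trivially keeps $\text{Q-gen}(\tilde z)$ a singleton, but adjoining a sign factor threatens a continuum of joinings, to be pruned by careful subsequence extraction and a discrete-spectrum hypothesis on the adjoined factor.
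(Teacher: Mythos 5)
Your overall architecture -- (c)$\Rightarrow$(a) by Jewett--Krieger, then (a)$\Rightarrow$(b) and (b)$\Rightarrow$(c) -- is the same as the one behind the cited result (the survey itself gives no proof; it only records that (a)$\Rightarrow$(b) borrows ideas from \cite{Hu-Wa-Zh} and (b)$\Rightarrow$(c) from \cite{Ab-Le-Ru2}), and (c)$\Rightarrow$(a) is indeed immediate. The genuine gap is in the last step of (b)$\Rightarrow$(c), the one you yourself flag as hardest. Your reduction is fine up to the signed sums: since the block lengths tend to infinity and $(Y,S)$ is uniquely ergodic, $\text{Q-gen}(\tilde z)=\{\iota_\ast\nu\}$ and (b) applies to $\tilde z$. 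But the sign-factor device does not work as you describe it. If the normalizing scalars $\theta_k$ range over the whole circle, the limit measures of the adjoined sequence $(\theta_{k(n)})_n$ are supported on shift-fixed points, so by extremality of the ergodic measure $\nu$ every quasi-generic measure of the pair $(\tilde z,\tilde\theta)$ is of the form $\iota_\ast\nu\otimes\rho$ with $\rho$ a probability measure on the circle: there is nothing for subsequence extraction to prune, no compact group extension ever appears (the ergodic components are just the products $\iota_\ast\nu\otimes\delta_c$), and when $\rho$ has infinite support this is not a finite convex combination of measures isomorphic to $(Z,\cd,\kappa,R)$, so hypothesis (b) simply does not apply. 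The phrase ``absorbing the finite extension on the target side by increasing $t$'' is also not legitimate: (b) requires each $\mu_j$ to be isomorphic to $\kappa$ itself, not to an extension of it. The missing idea is to choose the $\theta_k$ from a \emph{fixed finite set} (say $M$-th roots of unity), sacrificing only a factor $\cos(\pi/M)$ in $|S_k|$; then the adjoined coordinate lives in a finite alphabet, the quasi-generic measures of the pair are finite convex combinations of the measures $\iota_\ast\nu\otimes\delta_c$, each trivially isomorphic to $\kappa$, and (b) applies with $t\le M$ -- this is precisely why (b) is formulated with finitely many $\mu_j$ rather than one. Taking real parts then gives $\frac1{b_K}\sum_{k<K}|S_k|\to0$ up to the constant factor, the desired contradiction.

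Separately, in (a)$\Rightarrow$(b) the block-shadowing lemma is the entire content of the implication and you only assert it. On a given block the empirical distribution of $(T^nx)_{b_k\le n<b_{k+1}}$ need not be close to any single $\mu_j$ (it can be genuinely mixed between the $\mu_j$'s), so ``the dominant measure on block $k$'' is not well defined without further work; making the copying argument precise -- Lusin-type approximation of the measurable isomorphisms, a block structure chosen uniformly in $N$, and splitting or discarding bad blocks of small total density -- is exactly where the ideas of \cite{Hu-Wa-Zh} enter in \cite{Ab-Ku-Le-Ru1}. As written, this step is a plan rather than a proof, though the plan is the right one.
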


The proof of implication \eqref{p11}$\Rightarrow$\eqref{p2}
borrows some ideas from \cite{Hu-Wa-Zh}
and the proof of implication \eqref{p2}$\Rightarrow$\eqref{p3} uses some ideas from \cite{Ab-Le-Ru2}.

\begin{Remark}\label{r:momorot} It can be easily shown that any minimal (hence uniquely ergodic) rotation on a compact Abelian group satisfies the strong MOMO property (say, relatively to $\mob$). It follows from Theorem~\ref{thmA} (and the Halmos-von Neumann theorem) that in each uniquely ergodic model of an ergodic automorphism with discrete spectrum, we also have the strong MOMO property (in particular, the M\"obius disjointness).\end{Remark}

We now list three consequences of Theorem~\ref{thmA}:

\begin{Cor}[\cite{Ab-Ku-Le-Ru}] \label{c:uniformity}
\begin{enumerate}[(a)]
\item
If Sarnak's conjecture holds then the strong MOMO property (relative to $\mob$) holds for every zero entropy dynamical system.\footnote{That is, Sarnak's conjecture and the strong MOMO property (relatively to $\mob$) for all deterministic systems are equivalent statements.}
\item
If Sarnak's conjecture holds then it holds uniformly, that is, the convergence in~\eqref{mdis1} is uniform in $x$.\footnote{\label{f:eqSandMOMO}It is not hard to see that the MOMO property implies the relevant uniform convergence. As a matter of fact, the strong MOMO property is equivalent to the uniform convergence (in $x$, for a fixed $f\in C(X)$) on short intervals: $\frac1M\sum_{1\leq m<M}\left|\frac1H\sum_{m\leq h<m+H}f(T^hx)\mob(n)\right|\to 0$ (when $H,M\to\infty$ and $H={\rm o}(M)$). It follows that we have equivalence of: Sarnak's conjecture~\eqref{sc1}, Sarnak's conjecture in its uniform form, Sarnak's conjecture in its short interval uniform form and the strong MOMO property. Moreover, each of these conditions is implied by the Chowla conjecture.}
\item
Fix $\delta_{(\ldots0.00\ldots)}\neq\kappa\in M^e((\D_L)^{\Z},S)$, where $\D_L=\{z\in \C :  |z|\leq L\}$. Let $(X,T)$ be any uniquely ergodic model of $((\D_L)^{\Z},\kappa,S)$. Then for any $\bfu\in(\D_L)^{\Z}$ for which $\text{Q-gen}(\bfu)\subset{\rm conv}(\kappa_1,\ldots,\kappa_m)$, where $((\D_L)^{\Z},\kappa_j,S)$ for $j=1,\ldots,m$  is measure-theoretically isomorphic to $((\D_L)^{\Z},\kappa,S)$, the system $(X,T)$ does not satisfy the strong MOMO property (relative to $\bfu$).\footnote{This result means that there must be an observable sequence in $(X,T)$ which significantly correlates with $\bfu$.}
\end{enumerate}
\end{Cor}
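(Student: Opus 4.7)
My approach is to use the reformulation in Remark~\ref{r:chs1}, according to which Sarnak's conjecture is equivalent to M\"obius disjointness at every completely deterministic point of every topological system. Given the strong MOMO data $0=b_0<b_1<\cdots$ with $b_{k+1}-b_k\to\infty$, $(x_k)\subset X$ and $f\in C(X)$ for a zero-entropy $(X,T)$, I first pick, for each $k$, a unimodular $\varepsilon_k\in\bs^1:=\{z\in\C:|z|=1\}$ so that
\[
\varepsilon_k\sum_{b_k\le n<b_{k+1}}f(T^{n-b_k}x_k)\mob(n)=\Bigl|\sum_{b_k\le n<b_{k+1}}f(T^{n-b_k}x_k)\mob(n)\Bigr|,
\]
and then glue all orbit pieces together with these phases into a single point
\[
\tilde y:=\bigl((y_n,w_n)\bigr)_{n\ge 0}\in (X\times\bs^1)^{\N},\qquad y_n:=T^{n-b_k}x_k,\ w_n:=\varepsilon_k\ \text{for }b_k\le n<b_{k+1},
\]
viewed in the shift system $\bigl((X\times\bs^1)^{\N},\widehat S\bigr)$. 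With the continuous observable $\tilde f(z,v):=v_0 f(z_0)$ a direct summation yields
\[
\frac1{b_K}\sum_{n<b_K}\tilde f(\widehat S^n\tilde y)\,\mob(n)=\frac1{b_K}\sum_{k<K}\Bigl|\sum_{b_k\le n<b_{k+1}}f(T^{n-b_k}x_k)\mob(n)\Bigr|,
\]
so the strong MOMO convergence~\eqref{eq:defMOMOSI} would follow, along $N=b_K$, from Sarnak's conjecture applied at $\tilde y$.

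\textbf{The main obstacle in (a)} is to certify that $\tilde y$ is completely deterministic. The transition set $\{n:\tilde y_{n+1}\ne(Ty_n,w_n)\}$ coincides with $\{b_k:k\ge 1\}$ and has upper Banach density zero since $b_{k+1}-b_k\to\infty$; a short counting argument then shows that $\tfrac1N\sum_{n<N}|g(\widehat S^n\tilde y)|\to 0$ for every continuous cylinder $g$ vanishing on the closed $\widehat S$-invariant set
\[
A:=\{(z,v)\in(X\times\bs^1)^{\N}:z_{n+1}=Tz_n\ \text{and}\ v_{n+1}=v_n\ \text{for all } n\},
\]
and hence any $\nu\in\text{Q-gen}(\tilde y,\widehat S)$ is supported on $A$. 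But $(A,\widehat S|_A)$ is topologically conjugate, via $(z,v)\mapsto(z_0,v_0)$, to $(X\times\bs^1,T\times\mathrm{Id})$, whose topological entropy equals $h_{\rm top}(T)+h_{\rm top}(\mathrm{Id})=0$. The variational principle gives $h_\nu(\widehat S)=0$, so $\tilde y$ is completely deterministic; Remark~\ref{r:chs1} now yields $\tfrac1N\sum_{n<N}\tilde f(\widehat S^n\tilde y)\mob(n)\to 0$, and restricting to $N=b_K$ completes (a).

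\textbf{Plan for part (b).} This is an immediate consequence of (a) combined with footnote~\ref{f:eqSandMOMO}: once the strong MOMO property is established for every zero-entropy topological system, the uniform (in $x$) convergence of the averages in~\eqref{mdis1} follows. A failure of uniform convergence would produce $\varepsilon>0$, lengths $N_k\to\infty$ and points $x_k\in X$ with $\bigl|\tfrac1{N_k}\sum_{n\le N_k}f(T^nx_k)\mob(n)\bigr|\ge\varepsilon$, and a diagonal packaging of this data as a strong MOMO instance with $b_{k+1}-b_k\to\infty$ would contradict~\eqref{eq:defMOMOSI}.

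\textbf{Plan for part (c).} I argue by contradiction: assume $(X,T)$ satisfies the strong MOMO property relative to $\bfu$. Applying Theorem~\ref{thmA} (the implication \eqref{p11}$\Rightarrow$\eqref{p2}) to the ergodic system $((\D_L)^{\Z},\kappa,S)$, with ambient topological system $(Y,R):=((\D_L)^{\Z},S)$ and distinguished point $y:=\bfu$, the hypothesis $\text{Q-gen}(\bfu)\subset\mathrm{conv}(\kappa_1,\ldots,\kappa_m)$ is exactly what \eqref{p2} requires, so
\[
\frac1N\sum_{n\le N}g(S^n\bfu)\bfu(n)\tend{N}{\infty}0\qquad\text{for every }g\in C((\D_L)^{\Z}).
\]
Choosing $g(z):=\overline{z_0}$ gives $\tfrac1N\sum_{n\le N}|\bfu(n)|^2\to 0$. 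By compactness, along some subsequence $N_\ell\to\infty$ we have $\tfrac1{N_\ell}\sum_{n\le N_\ell}\delta_{S^n\bfu}\to\nu$ for some $\nu=\sum_j\alpha_j\kappa_j\in\mathrm{conv}(\kappa_1,\ldots,\kappa_m)$, whence $\tfrac1{N_\ell}\sum_{n\le N_\ell}|\bfu(n)|^2\to\int|z_0|^2\,d\nu$. The key observation is that each $\kappa_j$, being measure-theoretically isomorphic to the nontrivial $\kappa\ne\delta_{(\ldots0.00\ldots)}$, is itself not equal to $\delta_{(\ldots0.00\ldots)}$ as a shift-invariant measure on $(\D_L)^{\Z}$, and any such $S$-invariant measure supported on $\{z:z_0=0\}$ must, by iterated shift-invariance, be concentrated on the zero sequence. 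Therefore $\int|z_0|^2\,d\kappa_j>0$ for every $j$, giving $\int|z_0|^2\,d\nu>0$ and the required contradiction.
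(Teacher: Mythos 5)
Your parts (a) and (c) are essentially correct and follow the route the paper intends: (a) is exactly the gluing trick described in Remark~\ref{r:mondra} (pieces of orbits packed into one point of a larger shift, here over the alphabet $X\times\{z:|z|=1\}$ so that the unimodular constants $\varepsilon_k$ absorb the absolute values), the zero-entropy certification via quasi-generic measures supported on the $T\times\mathrm{Id}$-orbit set is sound, and the appeal to Remark~\ref{r:chs1} (Sarnak's conjecture at completely deterministic points) is the same input the source uses; (c) is the intended application of Theorem~\ref{thmA}, implication (a)$\Rightarrow$(b), to the full shift $((\D_L)^{\Z},S)$ at the point $\bfu$ with $g(z)=\overline{z_0}$. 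One caveat in (c): the step ``$\kappa_j\cong\kappa\neq\delta_{(\ldots0.00\ldots)}$ hence $\kappa_j\neq\delta_{(\ldots0.00\ldots)}$'' is not a consequence of measure-theoretic isomorphism alone (if $\kappa$ were a Dirac mass at a nonzero fixed sequence, $\delta_{(\ldots0.00\ldots)}$ would be isomorphic to it); this only matters in a degenerate case where the statement itself trivializes (e.g.\ $\bfu\equiv0$), but you should say explicitly that $\kappa$ is assumed to be a nontrivial system, equivalently $\int|z_0|^2\,d\kappa_j>0$ for every $j$.

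The genuine gap is in (b). The implication you actually need beyond (a) is ``strong MOMO $\Rightarrow$ uniform convergence in~\eqref{mdis1}'', which is precisely the content of footnote~\ref{f:eqSandMOMO}, so citing that footnote is circular, and your one-line sketch of it does not work as stated: if uniformity fails with data $(x_k,N_k)$, the hypothesis controls $\sum_{n\le N_k}f(T^nx_k)\mob(n)$, i.e.\ $\mob$ evaluated on $\{1,\ldots,N_k\}$, whereas placing $x_k$ in the $k$-th MOMO window $[b_{k-1},b_k)$ produces sums $\sum_{j<N_k}f(T^jx_k)\mob(b_{k-1}+j)$ with a \emph{shifted} M\"obius argument; since $\mob$ is not shift-invariant, the ``packaged'' configuration is not contradicted by the failure of uniformity. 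The correct deduction goes the other way around: fix one sequence $(b_j)$ with $b_{j+1}-b_j\to\infty$ and $b_{j+1}-b_j={\rm o}(b_j)$, and for each bad pair $(x,N)$ decompose $\sum_{n\le N}f(T^nx)\mob(n)$ over the windows $[b_j,b_{j+1})$, $b_j\le N$, using the points $T^{b_j}x$ of the \emph{same} orbit; this shows that a failure of uniformity forces $\frac1{b_K}\sum_{j<K}\bigl|\sum_{b_j\le n<b_{j+1}}f(T^{n-b_j}T^{b_j}x)\mob(n)\bigr|\ge\varepsilon-{\rm o}(1)$ for suitable $K$. Then diagonalize: choose counterexamples $(x^{(i)},N_{j_i})$ with $N_{j_{i+1}}$ enormous compared to $N_{j_i}$ and assign to the blocks with indices in $(K_{j_{i-1}},K_{j_i}]$ the points $T^{b_j}x^{(i)}$; the resulting single MOMO configuration has averages bounded below by $\varepsilon/2$ along $K=K_{j_i}$, contradicting~\eqref{eq:defMOMOSI}. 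With this replacement (b) follows from (a) as claimed.
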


\begin{Remark}\label{r:compdet} Let us come back to Theorem~\ref{ChtoS} and Remark~\ref{r:chs2}, i.e.\ to the reformulation of Sarnak's conjecture using completely deterministic sequences. We intend to show that a natural generalization of Corollary~\ref{c:uniformity}~(b) to the completely deterministic case fails. Indeed, consider the square-free system $(X_{\mob^2},S)$. In Remark~\ref{r:ratCh}, we have already noticed that whenever $k_j$, $j=1,\ldots,r$ are different non-negative integers, then
\begin{equation}\tag{$\ast$}\label{A}
\sum_{n\leq N}\mob^2(n+k_1)\ldots\mob^2(n+k_{r-1})\mob(n+k_r)={\rm o}(N).
\end{equation}
It follows that for each $f\in C(X_{\mob^2})$,  for each $k\in\Z$, we have
\begin{equation}\tag{$\ast\ast$}\label{B}
\frac1N\sum_{n\leq N} f(S^{n+k}\mob^2)\mob(n)\to 0.
\end{equation}
On the other hand, the convergence in~\eqref{B} cannot be uniform in $k\in\Z$. Indeed, if it were then the whole square-free system would be M\"obius disjoint. This is however impossible since $(X_{\mob^2},S)$ is hereditary, see Remark~\ref{r:hererd}. Indeed,
we can find $y\in X_{\mob^2}$ such $y(n)=1$ if and only if $\mob(n)=1$ and $y(n)=0$ otherwise (then $y\leq \mob^2$) and if we set $\theta(z):=z(0)$ then $\lim_{N\to\infty}\frac1N\sum_{n\leq N}\theta(S^ny)\mob(n)=\frac3{\pi^2}$.

See also \cite{Mu-Va2}, where a quantitative version  of~$(\ast)$ has been proved.
\end{Remark}

Note that Theorem~\ref{thmA} does not fully answer Question~\ref{pyt2}. In certain situations the following  general (lifting) lemma of Downarowicz and Lema\'nczyk can be helpful:

\begin{Lemma}[\cite{Ab-Ka-Le,Do-Ka}]\label{model2} Assume that an ergodic automorphism $R$ is coalescent.\footnote{This means that each measure-preserving transformation commuting with $R$ must be invertible. Finite multiplicity of the Koopman operator associated to $R$ guarantees coalescence. In particular, all ergodic rotations are coalescent.} Let $(\widetilde{X},\widetilde{T})$ and $(X,T)$ be uniquely ergodic models of $R$. Assume that $T$ is a topological factor of $\widetilde{T}$, i.e.\ there exists  $\pi\colon\widetilde{X}\to X$ which is continuous and onto and which satisfies $\pi\circ\widetilde{T}=T\circ \pi$. If $T$ is M\"obius disjoint then also $\widetilde{T}$ is M\"obius disjoint. \end{Lemma}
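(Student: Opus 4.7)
The plan is to first show that $\pi$ is automatically a measure-theoretic isomorphism (this is where coalescence enters), and then to use the continuity of $\pi$ together with the M\"obius disjointness of $T$ and the unique ergodicity of $\widetilde{T}$ to spread the conclusion from a dense subalgebra to all of $C(\widetilde{X})$. The argument relies only on Definition~\ref{def:strongMOMO}-free tools from basic ergodic theory, but it crucially uses the assumption that $R$ is coalescent.

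First I would fix the unique $\widetilde{T}$-invariant measure $\widetilde{\mu}$ on $\widetilde{X}$ and observe that $\pi_\ast\widetilde{\mu}$ is $T$-invariant, hence equals the unique $T$-invariant measure $\mu$ on $X$. Thus $\pi\colon(\widetilde{X},\cb(\widetilde{X}),\widetilde{\mu},\widetilde{T})\to(X,\cb(X),\mu,T)$ is a measure-theoretic factor map between two systems each measurably isomorphic to $R$. Picking isomorphisms $\phi\colon (Z,\cd,\kappa,R)\to(\widetilde{X},\cb(\widetilde{X}),\widetilde{\mu},\widetilde{T})$ and $\psi\colon(Z,\cd,\kappa,R)\to(X,\cb(X),\mu,T)$, the composition $\psi^{-1}\circ\pi\circ\phi$ is a measure-preserving map from $(Z,\cd,\kappa)$ to itself that commutes with $R$, so by coalescence it is invertible. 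Pulling back through $\phi$ and $\psi$, this forces $\pi$ to be a measure-theoretic isomorphism between $(\widetilde{X},\widetilde{\mu},\widetilde{T})$ and $(X,\mu,T)$.

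Next I would exploit this isomorphism together with continuity of $\pi$. The pull-back $g\mapsto g\circ\pi$ maps $C(X)$ into $C(\widetilde{X})$, and because $\pi$ is a measure-theoretic isomorphism, it extends to an isometric isomorphism $L^1(X,\mu)\to L^1(\widetilde{X},\widetilde{\mu})$; since $C(X)$ is dense in $L^1(X,\mu)$, the set $\cg:=\{g\circ\pi:g\in C(X)\}$ is therefore linearly dense in $L^1(\widetilde{X},\widetilde{\mu})$. For each fixed $g\in C(X)$ and each $\widetilde{x}\in\widetilde{X}$, the intertwining $\pi\circ\widetilde{T}=T\circ\pi$ gives
\[
\frac1N\sum_{n\leq N}(g\circ\pi)(\widetilde{T}^n\widetilde{x})\,\mob(n)=\frac1N\sum_{n\leq N}g(T^n\pi(\widetilde{x}))\,\mob(n)\tend{N}{\infty}0,
\]
using the M\"obius disjointness of $T$ at the point $\pi(\widetilde{x})\in X$. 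Hence \eqref{mdis1} holds on $\widetilde{X}$ for every observable in $\cg$ and every $\widetilde{x}\in\widetilde{X}$.

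Finally, given an arbitrary $f\in C(\widetilde{X})$ and $\vep>0$, by density of $\cg$ in $L^1(\widetilde{\mu})$ I would pick $g\in C(X)$ with $\|f-g\circ\pi\|_{L^1(\widetilde{\mu})}<\vep$. Since $|f-g\circ\pi|\in C(\widetilde{X})$ and $(\widetilde{X},\widetilde{T})$ is uniquely ergodic, the Birkhoff-type averages $\frac1N\sum_{n\leq N}|f-g\circ\pi|(\widetilde{T}^n\widetilde{x})$ converge (uniformly in $\widetilde{x}$) to $\int|f-g\circ\pi|\,d\widetilde{\mu}<\vep$. Combined with the previous step this yields
\[
\limsup_{N\to\infty}\left|\frac1N\sum_{n\leq N}f(\widetilde{T}^n\widetilde{x})\mob(n)\right|\leq\vep,
\]
and letting $\vep\to0$ finishes the proof. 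The only genuinely delicate step is the first one: the rest of the argument is the standard linearly-dense-subset reduction mentioned in footnote~\ref{f:lindense}, and the main obstacle is making the coalescence argument airtight, in particular noting that the \emph{continuous} factor map $\pi$, which is only surjective in the topological sense and whose fibres may be large, is nevertheless a measure-theoretic isomorphism on the nose once one passes to the unique invariant measure.
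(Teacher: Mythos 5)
Your argument is correct: unique ergodicity gives $\pi_\ast\widetilde{\mu}=\mu$, coalescence forces the induced measure-theoretic factor map to be an isomorphism mod~$0$, and then the $L^1$-density of $\{g\circ\pi : g\in C(X)\}$ in $L^1(\widetilde{X},\widetilde{\mu})$ together with uniform convergence of Birkhoff averages (unique ergodicity) spreads the orthogonality from $C(X)\circ\pi$ to all of $C(\widetilde{X})$, exactly as in footnote~\ref{f:lindense}. The survey itself only cites \cite{Ab-Ka-Le,Do-Ka} for this lemma, and your proof is essentially the same lifting argument used there, so there is nothing to add.
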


\subsection{M\"obius disjointness and entropy}
Sarnak's conjecture deals with deterministic systems but M\"obius disjointness, a priori, does not exclude the possibility of positive (topological) entropy systems which are M\"obius disjoint.\footnote{Sarnak in \cite{Sa} mentions that Bourgain has constructed a positive entropy system which is M\"obius disjoint but this construction has never been published.}
The first ``natural'' trial would be to take the square-free system $(X_{\mob^2},S)$ which has positive entropy (see Section~\ref{invme}) and clearly $\mob^2$ is orthogonal to $\mob$.  However, in spite of the orthogonality of the two sequences, as we have noticed in Remark~\ref{r:compdet}, the square-free system is not M\"obius  disjoint.

Recently, Downarowicz and Serafin \cite{Do-Se} constructed M\"obius disjoint positive entropy homeomorphisms of arbitrarily large entropy. On the other hand, see~\cite{Kar2}, in the subshift of finite type case we do not have M\"obius disjointness. Using Katok's horseshoe theorem, it follows that $C^{1+\delta}$-diffeomorphisms of surfaces are not M\"obius disjoint but the following question seems to be open:

\begin{Question}\label{q:smooth} Is there a positive entropy diffeomorphism of a compact manifold which is M\"obius disjoint?\end{Question}

Viewed all this above, another natural question arises:

\begin{Question}\label{q5}
Does there exist an ergodic positive entropy measure-theoretic system all uniquely ergodic models of which are M\"obius disjoint?
\end{Question}
\noindent

Using Theorem~\ref{thmA}, Sinai's theorem on Bernoulli factors (see e.g.\ \cite{Gl}) and B.\ Weiss' theorem \cite{We2} on strictly ergodic models of some diagrams a partial answer to Question~\ref{q5} is given by the following result:

\begin{Cor}[\cite{Ab-Ku-Le-Ru1}]\label{c11}
Assume that $\bfu\in (\D_L)^{\Z}$ is generic for a Bernoulli measure~$\kappa$. Let $\bfv\in(\D_L)^\Z$, $\bfu$ and $\bfv$ correlate. Then for each dynamical system $(X,T)$ with $h(X,T)>h((\D_L)^{\Z},\kappa,S)$, we do not have the strong MOMO property relatively to $\bfv$.\end{Cor}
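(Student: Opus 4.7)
I would argue by contradiction: assume $(X,T)$ satisfies strong MOMO relative to $\bfv$. The strategy is to extract from $(X,T)$ a uniquely ergodic model of the Bernoulli system $\kappa$ (as the bottom of a continuous factor diagram), and there confront the correlation of $\bfu$ and $\bfv$ with Corollary~\ref{c:uniformity}(c).

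The concrete steps are as follows. First, by the variational principle, the strict inequality $h(X,T)>h(\kappa)$ produces an ergodic $\mu\in M^e(X,T)$ with $h_\mu(T)>h(\kappa)$. Sinai's theorem on Bernoulli factors and Ornstein's isomorphism theorem then exhibit $((\D_L)^{\Z},\kappa,S)$ as a measure-theoretic factor of $(X,\cb,\mu,T)$. Second, B.~Weiss's theorem on strictly ergodic models of commutative diagrams \cite{We2} realizes this factor topologically: there exist uniquely ergodic systems $(\widetilde X,\widetilde T)$ and $(\widetilde Y,\widetilde S)$ with a continuous factor map $\pi\colon\widetilde X\to\widetilde Y$, satisfying $(\widetilde X,\widetilde\mu,\widetilde T)\cong(X,\mu,T)$ and $(\widetilde Y,\widetilde\kappa,\widetilde S)\cong((\D_L)^{\Z},\kappa,S)$. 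Third, I apply Corollary~\ref{c:uniformity}(c) to $(\widetilde Y,\widetilde S)$ with $\bfu$ (which, being generic for $\kappa$, has $\text{Q-gen}(\bfu)=\{\kappa\}$). The construction underlying Corollary~\ref{c:uniformity}(c) yields witnessing data $(b_k)$, $(y_k)\subset\widetilde Y$, and $g\in C(\widetilde Y)$ for which $g(\widetilde S^{n-b_k}y_k)$ reproduces $\bfu(n)$ on each block $[b_k,b_{k+1})$ up to a vanishing error. Substituting $\bfv$ for $\bfu$ in the MOMO sum then yields sums of the form $\sum_{b_k\le n<b_{k+1}}\bfu(n)\bfv(n)$; the normalized $k$-average of the absolute values of these partial sums is at least $\bigl|\tfrac{1}{b_K}\sum_{n\le b_K}\bfu(n)\bfv(n)\bigr|$ by the triangle inequality, and this quantity does not tend to zero by the correlation hypothesis. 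Hence strong MOMO rel $\bfv$ fails on $(\widetilde Y,\widetilde S)$; lifting via $f:=g\circ\pi\in C(\widetilde X)$ and preimages $x_k\in\pi^{-1}(y_k)$ carries the failure up to $(\widetilde X,\widetilde T)$.

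It remains to transfer failure from $(\widetilde X,\widetilde T)$ back to $(X,T)$: equivalently, to show that strong MOMO of $(X,T)$ rel $\bfv$ would force strong MOMO of $(\widetilde X,\widetilde T)$ rel $\bfv$. This is the main obstacle, and it is here that Theorem~\ref{thmA} enters in full force. The two topological systems $(X,T)$ and $(\widetilde X,\widetilde T)$ share only the measure-theoretic structure $(X,\cb,\mu,T)$, and strong MOMO is a topological invariant, so no direct transfer is automatic. The plan is to use the equivalence (a)$\Leftrightarrow$(c) of Theorem~\ref{thmA} applied to $(X,\cb,\mu,T)$: strong MOMO of $(X,T)$ rel $\bfv$, when applied to sequences drawn from $\mu$-generic orbits, should produce the measure-theoretic consequence of Remark~\ref{momointerpr} and thereby trigger condition~(a) for \emph{some} uniquely ergodic model of $(X,\mu,T)$; (c) then propagates strong MOMO to every uniquely ergodic model of $(X,\mu,T)$, including $(\widetilde X,\widetilde T)$, contradicting the failure just derived. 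Making this last transfer rigorous --- translating the topological strong MOMO on $(X,T)$ into the abstract condition (a)/(b) of Theorem~\ref{thmA} --- is the technical heart of the proof.
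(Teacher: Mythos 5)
Your skeleton (variational principle, Sinai's factor theorem, Weiss' strictly ergodic models of a diagram, Theorem~\ref{thmA}) is the same as the one the paper indicates, but the step where you actually confront the correlation of $\bfu$ and $\bfv$ is a genuine gap. Corollary~\ref{c:uniformity}(c) is a pure non-existence statement (itself a consequence of Theorem~\ref{thmA}); neither its statement nor its proof hands you blocks $(b_k)$, points $y_k\in\widetilde Y$ and a continuous $g$ such that $g(\widetilde S^{\,n-b_k}y_k)$ reproduces $\bfu(n)$ on $[b_k,b_{k+1})$ up to a vanishing averaged error \emph{inside the prescribed uniquely ergodic model} $(\widetilde Y,\widetilde S)$. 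Such blockwise shadowing of the specific sequence $\bfu$ does not follow from genericity: genericity controls empirical distributions of blocks, and closeness of empirical measures says nothing about closeness in the normalized $\ell^1$ sense you need (for the Bernoulli $(1/2,1/2)$ measure a typical $\pm1$ word and its sign flip have the same statistics but are at maximal normalized Hamming distance), nor is there any reason why the orbit blocks of a given uniquely ergodic model should approximate the particular blocks occurring in $\bfu$. This is exactly the point where the intended argument stays at the level of Theorem~\ref{thmA}: once $(\widetilde Y,\widetilde S)$, a uniquely ergodic model of $((\D_L)^{\Z},\kappa,S)$, is known to satisfy the strong MOMO property relative to $\bfv$, one applies Theorem~\ref{thmA} a second time (with arithmetic function $\bfv$ and $(Z,\cd,\kappa,R)$ the Bernoulli system): condition (a) holds, hence (b) holds, and evaluating (b) on the full shift $((\D_L)^{\Z},S)$ at the point $\bfu$ (for which $\text{Q-gen}(\bfu)=\{\kappa\}$) with the continuous functions $z\mapsto z(0)$ and $z\mapsto\overline{z(0)}$ yields $\frac1N\sum_{n\leq N}\bfu(n)\bfv(n)\to0$ and $\frac1N\sum_{n\leq N}\overline{\bfu(n)}\bfv(n)\to0$, contradicting the correlation hypothesis. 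No reproduction of $\bfu$ inside $\widetilde Y$ is needed.

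The step you single out as ``the technical heart'' is, by contrast, immediate and needs no detour through Remark~\ref{momointerpr}: condition (a) of Theorem~\ref{thmA} does not require the witnessing system to be uniquely ergodic. If $(X,T)$ satisfied the strong MOMO property relative to $\bfv$, then $(X,T)$ itself, together with an ergodic $\mu\in M^e(X,T)$ with $h_\mu(T)>h((\D_L)^{\Z},\kappa,S)$, witnesses (a) for $(Z,\cd,\kappa,R)=(X,\cb(X),\mu,T)$, and (a)$\Rightarrow$(c) gives the strong MOMO property relative to $\bfv$ in every uniquely ergodic model of $(X,\mu,T)$, in particular in $(\widetilde X,\widetilde T)$; it then passes to the topological factor $(\widetilde Y,\widetilde S)$ via $g\circ\pi$ and preimages of points. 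So the transfer you worried about is free, while the place where your proof really breaks is the claimed shadowing of $\bfu$, which must be replaced by the second application of Theorem~\ref{thmA} described above.
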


By substituting $\bfu=\lio$, $\bfv=\mob$ and assuming the Chowla conjecture for $\lio$, we obtain that no system $(X,T)$ with entropy $>\log2$ satisfies the strong MOMO relatively to $\mob$. When $\mob$ is replaced by $\lio$, we still have a stronger result.
\begin{Prop}[\cite{Ab-Ku-Le-Ru1}]\label{dodENT}
Assume that the Chowla conjecture holds for $\lio$. Then
no topological system $(X,T)$ with positive entropy satisfies the strong MOMO property relatively  to $\lio$.
\end{Prop}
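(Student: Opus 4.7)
My plan is to derive a contradiction via Corollary \ref{c11}, applied with a carefully chosen Bernoulli-generic sequence of arbitrarily small entropy. Suppose for contradiction that $(X,T)$ has $h(T) > 0$ and satisfies the strong MOMO property relative to $\lio$. For each integer $k \ge 2$ I would define $\bfu_k\colon\N\to\{0,1\}$ by
\[
\bfu_k(n) := \1_{\{\lio(k(n-1)+1) = \cdots = \lio(kn) = +1\}},
\]
the indicator that the $n$-th non-overlapping block of $\lio$ of length $k$ consists entirely of $+1$'s. Under the Chowla conjecture for $\lio$, the sequence $\lio$ is generic for the Bernoulli measure $B(\tfrac{1}{2},\tfrac{1}{2})$ on $\{-1,1\}^\Z$, so pairwise disjoint length-$k$ blocks of $\lio$ become asymptotically independent and uniform in the empirical distribution. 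Consequently $\bfu_k$ is a generic point for the Bernoulli measure $\kappa_k := B(2^{-k},1-2^{-k})$ on $\{0,1\}^\Z\subset\D_1^\Z$, whose entropy $H(2^{-k})$ tends to $0$ as $k\to\infty$.

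The delicate step is to verify that $\bfu_k$ correlates with $\lio$ in the sense of Corollary \ref{c11}, namely
\[
\frac{1}{N}\sum_{n\le N}\bfu_k(n)\lio(n)\;\xrightarrow[N\to\infty]{}\;\frac{\lio(k)}{2^k}\;\ne\;0.
\]
Expanding $\bfu_k(n) = 2^{-k}\prod_{i=1}^k(1+\lio(k(n-1)+i))$ and using complete multiplicativity of $\lio$, this average decomposes into Ces\`aro means of $\lio(P_S(n))$, indexed by subsets $S\subseteq\{1,\ldots,k\}$, where $P_S(n) = n\prod_{i\in S}(kn-k+i)$. The distinguished subset $S = \{k\}$ yields $P_S(n) = kn^2$, so $\lio(kn^2) = \lio(k)\lio(n)^2 = \lio(k)$ is constant and contributes exactly $\lio(k)/2^k$. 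For every other $S$, after the substitution $m = kn$ the corresponding term reduces to a Chowla-type multipoint correlation of $\lio$ restricted to the arithmetic progression $k\N$; expanding $\1_{\{k\mid m\}}$ as a linear combination of additive characters $e(\ell m/k)$ and using that $\lio$, being a generic point of a system (the Bernoulli shift) with purely continuous spectrum, satisfies $\frac{1}{M}\sum_m\prod_j\lio(m+d_j)e(\alpha m)\to 0$ for every $\alpha$ and every nonempty collection of distinct shifts, one concludes that all off-main contributions vanish in the Ces\`aro limit.

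Finally, choose $k$ large enough that $H(2^{-k}) < h(T)$. Then $\bfu_k$ is a generic point for the Bernoulli measure $\kappa_k$ of entropy strictly below $h(X,T)$, and $\bfu_k$ correlates with $\lio$, so Corollary \ref{c11} applied with $\bfu = \bfu_k$, $\bfv = \lio$, $\kappa = \kappa_k$ yields that $(X,T)$ does not satisfy the strong MOMO property relative to $\lio$, contradicting the assumption. The main obstacle is the correlation verification in the second paragraph: because the polynomials $P_S$ have degrees growing with $k$, one cannot simply appeal to unconditional Davenport--Hua-type estimates for $\sum_n\lio(P(n))$ at arbitrary degree; one must instead orchestrate Chowla, complete multiplicativity, and the character-sum decomposition of $\1_{\{k\mid m\}}$ in a coordinated way to force every off-main subset $S$ to contribute zero in the limit.
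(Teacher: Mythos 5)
Your overall strategy---extract from $\lio$ a block-coded sequence generic for a low-entropy Bernoulli measure which correlates with $\lio$, and feed it into Corollary~\ref{c11}---is exactly the route the survey sets up for this proposition (it is stated right after the discussion of Corollary~\ref{c11}, with $\bfu$ built from $\lio$), and your choice of $\bfu_k$ does the job. In fact you are working harder than necessary at the correlation step: by complete multiplicativity, on the event $\bfu_k(n)=1$ one has $\lio(kn)=1$ and hence $\lio(n)=\lio(k)\lio(kn)=\lio(k)$, so $\bfu_k(n)\lio(n)=\lio(k)\,\bfu_k(n)$ identically; the correlation computation therefore reduces to showing that the density of $\{n:\bfu_k(n)=1\}$ equals $2^{-k}$, and no expansion into the polynomial correlations $\lio(P_S(n))$ or into additive characters is needed at all.

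The genuine weak point is the statistical input you invoke to justify both the genericity of $\bfu_k$ and the vanishing of the off-main terms, namely that $\lio$, ``being a generic point of a system with purely continuous spectrum,'' satisfies $\frac1M\sum_{m\leq M}\prod_j\lio(m+d_j)e^{2\pi i\alpha m}\to0$ for every $\alpha$. That principle is false in general: genericity only controls Birkhoff averages of continuous functions along the full orbit, while Wiener--Wintner-type vanishing of twisted averages is an almost-everywhere statement that can fail at an individual generic point, so for irrational $\alpha$ nothing of the sort follows from continuity of the spectrum of the limit measure. What you actually need is only the statistics of $\lio$ sampled along $k\N$, and this does follow from the Chowla conjecture, but by a different (standard) argument: Chowla makes $\lio$ generic for $B(\tfrac12,\tfrac12)$ (the $\lio$-analogue of Corollary~\ref{Chg}); this Bernoulli measure is totally ergodic, hence $S^k$-ergodic, so if along a subsequence $\frac1N\sum_{n\leq N}\delta_{S^{kn+r}\lio}\to\nu_r$ for $r=0,\dots,k-1$, then each $\nu_r$ is $S^k$-invariant and $\frac1k\sum_r\nu_r=B(\tfrac12,\tfrac12)$, which is an extreme point of the simplex of $S^k$-invariant measures; hence every $\nu_r=B(\tfrac12,\tfrac12)$ and $\lio$ is generic under $S^k$ as well. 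Pushing this forward through the continuous $k$-block code yields both that $\bfu_k$ is generic for $B(2^{-k},1-2^{-k})$ and that $d(\{\bfu_k=1\})=2^{-k}$, after which your application of Corollary~\ref{c11}, with $k$ chosen so that $H(2^{-k})<h(X,T)$, goes through verbatim. So the approach is the intended one; only the justification of the sampling-along-$k\N$ step needs to be replaced as above.
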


\begin{Remark} The proof of Theorem~\ref{thmA} tells us that when $(Z,\cd,\kappa,R)$ is ergodic and has positive entropy  then there exists a system $(X,T)$, which is not Liouville disjoint, with at most three ergodic measures and all of these measures yield a measurable system isomorphic to $R$. Therefore, it seems reasonable to conjecture that the answer to Question~\ref{q5} is negative.\end{Remark}

We now have a completely clear picture for the Liouville function: it follows  from Theorem~\ref{ChtoS} (for $\lio$) and Proposition~\ref{dodENT} that if the Chowla conjecture holds for $\lio$ then the strong MOMO property (relatively to $\lio$) holds for $(X,T)$ if and only if $h(X,T)=0$. Using  footnote~\ref{f:eqSandMOMO}, we immediately obtain  Proposition~\ref{dodENT} in its equivalent form:

\begin{Cor}\label{c:shintUnif} Assume that the Chowla conjecture holds for $\lio$. Then, the short interval uniform  convergence in~\eqref{mdis1} (with $\mob$ replaced by $\lio$) takes place if and only if $h(X,T)=0$.\end{Cor}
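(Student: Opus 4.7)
The plan is to prove the equivalence by chaining together results already established in the excerpt, plus the reformulation of strong MOMO as short interval uniform convergence recorded in footnote~\ref{f:eqSandMOMO}. The key preliminary observation is that short interval uniform convergence in~\eqref{mdis1} (with $\mob$ replaced by $\lio$) is literally the strong MOMO property relative to $\lio$: given $f\in C(X)$,
\[
\frac1M\sum_{1\le m<M}\Big|\frac1H\sum_{m\le h<m+H} f(T^hx)\lio(h)\Big|\tend{M,H}{\infty,\,H={\rm o}(M)}0
\]
uniformly in $x\in X$ is, upon choosing any sequence $0=b_0<b_1<\cdots$ with $b_{k+1}-b_k\to\infty$ and any sequence of points $(x_k)$, the same as condition~\eqref{eq:defMOMOSI}. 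So it suffices to show that, under the Chowla conjecture for $\lio$, the strong MOMO property (relative to $\lio$) holds for $(X,T)$ if and only if $h(X,T)=0$.

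For the ``only if'' direction I would simply invoke Proposition~\ref{dodENT}: assuming the Chowla conjecture for $\lio$, no topological system with positive entropy satisfies the strong MOMO property relative to $\lio$. Contrapositively, strong MOMO forces $h(X,T)=0$.

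For the ``if'' direction I would argue in two steps. First, Theorem~\ref{ChtoS} together with Remark~\ref{r:chs2} yields that the Chowla conjecture for $\lio$ implies Liouville disjointness for every zero entropy topological system $(X,T)$, i.e.\ the Sarnak-type statement holds with $\mob$ replaced by $\lio$. Second, apply the $\lio$-analogue of Corollary~\ref{c:uniformity}(a): the equivalence between Sarnak's conjecture and the strong MOMO property for the class of deterministic systems (recorded in the footnote to that corollary) works identically when $\mob$ is replaced by $\lio$, because the proof of Theorem~\ref{thmA} only uses the boundedness of the arithmetic function in question. Thus Liouville disjointness of every zero entropy system upgrades to strong MOMO (relative to $\lio$) for every zero entropy system, which by our preliminary observation is precisely the short interval uniform convergence.

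No step is really an obstacle here since everything is cited from earlier in the paper; the only point worth double-checking is the compatibility of Corollary~\ref{c:uniformity}(a) and Theorem~\ref{thmA} with the substitution $\mob\leadsto\lio$, but this is explicit in Definition~\ref{def:strongMOMO} (which is formulated for an arbitrary bounded arithmetic function $\bfu$) and the statement of Theorem~\ref{thmA}.
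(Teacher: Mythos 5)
Your argument is correct and is essentially the paper's own: the text preceding Corollary~\ref{c:shintUnif} derives it by combining Theorem~\ref{ChtoS} (for $\lio$) with Proposition~\ref{dodENT} to get that, under the Chowla conjecture for $\lio$, strong MOMO relative to $\lio$ holds exactly for zero entropy systems, and then identifies strong MOMO with short interval uniform convergence via footnote~\ref{f:eqSandMOMO}. You merely make explicit the intermediate upgrade from Liouville disjointness to strong MOMO through the $\lio$-analogue of Corollary~\ref{c:uniformity}(a)/Theorem~\ref{thmA}, which the paper leaves implicit.
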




\subsection{The AOP property and its consequences}
We need an ergodic criterion to establish the strong MOMO property in models of an automorphism. This turns out to be a natural ergodic counterpart of the KBSZ criterion (Theorem~\ref{t:kbsz}). Following \cite{Ab-Le-Ru2}
an ergodic automorphism $R$ is said to have {\em asymptotically orthogonal powers} (AOP)  if for each $f,g\in L^2_0(Z,\cd,\kappa)$, we have
\beq\label{momoe4}
\lim_{\PP\ni p,q\to\infty, p\neq q} \sup_{\kappa\in J^e(R^p,R^q)}\left|\int_{X\times X} f\ot g\,d\kappa\right|=0.
\eeq
Rotation $Rx=x+1$ acting on $\Z/k\Z$ with $k\geq2$ has no AOP property because of Dirichlet's theorem on primes in arithmetic progressions. Hence, AOP implies total ergodicity (clearly, AOP is closed under taking factors).
The AOP property implies zero entropy \cite{Ab-Le-Ru2}.

Clearly, if the powers of $R$ are pairwise disjoint\footnote{This is a ``typical'' property of an automorphism of a probability standard Borel space~\cite{Ju}. M\"obius disjointness for uniquely ergodic models for this case is already noticed in \cite{Bo-Sa-Zi}.} then $R$ enjoys the AOP property.
In order to see a less trivial example of an AOP automorphism, consider any totally ergodic discrete spectrum automorphism $R$ on $(Z,\cd,\kappa)$. For $f,g$ take eigenfunctions  corresponding to eigenvalues $c,d$, respectively.
Now, take $\rho\in J^e(R^p,R^q)$ and consider
$$
\int_{Z\times Z}f\ot g\,d\rho=\int_{Z\times Z}(f\ot\raz_Z)\cdot(\raz_Z\ot g)\,d\rho.$$
Notice that $f\ot \raz_Z$ and $\raz_Z\ot g$ are eigenfunctions of $(Z\times Z,\rho,R^p\times R^q)$ corresponding to $c^p$ and $d^q$, respectively. If $c^p\neq d^q$ (and this is the case for all but one pair $(p,q)$ because of total ergodicity)  then these eigenfunctions are orthogonal and we are done. We will see more examples in Section~\ref{czesc5}.

\begin{Remark}\label{r:alliso} For an AOP automorphism the powers need not be disjoint. As a matter of fact, we can have an AOP automorphism with all of its non-zero powers isomorphic.\footnote{Take an ergodic rotation with the group of eigenvalues $\{e^{2\pi i\alpha m/n} :  m,n\in\Z, n\neq0, \alpha\notin\Q\}$.}\end{Remark}

\begin{Th}[\cite{Ab-Le-Ru2,Ab-Ku-Le-Ru}]\label{thmB}
Let $\bfu\in\cm$. Suppose that $(Z,\cd,\kappa,R)$ satisfies AOP. Then the following are equivalent:
\begin{itemize}
\item
$\bfu$ satisfies \eqref{eq:Mobius-like};
\item
The strong MOMO property relatively to $\bfu$ is satisfied in each uniquely ergodic model $(X,T)$ of $R$.
\end{itemize}
In particular, if the above holds, for each $f\in C(X)$, we have
$$
\frac1N\sum_{n\leq N}f(T^nx)\bfu(n)\tend{N}{\infty} 0 \text{ uniformly in } X.
$$
\end{Th}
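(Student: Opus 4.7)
The implication strong MOMO $\Rightarrow$ \eqref{eq:Mobius-like} is immediate: applying Definition~\ref{def:strongMOMO} with $f\equiv 1\in C(X)$ makes $f(T^{n-b_k}x_k)=1$, reducing \eqref{eq:defMOMOSI} to \eqref{eq:Mobius-like}. The heart of the theorem is the converse, which I sketch below.

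For the converse, let $(X,T)$ be a uniquely ergodic model of $R$ with unique $T$-invariant probability $\mu$, so $(X,\BC(X),\mu,T)\simeq(Z,\cd,\kappa,R)$. Since AOP is a measure-theoretic invariant, $T$ inherits it; in particular $T^p$ is ergodic for every $p\in\PP$, as AOP implies total ergodicity. Fix $f\in C(X)$, $(x_k)\subset X$, and $0=b_0<b_1<\cdots$ with $b_{k+1}-b_k\to\infty$. Writing $f=(f-\int f\,d\mu)+\int f\,d\mu$, the constant part contributes at most $|\!\int f\,d\mu|$ times the left-hand side of~\eqref{eq:Mobius-like}, which vanishes by hypothesis on $\bfu$. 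Hence we may assume $\int f\,d\mu=0$.

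The plan is to invoke Proposition~\ref{pr:kbsz}. Introduce optimising unimodular phases $(\epsilon_k)$ making each inner sum in~\eqref{eq:defMOMOSI} nonnegative, and set
\[
a_n\,:=\,\epsilon_k\, f\!\left(T^{n-b_k}x_k\right)\quad\text{for}\quad b_k\le n<b_{k+1}.
\]
Then strong MOMO for $(x_k),f$ is equivalent to $\tfrac{1}{b_K}\sum_{n<b_K} a_n\bfu(n)\to 0$, and since $\bfu\in\cm$ and $(a_n)$ is bounded, Proposition~\ref{pr:kbsz} reduces the task to
\[
\limsup_{\substack{p,q\in\PP\\p\ne q\to\infty}}\limsup_{N\to\infty}\left|\frac{1}{N}\sum_{n\le N}a_{pn}\overline{a_{qn}}\right|=0,
\]
uniformly in $(\epsilon_k),(x_k)$. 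I would interpret the inner sum as an integral against an empirical joining of $T^p$ and $T^q$: along any subsequence $(N_j)$ realising the limsup, extract a weak$^\ast$-limit $\rho$ on $X\times X$ of the measures $\frac{1}{N_j}\sum_{n\le N_j}\delta_{(T^{pn}x_{k(pn)},\,T^{qn}x_{k(qn)})}$, where $k(m)$ denotes the block index of $m$. Unique ergodicity of $T^p$ and $T^q$ forces both marginals of $\rho$ to be $\mu$, so $\rho\in J(T^p,T^q)$; decomposing $\rho$ ergodically produces $\rho$-a.s.\ elements of $J^e(T^p,T^q)$, on which AOP (transferred from $R$) supplies uniform decay of $|\!\int f\otimes\bar f\,d\sigma|$ as $p\ne q\to\infty$. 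The uniform convergence in the final display of the theorem is then a formal consequence of strong MOMO.

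The main obstacle is the identification of $\rho$ as a $(T^p\times T^q)$-invariant joining. Because $pn$ and $qn$ typically lie in \emph{different} blocks of $(b_k)$ (their separation $(p-q)n$ eventually dominates any fixed block length), the empirical measures above are not orbit averages of a single point under $T^p\times T^q$, so invariance and the marginal identification are not automatic. To overcome this I would exploit $b_{k+1}-b_k\to\infty$ via a block-by-block decomposition: on each typical pair of blocks the partial sum is a genuine $T^p\times T^q$-Birkhoff average of a fixed point $(x_k,x_{k'})$, whose limit is, by unique ergodicity of the marginals, a joining of $T^p$ and $T^q$; pooling these contributions as a convex combination then exhibits $\rho$ as an element of $J(T^p,T^q)$. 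Once this is done, AOP closes the loop uniformly in the data $(\epsilon_k),(x_k)$, which is precisely the uniformity needed to upgrade pointwise M\"obius disjointness to strong MOMO.
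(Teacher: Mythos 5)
Your argument is correct and follows essentially the approach behind the cited proof that the survey indicates: reduce the strong MOMO property, via unimodular phases, to Proposition~\ref{pr:kbsz} (after using \eqref{eq:Mobius-like} to pass to zero-mean $f$), and verify its correlation hypothesis by viewing the block-wise sums as pieces of $T^p\times T^q$-orbits whose limit empirical measures are joinings of the (uniquely ergodic, by total ergodicity coming from AOP) powers $T^p,T^q$, then conclude with AOP and ergodic decomposition. The only point to tighten is that, since the phases $\epsilon_k\overline{\epsilon_{k'}}$ change from piece to piece, one should bound the correlation piece-by-piece, using the compactness-based uniform estimate $\bigl|\frac1L\sum_{n<L}(f\otimes\bar f)\bigl((T^p\times T^q)^n(y,z)\bigr)\bigr|\le\sup_{\sigma\in J^e(T^p,T^q)}\bigl|\int f\otimes\bar f\,d\sigma\bigr|+\delta$ valid for all $(y,z)$ once $L$ is large (short pieces being negligible because the block boundaries have density zero), rather than through the single global joining $\rho$, which is exactly what your final paragraph gestures at.
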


\begin{Cor}\label{corC} Assume that $(Z,\cd,\kappa,R)$ enjoys the AOP property. Then, in each uniquely ergodic model $(X,T)$ of $R$, we have
\beq\label{momoe5}
\frac1M\sum_{M\leq m<2M}\left|\frac1H\sum_{m \leq h<m+H}f(T^nx)\mob(n)\right|\tend{H,M}{\infty,H={\rm o}(M)}0\eeq
for all $f\in C(X)$, $x\in X$.\end{Cor}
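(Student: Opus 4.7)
The plan is to apply Theorem~\ref{thmB} to the given AOP system $(Z,\cd,\kappa,R)$ with the arithmetic function $\bfu=\mob$: this will yield the strong MOMO property relative to $\mob$ in every uniquely ergodic model $(X,T)$ of $R$, and \eqref{momoe5} will then follow by unpacking strong MOMO in its short-interval form.

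The only hypothesis in Theorem~\ref{thmB} that needs checking is that $\mob$ satisfies the short-interval condition \eqref{eq:Mobius-like}, which is the $c=1$ case of \eqref{sh1} for $\bfu=\mob$. Since $\mob$ satisfies the non-pretentiousness condition \eqref{e:mrt}, as recorded just after that display and attributed to \cite{Ma-Ra-Ta}, the Matom\"aki--Radziwi\l\l--Tao theorem gives \eqref{sh1} for $\mob$ along every sequence $(b_k)$ with $b_{k+1}-b_k\to\infty$, which is precisely \eqref{eq:Mobius-like}. Consequently, Theorem~\ref{thmB} furnishes the strong MOMO property relative to $\mob$ in every uniquely ergodic model $(X,T)$ of $R$.

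To pass from strong MOMO to \eqref{momoe5}, I invoke the equivalence noted in footnote~\ref{f:eqSandMOMO}: the strong MOMO property on $(X,T)$ is equivalent to the short-interval convergence
$$\frac{1}{M}\sum_{1\leq m<M}\left|\frac{1}{H}\sum_{m\leq h<m+H} f(T^h x)\mob(h)\right| \tend{H,M}{\infty,H={\rm o}(M)} 0$$
uniformly in $x\in X$, for any fixed $f\in C(X)$. The replacement of the window $[1,M)$ by $[M,2M)$ as in \eqref{momoe5} is routine: apply the uniform short-interval statement with $M$ replaced by $2M$ and subtract the $[1,M)$ contribution, which is itself ${\rm o}(1)$ by the same statement.

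The main obstacle is the equivalence between strong MOMO and the uniform short-interval statement. Its forward direction proceeds by taking, for given $H$ and $M$, the MOMO breakpoints $b_k$ to be an $H$-spaced partition inside a long interval with $x_k = T^{b_k}x$, which turns a MOMO sum into an average of short-interval sums along the orbit of $x$. The reverse direction feeds the short-interval uniform statement into an arbitrary MOMO sum with variable window sizes $H_k = b_{k+1}-b_k\to\infty$, by applying the short-interval estimate block-by-block and using uniformity in $x$ to absorb the different base points $x_k$ as shifted initial conditions for the orbit.
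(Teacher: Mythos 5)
Your proof is correct and is essentially the paper's own (implicit) route: Corollary~\ref{corC} is stated right after Theorem~\ref{thmB} precisely because one applies that theorem with $\bfu=\mob$, whose hypothesis \eqref{eq:Mobius-like} is supplied by the Matom\"aki--Radziwi\l{}\l{}(--Tao) results via \eqref{e:mrt} and \eqref{sh1}, and then reads off the short-interval form of strong MOMO as in footnote~\ref{f:eqSandMOMO}. The only point to tighten is your sketch of the direction ``strong MOMO $\Rightarrow$ short intervals'': a fixed $H$-spaced choice of $b_k$ does not satisfy $b_{k+1}-b_k\to\infty$, so one should argue by contradiction along sequences $H_j,M_j\to\infty$ with $H_j={\rm o}(M_j)$, concatenating the corresponding blocks into a single admissible sequence $(b_k)$.
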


The AOP property can be defined for actions of locally compact (second countable) groups. Then, for induced actions this property lifts \cite{Fl-Fr-Ku-Le}, and in particular (by taking the induced $\R$-action), if we have an automorphism then the corresponding suspension flow\footnote{By the {\em suspension flow} of $R$ we mean the special flow over $R$ under the constant function (equal to~1).}  has this lifted property. In particular, using induced $\Z$-actions (for $a\Z\subset\Z$), one can derive easily that for uniquely ergodic systems $(X,T)$ with the measure-theoretic AOP property we not only have M\"obius disjointness but also
\beq\label{e:ind}
\frac1N\sum_{n\leq N}f(T^nx)\mob(an+b)\tend{N}{\infty}0\eeq
for each $a,b\in\N$, $f\in C(X)$ and the convergence is uniform in $x$ \cite{Fl-Fr-Ku-Le}.\footnote{The same argument shows that if Sarnak's conjecture holds then~\eqref{e:ind} holds for each zero entropy $(X,T)$, $a,b\in\N$, $f\in C(X)$ uniformly in $x\in X$.}

\section{Glimpses of results on Sarnak's conjecture}\label{czesc5}
The cases for which the M\"obius disjointness has been proved, depend on the complexity of the deterministic system. They fit into two basic types. The first comes with sufficiently quantitative estimates for the disjointness sums which makes possible an analysis of the sums on primes yielding a PNT. This group includes Kronecker systems (Vinogradov \cite{Vi1}), nilsystems (Green and Tao \cite{Gr-Ta}) and, perhaps the most striking, the Thue-Morse system (Mauduit and Rivat \cite{Ma-Ri1}) which resolved a conjecture of Gelfond \cite{Ge}. When the systems are more complex, such as horocycles flows,\footnote{Horocycle flows are mixing of all orders, see \cite{Ma}.} then at least to date they do not come with a PNT,\footnote{In case of horocycle flows (Bourgain, Sarnak and Ziegler \cite{Bo-Sa-Zi}) Ratner's theorems on joinings are used and these provide no rate.} and for them the KBSZ criterion is used, in other words, the disjointness (perhaps in its weaker form, see Section~\ref{s:AOP}) is achieved.

We now review most of important cases in which M\"obius disjointness has been proved.

\subsection{Systems of algebraic origin}
\subsubsection{Horocycle flows}
Let $\Gamma\subset PSL_2(\R)$ be a discrete subgroup  with finite
covolume.\footnote{We will tacitly assume that $\Gamma$ is cocompact, so that the homogenous space $\Gamma\backslash PSL_2(\R)$ is compact and the system is uniquely ergodic by \cite{Fu0}; otherwise, as in the modular case when $\Gamma=PSL_2(\Z)$ we need to compactify our space. The proof of Theorem~\ref{t:bsz} in the modular case is slightly different than what we describe below.} Then the homogeneous space $X=\Gamma\backslash
PSL_2(\R)$ is the unit tangent bundle of a surface $M$ of constant
negative curvature. Let us consider the corresponding  \emph{horocycle flow}\footnote{We have $h_t(\Gamma x)=\Gamma\cdot\left(x\cdot\left[\begin{array}{ll}1&t\\
0&1\end{array}\right]\right)$ and $g_s(\Gamma x)=\Gamma\cdot\left(x\cdot\left[\begin{array}{ll}
e^{-s}&0\\0&e^s\end{array}\right]\right)$; we identify $g_s$ and $h_t$ with the relevant matrices.}
$(h_t)_{t\in\R}$ and the \emph{geodesic flow} $(g_s)_{s\in\R}$
on $X$. Since
\begin{equation}\label{horkom}
g_sh_tg_s^{-1}=h_{e^{-2s}t}\text{ for all }s,t\in\R,
\end{equation}
the  flows $(h_t)_{t\in\R}$ and $(h_{e^{-2s}t})_{t\in\R}$ are measure-theoretically
isomorphic for each $s\in\R$. In order to show that $T:=h_1$ is M\"obius disjoint,  the KBSZ criterion is used, and, given $x\in PSL_2(\R)$, one studies
limit points of $\frac1N\sum_{n\leq N}\delta_{(T^{pn}\Gamma x,T^{qn}\Gamma x)}$, $N\geq1$. Now, the celebrated Ratner's rigidity theorem \cite{Ra2}  tells us two important things: the point $(\Gamma x,\Gamma x)$ is generic for a measure $\rho$ (which must be a joining by unique ergodicity: $\rho\in J(T^p,T^q)$) and moreover this joining is ergodic.\footnote{The measure $\rho$ depends on $p,q$ and $x$ and it is so called algebraic measure, i.e.\ a Haar measure.} Again using Ratner's theory (cf.\ \cite{Ra1}) such joinings are determined by the commensurator $Com(\Gamma)$ of the lattice $\Gamma$:
$$Com(\Gamma):=\{z\in PSL_2(\R) :  z^{-1}\Gamma z\cap \Gamma\text{ has finite index in both }\Gamma\text{ and }z^{-1}\Gamma z\}.$$
Set $x_{p,q}:=x g_{\frac12\log(\frac pq)}x^{-1}(\infty)$.
The intersection of the stabilizer of $x_{p,q}$ with $Com(\Gamma)$ yields the correlator of $x_{p,q}$: it is a subgroup $C(\Gamma,x_{p,q})\subset\R_+^\ast$ and if $\rho$  is not the product measure then $\frac pq\in C(\Gamma,x_{p,q})$.
The careful analysis of the arithmetic and non-arithmetic cases done in \cite{Bo-Sa-Zi} shows that given $x\in PSL_2(\R)$,
$\frac pq\in C(\Gamma,x_{p,q})$ only for finitely many different primes $p,q$. Hence,
the joining $\rho$ has to be product measure for all but finitely many pairs $(p,q)\in\PP^2$ with $p\neq q$ which, by Theorem~\ref{t:kbsz}, yields the following:

\begin{Th}[\cite{Bo-Sa-Zi}]\label{t:bsz} All time-automorphisms of horocycle flows are M\"obius disjoint.\end{Th}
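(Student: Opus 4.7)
The plan is to invoke the KBSZ criterion (Theorem~\ref{t:kbsz}, or rather its robust variant Proposition~\ref{pr:kbsz}) applied to the sequence $a_n = f(T^n\Gamma x)$, where $T=h_1$, $x\in PSL_2(\R)$, and $f\in C(X)$ has zero mean with respect to the unique $T$-invariant measure $\mu$. By footnote~\ref{f:lindense} it suffices to establish M\"obius orthogonality for $f$ ranging over a linearly dense subset of the mean-zero continuous functions. Assuming $\Gamma$ cocompact (as in the excerpt), it is therefore enough to show that for all but finitely many pairs $(p,q)$ of distinct primes,
\[
\frac{1}{N}\sum_{n\leq N} f(T^{pn}\Gamma x)\,\overline{f(T^{qn}\Gamma x)} \longrightarrow 0,
\]
or equivalently that every weak-$\ast$ limit $\rho=\rho_{p,q,x}$ of the empirical measures $\frac{1}{N}\sum_{n\leq N}\delta_{(T^{pn}\Gamma x,\,T^{qn}\Gamma x)}$ on $X\times X$ coincides with $\mu\otimes\mu$.

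First I would apply Ratner's measure-rigidity and equidistribution theorems to the product flow $T^p\times T^q$ on $(\Gamma\backslash PSL_2(\R))\times(\Gamma\backslash PSL_2(\R))$. Since this flow is generated by a unipotent one-parameter subgroup of $PSL_2(\R)\times PSL_2(\R)$, Ratner's theorems deliver two facts in one stroke: the point $(\Gamma x,\Gamma x)$ is genuinely generic (no subsequence extraction needed), and the limit $\rho$ is an \emph{algebraic} probability measure supported on a closed orbit of a connected closed subgroup. Unique ergodicity of each $T^s$ on $X$ (Furstenberg) forces both marginals of $\rho$ to equal $\mu$, so $\rho\in J^e(T^p,T^q)$. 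If $\rho=\mu\otimes\mu$ then the mean-zero condition on $f$ immediately gives the desired decay; otherwise $\rho$ must be supported on the graph of an isomorphism between $T^p$ and $T^q$, which by the commutation identity~\eqref{horkom} is implemented (up to conjugation) by $g_{\frac{1}{2}\log(p/q)}$.

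The remaining and most delicate step is to eliminate this non-product case for all but finitely many pairs of distinct primes. By Ratner's classification of homogeneous measures, a non-product joining forces the intertwining element to lie in the commensurator $\mathrm{Com}(\Gamma)$, producing the concrete constraint $p/q\in C(\Gamma,x_{p,q})\subset\R_+^\ast$ with $x_{p,q}=xg_{\frac12\log(p/q)}x^{-1}(\infty)$ as in the excerpt. One must then show that, for any fixed $x$, this containment can hold for only finitely many prime ratios $p/q$. I would split via Margulis's arithmeticity dichotomy: in the non-arithmetic case, $\mathrm{Com}(\Gamma)$ is commensurable with $\Gamma$, so $C(\Gamma,x_{p,q})$ is a discrete subgroup of $\R_+^\ast$ and only finitely many $\log(p/q)$ can sit inside it; in the arithmetic case, $\mathrm{Com}(\Gamma)$ is dense in $PSL_2(\R)$, but a finer algebraic argument using $p$-adic valuations at the primes $p,q$ still rules out all but finitely many pairs. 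This arithmetic subcase is where the argument is genuinely hard and where the primality of $p$ and $q$ (rather than mere coprimality) is essential. Once this finiteness is in hand, Proposition~\ref{pr:kbsz} yields $\frac1N\sum_{n\leq N}f(T^n\Gamma x)\mob(n)\to 0$, and linear density in $C(X)$ completes the proof.
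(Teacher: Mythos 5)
Your argument follows the paper's proof essentially verbatim: the KBSZ criterion (in the robust form of Proposition~\ref{pr:kbsz}), Ratner's genericity and joining rigidity applied to $\frac1N\sum_{n\leq N}\delta_{(T^{pn}\Gamma x,\,T^{qn}\Gamma x)}$, and the reduction of any non-product ergodic joining to the constraint $\frac pq\in C(\Gamma,x_{p,q})$, whose validity for only finitely many prime pairs is exactly the arithmetic/non-arithmetic analysis that the paper, like you, delegates to \cite{Bo-Sa-Zi}. The only cosmetic imprecision is describing the non-product ergodic joinings as graphs of isomorphisms implemented by $g_{\frac12\log(p/q)}$ — Ratner's theory gives finite-to-one algebraic joinings in general — but this does not change the structure of the argument.
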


\begin{Remark} As noticed in \cite{Ab-Le-Ru1}, this is  \eqref{horkom} which yields the absence\footnote{To be compared with Remark~\ref{r:alliso};~the difference however is that when the ratio of $p$ and $q$ is close to~1, we can choose graph joinings in a compact set.} of AOP and makes the following questions of interest.\end{Remark}

\begin{Question}\label{q:horo} Do we have the MOMO property for horocycle flows? Are all uniquely ergodic models of horocycle flows M\"obius disjoint? Do we have uniform convergence in~\eqref{mdis1}?
\end{Question}

Since the method to prove M\"obius disjointness is through the KBSZ criterion (hence offers no rate of convergence), the following question is still open:

\begin{Question}[Sarnak]\label{q:PNThoro}
Do we have a PNT for horocycle flows?\end{Question}

For a partial answer, see \cite{Sa-Ub}, where it is proved that if $\Gamma x$ is a generic point for Haar measure $\mu_X$ of $X$ then any limit point of $\left(\frac1{\pi(N)}\sum_{p\leq N}\delta_{T^p\Gamma x}\right)$ is a measure which is absolutely continuous with respect to $\mu_X$.
\begin{Question} [Ratner]\label{q:Ratn} Are smooth time changes for horocycle flows M\"obius disjoint?\end{Question}

As smooth time changes of horocycle flows enjoy so called Ratner's property, the above question can be asked in  the larger context of flows possessing Ratner's property.
\paragraph{Added in September 2017:} In the recent paper \cite{Ka-Le-Ul}, a new criterion (of Ratner's type) for disjointness of different time-automorphisms of flows has been proved. The criterion applies for some classes of flows with Ratner's property, namely, in case of so called Arnold flows and for non-trivial smooth time changes of horocycle flows (in particular, the answer to Question~\ref{q:Ratn} is positive).

\subsubsection{Nilrotations, affine automorphisms}
Green and Tao in \cite{Gr-Ta} proved M\"obius disjointnes in the following strong form:

\begin{Th}[\cite{Gr-Ta}] Let $G$ be a simply-connected
nilpotent Lie group with a discrete and cocompact subgroup $\Gamma$. Let $p \colon \Z \to G$ be any is polynomial sequence\footnote{I.e.\ $p(n)=a_1^{p_1(n)}\ldots a_k^{p_k(n)}$, where $p_j\colon\N\to\N$ is a polynomial, $j=1,\ldots,k$. See, Section~6 in \cite{Gr-Ta07} for the equivalence with the classical definition of polynomials sequences in nilpotent Lie groups.} and $f\colon G/\Gamma\to \R$  a Lipschitz function.
Then $$\left|\frac1N\sum_{n\leq N} f(p(n)\Gamma)\mob(n)\right|={\rm O}_{f,G,\Gamma,A}\left(\frac N{\log^AN}\right)$$
for all $A > 0$.\end{Th}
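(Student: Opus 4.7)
The plan is to follow the Vinogradov-style bilinear decomposition of the Möbius sum, combined with quantitative equidistribution estimates for polynomial orbits on nilmanifolds. The first step is to reduce the problem to a Davenport-type bound against nilsequences, i.e.\ sequences of the form $F(g^{P(n)}\Gamma)$, by means of a factorization theorem for polynomial sequences on nilmanifolds: any polynomial sequence $p \colon \Z \to G$ can be written in the form $p(n) = \varepsilon(n)\, p'(n)\, \gamma(n)$, where $\varepsilon(n)$ is ``smooth'' (slowly varying on the scale of $N$), $p'(n)$ is totally equidistributed in some closed subnilmanifold $H/(\Gamma\cap H)$, and $\gamma(n)$ is rational (periodic modulo $\Gamma$). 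The Lipschitz function $f$ absorbs the $\varepsilon$ factor up to acceptable error, while the rational part $\gamma$ reduces the sum to finitely many residue classes modulo some $q$; on each such progression one replaces $\mob$ by $\mob \cdot \raz_{a+q\Z}$ and one is reduced to proving the desired $\log^{-A}$ cancellation for $\mob$ twisted against a totally equidistributed polynomial nilsequence.

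For the core estimate I would apply a Vaughan (or Heath-Brown) identity to split $\sum_{n\le N} \mob(n) F(g^{P(n)}\Gamma)$ into Type I sums
\[
\sum_{d \le D}\mob(d) \sum_{m \le N/d} F(g^{P(dm)}\Gamma)
\]
and Type II sums
\[
\sum_{D \le d \le N/D}\sum_m \alpha_d \beta_m F(g^{P(dm)}\Gamma),
\]
with $D$ a small power of $N$ and $\alpha_d,\beta_m$ bounded arithmetic coefficients. Type I sums are handled directly by the quantitative equidistribution of the polynomial sequence $m \mapsto P(dm)$ on $G/\Gamma$ for most $d$: the inner sum gains a factor $\log^{-A}$ over the trivial bound whenever the sequence is sufficiently equidistributed. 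For Type II sums one applies Cauchy-Schwarz in the $d$ variable, opening the square to produce a double sum involving $F(g^{P(dm)}\Gamma) \overline{F(g^{P(dm')}\Gamma)}$; one then wants to show that for typical pairs $(m,m')$ the shifted polynomial sequence $d \mapsto (P(dm), P(dm'))$ is equidistributed in the product nilmanifold $G/\Gamma \times G/\Gamma$ (or in a suitable diagonal subnilmanifold).

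Both kinds of estimate reduce to the main analytic input: a quantitative version of Ratner/Leibman's equidistribution theorem stating that if a polynomial orbit $n \mapsto g^{P(n)}\Gamma$ fails to be $\delta$-equidistributed on $G/\Gamma$ in a window of length $N$, then there exists a non-trivial horizontal character $\eta \colon G/\Gamma \to \R/\Z$ of small height such that $n \mapsto \eta(g^{P(n)})$ has small (smooth) derivative modulo $1$, i.e.\ the coefficients of this polynomial in $\R/\Z$ are close to rationals with small denominators. This is the Green-Tao quantitative Leibman theorem, proved by an induction on the nilpotency step via a van der Corput argument descending to the commutator subgroup and its nilmanifold $[G,G]/(\Gamma \cap [G,G])$.

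The main obstacle is precisely this quantitative equidistribution theorem on nilmanifolds; everything else is assembling it into the bilinear machine. The induction on step is delicate because one must keep careful track of the polynomial coefficients (in the Mal'cev coordinates) and prove the $\log^{-A}$ savings uniformly over the Lipschitz norm of $f$ and over the polynomial degree. Once this quantitative equidistribution input is granted, the Type I estimate follows by direct substitution; the Type II estimate then follows after one shows that Cauchy-Schwarz on the $d$ variable lifts the polynomial sequence on $G/\Gamma$ to a polynomial sequence on $G \times G / \Gamma \times \Gamma$ whose failure of equidistribution yields a horizontal character that contradicts the already-established equidistribution of $P(n)$ for most pairs $(m,m')$. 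Combining Type I and Type II bounds with the factorization and Siegel-Walfisz-type input on arithmetic progressions yields the claimed $\mathrm{O}_{f,G,\Gamma,A}(N/\log^A N)$ bound on the unnormalized sum.
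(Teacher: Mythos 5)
The survey does not actually prove this theorem---it is quoted from Green and Tao---and your outline is essentially the proof given in that cited source: a factorization of the polynomial orbit into smooth, totally equidistributed and rational parts, a Vaughan/Heath-Brown Type I--Type II bilinear decomposition, and the quantitative Leibman equidistribution theorem (proved by van der Corput induction on the nilpotency step, with horizontal characters detecting the obstruction) as the main analytic input, with Cauchy--Schwarz and equidistribution on the product nilmanifold handling the Type II sums and a Siegel--Walfisz input handling the rational part. The only cosmetic difference is that Green and Tao state the core orthogonality estimates for linear orbits $n\mapsto g^n x\Gamma$ and obtain the polynomial case via the factorization/lifting step, whereas you run the bilinear argument directly on the polynomial sequence; both are legitimate, since their quantitative equidistribution theorem is established for general polynomial sequences.
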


In particular, by considering $T_g(x\Gamma)=gx\Gamma$, we see that all nilrotations are M\"obius disjoint with uniform Davenport's estimate~\eqref{vin}.

Also, a PNT holds for nilrotations:
Let $2=p_1<p_2<\ldots$ denote the sequence of primes.

\begin{Th}[\cite{Gr-Ta}, Theorem 7.1]  Assume that a nil-rotation $T_g$ is ergodic.\footnote{We assume that $G$ is connected.} Then, for every $x\in G$, we have
$$
\lim_{N\to\infty}
\frac1N\sum_{n\leq N} f(T_g^{p_n}x\Gamma) = \int_{G/\Gamma} f\,d\lambda_{G/\Gamma}$$
for all continuous functions $f\colon G/\Gamma \to [-1, 1]$.
\end{Th}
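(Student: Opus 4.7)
The plan is to reduce the stated limit to a von~Mangoldt-weighted Birkhoff average and then to use the Green--Tao Vaughan-type decomposition, in parallel with the Möbius orthogonality theorem quoted just above.

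First I would rewrite the conclusion in the form of a Prime Number Theorem for nilsystems. Writing $M=p_N$, so that $\pi(M)\sim N$, the claim $\frac{1}{N}\sum_{n\le N}f(T_g^{p_n}x\Gamma)\to\int f\,d\lambda$ is equivalent to
\[
\frac{1}{\pi(M)}\sum_{\PP\ni p\le M}f(T_g^{p}x\Gamma)\tend{M}{\infty}\int_{G/\Gamma}f\,d\lambda.
\]
Replacing $f$ by $f-\int f\,d\lambda$, I may assume $\int f\,d\lambda=0$, so that the goal becomes $\frac{1}{\pi(M)}\sum_{p\le M}f(T_g^{p}x\Gamma)\to0$. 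Exactly as in Section~\ref{czesc1} (using the PNT and absorbing the prime-power contribution, which is $O(M^{-1/2+\varepsilon})$), this is equivalent to showing
\beq\label{prop:vmsum}
\frac{1}{M}\sum_{n\le M}F(n)\,\boldsymbol{\Lambda}(n)=\mathrm{o}(1),\qquad F(n):=f(T_g^{n}x\Gamma).
\eeq
A Lipschitz approximation argument lets me assume $f$ is Lipschitz, so that $F$ is a Lipschitz nilsequence.

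Next I would apply Vaughan's identity at parameters $U,V\le M^{1/3}$ to $\boldsymbol{\Lambda}(n)$. This decomposes the left-hand side of~\eqref{prop:vmsum} into a bounded number of sums of two shapes:
\begin{itemize}
\item Type~I sums, of the form $\sum_{d\le D}a_d\sum_{m\le M/d}F(dm)$, with $a_d\ll\log M$ and $D\le M^{2/3}$;
\item Type~II sums, of the form $\sum_{D_1<d\le D_2}\sum_{m\le M/d}a_d\,b_m\,F(dm)$ with $M^{1/3}\le D_1,D_2\le M^{2/3}$ and $|a_d|,|b_m|\le\tau(n)\log n$.
\end{itemize}

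For the Type~I piece, the inner sum $\sum_{m\le M/d}f((T_g^d)^{m}(x\Gamma))$ is an orbit average under the translation $T_{g^d}$. Since $G$ is connected, ergodicity of $T_g$ forces ergodicity of every $T_{g^k}$ (the projection to the maximal torus remains an irrational rotation), so the quantitative Leibman equidistribution theorem of Green--Tao yields the polylogarithmic bound $\bigl|\sum_{m\le M/d}f(T_{g^d}^{m}(x\Gamma))\bigr|\ll_A (M/d)\,\log^{-A}(M/d)$, uniformly in $d$ in a range polynomial in $d$, which after summation is absorbed by choosing $A$ large.

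The main obstacle, as in the Möbius case, is the Type~II (bilinear) estimate. Here I would Cauchy--Schwarz in the outer variable to reduce matters to bounding a sum of the form $\sum_{d,d'}\bigl|\sum_m F(dm)\overline{F(d'm)}\bigr|$, i.e.\ correlations of the nilsequence $F$ with its own shifts along different arithmetic progressions. This is exactly where the Green--Tao machinery is needed: by the factorization theorem for polynomial orbits on nilmanifolds and an inductive (nilpotency-class) argument, one reduces such correlations to equidistribution statements on a product nilmanifold, which are again controlled by quantitative Leibman. Assembling the estimates and optimizing $U,V$ gives the $\mathrm{o}(1)$ bound in~\eqref{prop:vmsum} (in fact a polylogarithmic savings $O_{A,f,G,\Gamma}(\log^{-A}M)$), from which the theorem follows by the reductions above.
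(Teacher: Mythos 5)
The survey does not prove this statement at all --- it is quoted verbatim as Theorem~7.1 of [Gr-Ta], so the only meaningful comparison is with Green--Tao's original argument, whose general architecture (reduction to a $\boldsymbol{\Lambda}$-weighted sum, a Vinogradov/Vaughan Type~I--II decomposition, and the quantitative equidistribution machinery for polynomial orbits) your sketch does follow in spirit.

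There is, however, a genuine gap in your Type~I estimate, and it propagates to your final claim. You assert that ergodicity of $T_{g^d}$ plus the quantitative Leibman theorem gives $\bigl|\sum_{m\le M/d} f(T_{g^d}^m(x\Gamma))\bigr|\ll_A (M/d)\log^{-A}(M/d)$ uniformly in $d$, and you conclude a polylogarithmic saving $O_{A,f,G,\Gamma}(\log^{-A}M)$ for the full sum. This is false: ergodicity of a fixed nilrotation carries no rate whatsoever. Already for a circle rotation by a Liouville number $\alpha$ and a smooth mean-zero $f$, the Birkhoff sums fail any polylogarithmic saving along a subsequence of $N$ (the convergent denominators of $\alpha$ may grow arbitrarily slowly relative to $N$), and similarly $\sum_{n\le N}\boldsymbol{\Lambda}(n)e(n\alpha)$ can be of size $\asymp N/\log N$ for suitable Liouville $\alpha$. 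The quantitative Leibman theorem gives a dichotomy (either $\delta$-equidistribution or the presence of a low-complexity horizontal character obstructing it), not a rate for a fixed ergodic $g$; this is exactly why the theorem you are proving is stated as a limit with no error term, in contrast to the M\"obius estimate quoted just before it, where the cancellation comes from $\mob$ itself uniformly in $g$.

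The correct handling, as in [Gr-Ta], is to treat the ``structured'' ranges of the Type~I (and Type~II) parameters separately via the factorization theorem: when $g^d$ fails to be sufficiently equidistributed one does not get an error term but a major-arc--type contribution, which must be evaluated (using Siegel--Walfisz/PNT in arithmetic progressions and equidistribution on the relevant subnilmanifold after quotienting by the rational/smooth part) and shown to reconstitute the main term $\int f\,d\lambda_{G/\Gamma}$; only the totally equidistributed pieces are genuinely negligible, and the resulting conclusion is qualitative. Your Type~II paragraph gestures at the right tools, but without this separation of a main term the argument as written does not close, and the rate you claim at the end cannot be true for arbitrary ergodic $T_g$.
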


In \cite{Fl-Fr-Ku-Le}, it is proved that all nil-rotations enjoy the AOP property (hence all uniquely ergodic models of nil-rotations are M\"obius disjoint). In fact, the result is proved for all nil-affine automorphisms whose M\"obius disjointness has been established   earlier in \cite{Li-Sa}. Earlier, AOP has been proved for all quasi-discrete spectrum automorphism in \cite{Ab-Le-Ru2}, that is (following \cite{Ha-Pa}) for all unipotent affine automorphisms
$Tx=Ax+b$ of compact Abelian groups ($A$ is a continuous group automorphism and $b$ is an element of the group). The M\"obius disjointness of the latter automorphisms has been established still earlier in \cite{Li-Sa}.

The proof of the following corollary in \cite{Ab-Le-Ru2} shows that Furstenberg's proof~\cite{Fu00} (see e.g.\ \cite{Ei-Wa}) of Weyl's uniform distribution theorem can be adapted to the short interval version.

\begin{Cor}[\cite{Ab-Le-Ru2}]\label{c:wielsi}
  Assume that $\bfu\colon\N\to\C$, $\bfu\in\cm$. Then, for each non constant polynomial $P\in\R[x]$ with irrational leading coefficient, we have
  \[
    \frac{1}{M} \sum_{M\le m<2M} \frac{1}{H} \left| \sum_{m\le n<m+H} e^{2\pi iP(n)}\bfu(n)  \right|\tend{H,M}{\infty,H={\rm o}(M)}  0.\footnote{For degree~1 polynomials, the result is already in \cite{Ma-Ra-Ta}.}
  \]
\end{Cor}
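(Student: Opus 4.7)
The plan is to realize the exponential sequence $(e^{2\pi i P(n)})$ as a continuous observable along an orbit of a unipotent affine (hence quasi-discrete spectrum) automorphism of a torus, and then invoke the AOP machinery of Theorem~\ref{thmB} together with the Matom\"aki--Radziwi\l{}\l{}--Tao short interval bound. This replaces the induction on $\deg P$ in Furstenberg's proof of Weyl's theorem by a single appeal to the fact that the affine unipotent model of the polynomial orbit enjoys AOP.

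Concretely, writing $P(n) = \alpha_d n^d + \alpha_{d-1} n^{d-1} + \cdots + \alpha_0$ with $\alpha_d \notin \Q$, I would first build a unipotent affine map $T$ on $\T^d$ of the form
\[
T(x_1, x_2, \ldots, x_d) = (x_1 + \alpha,\ x_2 + x_1,\ \ldots,\ x_d + x_{d-1}),
\]
choose $\alpha$ and a starting point $x_0 \in \T^d$ so that $T^n x_0$ has last coordinate equal to $P(n) \bmod 1$, and take $f(x_1, \ldots, x_d) := e^{2\pi i x_d}$, so that $e^{2\pi i P(n)} = f(T^n x_0)$. Second, since $\alpha_d \notin \Q$, $T$ is ergodic and uniquely ergodic on $\T^d$ (Furstenberg), and by the Hahn--Parry classification $T$ has quasi-discrete spectrum. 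Third, the result of \cite{Ab-Le-Ru2} gives that quasi-discrete spectrum ergodic automorphisms satisfy AOP. Fourth, condition \eqref{eq:Mobius-like} for $\bfu \in \cm$ is precisely the Matom\"aki--Radziwi\l{}\l{} short interval cancellation (the $c=1$ case of~\eqref{sh2}), which holds for all $\bfu \in \cm$ satisfying~\eqref{e:mrt}. Fifth, Theorem~\ref{thmB} applied to the uniquely ergodic model $(\T^d, T)$ yields the strong MOMO property relative to $\bfu$, and by footnote~\ref{f:eqSandMOMO} this is equivalent to the short interval uniform convergence~\eqref{momoe5}. Specializing to the observable $f$ and starting point $x_0$ constructed above gives the displayed statement.

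The main obstacle is the packaging step: one must realize $(e^{2\pi i P(n)})$ \emph{exactly} as $f(T^n x_0)$ for a single continuous $f$, which requires absorbing the lower-order coefficients $\alpha_{d-1}, \ldots, \alpha_0$ (possibly rational) into the choice of $x_0$ and into the unipotent skew-product structure; this is routine but must be done carefully to keep $T$ an affine unipotent map on a compact abelian group so that the AOP result applies off the shelf. A secondary point is that a direct Furstenberg-style induction on $\deg P$ via van der Corput differencing would be awkward here, because the differenced weight $\bfu(n)\overline{\bfu(n+h)}$ is no longer multiplicative, so the inductive hypothesis could not be reapplied; funneling everything through AOP circumvents this obstruction cleanly.
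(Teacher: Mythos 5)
Your realization step is fine and is in fact the paper's own route: expanding $P$ in the binomial basis $\binom{n}{j}$, taking $\alpha=d!\,\alpha_d$ (irrational) and absorbing the lower-order coefficients into $x_0$, one gets $e^{2\pi iP(n)}=f(T^nx_0)$ for the affine unipotent, uniquely ergodic, totally ergodic map $T$ on $\T^d$, which has quasi-discrete spectrum and hence AOP by \cite{Ab-Le-Ru2}. The genuine gap is in your fourth and fifth steps. Theorem~\ref{thmB} yields the strong MOMO property relative to $\bfu$ \emph{only if} $\bfu$ satisfies \eqref{eq:Mobius-like} (this hypothesis is unavoidable there: taking $f=1$ in the strong MOMO definition recovers \eqref{eq:Mobius-like}), and \eqref{eq:Mobius-like} is not a consequence of $\bfu\in\cm$; it is the Matom\"aki--Radziwi\l\l{}-type short-interval cancellation available under \eqref{e:mrt}. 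The corollary, however, is asserted for \emph{every} $\bfu\in\cm$, including $\bfu\equiv1$, Dirichlet characters, Archimedean characters $n^{it}$, and all pretentious multiplicative functions, for which \eqref{eq:Mobius-like} fails outright. Your argument gives nothing in those cases, even though the statement is nontrivial there (already for $\bfu\equiv1$ and $\deg P\ge2$ it is a short-interval Weyl equidistribution assertion). So as written you prove the corollary only for non-pretentious $\bfu$, not for the class $\cm$.

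The repair is to drop the detour through strong MOMO and use the fact that your observable $f(x_1,\ldots,x_d)=e^{2\pi ix_d}$ has \emph{zero mean} for the unique invariant (Haar) measure. The theorem actually proved in \cite{Ab-Le-Ru2} is that in any uniquely ergodic model of an AOP automorphism, the short-interval averages of $f(T^nx)\bfu(n)$ vanish for every zero-mean $f\in C(X)$, every $x$, and every $\bfu\in\cm$, with no further hypothesis on $\bfu$ — compare Theorem~\ref{t:kbsz} and Proposition~\ref{pr:kbsz}, whose conclusions likewise hold for arbitrary $\bfu\in\cm$; condition \eqref{eq:Mobius-like} is needed exactly to handle the constant component of $f$, which is absent here. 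With that substitution your construction does give the corollary, and it is then essentially the proof the survey attributes to \cite{Ab-Le-Ru2}: Furstenberg's realization of the polynomial orbit inside a uniquely ergodic affine unipotent (quasi-discrete spectrum, hence AOP) system, combined with the AOP short-interval orthogonality theorem, replacing the induction on $\deg P$.
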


Recall that a sequence $(a_n)\subset\C$ is called a {\em nilsequence} if it is a uniform limit of basic nilsequences, i.e.\ of sequences of the form $(f(T_g^nx\Gamma))$, where $f\in C(G/\Gamma)$ (here, we do not assume that $G/\Gamma$ is connected, neither that $T_g$ is ergodic).

\begin{Cor}[\cite{Fl-Fr-Ku-Le}]\label{c:nilsi} We have \[
    \frac{1}{M} \sum_{M\le m<2M} \frac{1}{H} \left| \sum_{m\le n<m+H} a_n\bfu(n)  \right|\tend{H,M}{\infty,H={\rm o}(M)}  0.
  \]
\end{Cor}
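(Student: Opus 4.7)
The plan is to leverage the AOP property for nil-affine automorphisms together with the strong MOMO machinery developed in Section~\ref{s:AOP}. The overall strategy mirrors the derivation of Corollary~\ref{c:wielsi}, replacing the polynomial exponential sequence $(e^{2\pi i P(n)})$ by a general nilsequence.

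First I would reduce to basic nilsequences. By definition, $(a_n)$ is a uniform limit of basic nilsequences, so given $\varepsilon > 0$ one can pick a basic nilsequence $b_n = f(T_g^n x_0\Gamma)$ with $\|a - b\|_\infty < \varepsilon$. Since $|\bfu|\le 1$, the contribution of $(a_n - b_n)$ to the double average is bounded by $\varepsilon$ uniformly in $H,M$. So it suffices to establish the short-interval bound for basic nilsequences.

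Next I would pass to a uniquely ergodic model. Set $X := \overline{\{T_g^n x_0\Gamma : n \in \Z\}} \subset G/\Gamma$; by the structure theory of (possibly disconnected, possibly non-ergodic) nilsystems, the restriction $T_g|_X$ is a uniquely ergodic nil-affine automorphism on a sub-nilmanifold, with $f|_X \in C(X)$. By the main result of \cite{Fl-Fr-Ku-Le} mentioned in the text, every nil-affine automorphism has the AOP property as a measure-preserving system. Hence $(X, T_g|_X)$ is a uniquely ergodic model of an ergodic AOP automorphism.

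Now I would invoke Theorem~\ref{thmB}. The hypothesis on $\bfu$ (inherited from the context of Corollary~\ref{c:wielsi}: $\bfu \in \cm$ satisfying~\eqref{eq:Mobius-like}, which holds e.g.\ for $\mob$ and $\lio$ by~\eqref{e:mrt}) combined with AOP yields the strong MOMO property relatively to $\bfu$ in $(X, T_g|_X)$. Applied to the continuous function $f|_X$ and the point $x_0\Gamma \in X$, the equivalent short-interval form of strong MOMO (footnote~\ref{f:eqSandMOMO}, cf.\ Corollary~\ref{corC} and~\eqref{momoe5}) gives precisely
\[
\frac{1}{M} \sum_{M\le m<2M} \frac{1}{H} \left| \sum_{m\le n<m+H} b_n \bfu(n) \right| \tend{H,M}{\infty,H={\rm o}(M)} 0,
\]
which, together with the approximation step, yields the claim.

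The main obstacle I anticipate is the reduction in Step~2: one needs that the orbit closure of an arbitrary point in a (possibly disconnected, possibly non-ergodic) nilmanifold is itself a \emph{uniquely ergodic} sub-nil-affine system, so that the AOP result of \cite{Fl-Fr-Ku-Le} and Theorem~\ref{thmB} apply. In the connected ergodic case this is classical (Parry/Leibman), but some bookkeeping is required to handle the general case — this is precisely where basic nilsequences (as opposed to orbits in concrete minimal nilsystems) introduce the extra generality that must be absorbed.
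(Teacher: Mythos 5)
Your overall route -- uniform approximation by basic nilsequences, passage to the orbit closure, AOP of nil-rotations from \cite{Fl-Fr-Ku-Le}, and then Theorem~\ref{thmB} in its short-interval (strong MOMO) form at the point $x_0\Gamma$ -- is exactly the mechanism the survey has in mind, and your Steps 1 and 3 are fine. The gap is the step you dismiss as bookkeeping. The orbit closure of $x_0\Gamma$ is indeed a sub-nilmanifold on which $T_g$ is uniquely ergodic (Parry, Lesigne, Leibman), but it need not be \emph{totally} ergodic: since the definition of a basic nilsequence allows $T_g$ non-ergodic (and $G/\Gamma$ disconnected), the orbit closure can be a finite union of components cyclically permuted by $T_g$ -- already for $G=\R$, $\Gamma=q\Z$ and translation by $1$, the orbit closure of $0$ is the rotation on $\Z/q\Z$. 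As recalled in Section~\ref{s:AOP}, AOP implies total ergodicity and fails for $\Z/q\Z$ by Dirichlet's theorem, so the orbit-closure system need not have AOP and Theorem~\ref{thmB} simply does not apply to it. This cannot be ``absorbed'': it is the point where an extra argument is required, namely splitting the orbit closure into its connected components, applying AOP to the (totally ergodic) nilrotation obtained by restricting a power of $T_g$ to one component, and then reassembling the short-interval estimate along the arithmetic progressions $k\Z+j$ via the lifting of AOP to induced $\Z$-actions (this is what gives \eqref{e:ind} and its uniform short-interval counterpart in \cite{Fl-Fr-Ku-Le}).

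A second, related problem is your hypothesis on $\bfu$: condition \eqref{eq:Mobius-like} alone is not sufficient for the statement once (quasi-)periodic nilsequences are allowed. Take $\bfu=\chi$ a non-principal Dirichlet character of modulus $q$: it lies in $\cm$ and satisfies \eqref{eq:Mobius-like} (its partial sums are bounded by $q$), while $a_n=\overline{\chi(n)}$ is a basic nilsequence (coming from the non-ergodic translation by $1$ on $\R/q\Z$ as above); then $a_n\bfu(n)=\raz_{\{(n,q)=1\}}(n)$ and the double averages converge to $\boldsymbol{\phi}(q)/q>0$, not to $0$. So the corollary really requires short-interval control of $\bfu$ twisted by roots of unity (in the spirit of \eqref{sh1}, equivalently along arithmetic progressions), which holds for $\mob$ and $\lio$ by \eqref{e:mrt} and \cite{Ma-Ra-Ta}, but not for every $\bfu\in\cm$ satisfying \eqref{eq:Mobius-like}; this stronger input is precisely what feeds the arithmetic-progression step described above. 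With the component/induced-action decomposition and this strengthened hypothesis on $\bfu$, your argument goes through and coincides with the proof in \cite{Fl-Fr-Ku-Le}.
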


It has been  proved by Leibman \cite{Lei} that all polynomial multicorrelation sequences\footnote{More precisely, given an automorphism~$ T $ of a probability standard Borel space $\xbm$, we consider
$$ a_n=\int_X
    g_1\circ T^{p_1(n)}\cdot\ldots\cdot g_k\circ T^{p_k(n)}\,d\mu,$$
where~$ g_i\in L^\infty(X,\mu) $, $ p_i\in\Z[x] $, $ i=1,\ldots,k $ ($k\geq1$).}
are limits in the Weyl pseudo-metric of nil-sequences, all such polynomial sequences are orthogonal to $\mob$ on typical short interval, cf.\ Section~\ref{czesc6}.

The main problem connected with nilsequences is to prove the uniform version of convergence on short intervals as it is made precise in Conjecture~C of Tao (see Section~\ref{s:TAO} and also Frantzikinakis' proofs \cite{Fr}).

\subsubsection{Other algebraic systems}
For a more general zero entropy algebraic systems and their M\"obius disjointness we refer the reader to \cite{Pe}, where in particular the Ad-unipotent translation case is treated.

\subsection{Systems of measure-theoretic origin. Substitutions and interval exchange transformations}
\subsubsection{Systems whose powers are disjoint} We are interested in ergodic automorphisms $(Z,\cd,\kappa,R)$ for which (sufficiently large) prime powers $R^p$ are pairwise disjoint. Clearly, such automorphisms enjoy the AOP property. A typical automorphism has this property \cite{Ju} but there are also large classes of rank one (we detail on this class below) automorphisms with this property \cite{Ab-Le-Ru,Bo1,Ry}. Also minimal self-joining automorphisms \cite{Ju-Ru} enjoy this property. Chaika and Eskin in \cite{Ch-Es} show that for a.e.\ 3-interval exchange transformation (we detail on interval exchange transformations below) there are sufficiently many prime powers that are disjoint. It follows that all uniquely ergodic models of these automorphisms are M\"obius disjoint.


\subsubsection{Adic systems and Bourgain's criterion}
Let $(Z,\cd,\kappa,R)$ be a measure-theoretic system.
\begin{Def}\label{nm} In $(Z,\cd,\kappa,R)$, a Rokhlin {\em tower}  is a collection
    of disjoint measurable sets called {\em levels} $F$, $RF$, \ldots, $R^{h-1}F$.  If $Z$ is
    equipped with a partition $P$ such that each level $R^{r}F$ is contained in
    one atom
$P_{w(r)}$,
 the {\em name} of the
tower is the word
 $w(0)\ldots w(h-1)$.\end{Def}

  \begin{Def}\label{dr1} A system $(Z,\cd,\kappa,R)$  is of {\em rank one} if there exists a sequence of Rokhlin towers $(F_n,\ldots,R^{h_n-1}F_n)$, $n\geq1$, such that the whole $\sigma$-algebra is generated by the partitions $\{F_n, RF_n, \ldots, R^{h_n-1}F_n, X\setminus\bigcup_{j=0}^{h_n-1}R^jF_n\}$.
      \end{Def}

For topological systems, there is no canonical notion of rank, but the useful notion is that of adic presentation \cite{Ve-Li}, which we translate here from the original vocabulary into the one of Rokhlin towers.

\begin{Def}\label{dtt} An {\em adic presentation} of a topological system $(X,T)$ is given, for each $n\geq 0$, by a finite collection  $\mathcal Z_n$ of Rokhlin towers such that:
\begin{itemize} \item the levels of the towers in $\mathcal Z_n$ partition $X$,
\item each level of a tower in $\mathcal Z_n$ is a union of levels of towers in $\mathcal Z_{n+1}$,
\item the levels of the towers in $\bigcup_{n\geq 0}\mathcal Z_n$ form a basis of the topology of $X$.
\end {itemize}
\end{Def}
In that case, the towers of $\mathcal Z_{n+1}$ are built from the towers of $\mathcal Z_n$ by  cutting and stacking, following recursion rules: a given tower in $\mathcal Z_{n+1}$ can be built by taking columns of successive towers in $\mathcal Z_n$ and stacking them successively one above  another. These rules are best seen by looking at
 the partition $P$ into levels of the towers in $\mathcal Z_0$; possibly replacing $Z_0$ by some $Z_k$, we can always assume $P$ has at least two atoms. The names of the towers in $\mathcal Z_n$ form sets of words $\mathcal W_n$, and the cutting and stacking of towers gives a canonical decomposition of every $W\in \mathcal W_n$:
$$W=W_1^{k_1}\cdots W_r^{k_r}$$
for $r$ words $W_i\in\mathcal{W}_{n-1}$, $1\leq i\leq r$, integers  $k_1$, \ldots , $k_r$;  all these parameters depend on the word $W$. These decompositions are called the {\em rules of cutting and stacking} of the system. \\

The following result is an improvement on Theorem 3.1 of \cite{Fe-Ma}, which itself can be found in \cite{Bo1}, though it is not completely explicit in that paper (it is stated in full only in a particular case, as Theorem 3, and its proof is understated). The following effective bound stems from a closer reading of \cite{Bo1}:

 \begin{Th}\label{cbou}
 Let $(X,T)$  be a  topological dynamical system
 admitting an adic presentation, as in Definition \ref{dtt} and the comment just after.

\noindent Suppose that  for any $n$ and $W$ in $\mathcal W_n$, we have:
\begin{itemize}
\item in the rules of cutting and stacking $r\leq C$, with $C\geq 2$,
\item  if we decompose $W$ into words $W_\ell\in \mathcal W_{n-s}$ by iteration of the rules of cutting and stacking then  for all $\ell$ and $s$ large enough, we have $$\ab W\ab >C^{200s} \ab W_\ell\ab.$$
\end{itemize}
Then  $(X,T)$ is M\"obius disjoint.

 If such a system is uniquely ergodic and weakly mixing for its invariant probability, it satisfies also the following PNT: for any word $W=w_1\ldots w_N$ which is a factor of a word in any $\mathcal W_n$, we have $$\sum_{i=1}^N\boldsymbol{\Lambda}(i)w_i=\sum_{i=1}^Nw_i+{\rm o}(N).$$
\end{Th}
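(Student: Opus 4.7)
My approach combines the hierarchical structure of the adic presentation with Davenport's polylogarithmic bound~\eqref{vin} on exponential sums of $\mob$. I reduce first to the case $f=\raz_A$ where $A$ is a level of some tower in some $\mathcal{Z}_{n_0}$: by the third item of Definition~\ref{dtt} together with Stone--Weierstrass, such indicators form a uniformly dense family in $C(X)$, and for such $f$ the value $f(T^{k}x)$ depends only on the $\mathcal{W}_{n_0}$-letter that the orbit of $x$ traces at time $k$. So the task reduces to showing
\[
\frac{1}{N}\sum_{k\le N}\raz_{\{w_{k}=a\}}\,\mob(k)\longrightarrow 0
\]
for each fixed letter $a$, where $(w_{k})$ is the $\mathcal{W}_{n_0}$-itinerary of $x$.

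\textbf{Telescoping through the hierarchy.} Given $N$, pick the smallest $n=n(N)$ with $|W|\ge N$ for every $W\in\mathcal{W}_n$; the orbit window $[0,N)$ then sits inside a single $\mathcal{W}_n$-word $W$ at some offset. Iterating the cutting-and-stacking rules $s$ times expands $W$ as a nested concatenation of sub-words $W_\ell\in\mathcal{W}_{n-s}$, and inside every copy of a fixed $W_\ell$ the letter $a$ occupies the same set of relative positions $j_1<\dots<j_t$. Expanded in Fourier over the group $\Z/|W_\ell|\Z$, the restriction of $\raz_{\{w_{k}=a\}}$ to each maximal run of copies of $W_\ell$ becomes a superposition of characters $k\mapsto e^{2\pi ikj/|W_\ell|}$; after substitution, each contribution to the orbit sum is an exponential sum of $\mob$ of length $\ll N$ and frequency $j/|W_\ell|$, which Davenport dominates by $\ll_A N(\log N)^{-A}$ uniformly in the phase.

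\textbf{Combinatorics and the main obstacle.} It remains to check that the number of non-negligible terms does not swamp the polylogarithmic gain. Since $r\le C$, at depth $s$ the decomposition tree produces at most $C^s$ distinct sub-word types $W_\ell$; the hypothesis $|W|>C^{200s}|W_\ell|$ simultaneously (i) keeps $\log|W_\ell|$ comparable to $\log N$ so that Davenport's saving persists on the $|W_\ell|$-scale, and (ii) bounds each of the $s$ layers of boundary slippage by $|W|C^{-200s}$, which is negligible. The main obstacle is the joint calibration of the parameters $(n_0,n,s,A)$: I must let $s=s(N)\to\infty$ to refine the $\raz_A$-approximation of $f$ as $N$ grows, yet slowly enough that the combinatorial $C^{O(s)}$ bloat of the Fourier expansion stays below Davenport's polylogarithmic saving $(\log N)^{A}$. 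The specific exponent $200$ in the hypothesis is precisely the slack that makes this balancing work. For the PNT addendum the same scheme applies with $\boldsymbol{\Lambda}$ replacing $\mob$ and Vinogradov's exponential-sum bound replacing Davenport's: unique ergodicity pins the main term $\sum_{i\le N}w_i$ to $N\mu(A)+o(N)$, while weak mixing excludes the rational eigenvalues that would otherwise create resonance at the progression frequencies $j/|W_\ell|$.
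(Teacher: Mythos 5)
Your reduction to functions constant on tower levels matches what the paper does, but the analytic core of your argument has a genuine gap: applying Davenport's estimate~\eqref{vin} separately to each character $e^{2\pi i jn/|W_\ell|}$ on each maximal run of copies of $W_\ell$ cannot give ${\rm o}(N)$, because Davenport saves only a power of $\log N$ while the number of terms you sum it over is polynomial in $N$. Concretely, a run of copies of $W_\ell$ contributes, after your Fourier expansion mod $|W_\ell|$ and differencing of initial segments, at most $\|\widehat{c}\|_{\ell^1}\cdot C_A N/\log^A N$, and you must then sum over all maximal runs; the number of runs is of order $|W|/(k|W_\ell|)$ (not $C^s$ — the bound $r\leq C$ limits the number of \emph{types} per step, not the number of occurrences), and $\|\widehat{c}\|_{\ell^1}$ can itself be as large as $\sqrt{|W_\ell|}$. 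For any choice of the intermediate scale (in particular for $n_0$ fixed, where $|W_\ell|$ is constant and the number of runs is $\asymp N$) the product of these counts with $N/\log^A N$ exceeds the trivial bound $N$, so the calibration you defer to — "the exponent $200$ is precisely the slack" — cannot be carried out within this scheme; note also that the hypothesis $|W|>C^{200s}|W_\ell|$ gives an \emph{upper} bound on $|W_\ell|$, so it does not keep $\log|W_\ell|$ comparable to $\log N$ as you claim.

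The mechanism that actually works, and which the paper uses (it is a reading of Theorem 2 of Bourgain's paper with the hypothesis weakened from a superexponential $\beta(s)$ to $C^{200s}$), is bilinear rather than termwise: one bounds $\left|\sum_{n\leq N}w_n\mob(n)\right|$ by $\int_0^1\big|\sum_{n\leq N}w_ne^{2\pi in\theta}\big|\,\big|\sum_{n\leq N}\mob(n)e^{2\pi in\theta}\big|\,d\theta$, uses the concatenation structure to show that $\big|\sum_n w_ne^{2\pi in\theta}\big|$ is dominated by products of Dirichlet-kernel factors and hence is large only on a union of few arcs, applies Parseval/$L^2$ off that exceptional set, and only there invokes Davenport--Vinogradov type bounds; the constant $200$ then comes out of tracking Bourgain's relations (2.15), (2.42) and (2.49) with $\epsilon=\frac1{20}$ and $C_0=C^{200}$, which is exactly the bookkeeping your sketch omits. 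The PNT part of your proposal has the same status: the heuristic that weak mixing removes resonances at rational frequencies points in the right direction, but the proof rests on the corresponding estimates (3.4), (3.7), (3.14) of Bourgain, not on a run-by-run application of Vinogradov's bound.
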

\begin{proof}
We look at Theorem 2 of \cite{Bo1}. It requires
 a stronger assumption, denoted by relations (2.2) and (2.3) in p.~119 of \cite{Bo1}, which is indeed the assumption of the present theorem with the estimate $C^{200s}$ replaced by $\beta(s)$ for some function satisfying $\frac{\log \beta(s)}{s}\to \infty$ when $s\to \infty$ (note that the assumption in \cite{Bo1} that the words $W_n$ are on the alphabet $\{0,1\}$ is not used in the proof, which works for any finite alphabet). Then this theorem gives, for any word $w_1\cdots w_N$ in some $\mathcal W_m$ and $N$ large enough, an  estimate for $$\int \left| \sum_{n\leq N} w_ne^{2\pi in\theta}\right|
 \left|\sum_{n\leq N} \mob(n)e^{2\pi in\theta}\right| d\theta,$$ and this, through the
 relation (1.62) on p.~118, implies that $\sum_{n\leq N} w_n\mob (n)={\rm o}(N)$.

 Lacking space to rewrite the extensive computations in \cite{Bo1}, we explain how to weaken the hypothesis. First, as suggested in the remark at the beginning of Section 2,  p.~119, of that paper, we replace $\beta(s)$ by $C_0^s$ for some constant $C_0$, as yet unknown (the $C_0s$ written in the same p.~119 is a misprint).  The relations (2.2) and (2.3) are used twice in the course of the proof: first, to get the relation (2.15), namely $$\left(C\frac{\log \ab W\ab}{n}\right)^s<\ab W\ab ^{\epsilon}$$ for a word $W in \mathcal W_n$, and then to get
  the estimate (2.42), which states that $$\left(C\frac{\log K}{s}\right)^s<K^{\epsilon},$$ where $s$ is the number of stages such that a word of length $N$ in
   $\mathcal W_n$ is divided into words of  $\mathcal W_{n-s}$, of lengths in the order of $\frac{N}{K}$. Under our hypothesis, in the first case, $\ab W \ab$ is in $C_0^n$, and in the second case $K$ is in $C_0^s$. Thus both (2.15) and (2.42) are implied by the relation
    $$\frac{\log\log C_0+\log C}{\log C_0}<\epsilon.$$
  The value of $\epsilon$ is dictated by relation (2.49), which requires $Q^{\epsilon}K^{\epsilon}(Q+K)^{-\frac{1}{4}}\leq (Q+K)^{-\frac{1}{5}}$ for some large numbers $Q$ and $K$, thus we can take $\epsilon=\frac{1}{20}$. Then $\frac{\log\log C_0}{\log C_0}$ will be bounded if $C_0$ is large enough independently of $C$, while to bound $\frac{\log C}{\log C_0}$ we need to take $C_0=C^a$; as $C\geq 2$, we see that $a=200$ is convenient for the sum of the two terms.

Now, if we replace $w_n$ by  $ u (n) = f (T^n(x_0)) $, because of Definition \ref{dtt} above, we can first assume that $f$ is constant on all levels of the towers of some stage $m$, and then conclude by approximation. Such an $f$ is also constant on all levels of all towers at stages $q>m$; fixing $x_0$ and $N$, except for some initial values $u(1)$ to $u(N_0)$ where $N_0$ is much smaller than $N$, we can replace $u(n)$ by $w'_{n}$, where $w'_n$ is the value of $f$ on the $n$-th level of some tower with name $W$ in some $\mathcal W_q$ for $q\geq m$. Then the $w'_1\cdots w'_N$ are built by the same induction rules as the $w_1\cdots w_N$, and the estimates using the $w'_n$ are computed as those using the $w_n$ in the proof of Theorem 2 of \cite{Bo1}, thus we get the same result.

 The PNT is  in (3.4),  (3.7), (3.14)  of \cite{Bo1}  ((3.14)  is proved for the particular case of $3$-interval exchanges but holds in the same way for the more general case).
 \end{proof}

Of course, the value of $C_0$ could be improved, but we need it to be at least some power of $C$.

\subsubsection{Substitutions}

We start with some basic notions.
\begin{Def} A {\em substitution} $\sigma$ is an application from
an alphabet $\mathcal A$ into the set ${\mathcal A}^{\star}$ of finite words on
$\mathcal A$; it
extends to a morphism of ${\mathcal A}^{\star}$ for the concatenation. A {\em fixed point} of $\sigma$ is an infinite sequence $u$ with $\sigma u=u$. The  associated symbolic dynamical system $(X_{\sigma},S)$ is $(X_u,S)$ for a fixed point $u$.

Substitution $\sigma$ has {\em  constant length} $q$ if $|\sigma a|=q$ for all $a$ in $\mathcal A$.

The {\em Perron-Frobenius eigenvalue} is the largest eigenvalue of the matrix giving the number of occurrences of $j$ in $\sigma a$. A substitution $\sigma$ is {\em primitive} if a power of this matrix has strictly positive entries. \end{Def}

For the class of constant length substitutions, there have been a lot of partial results on  M\" obius orthogonality:
\begin{itemize}
\item
First for the most famous example, the Thue-Morse substitution $0\to 01$, $1\to 10$, with
Indlekofer and  K\'atai \cite{In-Ka}, Dartyge and  Tenenbaum \cite{Da-Te}, Mauduit and Rivat \cite{Ma-Ri1}, El Abdalaoui, Kasjan and Lema\'nczyk \cite{Ab-Ka-Le}.\footnote{In \cite{In-Ka,Da-Te,Ma-Ri1} it is proved that the sequence $(-1)^{u(n)}$, $n\geq1$ is orthogonal to $\mob$.}
\item
 The  case of the Rudin - Shapiro substitution  $0\to 01$, $1\to 02$, $2\to 31$, $3\to 32$ was solved by   Mauduit and Rivat \cite{Ma-Ri2}. Then  Drmota \cite{Dr}, Deshouillers,  Drmota and  M\" ullner \cite{De-Dr-Mu},
\item
 Ferenczi, Ku\l aga-Przymus, Lema\'nczyk and Mauduit \cite{Fe-Ku-Le-Ma} extended these results to various subclasses of systems of arithmetic origin.\footnote{While in \cite{Dr,Fe-Ku-Le-Ma} M\"obius disjointness is proved for the dynamical systems given by bijective substitutions, \cite{De-Dr-Mu} treats the opposite case, so called synchronized. As noted in \cite{Be-Ku-Le-Ri}, this leads to dynamical systems given by rational sequences and  such are M\"obius disjoint. Note also that for the synchronized case, once the system is uniquely ergodic, it is automatically a uniquely ergodic model of an automorphism with discrete spectrum, cf.\ Corollary~\ref{c:mn3} and Remark~\ref{r:momorot}.}
\end{itemize}
See also \cite{Ma-Ma-Ri1, Ma-Ma-Ri2}  for a PNT for some digital functions.

But all this was superseded by the general result of M\"ullner \cite{Mu},  whose proof uses the arithmetic techniques of \cite{Ma-Ri2} together with a new structure theorem on the underlying {\em automata}:

\begin{Th}[\cite{Mu}] For any substitution of constant length,
the associated symbolic system is M\"obius disjoint. Moreover,  a PNT holds if the substitution is primitive. \end{Th}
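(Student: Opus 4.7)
The plan is to combine two ingredients: a structural decomposition of the automaton associated with the substitution, and the Mauduit--Rivat arithmetic machinery based on Vaughan-type identities and carry-propagation estimates. First, I would recall that a substitution $\sigma$ of constant length $q$ on an alphabet $\mathcal A$ produces a $q$-automatic sequence: any fixed point $u$ satisfies $u(n)=\tau(\delta(s_0,w(n)))$, where $w(n)$ is the base-$q$ expansion of $n$, $\delta$ is the transition function of a finite automaton $\mathcal M$, $s_0$ its initial state, and $\tau$ an output map. In the primitive case $(X_\sigma,S)$ is uniquely ergodic, so by approximation it suffices, for each cylinder indicator $f=\mathbf 1_{[a_0\ldots a_k]}$ and for the fixed point $u$, to control
\[
\frac{1}{N}\sum_{n\leq N}f(S^n u)\mob(n);
\]
enlarging the alphabet by sliding windows of size $k+1$, this reduces to sums of the form $\sum_{n\leq N}g(u(n))\mob(n)$ for arbitrary scalar functions $g$ of the current state, keeping the $q$-automatic structure intact.

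Next, I would apply Vaughan's identity to $\mob$ to split the sum into Type~I sums $\sum_{d\leq D}\alpha_d\sum_{m\leq N/d}g(u(dm))$ and Type~II sums $\sum_{D<d\leq D'}\sum_{m}\alpha_d\beta_m g(u(dm))$ with the standard bounds on the weights $\alpha_d,\beta_m$. The Type~I contribution is controlled by the equidistribution of $q$-automatic sequences along arithmetic progressions, which itself follows from carry estimates in base $q$ coupled with the spectral structure of the substitution matrix. The Type~II sums are the genuine difficulty: after Cauchy--Schwarz they become bilinear correlations
\[
\sum_{m}\Bigl|\sum_{n} g(u(mn))\,\overline{g(u(m(n+h)))}\Bigr|,
\]
whose non-trivial estimation is the heart of the argument.

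The hard step is to obtain such bilinear bounds uniformly for \emph{every} constant-length substitution. This is where M\"ullner's structure theorem on the underlying automaton $\mathcal M$ enters: one decomposes $\mathcal M$ along its strongly connected components and, on each component, separates a \emph{bijective} direction, on which the letter-to-letter maps induced by individual digits are permutations, from a \emph{synchronizing} direction, on which sufficiently long digit words collapse the current state to a fixed one independent of the start. On the synchronizing direction the induced subsequence becomes essentially rational in the sense of footnote~\ref{f:rational}, and orthogonality with $\mob$ is automatic since $\mob$ is aperiodic. On the bijective direction, one adapts the Mauduit--Rivat Fourier/carry-propagation estimate (as used for Thue--Morse and for Rudin--Shapiro) uniformly over the permutation group generated by the digits; bijectivity is precisely what produces the cancellation in the relevant exponential sums and drives the level-of-distribution estimate for $g(u(dm))$.

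Finally, once a Davenport-type bound $\sum_{n\leq N}g(u(n))\mob(n)\ll_A N(\log N)^{-A}$ is established, M\"obius disjointness on the full subshift $X_\sigma$ follows by approximating an arbitrary $x\in X_\sigma$ and an arbitrary $f\in C(X_\sigma)$ by cylinder functions built from $u$, using minimality in the primitive case and a finite decomposition into minimal components in general. For the PNT in the primitive case, the same logarithmic power-saving is strong enough to transfer the statement from $\mob$ to the von Mangoldt function via the identity $\boldsymbol{\Lambda}=-\mob\ast\log$ and the dyadic argument at the end of Section~\ref{czesc1}. The main obstacle throughout is the uniformity of the Type~II bilinear bound over all states of $\mathcal M$ and all base-$q$ shifts, and it is exactly the structural decomposition of the automaton that is engineered to deliver it.
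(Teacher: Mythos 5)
First, a caveat: the survey does not actually prove this theorem — it is quoted from M\"ullner \cite{Mu}, with only the one-line indication that the proof combines the arithmetic techniques of \cite{Ma-Ri2} with a new structure theorem for the underlying automata. Your outline reproduces precisely that architecture: reduction to a $q$-automatic sequence read off base-$q$ digits, a Vaughan-type splitting into Type~I/Type~II sums, carry-propagation and Fourier estimates of Mauduit--Rivat type on the ``bijective'' (invertible) part of the automaton, and the observation that the synchronizing part yields essentially rational sequences, hence is orthogonal to $\mob$ by aperiodicity (cf.\ footnote~\ref{f:rational} and the discussion of \cite{De-Dr-Mu} in this section). So at the level of strategy you are on the route actually taken, with the genuinely hard analytic content (the bilinear Type~II bound uniform over states and shifts) correctly identified but, of course, only asserted.

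One step, however, would fail as written: the derivation of the PNT in the primitive case from the Davenport-type bound $\sum_{n\leq N}g(u(n))\mob(n)\ll_A N(\log N)^{-A}$ ``via $\boldsymbol{\Lambda}=-\mob\ast\log$ and the dyadic argument at the end of Section~\ref{czesc1}''. That dyadic argument only converts $\boldsymbol{\Lambda}$-weighted sums into sums over primes; it does not transfer a M\"obius bound into a $\boldsymbol{\Lambda}$ bound. Writing $\sum_{n\leq N}f(n)\boldsymbol{\Lambda}(n)=-\sum_{d\leq N}\mob(d)\log d\sum_{e\leq N/d}f(ed)$ requires controlling $f$ along all progressions $d\N$ with the weights $\mob(d)\log d$, i.e.\ bilinear information, not the single correlation you have bounded — indeed the survey stresses (Remark following Theorem~\ref{t:pnt1}, and the Remark in Section~\ref{czesc1}) that no formal implication from M\"obius disjointness, even with logarithmic power savings, to a PNT for general observables is known. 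In M\"ullner's scheme the PNT is obtained by running the same Type~I/Type~II estimates directly with $\boldsymbol{\Lambda}$ in Vaughan's identity, primitivity entering through unique ergodicity so that the main term $\int f\,d\mu$ exists. A smaller caution: the passage from the fixed point $u$ to an arbitrary $x\in X_\sigma$, and to non-primitive substitutions, is not a matter of ``finite decomposition into minimal components'' (non-primitive substitution subshifts need not decompose this way); it rests on having the arithmetic estimates uniformly in the initial state and shift, which is exactly what the structure theorem and the uniform bilinear bound are designed to provide.
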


The substitutions which are not of constant length are much less known:
\begin{itemize}
\item
The most famous example is the Fibonacci substitution, $0\to 01$, $1\to 0$: in that case, the associated symbolic system is a coding of an irrational rotation, hence it is M\"obius disjoint as a uniquely ergodic model of a discrete spectrum automorphism, see Section~\ref{s:dissp}.
\item
 Drmota, M\"ullner and Spiegelhofer~\cite{Dr-Mu-Sp} have just shown M\"obius disjointness for a new example, a substitution which generates $(-1)^{s_{\phi(n)}}$, where $s_{\phi(n)}$ is the Zeckendorf sum-of-digits function.\footnote{This example has partly continuous spectrum.}
\item
 Also,  we can exhibit a small subclass of examples which are M\"obius disjoint, by a straightforward translation of Bourgain's criterion above:
\end{itemize}

\begin{Th} Suppose that $\sigma$ is a primitive substitution  satisfying
\begin{itemize}
\item  for all $i\in \mathcal A$, $\sigma i=(j_1(i))^{a_1(i)}\ldots (j_{q_i}(i))^{a_{q_i}(i)}$, , $a_1(i)\in \mathcal A$, \ldots, $a_{q_i}(i)\in \mathcal A$, $q_i\leq C$
(this can be expressed as: the multiplicative length of $\sigma$ is smaller than $C$),
\item the Perron-Frobenius eigenvalue of $\sigma$ is larger than $C^{200}$;
\end{itemize}
then the associated symbolic dynamical system is M\"obius disjoint. If $(X_{\sigma}, S)$ is  weakly mixing, the fixed points satisfy a PNT.\end{Th}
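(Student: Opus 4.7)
The plan is to exhibit an adic presentation of $(X_{\sigma}, S)$ for which the two quantitative conditions of Theorem~\ref{cbou} hold, with the constant $C$ of that theorem matching the $C$ from the statement.

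First I would construct the adic presentation. For each $n \geq 0$ and each letter $i \in \mathcal A$, take the Rokhlin tower $\mathcal{T}_i^{(n)}$ of height $|\sigma^n(i)|$ whose name is $\sigma^n(i)$; the levels of these towers partition $X_{\sigma}$ into cylinder sets, and, for primitive $\sigma$, the union over all $n$ of all such levels is a basis of the topology of $X_{\sigma}$, so we obtain a legitimate adic presentation in the sense of Definition~\ref{dtt}. The cutting-and-stacking rule from $\mathcal{Z}_n$ to $\mathcal{Z}_{n+1}$ is read off from the identity
\[
\sigma^{n+1}(i) = \sigma^n(\sigma(i)) = \sigma^n(j_1(i))^{a_1(i)} \cdots \sigma^n(j_{q_i}(i))^{a_{q_i}(i)},
\]
so that $\mathcal{T}_i^{(n+1)}$ is built by stacking $a_1(i)$ copies of $\mathcal{T}_{j_1(i)}^{(n)}$, then $a_2(i)$ copies of $\mathcal{T}_{j_2(i)}^{(n)}$, and so on. After collapsing any adjacent repetitions inherited from the factorization, the multiplicative length at this stage is at most $q_i \leq C$, giving the bound $r \leq C$ required by Theorem~\ref{cbou}.

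Next I would verify the size-gap condition. Let $\lambda$ denote the Perron-Frobenius eigenvalue of the incidence matrix of $\sigma$. Since $\sigma$ is primitive, Perron-Frobenius theory yields constants $0 < c_- \leq c_+ < \infty$ with
\[
c_- \lambda^n \leq |\sigma^n(i)| \leq c_+ \lambda^n \quad \text{for all } i \in \mathcal A,\ n \geq 0.
\]
Iterating the cutting-and-stacking rule $s$ times, every $W \in \mathcal{W}_n$ decomposes into pieces $W_\ell \in \mathcal{W}_{n-s}$ of the form $\sigma^{n-s}(j)$, whence $|W|/|W_\ell| \geq (c_-/c_+)\lambda^s$. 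The hypothesis $\lambda > C^{200}$ then yields $|W| > C^{200s}|W_\ell|$ for every $\ell$ and every $s$ larger than some threshold $s_0 = s_0(\sigma)$, matching the second hypothesis of Theorem~\ref{cbou}. An application of Theorem~\ref{cbou} now delivers M\"obius disjointness of $(X_{\sigma}, S)$. For the PNT part, recall that a primitive substitution generates a uniquely ergodic system, so adding weak mixing puts us in the situation where the PNT conclusion of Theorem~\ref{cbou} applies verbatim; since every long prefix of a fixed point of $\sigma$ is a factor of some $\sigma^n(i) \in \mathcal{W}_n$, we obtain $\sum_{i=1}^N \boldsymbol{\Lambda}(i) u_i = \sum_{i=1}^N u_i + {\rm o}(N)$ for any fixed point $u$ of $\sigma$.

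The main obstacle is the careful translation of the measure-theoretic Rokhlin-tower picture of a primitive substitution into a genuine adic presentation in the topological sense of Definition~\ref{dtt} — in particular, arranging the partitions $\mathcal{Z}_n$ so that the union of their levels forms a basis of the topology of $X_{\sigma}$, and handling the bookkeeping of adjacent repetitions in the cutting-and-stacking rule so that the bound $r \leq C$ is preserved at every stage rather than only at the initial one. Once this setup is in place, the verification of the two quantitative hypotheses of Theorem~\ref{cbou} is a direct application of the Perron-Frobenius growth estimate, as sketched above.
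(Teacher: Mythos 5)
Your route is the paper's route: the towers with names $\sigma^n a$, $a\in\mathcal A$, as the adic presentation, the bound $r\leq q_i\leq C$ read off from the substitution rule, the Perron--Frobenius estimate $c_-\lambda^n\leq|\sigma^n(i)|\leq c_+\lambda^n$ to get $|W|>C^{200s}|W_\ell|$ for all $s$ large enough from $\lambda>C^{200}$, and then Theorem~\ref{cbou} (with unique ergodicity of primitive substitutions plus the weak mixing hypothesis for the PNT clause). That part is fine and matches the paper.

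The one genuine gap is exactly the step you first assert and then relegate to an ``obstacle'': that the levels of the towers named $\sigma^n(i)$ partition $X_\sigma$ and generate its topology, i.e.\ that these towers really form an adic presentation in the sense of Definition~\ref{dtt}. This is not bookkeeping; it is the statement that every point of $X_\sigma$ desubstitutes uniquely under $\sigma^n$, which is Moss\'e's (bilateral) recognizability theorem for primitive substitutions with a non-periodic fixed point (combined with Qu\'ef\'elec-type tower constructions), and it is precisely what the paper cites at this point. Moreover, recognizability genuinely requires aperiodicity, and your hypotheses do not exclude periodic fixed points: e.g.\ $0\mapsto(01)^k$, $1\mapsto(01)^k$ is primitive with Perron--Frobenius eigenvalue $2k$ as large as you like, yet its fixed point is periodic, the subshift is a finite orbit, and your claim that the levels of towers of height $|\sigma^n(i)|\to\infty$ partition $X_\sigma$ is simply false there. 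So the argument needs the dichotomy the paper opens with: if all fixed points are periodic the conclusion is trivial (periodic sequences are orthogonal to $\mob$), and in the non-periodic case the adic presentation is supplied by the recognizability theorem rather than by an unproved assertion; with that inserted, the rest of your verification goes through as written.
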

\begin{proof}
If all fixed points are periodic, the result is trivial. If $\sigma$ has
 a non-periodic fixed point, it is well known (and proved by the methods of \cite{Qu} together with the recognizability result of \cite{Mo}) that the system has an adic presentation, where the names of the towers in
 $Z_n$ are the words $\sigma^na$, $a\in \mathcal A$. Thus the results come from Theorem \ref{cbou} above and the properties of the matrix of $\sigma$.
\end{proof}

\begin{Example}
Here are some substitutions for which the above theorem applies, with a PNT: $0\to 0^{k+1}12$, $1\to 12$, $2\to 0^k12$, $k+2>3^{200}$.
\end{Example}

\begin{Question} Are dynamical systems associated to substitutions M\"obius disjoint?\footnote{One can also ask about M\"obius disjointness of related systems as tiling systems.}
\end{Question}

\subsubsection{Interval exchanges}
\begin{Def}A {\em $k$-interval exchange} with probability vector $(\alpha _1,\alpha _2,\ldots ,\alpha _k)$,
 and
permutation $\pi$ is
defined by
$$
Tx=x+\sum_{\pi^{-1}(j)<\pi^{-1}(i)}\alpha_{j}-\sum_{j<i}\alpha_{j}.
$$
when $x\in \Delta_i=\left[ \sum_{j<i}\alpha_{j}
,\sum_{j\leq i}\alpha_{j}\right).$
\end{Def}

Exchanges of $2$ intervals are just rotations, thus M\"obius disjointness holds for them by the Prime Number Theorem (on arithmetic progressions)  when the rotation is rational and from a result of Davenport \cite{Da}  -- using a result of Vinogradov \cite{Vi} -- when the rotation is irrational, cf.~\eqref{vin} in Introduction.

Then \cite{Bo1} exhibits exchanges of $3$ intervals which are M\"obius disjoint, with a PNT if weak mixing holds: these use the criterion developed in Theorem~\ref{cbou} above, together with the adic presentation built in \cite{Fe-Ho-Za}. Generalizing these methods, it is shown in \cite{Fe-Ma} that M\"obius disjointness holds for examples of exchanges of $k$ intervals for every $k\geq 2$ and every {\em Rauzy class}, with a PNT in the weak mixing case. A breakthrough came with \cite{Ch-Es}, for a large subclass of exchange of~3 intervals:


\begin{Th}
[\cite{Ch-Es}]
For (Lebesgue)-almost all $(\alpha_1,\alpha_2)$, Sarnak's conjecture holds for exchanges of $3$ intervals with permutation $\pi i=3-i$ and probability vector $(\alpha_1,\alpha_2,1-\alpha_1-\alpha_2)$.\end{Th}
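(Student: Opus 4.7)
The plan is to reduce to the KBSZ criterion (Theorem~\ref{t:kbsz}) by establishing that for Lebesgue-a.e.\ such 3-IET $T$, the prime powers $T^p$ and $T^q$ are disjoint in the sense of Furstenberg whenever $p\neq q$ are primes. Since almost every 3-IET is uniquely ergodic (Masur--Veech) with respect to Lebesgue measure $\mu$ on the interval $X$, once we have $T^p\perp T^q$, unique ergodicity of $T^p\times T^q$ (which follows from disjointness together with unique ergodicity of each factor) implies that for every $x\in X$ and $f\in C(X)$ the averages
$$\frac1N\sum_{n\leq N}f(T^{pn}x)\overline{f(T^{qn}x)}\tend{N}{\infty}\left(\int f\,d\mu\right)\left(\int \ov f\,d\mu\right).$$
Applying the KBSZ criterion to $a_n=(f-\int f\,d\mu)(T^nx)$ for a linearly dense family in $C(X)$ (e.g.\ characteristic functions of subintervals) then gives M\"obius disjointness at every $x$.

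To prove this disjointness, I would realise the 3-IET as the first-return map of the vertical translation flow on a translation surface, so that after normalisation the parameter space $(\alpha_1,\alpha_2)$ embeds into a stratum $\mathcal H$ of abelian differentials on which $\mathrm{SL}(2,\R)$ acts, and in particular the Teichm\"uller geodesic flow $g_s$. An ergodic self-joining of $T^p$ and $T^q$ corresponds to a flow-invariant measure on a product of two such surfaces; renormalising by the diagonal action of $g_s$ converts the disjointness problem into an equidistribution question for orbits of the Teichm\"uller flow in $\mathcal H\times\mathcal H$.

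The deep input is the Eskin--Mirzakhani--Mohammadi classification of $\mathrm{SL}(2,\R)$-invariant measures and orbit closures in strata of abelian differentials, together with the main theorem of \cite{Ch-Es} asserting that for Lebesgue-a.e.\ $(\alpha_1,\alpha_2)$ the Teichm\"uller orbit of the associated surface equidistributes in its affine invariant submanifold. This classification, combined with weak mixing of almost every 3-IET (Katok--Stepin, strengthened by Avila--Forni) to rule out discrete-spectrum contributions arising from rational resonances between $p$ and $q$, forces every non-product ergodic joining of $T^p$ and $T^q$ to come from a proper affine invariant subvariety; such subvarieties contain a null set of parameters, so for a full-measure set of $(\alpha_1,\alpha_2)$ the only ergodic joining is the product measure.

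The main obstacle is therefore the Teichm\"uller-theoretic equidistribution result at the heart of \cite{Ch-Es}: excluding non-generic Teichm\"uller orbits for each fixed pair $(p,q)$ of distinct primes is the genuinely hard step. Granting it, the passage from ``full-measure set of parameters for one pair $(p,q)$'' to ``full-measure set of parameters valid for all pairs of distinct primes simultaneously'' is a routine countable-intersection argument (there are only countably many pairs $(p,q)\in\PP^2$ with $p\neq q$), after which the KBSZ criterion yields Sarnak's conjecture on the resulting full-measure set.
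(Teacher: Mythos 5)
Your overall reduction is fine and is in fact the easier of the two routes taken in \cite{Ch-Es}: for a.e.\ parameter one wants $T^p\perp T^q$ for distinct primes (indeed \cite{Ch-Es} give a condition ensuring $T^m\perp T^n$ for \emph{all} $m\neq n$), one intersects over the countably many pairs, and unique ergodicity (total ergodicity coming from a.e.\ weak mixing) turns disjointness into the hypothesis~\eqref{kbsz1} of Theorem~\ref{t:kbsz}. The genuine gap is in the step you yourself flag as the ``deep input''. An ergodic joining of $T^p$ and $T^q$ is a $T^p\times T^q$-invariant measure on $X\times X$; it is not an $\mathrm{SL}(2,\R)$-invariant (or upper-triangular-invariant) measure on a stratum or on a product of strata, and no renormalization by the Teichm\"uller flow converts it into one. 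The Eskin--Mirzakhani--Mohammadi theorems classify invariant measures and orbit closures in moduli space and say nothing about joinings of powers of a fixed IET; unlike the horocycle case \cite{Bo-Sa-Zi}, where Ratner's theorems do classify the relevant joinings, no such joining rigidity is available for IETs/translation flows --- this is exactly the hard point, and it cannot be discharged by quoting a measure classification in moduli space together with weak mixing (Katok--Stepin only removes eigenvalue obstructions; it classifies nothing). You are also conflating \cite{Ch-Es} with the earlier Chaika--Eskin paper on Birkhoff/Oseledets genericity in almost every direction: the main theorem of \cite{Ch-Es} is the M\"obius disjointness statement itself, not an equidistribution statement in an affine invariant submanifold, so the result you propose to ``grant'' is not there to be quoted.

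What \cite{Ch-Es} actually exploit is that a 3-IET with permutation $\pi i=3-i$ is the induced map of the rotation by $\alpha=\frac{1-\alpha_1}{1+\alpha_2}$ on $[0,x)$ with $x=\frac{1}{1+\alpha_2}$ (which is why the method does not extend to $\geq 4$ intervals). The easier result gives a combinatorial condition on the continued fraction of $\alpha$ together with the $\alpha$-Ostrowski expansion of $x$ (every admissible pattern of digit pairs occurs infinitely often), valid for a.e.\ parameter, which yields disjointness of all distinct powers. The bulk of the paper proves, under an explicit Diophantine condition --- that the geodesic ray of a twice-marked flat torus built from $T$ and its inducing rotation spends significant time in compact subsets of the space of such tori, an a.e.\ condition obtained by recurrence, not by EMM equidistribution --- only a weaker property: $T^m\perp T^n$ except for $m$ in a lacunary sequence $m_i(n)$ with $m_{i+1}(n)>Cm_i(n)$, and this weaker property is then shown to still imply M\"obius disjointness by a KBSZ-type argument (compare Proposition~\ref{pr:kbsz}). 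So the skeleton of your reduction survives, but the disjointness/joining-rigidity input has to be established by this hands-on analysis of the rotation-tower structure; as written, your proposal replaces the actual content of the theorem with machinery that does not apply.
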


To prove this, Chaika and Eskin use first  the well-known fact that such an exchange of $3$ intervals, denoted by $T$, is the induced map of the rotation of angle $\alpha=\frac{1-\alpha_1}{1+\alpha_2}$ on the interval $[0,x)$ where $x=\frac{1}{1+\alpha_2}$. This approach, of course, does not generalize to $4$ intervals or more.

In fact, in \cite{Ch-Es} two different results are proved. In the easier one, they deduce M\"obius disjointness from   the disjointness of powers of $T$; they  give a sufficient condition for $T^m$ to be disjoint from  $T^n$ for all $m\neq n$, which is satisfied by almost all these $T$. Namely, if we take $(a_1,\ldots)$ to be the continued fraction of $\alpha$ and  $(b_1,\ldots)$ the {\em $\alpha$-Ostrowski} expansion of $x$, then it is enough that, for any ordered $k$-tuple of pairs $((c_1,d_1),\ldots (c_k,d_k))$ of natural numbers such that $d_i\leq c_i-1$, there are infinitely many $i$ with $a_i=c_1$,\ldots, $a_{i+k-1}=c_k$,  $b_i=d_1$,\ldots, $b_{i+k-1}=d_k$.

Then most of the paper is used to give an explicit Diophantine condition on $\alpha$ and $x$, which implies a slightly weaker property than the disjointness of powers. Under that condition, there exists a constant $C$ such that for all $n$, and $0\leq m\leq n$, $T^m$ is disjoint from $T^n$ except maybe when $m$ belongs to a sequence $m_i(n)$ in which any two consecutive terms satisfy $m_{i+1}(n)>Cm_i(n)$, and this is proved to imply M\"obius disjointness. The Diophantine condition holds for almost all $T$, and, as it is long, we refer the reader to Theorem 1.4 of \cite{Ch-Es}; it expresses the fact that the geodesic ray from a certain flat torus with two marked points, defined naturally from $T$ and its inducing rotation,  spends significant time in compact subsets of the space of such tori.

\subsubsection{Systems of rank one}
These systems form a measure-theoretic class defined in Definition \ref{dr1} above. It is well known, but has been shown explicitly for all cases only  in the recent \cite{Ad-Fe-Pe}, that each system of rank-one is measure-theoretically isomorphic to one of the topological systems we define now.

\begin{Def}\label{dsc}
A {\em standard model of rank one} is
the shift on the orbit closure of the sequence $u$ which, for each $n\geq 0$, begins with the word $B_n$ defined recursively by concatenation as follows.
We take sequences of
positive integers $q_n, n\geq 0$, with $q_n>1$ for infinitely many $n$,  and $a_{n,i}, n \geq 0,
0\leq i \leq q_n-1$, such that, if $h_n$ are defined by
 $h_0=1$,
$h_{n+1}=q_nh_n+\sum_{j=0}^{q_n-1}a_{n,i}$,
then
$$\sum_{n=0}^{\infty}{{h_{n+1}-q_nh_n}\over{h_{n+1}}} <\infty.$$
We define $B_0=0$,
$$B_{n+1}=B_n 1^{a_{n,0}}B_n \dots B_n 1^{a_{n,q_n-1}}$$ for $n \geq 0$.
\end{Def}

In \cite{Bo1}, Bourgain proved M\"obius disjointness for a standard model of rank one if both the  $q_n, n\in {\mathbb N}\cup \{0\}$   and $a_{n,i}, n \in {\mathbb N}\cup \{0\}$, are bounded by some constant $C$ (we will refer to this as to a {\em bounded rank one construction}).

Note however that, in the same paper, the  half-hidden  criterion deduced from Theorem~2 or~3, see Theorem~\ref{cbou}  above, is much more than an auxiliary to prove the supposedly main Theorem~1 of \cite{Bo1}; it applies  to a much wider class of systems,  and even for some famous rank one systems  this criterion works while Theorem~1  does not apply.

Bourgain's  result was improved in \cite{Ab-Le-Ru}, where so called {\em recurrent rank one constructions} are considered with a stabilizing bounded subsequence of spacers (that is, of a subsequence of $(a_{n,i})$).\footnote{Moreover, M\"obius disjointness is established for some  other famous classes of rank one transformations such as: Katok's $\alpha$-weak mixing class (these are a special case of three interval exchange maps) or rigid generalized Chacon's maps.} One of main tools in \cite{Ab-Le-Ru} is a representation of each rank one transformation as an integral automorphism over an odometer with so called Morse-type roof function which goes back to \cite{Ho-Me-Pa}. See also \cite{Ry} for a simpler proof of a generalization of Bourgain's result to a class of partially bounded rank one constructions. 

\paragraph{Spectral approach}

In order to prove M\"obius disjointness for standard models of rank one transformations, both papers \cite{Bo1} and \cite{Ab-Le-Ru} use a spectral approach. In \cite{Ab-Le-Ru}, unitary operators $U$ (of separable Hilbert spaces) are considered and  weak limits  of powers $(U^{pm_k})$ (for different primes $p$) are studied. Once such  limits yield
sufficiently different (for different $p$) analytic functions (of $U$), the powers $U^p$ and $U^q$ are spectrally disjoint.\footnote{Hence, $T^p$ and $T^q$ are disjoint in Furstenberg's sense, and, in fact, we even have AOP.} If for a positive real number $a$ we set $s_a(x)=ax$ mod~1 on the additive circle $\T=[0,1)$, then the above spectral disjointness means that
\beq\label{e:spe1}
\sigma^{(p)}:=(s_p)_\ast(\sigma)\text{ are mutually singular for different }p\in\PP,
\eeq
where $\sigma=\sigma_U$ stands for the maximal spectral type of $U$.

In \cite{Bo1}, a different spectral approach (sufficient for a use of the KBSZ criterion, hence, sufficient for M\"obius disjointness) is used. Namely,  if $r\geq1$ is an integer, then 
by $\sigma_r$, we will denote the measure which is obtained first by taking the image of $\sigma$ under the map $x\mapsto\frac1rx$, i.e.\ the measure $\sigma^{(1/r)}$, and then repeating this new measure periodically in intervals $[\frac jr,\frac{j+1}r)$, that is:
$$
\sigma_r:=
\frac1r\sum_{j=0}^{r-1}\sigma^{1/r}\ast\delta_{j/r}.$$
Bourgain~\cite{Bo1} uses a representation of the maximal spectral type of a rank one transformation as a Riesz product and then shows the mutual disjointness of measures $\sigma_p$ and $\sigma_q$ for different $p,q\in\PP$ (for more information about the measures $\sigma_r$, see e.g.\ \cite{Qu}, p.\ 196). Although, there seems not to be too much relation between the measures $\sigma^{(r)}$ and $\sigma_r$, the following observation\footnote{This has been proved, e.g.\ in an unpublished preprint of El Abadalaoui, Ku\l aga-Przymus, Lema\'nczyk and de la Rue.} explains some equivalence of these both spectral approaches:

\begin{Lemma}\label{l:bourgaintype}
Assume that $\sigma$ and $\eta$ are two probability measures on the circle. Then:
\begin{enumerate}[(a)]
\item
if $\sigma^{(r)}\perp\eta^{(s)}$ then $\sigma_s\perp \eta_r$;
\item
 if $(r,s)=1$ then $\sigma^{(r)}\perp\eta^{(s)}$ if and only if $\sigma_s\perp \eta_r$.
\end{enumerate}
\end{Lemma}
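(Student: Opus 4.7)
The plan is to reformulate both implications in terms of disjoint Borel sets carrying the measures, and to exploit the characterization of $\sigma_s$ as the unique $\langle 1/s\rangle$-invariant probability measure on $\T$ with $(s_s)_\ast\sigma_s=\sigma$ (the ``uniform lift'' of $\sigma$), and analogously for $\eta_r$. Composing, $(s_{rs})_\ast\sigma_s=\sigma^{(r)}$ and $(s_{rs})_\ast\eta_r=\eta^{(s)}$, so both $\sigma^{(r)}$ and $\eta^{(s)}$ arise as images under the common finite covering $s_{rs}\colon\T\to\T$ of $\sigma_s$ and $\eta_r$; this is the structural bridge between the two singularity statements.

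Part (a) is a direct pullback argument and uses no coprimality. If $A,B$ are disjoint Borel sets carrying $\sigma^{(r)},\eta^{(s)}$ respectively, then $\sigma$ is carried by $s_r^{-1}(A)$ and hence, by the uniform-lift property, $\sigma_s$ is carried by $s_s^{-1}(s_r^{-1}(A))=s_{rs}^{-1}(A)$; similarly $\eta_r$ lives on $s_{rs}^{-1}(B)$, and these preimages are disjoint, giving $\sigma_s\perp\eta_r$.

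For part (b), beginning from disjoint Borel $A_0,B_0$ carrying $\sigma_s,\eta_r$, my first move is to shrink them to sets $A\subset A_0,\ B\subset B_0$ that are respectively $\langle 1/s\rangle$- and $\langle 1/r\rangle$-invariant and still carry the two measures; this is done by intersecting with the finitely many relevant translates, invoking the invariances of $\sigma_s$ and $\eta_r$, and $A\cap B=\emptyset$ is preserved. The crux — and the only step genuinely using $(r,s)=1$ — is to show $s_{rs}(A)\cap s_{rs}(B)=\emptyset$, equivalently $(A+\langle 1/(rs)\rangle)\cap B=\emptyset$, since $\ker s_{rs}=\langle 1/(rs)\rangle$. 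Here coprimality enters through the group identity $\langle 1/(rs)\rangle=\langle 1/r\rangle+\langle 1/s\rangle$, which combined with the $\langle 1/s\rangle$-invariance of $A$ gives $A+\langle 1/(rs)\rangle=A+\langle 1/r\rangle$; so if some $a+\ell/r\in B$ with $a\in A$, the $\langle 1/r\rangle$-invariance of $B$ forces $a\in B$, contradicting $A\cap B=\emptyset$. Once $s_{rs}(A)$ and $s_{rs}(B)$ are disjoint, they carry $\sigma^{(r)}$ and $\eta^{(s)}$ respectively (a mild measurability point about continuous images of Borel sets is handled by choosing a Borel fundamental domain for $s_{rs}$), giving $\sigma^{(r)}\perp\eta^{(s)}$. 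I expect the kernel decomposition to be the only substantive step; it breaks without coprimality, and a simple point-mass computation with $\sigma=\delta_\alpha,\ \eta=\delta_\beta$ shows the equivalence itself then fails in general.
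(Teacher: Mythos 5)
Your proof is correct, and in fact the paper itself gives no argument for this lemma (it is stated with a footnote deferring to an unpublished preprint of El Abdalaoui, Ku\l{}aga-Przymus, Lema\'nczyk and de la Rue), so there is nothing to compare against line by line; I can only certify that your route works, which it does. The two pillars you rely on are sound: $\sigma_s$ is indeed the unique $\langle 1/s\rangle$-invariant probability measure with $(s_s)_\ast\sigma_s=\sigma$ (invariance under the deck group of the covering $s_s$ forces uniform conditional measures on fibres), and consequently $(s_{rs})_\ast\sigma_s=(s_r)_\ast\sigma=\sigma^{(r)}$, $(s_{rs})_\ast\eta_r=\eta^{(s)}$. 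Part (a) then follows by pulling back disjoint carriers through $s_{rs}$, and, as you note, uses no coprimality. In part (b), the shrinking of $A_0,B_0$ to invariant carriers $A=\bigcap_{j<s}(A_0+j/s)$, $B=\bigcap_{k<r}(B_0+k/r)$ is legitimate, and the one genuinely arithmetic step is exactly where you place it: $(r,s)=1$ gives $\langle 1/(rs)\rangle=\langle 1/r\rangle+\langle 1/s\rangle$, whence $A+\langle 1/(rs)\rangle=A+\langle 1/r\rangle$ and the $\langle 1/r\rangle$-invariance of $B$ turns any intersection into a point of $A\cap B$. The measurability caveat is even milder than you suggest: $s_{rs}$ is injective on each interval $[j/(rs),(j+1)/(rs))$, so $s_{rs}(A)$ and $s_{rs}(B)$ are finite unions of injective Borel images, hence Borel, and they carry $\sigma^{(r)}$ and $\eta^{(s)}$ since $\sigma^{(r)}(s_{rs}(A))\geq\sigma_s(A)=1$. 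Your closing remark is also accurate: for $r=s=2$, $\sigma=\delta_0$, $\eta=\delta_{1/2}$ one has $\sigma_2\perp\eta_2$ while $\sigma^{(2)}=\eta^{(2)}=\delta_0$, so the reverse implication genuinely needs $(r,s)=1$.
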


\subsubsection{Rokhlin extensions} Let $T$ be a uniquely ergodic homeomorphism of a compact metric space and let $f\colon X\to \R$ be continuous. Set $T_f(x,t):=(Tx,f(x)+t)$ to obtain a skew product homeomorphism on $X\times\R$. Note that the latter space is not compact. But, if we take any continuous flow $\cs=(S_t)_{t\in\R}$ acting on a compact metric space $Y$ then the skew product
$T_{f,\cs}$ acting on $X\times Y$ by the formula:
$$
T_{f,\cs}(x,y)=(Tx,S_{f(x)}(y)),\;(x,y)\in X\times Y$$
is a homeomorphism of the compact space $X\times Y$ and it
is called a {\em Rokhlin extension} of $T$. To get a good theory, usually one has to put some further assumptions on $f$ (considered as a cocycle taking values in a locally compact but not compact group, see e.g.\ \cite{Le-Pa,Sch}).
It is proved in \cite{Ku-Le1} that there are irrational rotations $Tx=x+\alpha$ and continuous $f\colon\T\to\R$ (even smooth) such that $\tfs$ has the AOP property for each uniquely ergodic $\cs$.\footnote{If $\cs$ preserves a measure $\nu$ then $\tfs$ preserves measure $\mu\ot\nu$, the AOP property is considered with respect to this measure.}

We would like to emphasize that the Rokhlin skew product construction are usually relatively weakly mixing \cite{Le-Pa}, so the class we consider here is drastically different from the distal class which is our next object to give account.

This approach leads in \cite{Ku-Le1} to so called random sequences\footnote{Such a sequence $(a_n)$ is of the form $(\va^{(n)}(x))$  with $\va^{(n)}(x)=\va(x)+\va(Tx)+\ldots+\va(T^{n-1}x)$, $n\geq0$.} $(a_n)\subset\R$ such that
$$
\frac1N\sum_{n\leq N}g(S_{a_n}y)\mob(n)\to0$$
for each uniquely ergodic flow $\cs$ acting on a compact metric space $Y$, each $g\in C(Y)$ and (due to \cite{Ab-Ku-Le-Ru1}) uniformly in $y\in Y$.

\subsection{Distal systems}
Assume that $R$ is an ergodic automorphism of a probability standard Borel space $(Z,\cd,\kappa)$. $R$ is called (measurably) {\em distal} if it can be represented as transfinite sequence of consecutive isometric extensions, where in case of a limit ordinal, we take the corresponding inverse limit (i.e.\ we start with the one-point dynamical system, the first isometric extension is a rotation and then we take a further isometric extension of it etc.). Recall  that by a {\em separating sieve} we mean a sequence
$$
Z\supset A_1\supset A_2\supset\ldots\supset A_n\supset\ldots$$
of sets of positive measure such that $\mu(A_n)\to0$ and there exists $Z_0\subset Z$, $\mu(Z_0)=1$, such that for each $z,z'\in Z_0$ if for each $n\geq1$ there is $k_n\in\Z$ such that $R^{k_n}z,R^{k_n}z'\in A_n$, then $z=z'$.
A theorem by Zimmer \cite{Zi1} says that $T$ is distal if and only if it has a separating sieve.

Distal automorphisms play a special role in ergodic theory: each automorphism has a maximal distal factor and is relatively weakly mixing over it \cite{Fu1,Zi1,Zi2}. Hence, many problems in ergodic theory can be reduced to study the two opposite cases: the distal and the weak mixing one.\footnote{See the most prominent example of such a reduction, namely,  Furstenberg's ergodic proof of Szemer\'edi theorem on the existence of arbitrarily long arithmetic progressions in subsets of integers of positive upper Banach density \cite{Fu1}.} Recall that distal automorphisms have entropy zero.

There is also a notion of distality in topological dynamics. A homeomorphism $T$ of a compact metric space $X$ is called {\em distal} if the orbit $(T^nx,T^nx')$, $n\in\Z$, is bounded away from the
diagonal in $X \times X$  for each $x \neq x'$. Some of topologically distal classes already appeared in previous sections. Indeed,
zero entropy (minimal) affine transformations are examples of distal homeomorphisms. Another natural class of distal (uniquely ergodic) homeomorphisms is given by nil-translations and, more generally, affine unipotent diffeomorphisms of nilmanifolds.
A theorem by Lindenstrauus \cite{Li} says that a measurably distal automorphism $R$ has a minimal\footnote{In general, there is no uniquely ergodic model $(X,T)$ of $R$ with $T$ topologically distal.} model $(X,T)$ together with $\mu\in M^e(X,T)$  of full support (and $(X,\mu,T)$ is isomorphic to $(Z,\kappa, R)$) in which $T$ is topologically distal.

The following (still open) question seems to be a natural and important step in proving Sarnak's conjecture:

\begin{Question}[Liu and Sarnak \cite{Li-Sa}]
Are all topologically distal systems M\"obius disjoint?
\end{Question}

As transformations with discrete spectrum are measurably distal and Theorem~\ref{dissp} holds, we can of course ask whether given a measurably distal automorphism, all of its uniquely ergodic models
are M\"obius disjoint.\footnote{As a matter of fact, such a question remains open even for 2-point extensions of irrational rotations.}

We now focus on the famous class of Anzai skew products. This is the class of transformations defined on $\T^2$ by the formula:
$$
T_{\va}\colon\T^2\to\T^2,\;T_{\va}(x,y)=(x+\alpha,\va(x)+y).
$$
In other words, Anzai skew products are given by $Tx=x+\alpha$ an irrational rotation on the (additive) circle, and a measurable  $\va\colon\T\to\T$; the skew product $T_{\va}$ preserves the Lebesgue measure. If $\va$ is continuous, $T_{\va}$ is a homeomorphism of $\T^2$. If we cannot solve the functional equations \beq\label{equa1}k\va(x)=\xi(x)-\xi(Tx)\eeq ($k\in \N$) in continuous functions $\xi\colon\T\to\T$, then $T_{\va}$ is minimal, but if for one $k\in\N$ we have a measurable solution then $T_{\va}$ is not uniquely ergodic. In \cite{Li-Sa}, we find examples of Anzai skew products which are minimal not uniquely ergodic but are M\"obius disjoint,\footnote{As a matter of fact, in \cite{Ab-Ku-Le-Ru1} it is proved that if a uniquely ergodic homeomorphism $T$ satisfies the strong MOMO property (see Definition \ref{def:SM} on page \pageref{def:SM}) and (continuous) $\va\colon X\to G$ ($G$ is a compact Abelian group) satisfies $\va:=\xi-\xi\circ T$ has a measurable solution $\xi:X\to G$, then the homeomorphism $T_{\va}$ of $X\times G$ is M\"obius disjoint. This applies  if~\eqref{equa1} has a measurable solution for $k=1$. It is however an open question whether we have M\"obius disjointness when there is no measurable solution for $k=1$ but there is such a solution for some $k\geq2$.} moreover it is proved that if $\va$ is analytic with an additional condition on the decay (from below) of Fourier coefficients then $T_{\va}$ is M\"obius disjoint for each irrational $\alpha$. In \cite{Ku-Le}, it is proved that if $\va$ is of class $C^{1+\delta}$ then for a typical (in topological sense) $\alpha$, we have M\"obius disjointness of $T_{\va}$.\footnote{It follows from a subsequent paper \cite{Ku-Le1} that the Anzai skew products considered in \cite{Ku-Le} enjoy the AOP property.} A remarkable result is proved by Wang \cite{Wa}: all analytic Anzai skew products are M\"obius disjoint.  The proofs in all these papers are using Fourier analysis techniques but in \cite{Wa}, it is also a short interval argument from \cite{Ma-Ra-Ta} used in one crucial case.

Nothing seems to be proved about a PNT in the class of distal systems (except for rotations).

\subsubsection{Discrete spectrum automorphisms} \label{s:dissp}
The simplest examples of (measurably) distal automorphisms are those with discrete spectrum. Recall that a measure-theoretic system $(Z,\cd,\kappa,R)$ is said {\em to have discrete spectrum} if the $L^2$-space is generated by the eigenfunctions of the Koopman operator $Tf:=f\circ T$. The classical Halmos-von Neumann theorem tells us that each ergodic automorphism with discrete spectrum has a uniquely ergodic model being a rotation on a compact Abelian (monothetic) group.

\begin{Th}\label{dissp}
All uniquely ergodic models of automorphisms with discrete spectrum are M\"obius disjoint.\end{Th}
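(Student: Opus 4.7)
The plan is to establish the theorem by reducing it to the strong MOMO property for the canonical rotation model and then invoking Theorem~\ref{thmA}. By the Halmos--von Neumann theorem, an ergodic discrete spectrum automorphism $(Z,\cd,\kappa,R)$ is measure-theoretically isomorphic to an ergodic rotation $Rx=x+x_0$ on a compact metric monothetic group $G$ equipped with Haar measure $\lambda_G$; this rotation is itself a uniquely ergodic model of $R$. Hence, if we show that $(G,R)$ enjoys the strong MOMO property relative to $\mob$, then the implication \eqref{p11}$\Rightarrow$\eqref{p3} of Theorem~\ref{thmA} propagates the strong MOMO property (and in particular M\"obius disjointness) to \emph{every} uniquely ergodic model of $R$, as desired.

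To verify the strong MOMO property for $(G,R)$, first I would reduce to checking \eqref{eq:defMOMOSI} on a linearly dense subset of $C(G)$: the characters of $G$ form such a set (they span a self-adjoint, point-separating, unital subalgebra, dense by Stone--Weierstrass), and a standard triangle-inequality argument shows that \eqref{eq:defMOMOSI} is stable under uniform approximation of $f$ because for $\|f-g\|_\infty<\varepsilon$ the MOMO averages of $f$ and $g$ differ by at most $\varepsilon$. For a character $\chi$ one has $\chi(R^mx)=\chi(x)\chi(x_0)^m$; writing $c:=\chi(x_0)\in\T$ and pulling out the unimodular factor $\chi(x_k)c^{-b_k}$, the inner sum in \eqref{eq:defMOMOSI} becomes
\[
\left|\sum_{b_k\le n<b_{k+1}}\chi(R^{n-b_k}x_k)\,\mob(n)\right|
=\left|\sum_{b_k\le n<b_{k+1}}c^{\,n}\,\mob(n)\right|,
\]
independent of $x_k$. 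The strong MOMO sum to be controlled thus reduces to
\[
\frac{1}{b_K}\sum_{k<K}\left|\sum_{b_k\le n<b_{k+1}}c^{\,n}\,\mob(n)\right|,
\]
and this is precisely the quantity bounded in \eqref{sh1} applied to $\bfu=\mob$ and to the unimodular constant $c$; the latter tends to $0$ as $K\to\infty$ because $\mob$ satisfies the non-pretentiousness condition \eqref{e:mrt} by the Matom\"aki--Radziwi\l{}\l{}--Tao theorems \cite{Ma-Ra,Ma-Ra-Ta}.

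The main technical input, and the only non-elementary ingredient, is the short-interval cancellation \eqref{sh1} for $\mob$ twisted by arbitrary unimodular exponential phases; everything else is a straightforward packaging of the Halmos--von Neumann representation together with Theorem~\ref{thmA}. In this sense the proof amounts to writing out the argument already summarized in Remark~\ref{r:momorot}. As a byproduct of Corollary~\ref{c:uniformity}, one also obtains that the convergence in~\eqref{mdis1} is uniform in $x\in X$ for any uniquely ergodic model $(X,T)$ of an ergodic discrete spectrum automorphism, together with the short-interval uniform form described in footnote~\ref{f:eqSandMOMO}.
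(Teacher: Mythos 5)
Your proof is correct and follows essentially the route the paper itself indicates immediately after the theorem: reduce via Halmos--von Neumann to an ergodic rotation on a compact monothetic group, verify the strong MOMO property there by testing on characters (where the inner sums collapse to $\bigl|\sum_{b_k\le n<b_{k+1}}c^n\mob(n)\bigr|$ and \eqref{sh1}, valid for $\mob$ by \eqref{e:mrt} and the Matom\"aki--Radziwi\l\l--Tao results, applies), and then transfer to all uniquely ergodic models by Theorem~\ref{thmA} --- exactly the content of Remark~\ref{r:momorot}. The paper's other cited proofs (Huang--Wang--Zhang, or Corollary~\ref{c:mn3}) are alternatives, but your argument is the one the paper explicitly sketches, with the ``easily shown'' character computation written out correctly.
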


This result was first proved in \cite{Ab-Le-Ru2} for totally ergodic discrete spectrum automorphisms (as they have the AOP property) and in full generality by Huang, Wang and Zhang in \cite{Hu-Wa-Zh}. In fact, the latter result is stronger:

\begin{Th}[\cite{Hu-Wa-Zh}]\label{dissp1} Let $(X,T)$ be a dynamical system, $x\in X$ and $N_i\to\infty$. Assume that
$\frac1{N_i}\sum_{n\leq N_i}\delta_{T^nx}\tend{i}{\infty} \mu$. Assume that $\mu$ is a convex combination of countably many ergodic measures, each of which yields a system with discrete spectrum. Then $\lim_{i\to\infty}\frac1{N_i}\sum_{n\leq N_i}f(T^nx)\mob(n)=0$ for each $f\in C(X)$.\end{Th}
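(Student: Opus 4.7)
The plan is to imitate the argument from Corollary~\ref{c:mn3}, but on a joining coming from the prescribed ergodic decomposition of $\mu$. First, I would argue by contradiction: if the conclusion failed, there would exist $\vep>0$ and a further subsequence (still denoted $(N_i)$) along which $\left|\frac{1}{N_i}\sum_{n\leq N_i}f(T^nx)\mob(n)\right|\ge\vep$. By compactness of the space of Borel probability measures on $X\times X_{\mob}$ one could then pass to yet another subsequence to obtain
\[
\frac1{N_i}\sum_{n\leq N_i}\delta_{(T^nx,S^n\mob)}\to\rho\in M(X\times X_{\mob},T\times S),
\]
with marginals $\rho|_X=\mu$ (forced by the hypothesis) and $\rho|_{X_{\mob}}=:\kappa\in\text{Q-gen}(\mob)$. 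The problem thus reduces to showing $\int_{X\times X_{\mob}}f\otimes\theta\,d\rho=0$.

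Next I would decompose $\rho$ along the ergodic decomposition $\mu=\sum_i c_i\mu_i$. Because distinct ergodic measures are mutually singular and there are only \emph{countably many} $\mu_i$, one can choose pairwise disjoint, $T$-invariant Borel sets $X_i\subset X$ with $\mu_i(X_i)=1$, so that $\rho$ is concentrated on $\bigcup_i(X_i\times X_{\mob})$. Setting
\[
\rho_i(A):=\tfrac{1}{c_i}\rho\bigl((X_i\times X_{\mob})\cap A\bigr),
\]
one obtains a $T\times S$-invariant probability measure which is a joining of $(X,\mu_i,T)$ and $(X_{\mob},\kappa_i,S)$, where $\kappa_i:=\rho_i|_{X_{\mob}}$; moreover $\rho=\sum_i c_i\rho_i$ and $\kappa=\sum_i c_i\kappa_i$.

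The concluding step is a spectral argument on each $L^2(X\times X_{\mob},\rho_i)$. The spectral measure of $f\otimes 1$ under $T\times S$ coincides with the (discrete) spectral measure of $f$ under $T$ on $L^2(\mu_i)$, while that of $1\otimes\theta$ coincides with the spectral measure $\sigma_{\theta,\kappa_i}$ of $\theta$ under $S$ on $L^2(\kappa_i)$. Comparing Fourier coefficients gives $\sigma_{\theta,\kappa}=\sum_i c_i\sigma_{\theta,\kappa_i}$; by Corollary~\ref{c:mn2} (itself a consequence of Theorem~\ref{srednie}) the measure $\sigma_{\theta,\kappa}$ is continuous, and since $c_i>0$ each summand $\sigma_{\theta,\kappa_i}$ is continuous too. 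Mutual singularity between a discrete and a continuous spectral measure forces $f\otimes 1\perp 1\otimes\theta$ in $L^2(\rho_i)$, hence $\int f\otimes\theta\,d\rho_i=0$, and summing over $i$ with weights $c_i$ yields $\int f\otimes\theta\,d\rho=0$, contradicting the standing assumption.

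The step I expect to be delicate is not the spectral orthogonality but the decomposition itself: it uses exactly the countability of the ergodic components of $\mu$ to partition $X$ into $T$-invariant pieces, to slice $\rho$ accordingly, and to propagate the continuity of $\sigma_{\theta,\cdot}$ from $\kappa$ down to each $\kappa_i$ (which works because $c_i\kappa_i\le\kappa$). In a genuinely uncountable-component situation the same scheme would require a full measurable disintegration together with uniform continuity control of the fibrewise spectral measures of $\theta$, something Corollary~\ref{c:mn2} does not in itself provide, which is presumably why the statement is restricted to countably many ergodic components.
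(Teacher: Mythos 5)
Your argument is correct. Bear in mind that the survey does not actually prove Theorem~\ref{dissp1}: it quotes it from \cite{Hu-Wa-Zh}, whose original proof is a different, more quantitative one. The natural in-paper comparison is therefore with the proof of Corollary~\ref{c:mn3}, and your route is essentially that proof, localized at the point $x$: you pass to a joint weak$^\ast$ limit $\rho$ of the empirical measures of $(x,\mob)$, identify the spectral measures of $f\ot 1$ and $1\ot\theta$ under $\rho$ with those of $f$ and $\theta$ under the marginals, and invoke Corollary~\ref{c:mn2} (i.e.\ Theorem~\ref{srednie}) for the continuity of $\sigma_\theta$. The genuinely new ingredient you add is the countable decomposition of $\rho$ over invariant carriers $X_i$ of the ergodic components of $\mu$, and that step is sound: such carriers exist by mutual singularity (e.g.\ the sets of generic points of the $\mu_i$), each $\rho_i$ is $T\times S$-invariant because $X_i\times X_{\mob}$ is, and $c_i\sigma_{\theta,\kappa_i}\le\sigma_{\theta,\kappa}$ does propagate continuity to every component. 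You could, however, shortcut this: since the $\mu_i$ are pairwise mutually singular and there are countably many of them, $L^2(X,\mu)$ is the orthogonal sum of the spaces $L^2(X,\mu_i)$, so the Koopman operator of $(X,\mu,T)$ has pure point spectrum and the spectral measure of $f$ with respect to $\mu$ itself is already atomic; then the proof of Corollary~\ref{c:mn3} applies verbatim to $\rho$, with no decomposition of $\rho$ and no contradiction setup (it suffices to show every subsequential limit of the averages vanishes). Either way, countability is used exactly where you say it is, in agreement with the paper's example $(x,y)\mapsto(x,x+y)$ on $\T^2$, which shows that with uncountably many discrete-spectrum components the spectral measure of $f$ under $\mu$ can be Lebesgue and the whole scheme collapses.
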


Note that Theorem~\ref{dissp} also follows from Theorem~\ref{thmA} because ergodic rotations enjoy the strong MOMO property \cite{Ab-Ku-Le-Ru1} (see Remark~\ref{r:momorot}). As a matter of fact, as we have already noticed in Corollary~\ref{c:mn3}, Theorem~\ref{dissp} follows from~\cite{Ma-Ra-Ta}.

\subsection{Sub-polynomial complexity}
Let $T$ be a homeomorphism of a compact metric space $(X,d)$ and let $\mu\in M(X,T)$. Assume also that $a\colon\N\to\R$ is increasing with $\lim_{n\to\infty}a(n)=\infty$. In the spirit of \cite{Fe10}, we say that the measure complexity of $\mu$ is weaker than $a$ if
$$
\liminf_{n\to\infty}\frac{\min\{m\geq1:
\mu(\bigcup_{j=1}^mB_{d_n}(x_j,\vep))>1-\vep\text{ for some } x_1,\ldots,x_m\in X\}}{a(n)}=0$$
for each $\vep>0$ (here $d_n(y,z)=\frac1n\sum_{j=1}^nd(T^jy,T^jz)$).

The main result of the recent article \cite{Hu-Wa-Ye} states the following:

\begin{Th}[\cite{Hu-Wa-Ye}] \label{t:hwy} If $(X,T)$ is a topological system for which all its invariant measures have sub-polynomial complexity, i.e.\ their complexity is weaker than $n^\delta$ for each $\delta>0$, then $(X,T)$ is M\"obius disjoint.\end{Th}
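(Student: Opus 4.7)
The natural route to pursue is to invoke Corollary~\ref{c:mn3}: it suffices to show that every ergodic invariant measure $\mu\in M^e(X,T)$ with sub-polynomial complexity yields a measure-theoretic system $(X,\cb(X),\mu,T)$ with discrete spectrum, after which M\"obius disjointness of $(X,T)$ follows immediately. I would first reduce to the ergodic case by ergodic decomposition, verifying that sub-polynomial complexity passes to almost every ergodic component (the Bowen-ball covering bound behaves well under disintegration up to an $\vep$-loss). Thus the heart of the matter is a purely measure-theoretic implication: \emph{sub-polynomial measure complexity implies discrete Koopman spectrum}.

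To prove this implication I would proceed in the spirit of Ferenczi's characterization of discrete spectrum via bounded complexity, but quantified. Fix an ergodic $\mu$ with sub-polynomial complexity and suppose for contradiction that the Koopman operator of $T$ on $L^2(X,\mu)$ has a non-trivial continuous component; take $f\in L^2_0(X,\mu)$ with continuous spectral measure $\sigma_f$. Then $\frac{1}{N}\sum_{n\leq N}|\langle U_T^n f,f\rangle|^2 \to 0$, which is a mean-mixing statement for $f$ along density-one subsequences. The plan is to translate this into a lower bound on the number of Bowen $(n,\vep)$-balls needed to cover a set of $\mu$-measure $1-\vep$, by showing that any such cover induces an approximate equicontinuity structure on the correlation averages of $f$: on each Bowen ball $B_{d_n}(x_j,\vep)$ the function $\frac{1}{n}\sum_{k<n} f(T^k\cdot)\overline{f(T^{k+h}\cdot)}$ is essentially constant to within $\vep\|f\|_\infty^2$, so the number of balls needed to distinguish $h\in\{0,1,\dots,H\}$ must grow at least polynomially in $H$ whenever continuous spectrum is present. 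Combining this with the sub-polynomial assumption (which allows $\delta$ arbitrarily small) yields a contradiction, forcing $\sigma_f$ to be discrete; since $f$ was arbitrary in $L^2_0$, the Koopman operator has pure point spectrum. One then concludes via Halmos--von Neumann and Corollary~\ref{c:mn3}.

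The main obstacle is exactly the quantitative step relating the geometric definition of complexity (via covers by $d_n$-balls) to the spectral type. Ferenczi's equivalence between $O(1)$ complexity and discrete spectrum is not directly applicable, since sub-polynomial growth is strictly weaker; one must show that \emph{any} non-discrete spectral component forces the complexity past every $n^\delta$ threshold. A promising technical tool here is the Weyl pseudo-metric: on a discrete-spectrum system the $T$-orbits are $d_W$-precompact, while on systems with continuous spectrum the $d_W$-size of orbits grows, and one can hope to transfer this growth to the Bowen covering numbers (perhaps by comparing $d_n$ with a discretized $d_W$ along typical points). If this translation turns out to be too delicate, a fallback is to bypass the discrete-spectrum detour altogether and apply Proposition~\ref{pr:kbsz} (the KBSZ criterion) directly: using the short-interval estimate of Matom\"aki--Radziwi\l{}\l{}--Tao (Theorem~\ref{srednie} and its generalization to $\bfu\in\cm$ satisfying \eqref{e:mrt}) on blocks of length $H$, one can break the Ces\`aro sum into $N/H$ pieces and use the sub-polynomial complexity to show that across these pieces the orbit visits only $(N/H)^\delta$ many distinct Bowen-types, on each of which the short-interval cancellation of $\mob$ applies with negligible loss.
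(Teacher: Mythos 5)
Your central reduction fails at its key lemma: \emph{sub-polynomial measure complexity does not imply discrete spectrum}. What characterizes discrete spectrum is \emph{bounded} measure complexity (this equivalence is itself one of the main results of \cite{Hu-Wa-Ye}), and between ``bounded'' and ``weaker than $n^\delta$ for every $\delta$'' there is a lot of room: a system with a continuous spectral component must have unbounded complexity, but it can still grow slower than any power of $n$ (recall also that the definition is a $\liminf$ over $n$ for each fixed $\vep$, which makes the hypothesis very weak). This is not a repairable technicality: the list of applications given right after Theorem~\ref{t:hwy} includes $C^\infty$ Anzai skew products over arbitrary irrational rotations, and the accompanying footnote stresses that only the \emph{latter two} listed classes have all invariant measures with discrete spectrum -- ergodic smooth (degree-zero) Anzai extensions of Liouville rotations need not be measurably conjugate to group rotations and can carry continuous spectrum in the orthocomplement of the base, yet their measure complexity is sub-polynomial. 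If your implication were true, Theorem~\ref{t:hwy} would collapse to Corollary~\ref{c:mn3}, whereas its whole point is to go strictly beyond it; consequently the contradiction you aim for (``any non-discrete spectral component forces the complexity past every $n^\delta$'') cannot be established, whether via a quantified Ferenczi-type argument or via the Weyl pseudo-metric. A secondary gap: even granting discrete spectrum of the ergodic measures, Corollary~\ref{c:mn3} requires discrete spectrum of \emph{every} invariant measure (or countably many ergodic ones); the example~\eqref{Z} in the survey shows that ``all ergodic components have discrete spectrum'' is not sufficient for that spectral-disjointness argument, so your ergodic-decomposition step does not feed correctly into the corollary you invoke.

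Your fallback is directionally much closer to how \cite{Hu-Wa-Ye} actually argue (block the Ces\`aro sum into intervals of length $H$, replace each orbit segment by one of few $d_H$-ball centres using Lipschitz $f$, and appeal to the averaged two-point correlation bound of Theorem~\ref{srednie}), but as sketched it does not close, and Proposition~\ref{pr:kbsz} is not the right vehicle for it: nothing in your blocking scheme produces the dilated correlations $\frac1N\sum_{n\le N}a_{pn}\overline{a}_{qn}$ that the KBSZ criterion requires. The quantitative crux is this: after Cauchy--Schwarz and a union bound over the ball types, you need (number of $d_H$-balls at scale $H$, not $(N/H)^\delta$ as you write) times (the saving in Theorem~\ref{srednie}) to tend to $0$; but Matom\"aki--Radziwi\l{}\l{}--Tao give only an $o(1)$ saving of logarithmic type in $H$, which is wiped out by a count as large as $H^\delta$. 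Making genuinely sub-polynomial (rather than sub-logarithmic) complexity suffice is exactly the nontrivial content of \cite{Hu-Wa-Ye}, and your sketch supplies no mechanism for it; you would also need to handle the fact that the complexity hypothesis only provides few balls along a subsequence of scales $H$ and per fixed $\vep$.
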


As shown in \cite{Hu-Wa-Ye}, Theorem~\ref{t:hwy} applies to: topological systems whose all invariant measures yield systems with discrete spectrum (cf.\ Corollary~\ref{c:mn3}), Anzai skew products of $C^\infty$-class (over each irrational rotation), $K(\Z)$-sequences introduced by Veech \cite{Ve} and tame systems.\footnote{For the latter two classes all invariant measures yield discrete spectrum.}

\subsection{Systems of number-theoretic origin}
Recall that a sequence $x\in\{0,1\}^{\N}$ is called a {\em generalized Morse sequence}~\cite{Ke} if
\beq\label{defm}
x=b^0\times b^1\times \ldots\eeq
with $b^i\in\{0,1\}^{\ell_i}$, $\ell_i\geq2$, $b^i(0)=0$ for each $i\geq0$.\footnote{If $B\in\{0,1\}^k$ and $C=C(0)C(1)\ldots C(\ell-1)\in\{0,1\}^{\ell}$ then we define
$
B\times C:=(B+C(0))(B+C(1))\ldots (B+C(\ell-1))$.}
The following question still remains open.

\begin{Question}[Mauduit (2014)]\label{q:CM}
Are dynamical systems arising from generalized Morse sequences M\"obius disjoint?\end{Question}

Consider  the simplest subclass of the class of generalized Morse sequences, for which in~\eqref{defm} we have $|b^i|=2$ for all $i\geq0$ (in other words, either $b^i=01$ or $b^i=00$). Such sequences are called {\em Kakutani sequences} \cite{Kw}. A particular case of Sarnak's conjecture, namely:
\beq\label{kk}
\frac1N\sum_{n=1}^N(-1)^{x(n)}\mob(n)\to0,\eeq
for the classical Thue-Morse sequence $x=01\times01\times\ldots$ follows from \cite{In-Ka,Ka} (see also \cite{Da-Te} where, additionally, the speed of convergence to zero is given and \cite{Ma-Ri1}, where, additionally, a PNT has been proved).
Then~(\ref{kk}) has been proved for some subclass of Kakutani sequences in \cite{Gr}. As a matter of fact, in~\cite{Gr},   the problem whether $\frac1N\sum_{n=1}^N(-1)^{s_E(n)}\mob(n)\to 0$ is considered. Here
$E\subset\N$ is fixed and $s_E(n):=\sum_{i\in E}n_i$, where $n=\sum_{i=0}^\infty n_i2^i$ ($n_i\in\{0,1\}$). To see a relationship with Kakutani sequences define a Kakutani sequence $x=b^0\times
b^1\times\ldots$ with $b^n=01$ iff $n+1\in E$; it is now not hard to see that $s_E(n)=x(n)$ mod~2.  Finally, using some methods from~\cite{Ma-Ri1}, Bourgain \cite{Bo0} completed the result from \cite{Gr} so that~(\ref{kk}) holds in the whole class of Kakutani sequences (moreover, in \cite{Bo0,Gr}  a relevant PNT has been proved).
One can show that the methods used in the aforementioned papers allow us to have~(\ref{kk}) with $x$ replaced by every $y\in\co(x)$ (as shown in \cite{Fe-Ku-Le-Ma} in Lemma~6.5 therein, this can be sufficient to show M\"obius disjointness  for the simple spectrum case; for example, this approach works for the Thue-Morse system).

The problem of M\"obius disjointness is also studied (and solved) in the class of (generalized) Kakutani sequences  taking values in  compact  (even non-Abelian) groups, see \cite{Ve}.

\subsection{Other research around Sarnak's conjecture} As all periodic observable sequences are orthogonal to $\mob$, one could think that a limit of periodic constructions of type of Toeplitz sequences\footnote{A sequence $x\in A^{\N}$ is called Toeplitz if for each $n\in\N$ there is $q_n\in\N$ such that $x(n+jq_n)=x(n)$ for each $j=0,1,\ldots$} also yields systems that are M\"obius disjoint.\footnote{So called regular Toeplitz sequences are treated in \cite{Ab-Ka-Le} and \cite{Do-Ka}, these are however uniquely ergodic models of odometers.} However, in \cite{Ab-Ka-Le} (and then \cite{Do-Ka}) there are examples of Toeplitz systems which are not M\"obius orthogonal. These examples have positive entropy \cite{Ab-Ku-Le-Ru, Do-Ka}. Karagulyan in \cite{Kar1} shows M\"obius disjointness of zero entropy continuous maps of the interval and (orientation preserving) homeomorphisms of the circle. In~\cite{EI1}, Eisner proposes to study a polynomial version of Sarnak's conjecture (in the minimal case). See also~\cite{Ab-Di,Ab-Ye,Br-Te,Do-Gl,Fa,HLSY}.

\section{Related research: $\mathscr{B}$-free numbers}
\label{czesc6}

\subsection{Introduction}\label{sofm}
\paragraph{Sets of multiples}
We have already seen that some properties of the M\"obius function $\mob$ can be investigated by looking at its square $\mob^2$, i.e.\ the characteristic function of the set of square-free numbers $Q:=\{n\in \Z  : p^2\not\divides n \text{ for all primes }p\}$. A natural generalization comes when we study sets of integers that are not divisible by elements of a given set. Let $\mathscr{B}\subset \N$ and let $\mathcal{M}_\mathscr{B}$ be the corresponding set of multiples, i.e.\ $\mathcal{M}_\mathscr{B}=\bigcup_{b\in\mathscr{B}}b\Z$ and the associated set of \emph{$\mathscr{B}$-free numbers} $\mathcal{F}_\mathscr{B}:=\Z\setminus \mathcal{M}_\mathscr{B}$ (for convenience, we will deal now with subsets of $\Z$ instead of subsets of $\N$ -- the M\"obius function $\mob$ is not defined for negative arguments, but its square has a natural extension to negative integers). By $\eta=\eta_\mathscr{B}$ we will denote the characteristic function of $\mathcal{F}_\mathscr{B}$. It is not hard to show that a symmetric subset $F\subset\Z$ is a $\mathscr{B}$-free set (for some $\mathscr{B}$) if and only if $F$ is closed under taking divisors.

\paragraph{Historical remarks}
Sets of multiples were an object of intensive studies already in the 1930s \cite{bessel1929zahlentheorie, Davenport:1933aa, zbMATH03014412,MR1574879}. The basic motivating example there was the set of \emph{abundant numbers} ($n\in\Z$ is {\em abundant} if $|n|$ is smaller than the sum of its (positive) proper divisors, i.e.\ $|n|<\boldsymbol{\sigma}(|n|)$), see also more recent~\cite{MR3254753,MR3189967,MR2134854} on that subject. Also many natural questions on general $\mathscr{B}$-free sets emerged. Besicovitch~\cite{MR1512943} showed that the asymptotic density of $\mathcal{M}_\mathscr{B}$ may fail to exist.  It turned out that it was more natural to use the notion of logarithmic density (denoted by $\bdelta$) which always exists in this case and equals the lower density. More precisely, we have the following result of Davenport and Erd\"os:
\begin{Th}[\cite{Davenport1936,MR0043835}]\label{da-er}
For any $\mathscr{B}$, the logarithmic density $\bdelta(\cm_\sB)$ of $\cm_\mathscr{B}$ exists. Moreover,
$
\bdelta(\cm_\mathscr{B})=\un{d}(\cm_{\mathscr{B}})=
\lim_{n\to\infty}d(\mathcal{M}_{\{b\in \mathscr{B} : b\leq n\}})$.
\end{Th}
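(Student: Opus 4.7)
The plan is to identify the common value $\alpha$ and prove that it coincides with $\bdelta(\cm_\mathscr{B})$, with $\un{d}(\cm_\mathscr{B})$, and with $\lim_n d(\cm_{\mathscr{B}_n})$. Write $\mathscr{B}_n := \{b \in \mathscr{B} : b \leq n\}$ and $\alpha_n := d(\cm_{\mathscr{B}_n})$. Since $\mathscr{B}_n$ is finite, $\cm_{\mathscr{B}_n}$ is a finite union of arithmetic progressions and its natural density exists by inclusion-exclusion:
\[
\alpha_n = \sum_{\emptyset \neq S \subset \mathscr{B}_n} \frac{(-1)^{|S|+1}}{\operatorname{lcm}(S)}.
\]
As $\mathscr{B}_n \subset \mathscr{B}_{n+1}$, the sequence $(\alpha_n)$ is non-decreasing and bounded by $1$, so $\alpha := \lim_n \alpha_n$ exists.

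The lower bounds are immediate. Inclusion $\cm_{\mathscr{B}_n} \subset \cm_\mathscr{B}$ yields $\un{d}(\cm_\mathscr{B}) \geq \alpha_n$ for every $n$, hence $\un{d}(\cm_\mathscr{B}) \geq \alpha$. Combining with the standard hierarchy $\un{d}(A) \leq \un{\bdelta}(A) \leq \overline{\bdelta}(A)$, valid for any $A \subset \N$ (a consequence of partial summation applied to $\sum_{m \leq N} \raz_A(m)/m$), we obtain $\un{\bdelta}(\cm_\mathscr{B}) \geq \alpha$ as well. Everything thus reduces to the upper bound $\overline{\bdelta}(\cm_\mathscr{B}) \leq \alpha$.

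For this upper bound, fix $n$ and split
\[
\frac{1}{\log N} \sum_{\substack{m \leq N \\ m \in \cm_\mathscr{B}}} \frac{1}{m} = \frac{1}{\log N} \sum_{\substack{m \leq N \\ m \in \cm_{\mathscr{B}_n}}} \frac{1}{m} + \frac{1}{\log N} \sum_{\substack{m \leq N \\ m \in \cm_\mathscr{B} \setminus \cm_{\mathscr{B}_n}}} \frac{1}{m}.
\]
As $N\to\infty$ the first summand tends to $\alpha_n$ (whenever natural density exists, the logarithmic density equals it). For the tail, write each $m \in \cm_\mathscr{B} \setminus \cm_{\mathscr{B}_n}$ as $m = kb$ where $b$ is the smallest $\mathscr{B}$-divisor of $m$; necessarily $b > n$, and by minimality of $b$ any $b' \in \mathscr{B}_n$ dividing $k$ would also divide $m$, forcing $k \in \mathcal{F}_{\mathscr{B}_n}$. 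This produces the overcount
\[
\sum_{\substack{m \leq N \\ m \in \cm_\mathscr{B} \setminus \cm_{\mathscr{B}_n}}} \frac{1}{m} \leq \sum_{\substack{b \in \mathscr{B} \\ n < b \leq N}} \frac{1}{b} \sum_{\substack{k \leq N/b \\ k \in \mathcal{F}_{\mathscr{B}_n}}} \frac{1}{k}.
\]
The aim is to show the right-hand side is bounded by $(\alpha - \alpha_n + o_{N\to\infty}(1))\log N$, for then sending $N\to\infty$ and then $n\to\infty$ gives $\overline{\bdelta}(\cm_\mathscr{B}) \leq \alpha$.

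The main obstacle is this tail estimate in the case $\sum_{b \in \mathscr{B}} 1/b = \infty$ (e.g.\ $\mathscr{B} = \{2p : p \in \PP\}$, where $\alpha = 1/2$): the naive bound $\sum_{k \leq N/b} 1/k = O(\log N)$ is useless, because $\sum_{b > n,\, b \in \mathscr{B}} 1/b$ need not vanish. One must exploit the sieve gain from the constraint $k \in \mathcal{F}_{\mathscr{B}_n}$: as $\mathcal{F}_{\mathscr{B}_n}$ is a union of residue classes modulo $\operatorname{lcm}(\mathscr{B}_n)$ of natural density $1-\alpha_n$, partial summation on each progression gives
\[
\sum_{\substack{k \leq N/b \\ k \in \mathcal{F}_{\mathscr{B}_n}}} \frac{1}{k} = (1-\alpha_n)\log(N/b) + O_n(1).
\]
Substituting this into the overcount and carefully summing over $b \in (n, N]$ (as in the original Davenport--Erd\H{o}s argument, where one further re-indexes the contributions according to the next-smallest $\mathscr{B}$-divisor to convert the overcount into an exact count and recover precisely $(\alpha-\alpha_n)\log N$) closes the bound and completes the proof.
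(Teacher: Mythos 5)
The paper itself gives no proof of this theorem -- it is a survey quoting the classical result of Davenport and Erd\H{o}s -- so your attempt has to be judged against the known arguments, and as written it has a genuine gap exactly where those arguments have their substance. Your reductions are fine: $\alpha_n=d(\cm_{\mathscr{B}_n})$ exists and increases to $\alpha$, the inclusions give $\un{d}(\cm_\mathscr{B})\geq\alpha$ and $\un{\bdelta}(\cm_\mathscr{B})\geq\alpha$, and everything reduces to $\ov{\bdelta}(\cm_\mathscr{B})\leq\alpha$. But the tail estimate you propose does not deliver this. Substituting $\sum_{k\leq N/b,\,k\in\mathcal{F}_{\mathscr{B}_n}}1/k=(1-\alpha_n)\log(N/b)+O_n(1)$ into your overcount leaves $(1-\alpha_n)\sum_{b\in\mathscr{B},\,n<b\leq N}\frac{1}{b}\log(N/b)$ plus error terms, and precisely in the case you flag, $\sum_{b\in\mathscr{B}}1/b=\infty$ (e.g.\ $\mathscr{B}=\{2p:p\in\PP\}$), this main term is of order $\log N\log\log N$, not $O\big((\alpha-\alpha_n)\log N\big)$: the ``sieve gain'' from $k\in\mathcal{F}_{\mathscr{B}_n}$ only buys the bounded factor $1-\alpha_n$, while the real loss in your inequality is that the right-hand side sums over all $b\in\mathscr{B}\cap(n,N]$, i.e.\ the divergent factor $\sum_{n<b\leq N}1/b$. (The error terms $O_n(1)$ summed over $b$ are also not negligible in general, e.g.\ if $\mathscr{B}$ contains an arithmetic progression.)

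The parenthetical remedy -- re-index by the smallest $\mathscr{B}$-divisor to turn the overcount into an exact count -- is exactly the hard part, and taken literally it is not correct: ``$b$ is the smallest $\mathscr{B}$-divisor of $m=kb$'' is not a condition on $k$ alone, since a divisor $b'<b$ of $m$ need not divide $k$ (e.g.\ $4\mid 60=6\cdot 10$ while $4\nmid 6$ and $4\nmid 10$). The correct exact decomposition is $\cm_\mathscr{B}=\bigcup_{b\in\mathscr{B}}E_b$ with $E_b=\{m:\ b\mid m,\ m\ \text{has no}\ \mathscr{B}\text{-divisor}<b\}$, each $E_b$ periodic of density $\gamma_b$ and $\sum_{b\in\mathscr{B},\,b\leq n}\gamma_b=\alpha_n$. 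With this, Fatou gives the lower bound $\liminf\geq\alpha$ for free, but the upper bound requires interchanging $\lim_{N\to\infty}$ with the sum over $b\in\mathscr{B}\cap(n,N]$, and the only termwise domination available, $\frac1{\log N}\sum_{m\in E_b,\,m\leq N}1/m\leq\frac{1+\log(N/b)}{b\log N}$, is not summable when $\sum_{b\in\mathscr{B}}1/b=\infty$. Justifying that interchange is the actual content of the Davenport--Erd\H{o}s theorem (via a Tauberian argument in 1936, and by a different elementary device in 1951); your write-up stops at the point where that input is needed, so the decisive step is missing.
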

In the so-called {\em Erd\"os case} when $\mathscr{B}$ consists of pairwise coprime elements whose sum of reciprocals converges, the density does exist, cf.~\cite{Ha-Ro} (in particular, $\raz_{\cf_{\mathscr{B}}}$ is rational). We refer the reader to~\cite{Ha-Ro,Ha} for a coherent, self-contained introduction to the theory of sets of multiples from the analytic and probabilistic number theory viewpoint.

\paragraph{Dynamics comes into play}
Sarnak in~\cite{Sa}, suggested to study $\mob^2$ from the dynamical viewpoint and he announced the following results:
\begin{enumerate}[(i)]
\item $\mob^2$ is \emph{generic} for an ergodic $S$-invariant measure $\nu_{\mob^2}$ on $\{0,1\}^\Z$ such that the  measure-theoretical dynamical system $(X_{\mob^2},\nu_{\mob^2},S)$ has zero measure-theoretic entropy;\footnote{This is clearly a refinement of the fact that the asymptotic density of square-free integers exists (it is given by $6/\pi^2=1/\zeta(2)$).  It follows that $\mob^2$ is a completely deterministic point.}\label{sa1}
\item the topological entropy of $(X_{\mob^2},S)$ is equal to $\nicefrac{6}{\pi^2}$;\label{sa2}
\item  $X_{\mob^2}=X_{\{p^2 : p\in\PP \}}$ (see the definition of admissibility below);\label{sa3}
\item $(X_{\mob^2},S)$ is \emph{proximal}.\label{sa4}
\end{enumerate}
This triggered intensive research in analogous direction for dynamical systems given by other $\mathscr{B}$-free sets. In~\cite{Ab-Le-Ru1}, Abdalaoui, Lema\'nczyk and de la Rue developed the necessary tools in the Erd\"os case and covered (i)-(iii) from the above list. Given $\mathscr{B}=\{b_k : k\geq 1\}$, In particular, they defined a function $\varphi\colon G=\prod_{k\geq 1}\Z/b_k\Z\to \{0,1\}^\Z$ given by
$$
\varphi(g)(n)=1 \iff g_k+n \not\equiv 0\bmod b_k \text{ for all }k\geq 1.
$$
Note that $\eta_\sB=\varphi(0)$ and $\varphi$ is the coding of points under the translation by $(1,1,\dots)$ on $G$ with respect to a two-set partition $\{W,W^c\}$, where
\begin{equation}\label{okienko}
W=\{h\in G : h_b\neq 0 \text{ for all }b\in\sB\}.
\end{equation}
This study was continued in a general setting in~\cite{Ba-Ka-Ku-Le} and the first obstacle was that it was no longer clear which subshift to study -- it turned out that the most important role is played by the following three subshifts, which coincide in the Erd\"os case (for the square-free, case see~\cite{MR3430278} by Peckner and for the Erd\"os case, see \cite{Ab-Le-Ru1}):
\begin{itemize}
\item
$X_\eta$ is the closure of the orbit of $\eta_\mathscr{B}$ under $S$ (\emph{$\mathscr{B}$-free subshift}),
\item
$\widetilde{X}_\eta$ is the smallest hereditary subshift containing $X_\eta$ (a subshift $(X,S)$ is {\em hereditary},  whenever $x\in X$ and $y\leq x$ coordinatewise, then $y\in X$),
\item
$X_\mathscr{B}$ is the set of $\mathscr{B}$-{\em admissible} sequences, i.e.\ of $x\in\{0,1\}^{\Z}$ such that, for each $b\in\mathscr{B}$, the support ${\rm supp}\,x:=\{n\in\Z : x(n)=1\}$ of $x$ taken modulo $b$ is a proper subset of $\Z/b\Z$ (\emph{$\mathscr{B}$-admissible subshift}).
\end{itemize}

\begin{Remark}\label{r:hererd} As $X_{\mathscr{B}}$ is hereditary, we have $X_\eta\subset\widetilde{X_\eta}\subset X_{\mathscr{B}}$. In the Erd\"os case, we have $X_\eta=X_{\mathscr{B}}$ \cite{Ab-Le-Ru1} (for the square-free system \cite{Sa}).\end{Remark}

Also the group $G$ turned out to be too large for the studies -- it is natural to consider its closed subgroup
\beq\label{podgrupa}
H:=\overline{\{(n,n,\dots)\in G : n\in\Z\}}.
\eeq
In the Erd\"os case we have $H=G$. Certain special cases more general than the Erd\"os one were considered in~\cite{Ba-Ka-Ku-Le}:
\begin{itemize}
\item
we say that $\mathscr{B}$ is {\em taut} whenever $\bdelta(\mathcal{F}_\mathscr{B})<\bdelta(\mathcal{F}_{\mathscr{B}\setminus \{b\}})$ for each $b\in\mathscr{B}$;
\item
we say that $\mathscr{B}$ has {\em light tails}, i.e.\ $\ov{d}(\sum_{b>K}b\Z)\to 0$ as $K\to \infty$.
\end{itemize}
Following~\cite{Ha}, we also say that $\mathscr{B}$ is \emph{Besicovitch} if $d(\cm_{\mathscr{B}})$ exists (equivalently, $d(\cf_\mathscr{B})$ exists). A set $\mathscr{B}\subset \N\setminus \{1\}$ is called \emph{Behrend} if $\boldsymbol{\delta}(\cm_{\mathscr{B}})=1$. Throughout, we will tacitly assume that $\sB$ is {\em primitive}, i.e.\ does not contain $b\neq b'$ with $b\divides b'$. Recall that $\mathscr{B}$ is taut if and only if $\sB$ does not contain $d\mathscr{A}$, where $\mathscr{A}\subset \N\setminus\{1\}$ is Behrend and $d\in\N$.

\paragraph{Further generalizations}\label{fu-ge}
Several further generalizations of $\mathscr{B}$-free integers were discussed in the literature from the dynamical viewpoint. Let us briefly recall them here:
\begin{itemize}
\item
Pleasants and Huck~\cite{Pe-Hu} considered {\em $k$-free lattice points} $\mathcal{F}_k=\mathcal{F}_k(\Lambda):=\Lambda \setminus \bigcup_{p\in\mathbb{P}} p^k \Lambda$, where
$\Lambda$ is a lattice in $\R^d$ (the corresponding dynamical system given by the orbit closure of $\raz_{\mathcal{F}_k}\in\{0,1\}^\Lambda$ under the multidimensional shift).
\item
Cellarosi and Vinogradov~\cite{MR3296562} considered {\em $k$-free integers in number fields} $\mathcal{F}_k=\mathcal{F}_k(\mathcal{O}_K):=\mathcal{O}_K \setminus \bigcup_{\mathfrak{p}\in\mathfrak{P}}\mathfrak{p}^k$. Here $K$ is a finite extension of $\Q$, $\mathcal{O}_K\subset K$ is the ring of integers, $\mathfrak{P}$ stands for the family of all prime ideals in $\mathcal{O}_K$ and $\mathfrak{p}^k$ stands for ${\mathfrak{p}\dots\mathfrak{p}}$ ($\mathfrak{p}$ is taken $k$ times).
\item
Baake and Huck in their survey~\cite{Baake:2015aa} considered {\em $\mathscr{B}$-free lattice points} $\mathcal{F}_{\mathscr{B}}=\mathcal{F}_{\mathscr{B}}(\Lambda):=\Lambda \setminus \bigcup_{b\in\mathscr{B}}b\Lambda$. Here $\Lambda$ is a lattice in $\R^d$ and $\mathscr{B}\subseteq \N\setminus\{1\}$ is an infinite pairwise coprime set with $\sum_{b\in\mathscr{B}}1/{b^d}<\infty$.
\item
Finally, one can consider $\mathfrak{B}$-free integers $\mathcal{F}_\mathfrak{B}$ in number fields as suggested in~\cite{Baake:2015aa}. Here $K$ is a finite extension of $\Q$, $\mathcal{O}_K\subset K$ is the ring of integers and $\mathfrak{B}$ is a family of pairwise coprime ideals in $\mathcal{O}_K$ such that the sum of reciprocals of their norms converges.
\end{itemize}

We will recall some of the main results from the above papers in the relevant sections below.

\subsection{Invariant measures and entropy}\label{invme}


\paragraph{Mirsky measure}
Cellarosi and Sinai proved \eqref{sa1} in~\cite{Ce-Si}: they showed that $\nu_{\mob^2}$ is generic for a shift-invariant measure $\nu_{\mob^2}$ on $\{0,1\}^\Z$, and that $(X_{\mob^2},\nu_{\mob^2},S)$ is isomorphic to a rotation on the compact Abelian group $\prod_{p\in\mathbb{P}}\Z/p^2\Z$. In particular, $(X_{\mob^2},\nu_{\mob^2},S)$ is of zero Kolmogorov entropy.\footnote{The frequencies of blocks on $\mob^2$ were first studied by Mirsky~\cite{MR0021566,Mi} and that is why we  refer to $\nu_{\mob^2}$ (and the analogous measure in case of general $\mathscr{B}$-free systems) as the {\em Mirsky measure}.
} In case of $k$-free lattice points and $k$-free integers in number fields an analogous result can be found in~\cite{Pe-Hu} and~\cite{MR3296562}, respectively and for $\mathscr{B}$-free lattice points it was announced in~\cite{Baake:2015aa}. Recently, Huck~\cite{Huck-daer} showed that in case of $\mathfrak{B}$-free integers in number fields, the logarithmic density of $\mathcal{F}_\mathfrak{B}$ always exists and equals the lower density, thus extending Theorem~\ref{da-er} in the (1-dimensional) Erd\"os case.

Since $\mathcal{F}_\mathscr{B}$ may fail to have asymptotic density, the more $\eta$ may fail to be a generic point. However (Proposition E in~\cite{Ba-Ka-Ku-Le}), for any $\mathscr{B}\subset \N$, $\eta$ is always a quasi-generic point for a natural ergodic $S$-invariant measure $\nu_\eta$ on $\{0,1\}^\Z$ (the relevant Mirsky measure). Moreover, $\mathscr{B}$ is Besicovitch if and only if $\eta$ is generic for $\nu_\eta$. Now, if we additionally assume that $\mathscr{B}$ is taut, then $(X_\eta,\nu_\eta,S)$ is isomorphic to an ergodic rotation on a compact metric group (Theorem F in~\cite{Ba-Ka-Ku-Le}).\footnote{More precisely, it is isomorphic to $(H,\PP,T)$, where $H$ is the closure of ${\{(n\text{ mod }b_k)_{k\geq1} : n\in \Z\}}$ in $\prod_{k\geq 1}\Z/b_k\Z$ and $Tg=g+(1,1,\dots)$, cf.~\eqref{podgrupa}.} In particular, $(X_\eta,\nu_\eta,S)$ has zero entropy.

Finally, for a generalization to so-called weak model sets, see \cite{Ba-Hu-St}, and for some results related to the distribution of $\mathscr{B}$-free integers, see~\cite{Av,Av-Ce-Si}.

\paragraph{Entropy}
The topological entropy of $X_{\mob^2}$ is positive and equals $6/\pi^2=\prod_{p\in\PP}(1-1/p^2)=d(\mathcal{F}_\sB)$ for $\sB=\{p^2 : p\in\PP\}$, see~\cite{MR3430278}. This extends to the Erd\"os case, where the topological entropy of $X_\eta=\widetilde{X}_\eta=X_\mathscr{B}$ equals $\prod_{b\in\mathscr{B}}(1-1/b)=d(\mathcal{F}_\mathscr{B})$, see~\cite{Ab-Le-Ru1}. In the general case of $\mathscr{B}$-free systems, we have $h_{top}(\widetilde{X}_\eta,S)=h_{top}(X_\mathscr{B},S)=\bdelta(\cf_{\mathscr{B}})$ (see Theorem K in~\cite{Ba-Ka-Ku-Le}). The formula for the topological entropy of $k$-free lattice points is provided in~\cite{Pe-Hu}.

In view of the variational principle, the positivity of the topological entropy evokes two problems: whether the system under consideration is intrinsically ergodic (i.e. whether there is a unique measure of maximal entropy) and to describe the set of all invariant measures. We address them next.

\paragraph{Maximal entropy measure}
In the square-free case, the intrinsic ergodicity is proved by Peckner in~\cite{MR3430278}. This extends to the Erd\"os case, see~\cite{Ku-Le-We} by Ku\l{}aga-Przymus, Lema\'nczyk and Weiss. Finally, for any $\mathscr{B}\subset \N$, the subshift $(\widetilde{X}_\eta,S)$ is intrinsically ergodic, see Theorem J in~\cite{Ba-Ka-Ku-Le}. In particular, if $\mathscr{B}$ has light tails and contains an infinite pairwise coprime subset then $(X_\mathscr{B},S)$ is intrinsically ergodic.

\paragraph{All invariant measures}
Notice that for each $\mathscr{B}$, the map $M\colon X_\eta\times \{0,1\}^\Z\to \widetilde{X}_\eta$ given by the coordinatewise multiplication of sequences is well-defined and each $S\times S$-invariant measure $\rho$ on $X_\eta\times \{0,1\}^\Z$ yields an $S$-invariant measure on $\widetilde{X}_\eta$. In particular, this applies to those $\rho$ whose projection on the first coordinate is $\nu_\eta$. It turns out that the converse is also true: for any $S$-invariant measure $\nu$ on $\widetilde{X}_\eta$ there exists an $S\times S$-invariant measure $\rho$ on $X_\eta\times \{0,1\}^\Z$ whose projection on the first coordinate is $\nu_\eta$ and such that $M_\ast(\rho)=\nu$. For the Erd\"os case see~\cite{Ku-Le-We} and for general $\mathscr{B}$-free systems, see Theorem I in~\cite{Ba-Ka-Ku-Le} (for further generalizations of $\mathscr{B}$-free systems listed before (see page \pageref{fu-ge}) no analogous description of the set of all invariant measures is known).


It turns out that a special role is played by $\mathscr{B}$ that are taut. We have the following: for any $\mathscr{B}$, there exists a unique taut set $\mathscr{B}'\subset \N$ such that $\cf_{\sB'}\subset \cf_{\sB}$, $\widetilde{X}_{\eta'}\subset \widetilde{X}_\eta$ and all $S$-invariant measures on $\widetilde{X}_\eta$ are in fact supported on $\widetilde{X}_{\eta'}$ (Theorem C in~\cite{Ba-Ka-Ku-Le}).

More subtle properties of the simplex of invariant measures of the $\mathscr{B}$-shift have been studied in~\cite{Ku-Le-We1} by Ku\l{}aga-Przymus, Lema\'nczyk and Weiss -- it was shown that in the positive entropy case the simplex of $S$-invariant measures on $\widetilde{X}_\eta$  is {\em Poulsen}, i.e.\ the ergodic measures are dense. In particular, if we additionally know that $X_\eta$ is hereditary (and has positive entropy), then its simplex of invariant measures is Poulsen. However, this is no longer true for a general (not necessarily $\mathscr{B}$-free) hereditary system. On the other hand, Konieczny, Kupsa and Kwietniak~\cite{Ko-Ku-Kw} showed that the set of ergodic invariant measures of a hereditary shift is always arcwise connected (when endowed with the $d$-bar metric).

\subsection{Topological results}
A lot can be said about the topological properties of $(X_\eta,S)$. E.g.\ for any $\mathscr{B}\subset \N$ the subshift $X_\eta$ has a unique minimal subset that is the orbit closure of a Toeplitz system (Theorem A in~\cite{Ba-Ka-Ku-Le}).  In particular, $X_\eta$ is minimal if and only if $X_\eta$ is a Toeplitz system.\footnote{This has been recently improved in~\cite{Kasjan:2017aa}  and by A. Bartnicka: $X_\eta$ is minimal if and only if $\eta$ is Toeplitz.}  In fact, $\eta$ itself can be a Toeplitz sequence (see Example~3.1 in~\cite{Ba-Ka-Ku-Le}) and it was shown in~\cite{Kasjan:2017aa} that $\eta$ is a Toeplitz sequence different from $\dots0.00\dots$ if and only if $\mathscr{B}$ does not contain a subset of the form $d\mathscr{A}$, where $d\in\N$ and $\mathscr{A}\subset \N\setminus\{1\}$ is infinite and pairwise coprime. Moreover, if $\eta$ is Toeplitz then $\sB$ is necessarily taut~\cite{Kasjan:2017aa}.

On the other hand, the proximality of $X_\eta$ is equivalent to $\{\dots0.00\dots\}$ being the unique minimal subset of $X_\eta$. Moreover, $X_\eta$ is proximal if and only if $\mathscr{B}$ contains an infinite pairwise coprime subset (Theorem B in~\cite{Ba-Ka-Ku-Le}).



Some of the properties of the $\mathscr{B}$-free subshift $X_\eta$
 can be characterized via properties of a set $W$ called the {\em window}: $W=\{h\in H : h_b\neq 0 \text{ for all }b\in\sB\}$, cf.\ \eqref{okienko}. This name has its origins in the theory of weak model sets (for more details see~\cite{Ba-Gr}); $\mathcal{F}_\mathscr{B}$ is an example of such a set. Again a special role is played by sets $\mathscr{B}$ that are taut. In~\cite{Kasjan:2017aa}, Kasjan, Keller and Lema\'nczyk show the following:
\begin{itemize}
\item
$\mathscr{B}$ is taut if and only if $W$ is Haar regular, i.e.\ the topological support of Haar measure restricted to $W$ is the whole $W$;
\item
if $\mathscr{B}$ is primitive then $X_\eta$ is a Toeplitz system if and only if $W$ is topologically regular;
\item
$X_\eta$ is proximal if and only if $W$ has empty interior.
\end{itemize}
In~\cite{Kasjan:2017aa} there is also a detailed description of the maximal equicontinuous factor of $X_\eta$ (with no extra assumptions on $\mathscr{B}$). See also~\cite{Ke-Ri}.

Clearly, if $X_\eta$ is hereditary, i.e.\ $X_\eta=\widetilde{X}_\eta$ then $(\dots0.00\dots)\in X_\eta$ and hence $X_\eta$ is proximal. If we assume that $\mathscr{B}$ is taut then the converse is true: proximality yields heredity (Theorem D in~\cite{Ba-Ka-Ku-Le}). However, $\widetilde{X}_\eta=X_\sB$ may fail to hold, even under quite strong assumptions on $\mathscr{B}$. Indeed, the set of abundant numbers $\mathbb{A}$ is the corresponding set of multiples $\mathcal{M}_\mathscr{B}$ for a certain set $\mathscr{B}$ with the property that $\sum_{b\in\mathscr{B}}1/b<\infty$. Here, $\widetilde{X}_\eta\neq X_\mathscr{B}$, see Section 11 in~\cite{Ba-Ka-Ku-Le}.

More subtle results on heredity were recently obtained by Keller in~\cite{Keller:2017aa}. He shows that whenever $X_\eta$ is proximal then it is contained in a slightly larger subshift that is hereditary (there is no need to make extra assumptions on $\mathscr{B}$). He also generalizes the concept of heredity to the non-proximal case.

It is also interesting to ask about the (invertible) centralizer of $(S,X_\eta)$. In the Erd\"os case it was proved by Mentzen\footnote{Mentzen's result is extended in \cite{Ba-Hu-Le} to every hereditary $\mathscr{B}$-free subshift.} in~\cite{MR3612882} that the group of homeomorphisms commuting with the shift $(S,X_\eta)$ consists only of the powers of $S$. In case of some Toeplitz $\mathscr{B}$-free systems an analogous result was proved by Bartnicka in~\cite{Ba-top}.

\begin{Question}\label{centr}
Is the invertible centralizer trivial for each $\mathscr{B}$-free subshift?
\end{Question}

\subsection{Ergodic Ramsey theory}

We will now see some connections of the theory of $\mathscr{B}$-free sets with the theory uniform distribution and ergodic Ramsey theory.
\paragraph{Polynomial recurrence and divisibility}
Recall that Szemer{\'e}di showed~\cite{MR0369312} that any set $S\subset \N$ with positive upper density contains arbitrarily long arithmetic progressions and Furstenberg~\cite{MR0498471,Fu1} introduced an ergodic approach to this result that proved very fruitful from the point of view of various generalizations. In particular, it allowed one to prove the following:
for any probability space $\xbm$, invertible measure preserving transformation $T\colon X\to X$,
$A\in\cb$ with $\mu(A)>0$
and any polynomials $p_i\in\Q[t]$ satisfying
$p_i(\Z)\subset\Z$ and $p_i(0)=0$, $1\leq i\leq \ell$,
there exists arbitrarily large $n\in\N$
such that
\beq\label{gwiazdka}\mu\big(A\cap T^{-p_1(n)}A\cap\ldots\cap T^{-p_\ell(n)}A\big)>0.\eeq In fact, we have
$$
\lim_{N\to\infty}\frac{1}{N}\sum_{n=1}^N
\mu\Big(A\cap T^{-p_1(n)}A\cap\ldots\cap T^{-p_\ell(n)}A\Big)>0
$$
\cite{MR1325795,MR2191208,MR2151605}. One can now restrict attention to a specific subset $R$ of $n\in\N$ for which we ask whether~\eqref{gwiazdka} holds or even demand
\begin{equation}\label{eq:dem}
\lim_{N\to\infty}\frac{1}{|R\cap[1,N]|}\sum_{n=1}^N\raz_R(n)\mu\Big(A\cap T^{-p_1(n)}A\cap\ldots\cap T^{-p_\ell(n)}A\Big)>0.
\end{equation}
If~\eqref{eq:dem} holds for any invertible measure preserving system $\xbmt$,
$A\in\mathcal{B}$ with $\mu(A)>0$,
$\ell\in\N$ and any polynomials
$p_i\in\Q[t]$, $i=1,\ldots,\ell$, with $p_i(\Z)\subset\Z$ and $p_i(0)=0$ for all $i\in\{1,\ldots,\ell\}$, we say (cf. {\cite[Definition 1.5]{MR1412598}}) that $R\subset \N$ is \emph{averaging set of polynomial multiple
recurrence}.
If $\ell=1$, we speak of an \emph{averaging set of polynomial single recurrence}.

We will be interested in polynomial recurrence for $\mathscr{B}$-free sets. Before we get there, let us direct our attention to so-called rational sets. Recall that $R\subset \N$ is rational if it can be approximated in density by finite unions of arithmetic progressions, cf.\ footnote~\ref{f:rational}. Note that the rationality of $\mathcal{F}_\sB$ is equivalent to $\mathscr{B}$ being Besicovitch. An easy necessary condition for $R\subset \N$ to be an averaging set of polynomial recurrence is that the density of $R\cap u\N$ exists and is positive for any $u\in\N$ (indeed, otherwise consider the cyclic rotation on $\Z/u\Z$ to see that even usual recurrence fails).  If the latter holds, we will say that $R$ is {\em divisible}. It turns out that in case of rational sets, divisibility is not only necessary but also sufficient for polynomial recurrence. More precisely, we have the following:

\begin{Th}[\cite{Be-Ku-Le-Ri}] Let $R\subset\N$ be rational and of positive density. The following conditions are equivalent:
\begin{enumerate}[(a)]
\item\label{itm:pmrar-a}
$R$ is divisible.
\item\label{itm:pmrar-b}
$R$ is an averaging set of polynomial single recurrence.
\item\label{itm:pmrar-c}
$R$ is an averaging set of polynomial multiple recurrence.
\end{enumerate}\end{Th}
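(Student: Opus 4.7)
The implication $(c)\Rightarrow(b)$ is tautological: taking $\ell=1$ in the definition of an averaging set of polynomial multiple recurrence produces the definition of an averaging set of polynomial single recurrence.

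For $(b)\Rightarrow(a)$, the plan is to use the preceding remark: fix $u\in\N$, take $X=\Z/u\Z$ with uniform probability measure $\mu$, $Tx=x+1$, $A=\{0\}$, and the polynomial $p_1(t)=t$. Then $\mu(A\cap T^{-n}A)$ equals $1/u$ if $u\divides n$ and $0$ otherwise, so (b) forces
\[
\liminf_{N\to\infty}\frac1{|R\cap[1,N]|}\sum_{n\leq N}\raz_{R\cap u\N}(n)>0,
\]
i.e.\ $\un d(R\cap u\N)>0$. Since $R$ is rational and $u\N$ is periodic, $R\cap u\N$ is rational, so its density exists; hence $d(R\cap u\N)>0$, which is divisibility.

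The substance of the theorem is $(a)\Rightarrow(c)$, which I would prove by a rational-approximation argument that reduces matters to the Bergelson--Leibman polynomial Szemer\'edi theorem. Fix a system $\xbmt$, a set $A\in\cb$ with $\mu(A)>0$, and polynomials $p_1,\dots,p_\ell\in\Q[t]$ with $p_j(\Z)\subset\Z$ and $p_j(0)=0$, and write
\[
\Phi(n):=\mu\bigl(A\cap T^{-p_1(n)}A\cap\dots\cap T^{-p_\ell(n)}A\bigr)\in[0,\mu(A)].
\]
Given $\vep>0$, rationality of $R$ lets me pick a period $a\in\N$ and a periodic set $P=\bigsqcup_{i=1}^{k}(a\Z+b_i)\cap\N$ with $\limsup_N|R\triangle P|/N<\vep\, d(R)$. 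A direct computation then gives
\[
\Bigl|\tfrac1{|R\cap[1,N]|}\!\!\sum_{n\leq N}\!\raz_R(n)\Phi(n)-\tfrac1{|P\cap[1,N]|}\!\!\sum_{n\leq N}\!\raz_P(n)\Phi(n)\Bigr|=O(\vep)+{\rm o}(1).
\]
The $P$-sum decomposes over residue classes: putting $n=am+b_i$ and $q_j^{(i)}(m):=p_j(am+b_i)-p_j(b_i)$, each term becomes an average of
\[
\mu\bigl(T^{p_1(b_i)}A\cap T^{p_1(b_i)-p_1(am+b_i)}\!A\cap\dots\cap T^{p_1(b_i)-p_\ell(am+b_i)}\!A\bigr),
\]
(absorbing a global shift by $T^{p_1(b_i)}$ inside), which after a standard rewrite is an ergodic average to which the Bergelson--Leibman theorem for the polynomials $q_j^{(i)}$ (all vanishing at $0$) applies to the system $(X,T^a)$, yielding existence of the limit and non-negativity.

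The heart of the matter, and the step I expect to be the main obstacle, is turning non-negativity into strict positivity. Divisibility enters here: since $d(R\cap(a\Z+b_i))>0$ for at least one $i$, the $P$-approximation has genuine mass on that progression, and on it Bergelson--Leibman delivers a positive liminf that cannot be washed out by the $O(\vep)$ error. I would thus pick $\vep$ small enough (relative to the Bergelson--Leibman lower bound, which depends only on $\mu(A)$ and the polynomials $p_j$ and $a$) so that the positive contribution from the surviving progressions dominates the error from the $R\triangle P$ approximation. The delicate point is that the Bergelson--Leibman lower bound may depend on $a$, which in turn depends on the approximation quality $\vep$; so I would either use a quantitative uniform version of Bergelson--Leibman (e.g.\ via a Furstenberg correspondence to an ergodic averaged joining) or, alternatively, argue by contradiction using the Furstenberg--Katznelson machinery applied to an inductive limit of finer and finer periodic approximations of $R$.
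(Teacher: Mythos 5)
The survey does not prove this theorem; it is quoted from \cite{Be-Ku-Le-Ri}, so your attempt can only be judged on its own merits. The two easy directions are fine: (c)$\Rightarrow$(b) is indeed tautological, and your cyclic-system argument for (b)$\Rightarrow$(a), combined with the remark that $R\cap u\N$ is again rational (hence has a density), is correct.

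The gap is in (a)$\Rightarrow$(c), at the step where you apply Bergelson--Leibman to each residue class of the periodic approximation. For a progression $a\Z+b$ on which the polynomials do not have a common root modulo the relevant moduli, the average $\frac1N\sum_{m\le N}\mu\bigl(A\cap T^{-p_1(am+b)}A\cap\dots\cap T^{-p_\ell(am+b)}A\bigr)$ can be identically zero: take $\ell=1$, $p_1(n)=n$, $T$ the rotation by $1$ on $\Z/2\Z$, $A=\{0\}$, $a=4$, $b=1$ --- every term vanishes. Your ``global shift by $T^{p_1(b_i)}$'' cannot repair this: conjugating the whole intersection by one power of $T$ produces translates of $A$ by exponents which still do not vanish simultaneously at $m=0$ (you would need $p_1(b_i)=\dots=p_\ell(b_i)$), and no algebraic rewriting can create positivity where the quantity is genuinely $0$. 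So positivity can only be harvested from residue classes through which the $p_j$ pass with a common root, in practice the class $0\bmod a$ (where $q_j(m)=p_j(am)$ do vanish at $0$), or more general classes handled by the joint-intersectivity theorem of Bergelson--Leibman--Lesigne. That is precisely where divisibility must enter: it guarantees $R\cap a\N$ carries a positive proportion of $R$. Your use of divisibility --- ``$d(R\cap(a\Z+b_i))>0$ for at least one $i$'' --- is automatic for any set of positive density and selects a class on which you have no recurrence at all, so as written the argument proves nothing.

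The second problem is the one you flagged, and it is not a technicality. If you keep only the class $0\bmod a$, the main term is $\frac{d(R\cap a\N)}{d(R)}$ times the Bergelson--Leibman constant for the polynomials $p_j(a\,\cdot)$ in the \emph{same} system; both factors degrade as $a$ grows, while $a$ must grow to make $\vep$ small, and there is no uniform-in-$a$ Bergelson--Leibman bound depending only on $\mu(A)$ (Khintchine-type large-intersection estimates fail for general polynomial multiple recurrence), so ``choose $\vep$ small relative to the BL bound'' cannot be run in the order you need. Worse, the reduction is self-similar: the weight $\raz_{R\cap a\N}$ is again an arbitrary rational divisible weight, so iterating the approximation never terminates. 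Closing this loop requires an input of a different nature --- in \cite{Be-Ku-Le-Ri} it comes from joint intersectivity (Bergelson--Leibman--Lesigne) together with an analysis of how Besicovitch rationally almost periodic weights correlate with polynomial multicorrelation sequences (which, by Leibman's theorem recalled in Section 5 of the survey, are nilsequences up to an error negligible in uniform density), so that only the structured, rational component of $n\mapsto\mu(A\cap T^{-p_1(n)}A\cap\dots\cap T^{-p_\ell(n)}A)$ matters. Neither of your two proposed remedies (a uniform BL, or a Furstenberg--Katznelson contradiction over an inductive limit of periodic approximations) supplies this missing ingredient.
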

Recall that it was proved in~\cite{MR1954690} that every self-shift $Q-r$, $r\in Q$, of the set of square-free numbers $Q$ is divisible and these are the only divisible shifts of $Q$. For general $\mathscr{B}$-free sets the situation is more complicated and we have the following result:

\begin{Th}[\cite{Be-Ku-Le-Ri}] Given $\mathscr{B}\subset \N$ that is Besicovitch, there exists a set $D\subset \cf_\sB$ with
$d(\cf_\sB\setminus D)=0$
such that the set $\cf_\sB-r$
is an averaging set of polynomial multiple recurrence if and only if $r\in D$. Moreover, $D=\cf_\sB$ if and only if the set $\sB$ is {taut}.
\end{Th}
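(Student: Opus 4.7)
The plan is to reduce the assertion to the previous theorem (characterizing rational averaging sets of polynomial recurrence via divisibility) and then to do a careful density computation to identify which translates $\cf_\sB-r$ are divisible.

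\textbf{Step 1 (Reduction to divisibility).} Since $\sB$ is Besicovitch, $\cf_\sB$ is rational (the characteristic function of $\cf_\sB$ is well-approximated in density by finite unions of arithmetic progressions obtained by truncating $\sB$), and rationality is preserved by integer shifts. Hence, for every $r\in\cf_\sB$, the set $\cf_\sB-r$ is rational of positive density $d(\cf_\sB)$. By the previous theorem, it is an averaging set of polynomial multiple recurrence if and only if it is divisible, i.e.\ $d((\cf_\sB-r)\cap u\N)>0$ for every $u\in\N$. Thus one defines
\[
D:=\bigl\{r\in\cf_\sB :\ d\bigl((\cf_\sB-r)\cap u\N\bigr)>0\ \text{for every }u\in\N\bigr\}.
\]

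\textbf{Step 2 (Divisibility computation).} For fixed $r\in\cf_\sB$ and $u\in\N$, writing $n=um$, we have $n\in(\cf_\sB-r)\cap u\N$ iff $um+r\in\cf_\sB$, i.e.\ $b\nmid um+r$ for every $b\in\sB$. Split $\sB=\sB_u\sqcup\sB^u$ with $\sB_u=\{b\in\sB : b\divides u\}$. For $b\in\sB_u$ the condition $b\nmid um+r$ is automatic because $r\in\cf_\sB$. Hence the divisibility of $\cf_\sB-r$ along $u\N$ is governed by $\sB^u$ acting on the arithmetic progression $um+r$. After standard manipulations (replacing each $b\in\sB^u$ by $b/\gcd(b,u)$ and tracking the forced residue class of $m$), the question becomes whether a shifted $\sB'$-free set is of positive density, where $\sB'$ is derived from $\sB$ and $u$. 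Invoking the Davenport--Erd\H{o}s theorem for this derived family and the assumption that $\sB$ is Besicovitch, one obtains that $r\in D$ if and only if none of the derived families is Behrend -- equivalently, $\sB$ does not contain a subset of the form $d\mathscr{A}$ forcing $um+r$ into $\cm_{d\mathscr{A}}$ for every $m$.

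\textbf{Step 3 (Full density of $D$).} To prove $d(\cf_\sB\setminus D)=0$, one shows that any obstruction of the above type concentrates on a set of density zero inside $\cf_\sB$. Namely, for each $u\in\N$ and each Behrend $\mathscr{A}$ with $d\mathscr{A}\subset\sB$, the set of $r\in\cf_\sB$ for which $um+r\in\cm_{d\mathscr{A}}$ for (almost) all $m$ is constrained to lie in a single residue class mod $d$ intersected with $\cf_\sB$. Summing over countably many $(u,d,\mathscr{A})$ and using the Besicovitch hypothesis to ensure the existence of the relevant densities, one shows that the exceptional set has density $0$ in $\cf_\sB$.

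\textbf{Step 4 (The tautness characterization).} If $\sB$ is taut, then by definition (equivalently, the characterization recalled in the paper) $\sB$ contains no set of the form $d\mathscr{A}$ with $\mathscr{A}$ Behrend; the Step~2 analysis then shows that no obstruction arises for any $r\in\cf_\sB$ and any $u$, hence $D=\cf_\sB$. Conversely, if $\sB$ is not taut, pick $d\in\N$ and a Behrend $\mathscr{A}\subset\N\setminus\{1\}$ with $d\mathscr{A}\subset\sB$; choosing $u=d$ and any $r\in\cf_\sB$ with $d\divides r$ (such an $r$ exists because $\cf_\sB$ meets the progression $d\N$ in a set of positive density, as $\sB$ is Besicovitch) one checks $um+r\in\cm_{d\mathscr{A}}$ for a density-$1$ set of $m$, so $d((\cf_\sB-r)\cap u\N)=0$, giving $r\notin D$.

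The main obstacle is Step~2: translating the divisibility condition on $(\cf_\sB-r)\cap u\N$ into a clean statement about a derived $\sB'$-free set and then pinpointing that the only obstruction is a Behrend configuration hidden inside $\sB$. This is exactly where tautness enters, and it is also where one must be careful because in the Besicovitch (but not Erd\H{o}s) regime, densities of infinite intersections need not be multiplicative, so the Davenport--Erd\H{o}s theorem (Theorem~\ref{da-er}) has to be invoked to guarantee the existence of the limits used throughout.
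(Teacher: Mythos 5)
First, note that the survey you are working from does not prove this theorem at all: it is quoted from \cite{Be-Ku-Le-Ri}, so your sketch has to be judged against the strategy of that paper. Your Step~1 is indeed the intended reduction (this is exactly why the rational/divisibility theorem is stated just before), and your Step~4 converse is essentially the right computation: taking $u=d$ and $d\divides r$, every $m$ with $dm+r\in\cf_\sB$ must satisfy $m+r/d\in\cf_{\mathscr{A}}$, which has density zero because $\mathscr{A}$ is Behrend (Davenport--Erd\H{o}s gives $\un d(\cm_{\mathscr{A}})=\bdelta(\cm_{\mathscr{A}})=1$). However, your justification for the existence of such an $r$ is circular: ``$\cf_\sB$ meets $d\N$ in positive density because $\sB$ is Besicovitch'' is not a consequence of Besicovitch --- positivity of $d(\cf_\sB\cap d\N)$ is precisely a divisibility statement of the kind the theorem is about. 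The correct reason is primitivity: no $b\in\sB$ can divide $d$ (otherwise $b\divides da$ for $a\in\mathscr{A}$ contradicts primitivity), hence $d$ itself lies in $\cf_\sB$.

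The genuine gaps are Steps~2 and~3, which are the heart of the theorem. In Step~2, after discarding the $b\divides u$ part, the set of bad $m$ is a union of residue classes $m\equiv m_b \pmod{b/\gcd(b,u)}$ with $m_b$ in general nonzero; this is not a set of multiples, so Theorem~\ref{da-er} does not apply to it off the shelf, and the claim that ``the only obstruction is a Behrend configuration'' --- equivalently, that for taut $\sB$ one has $d\bigl(\cf_\sB\cap(r+u\Z)\bigr)>0$ for every $r\in\cf_\sB$ and every $u$ --- is a nontrivial result about taut sets proved in \cite{Be-Ku-Le-Ri} (resting on the structure theory of taut sets, cf.\ the window/Mirsky-measure description in \cite{Ba-Ka-Ku-Le} and Hall's book), not a ``standard manipulation.'' In Step~3 the argument fails as written: you claim the exceptional $r$'s lie in ``a single residue class mod $d$ intersected with $\cf_\sB$'' and are therefore negligible, but a residue class intersected with $\cf_\sB$ can perfectly well have positive density, so summing over $(u,d)$ gives nothing. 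What actually yields $d(\cf_\sB\setminus D)=0$ is the tautification: the unique taut $\sB'$ with $\cm_\sB\subseteq\cm_{\sB'}$ and $\bdelta(\cm_{\sB'})=\bdelta(\cm_\sB)$ satisfies $d(\cf_\sB\setminus\cf_{\sB'})=0$ (this is where Besicovitch is really used), and one then shows $\cf_{\sB'}\subseteq D$ via the taut case; without this device, or an equivalent one, your Step~3 has no mechanism forcing the exceptional set to be null.
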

This can be generalized to $\mathscr{B}$ that are not Besicovitch by considering divisibility and recurrence along a certain subsequence $(N_k)_{k\geq 1}$. As a combinatorial application, one obtains in~\cite{Be-Ku-Le-Ri} the following result: Suppose that $(N_k)_{k\geq 1}$ is such that the density of $\mathcal{F}_\sB$ along $(N_k)_{k\geq 1}$ exists and is positive. Then there exists $D\subset \mathcal{F}_\mathscr{B}$ which equals $\mathcal{F}_\mathscr{B}$ up to a set of zero density along $(N_k)_{k\geq 1}$ such that for all $r\in D$ and for all $E\subset \N$ with positive upper density, for any polynomials $p_i\in\Q[t]$, $i=1,\ldots,\ell$,
which satisfy
$p_i(\Z)\subset\Z$ and $p_i(0)=0$, for all $1\leq i\leq \ell$, there exists $\beta>0$ such that the set
$$
\left\{n\in \cf_{\sB}-r:\overline{d}\Big(
E\cap (E-p_1(n))\cap \ldots\cap(E-p_\ell(n))
\Big)>\beta \right\}
$$
has positive lower density along $(N_k)_{k\geq 1}$. If, additionally, $\sB$ is taut then one can take $D=\cf_{\sB}$.

Results of similar flavor as above have been also obtained in~\cite{Be-Ku-Le-Ri1a} in the context of level sets of multiplicative functions. In particular, if $E$ is a level set of a multiplicative function and has positive density then every self-shift of $E$ is an averaging set of polynomial multiple recurrence (Corollary~C in~\cite{Be-Ku-Le-Ri1a}). The key tool here is~\eqref{relunif} that provides an important link between level sets of multiplicative functions and rational sets. See also  \cite{Be-Ku-Le-Ri1b}.


\section*{Acknowledgments}
The research resulting in this survey was carried out during the Research in Pairs Program of CIRM, Luminy, France, 15-19.05.2017. J.\ Ku\l{}aga-Przymus and M.\ Lema\'nczyk also acknowledge the support of Narodowe Centrum Nauki grant UMO-2014/15/B/ST1/03736. J.\ Ku\l{}aga-Przymus was also supported by the European Research Council (ERC) under the European Union’s Horizon 2020 research and innovation programme (grant agreement No 647133 (ICHAOS)).

The authors special thanks go to N.\ Frantzikinakis and P.\ Sarnak for a careful reading of the manuscript,  numerous remarks and suggestions to improve presentation. We  also thank M.\ Baake, V. Bergelson, B.\ Green, D.\ Kwietniak, C.\ Mauduit and M.\ Radziwi\l\l \ for some useful comments on the subject.

\small

\bibliography{survey-sarnak}

\bigskip
\footnotesize

\noindent
Sebastien Ferenczi\\
\textsc{Aix Marseille Universit\'e, CNRS, Centrale Marseille, Institut de Math\' ematiques de Marseille, I2M -- UMR 7373\\13453 Marseille\\France\\}
\noindent
\texttt{ssferenczi@gmail.com}

\medskip

\noindent
Joanna Ku\l aga-Przymus\\
\textsc{Aix-Marseille Universit\'e, CNRS, Centrale Marseille, Institut de Math\'ematiques de Marseille, I2M -- UMR 7373\\ 13453, Marseille, France}\\
\textsc{Faculty of Mathematics and Computer Science, Nicolaus Copernicus University, Chopina 12/18, 87-100 Toru\'{n}, Poland}\par\nopagebreak
\noindent
\texttt{joanna.kulaga@gmail.com}

\medskip

\noindent
Mariusz Lema\'nczyk\\
\textsc{Faculty of Mathematics and Computer Science, Nicolaus Copernicus University, Chopina 12/18, 87-100 Toru\'{n}, Poland}\par\nopagebreak
\noindent
\texttt{mlem@mat.umk.pl}

\end{document}